\newtheorem{theorem}{Theorem}
\newtheorem{definition}{Definition}
\newtheorem{lemma}{Lemma}
\newtheorem{proposition}{Proposition}
\titleformat{\section}[display]{\normalfont\huge\bfseries\centering}{\centering\chaptertitlename\thechapter}{10pt}{\Large}
\titlespacing*{\section}{0pt}{0ex}{0ex}
\begin{document}
\title[Inverse  coefficient problem for  a fully fractional diffusion equation..]{Inverse  coefficient problem for  a fully fractional diffusion equation with nonlinear and source nonlocal initial condition} 
\author[D.K.Durdiev]{D.K.Durdiev}
\author[H.H. Turdiev]{H.H. Turdiev}
\date{\today}
\address{$ ^1$Bukhara Branch of Romanovskii Institute of Mathematics,
Uzbekistan Academy of Sciences, Bukhara, Uzbekistan}
\address{$^2$Bukhara State University, Bukhara, 705018 Uzbekistan}
\email{durdiyevdd@gmail.com}
\email{hturdiev@mail.ru, h.h.turdiyev@gmail.com}
\maketitle

\let\thefootnote\relax
\footnotetext{MSC2020: 34A08, 34K10, 34K29, 34K37, 35R11, 35R30} 

{\bf Abstract}
In this work, we consider an inverse problem of determining a time dependent coefficient in a fully fractional diffusion equation with a nonlinear source term. The  nonlocal initial-boundary value problem refers to the forward model: the fractional diffusion equation equipped with a nonlocal initial condition and homogeneous Dirichlet boundary conditions. We first establish the existence and uniqueness of the mild solution to this nonlocal initial boundary value problem, together with the corresponding regularity properties of the solution. These results are obtained via the Fourier method, tools from fractional calculus, and key properties of the Mittag-Leffler function.
Subsequently, by applying a fixed-point argument in suitable Sobolev spaces, we prove a theorem on the local existence and uniqueness of the solution to the inverse problem. In this way, we establish the well-posedness of the  problem solution.

\bigskip

{\bf Keywords:} Caputo fractional derivative, nonlocal direct problem, Mittag-Leffler function, fully fractional equation, inverse problem, Sobolev spaces, existence, uniqueness, contraction mapping principle

\section*{1 Introduction}

 In the domain $\Omega_{T}:=\{(x,t): 0 < x <l, \,\, 0<t\leq T\}$ consider the fully fractional diffusion- wave equation

\begin{equation}\label{eqth1}
\partial_t^\alpha u(t, x)+L^\beta u(t, x)+k(t)u(t, x)=F(t, x, u(t, x)), \,\, (t, x) \in (0,T) \times \Omega
\end{equation}
with the boundary condition
\begin{equation}\label{eqth2}
\textbf{B} u(t, x)=0, \quad (t, x) \in (0,T) \times \partial \Omega
\end{equation}
and the nonlocal initial condition
\begin{equation}\label{eqth3}
u(T, x)=\kappa u(0, x)+\varphi(x), \quad x \in \Omega,
\end{equation}
where $\Omega$ is an open and bounded domain in $\mathbb{R}^N,(N \geq 1)$ with boundary $\partial \Omega$, the Caputo fractional differential operator
$\partial_t^{\alpha}$ of the order $0<\alpha<1$  is defined in [[1], pp. 90-99]:
\begin{equation*}
\partial_t^{\alpha}u(x, t):=I_{t}^{1-\alpha}u_{t}(x, t)=\frac{1}{\Gamma(1-\alpha)}\int_0^t\frac{u_{\tau}(x, \tau)}{(t-\tau)^{\alpha}}d\tau,
\end{equation*}
\begin{equation*}
I_{t}^{\alpha}u(x, t):=\frac{1}{\Gamma(\alpha)}\int_0^t\frac{u(x, \tau)}{(t-\tau)^{1-\alpha}}d\tau, \, \, \alpha\in(0, 1),
\end{equation*}
the function $\Gamma(\cdot)$ represents the Gamma function, while $I_{t}^{\alpha}u(x, t)$ denotes the Riemann Liouville fractional integral of the function $u(x, t)$ with respect to the variable $t.$

The fractional power $L^\beta(0<\beta \leq 1)$ of the   operator $L$ on $\Omega$ is defined by its spectrum. The symmetric uniformly elliptic operator is defined on the space $L_2(\Omega)$ by
$$
L u(x)=-\sum_{i=1}^N \frac{\partial}{\partial x_i}\left(\sum_{j=1}^N A_{i j}(x) \frac{\partial}{\partial x_j} u(x)\right)+b(x) u(x),
$$
provided that $A_{i j} \in C^1(\bar{\Omega}), b \in C(\bar{\Omega}), b(x) \geq 0$ for all $x \in \bar{\Omega}, A_{i j}= A_{j i}, 1 \leq i, j \leq N$, and $\xi^T\left[A_{i j}(x)\right] \xi \geq L_0|\xi|^2$ for some $L_0>0, x \in \bar{\Omega}$, $\xi=\left(\xi_1, \xi_2, \ldots, \xi_N\right) \in \mathbb{R}^N$. Equation (\ref{eqth1}) is equipped with $\textbf{B} v=v$ or $\textbf{B} v=(L\nabla v) \cdot \vec{n}$, where $L=\left[A_{i j}(x)\right]_{i, j=1}^N$ is a $N \times N$ matrix and $n$ is the outer normal vector of $\partial D$. Then the operator $L$ is self-adjoint under this impedance boundary condition.
Here $\varphi$ is a given function, $F(t, x, u(t, x))$ is forcing or source function.

\textit{Inverse problem.} The inverse problem in this paper is to reconstruct  $k(t)$ according to the additional data
\begin{equation}\label{eqth4}
u(t, x_0)=h(t), \quad t \in [0,T],
\end{equation}
where $x_0\in \Omega$ is fixed point, $h(t)$ is given function.

In recent years, fractional differential equations have found many successful applications in various areas of modern science and engineering [2].  Such as, in mechanics, biological, physical science, and applied science, where the generalization of the classical diffusion-wave equation is the fractional diffusion-wave equation [1], [2].

Mathematical studies on initial-boundary problems for the fractional diffusion-wave equations have been extensively developed by many authors, for example, see [3]-[7].  Gorenlfo, Luchko and Yamamoto [8] proposed a modified definition of the Caputo derivative in the fractional Sobolev spaces, and proved the maximal regularity of the solutions to some initial boundary value problems for the time-fractional diffusion equation with the definition of Caputo derivative in the fractional Sobolev spaces.
Sakamoto and Yamamoto [4] investigated the existence, uniqueness, and regularity of solution of a  initial value/boundary value problem for fractional diffusion equation in space $C\left([0, T] ; L^2(\Omega)\right)$ with respect to the assumptions $a \in H_0^1(\Omega) \cap H^2(\Omega)$ and $f \in C^\theta\left([0, T] ; L^2(\Omega)\right)$.
Mu, Ahmad and Huang [9] investigated the existence and regularity of classical solutions on a weighted H\''older continuous function space, with respect to the assumption $f \in \mathcal{F}^{\beta, \theta} \left((0, T]L^2(\Omega)\right)$, which is weaker than H\"older continuous. Carvaho et al. [10] established a local theory of mild solutions for the problem (\ref{eqth1}) and (\ref{eqth3}) where $L^\beta$ is a sectorial (nonpositive) operator. Luchko et al. [11] discussed uniqueness and reconstruction of an unknown semilinear term in a time-fractional reaction diffusion equation

When $\kappa=0$ in problem (\ref{eqth1})--(\ref{eqth3}), Tuan et al. [12] studied a backward problem for a time-space fractional diffusion equation. For the problem with linear source function, regularity properties of solutions were studied. The existence, uniqueness and regularity to solution were proved in the case of nonlinear source.
Tuan, Huynh, Ngoc and Zhou [13] investigated a backward problem for a time fractional diffusion equation with a nonlinear source term
$$
\begin{cases}\partial_t^\alpha u(x, t)=A u(x, t)+F(x, t, u(x, t)), & (x, t) \in \Omega \times(0, T), \\ u(x, t)=0, & (x, t) \in \partial \Omega \times(0, T), \\ u(x, T)=f(x), & x \in \Omega, \end{cases}
$$
where $\Omega$ is a bounded domain in $\mathbb{R}^d$ with sufficiently smooth boundary $\partial \Omega, \frac{\partial^\alpha}{\partial t^\alpha}$ is the Caputo fractional derivative of order $0<\alpha<1$, $F$ is a given function-satisfying some hypotheses. The operator $A$ is a linear densely defined self-adjoint and positive definite elliptic operator on the connected bounded domain $\Omega$ with zero Dirichlet boundary condition.
Under certain assumptions, they established the existence and uniqueness of mild local solutions to the nonlinear problem. They also showed that the backward problem is ill-posed in the sense of Hadamard and proposed a regularization method to approximate its solution. Tuan et al. [14] studied a backward (final) value problem for the time-fractional diffusion equation with inhomogeneous source. They showed that the ill-posedness of the problem and provided a regularized method. Chen et al. in [15] develops weak solutions of time-space fractional diffusion equations on bounded domains.
Jia and Li [16] gave the maximum principles for the classical solution and weak solution.

In 2015, Gusvanto and Suzuki [17] considered the existence and uniqueness of a local mild solution of the fractional-order abstract Cauchy problem
$$
\left\{\begin{array}{lll}
\partial_t^{\alpha} u=A u+f(u), t>0,0<\alpha<1, \\
u(0)=u_0,
\end{array}\right.
$$
where $A: D(A) \rightarrow H$ is a sectorial linear operator, $H$ is a Banach space, $u_0 \in H$, and $f: H \rightarrow H$.

Inverse problems for fractional diffusion-wave equations have become a promising research area because of their wide applications in many scientific and engineering fields.  Currently, scientists have a great interest in inverse problems related to fractional derivatives. In most of these studies, a fractional time derivative is considered, and the inverse problem typically involves determining this derivative, a source term,  the coefficient, and the memory function under some additional conditions.  However, the majority of existing research focuses on linear models (see  [18]-[21]. Moreover, Wei and Wang [22] identified a space-dependent source term by a modified quasi-boundary value method from the final data.
In [23], the initial-boundary value problem for the time-fractional wave equation, as well as the associated inverse problem concerning the determination of the zero-order term, were investigated. First, the direct problem was studied under a Robin-type boundary condition. Utilizing the properties of the direct problem, the inverse problem was subsequently addressed. The local solvability of the inverse problem was established, and its stability was demonstrated.

Inverse problems for nonlinear and nonlocal diffusion wave equations represent a relatively new research direction, and only a limited number of experimental and analytical studies can be found in this field. Janno [24] investigated an inverse problem for identifying time-dependent sources and parameters in nonlocal diffusion and wave equations from final data.

\section*{Structure of the work}

To provide an overview of the present paper, we give the outline of the work, including some summaries of the mathematical contributions.

Section 2 provides some basic knowledge about function spaces, special operators, the mild formula, and some linear estimates.

Section 3,  we consider the regularity property of the solutions in the linear case $F$. We seek to address the following question: If the given functions are regular, how regular is the solution? Our task in this part is to find a suitable Banach space for the given data $(\varphi, F)$ in order to obtain regularity results. Particular, in subsection 3.1, we obtain an mild solution of the initial boundary value problem and the operator form of this solution. In subsection 3.2, we prove the existence, uniqueness, and regularity of the mild solution of a nonlocal initial problem with a linear source. We will investigate the fractional derivative of a mild solution over time in subsection  3.3.

Section 4, we consider the regularity property of solutions for the nonlinear case $F$.  In this case, in subsection 4.1, we discuss existence, uniqueness, and regularity for the solutions to (\ref{eqth1})--(\ref{eqth3}) for the nonlinear problem.   In subsection 4.2, we will investigate the existence of  time-space fractional derivatives of a mild solution of a nonlocal initial problem with a nonlinear source.

In section 5, we will investigate the inverse problem (\ref{eqth1})--(\ref{eqth4}). First, the inverse problem reduces to an equivalent Volterra-type integral equation. In section 6, the correctness of the inverse problem is proven.  The local existence and global uniqueness of the solution of the inverse problem are proven using the contraction mapping principle.

\section*{2 Preliminaries}

In this section, we present some useful definitions and results , which will be use in the future.

The function $E_\alpha(z)$ is named after the great Swedish mathematician G\''osta Magnus Mittag-Leffler (1846-1927) who defined it by a power series
$$
E_\alpha(z)=\sum_{k=0}^{\infty} \frac{z^k}{\Gamma(\alpha k+1)}, \quad \alpha \in \mathbb{C}, \operatorname{Re} \alpha>0,
$$
and studied its properties in 1902-1905 [25]-[28]  in connection with his summation method for divergent series.

{\bf Two parameter Mittag-Leffler (M-L) function.} The two parameter M-L function $E_{\alpha,\sigma}(z)$ is defined by the following series:
\begin{equation*}
E_{\alpha,\sigma}(z)=\sum\limits_{k=0}^{\infty}\frac{z^k}{\Gamma(\alpha k+\sigma)},
\end{equation*}
where $\alpha,\beta,z\in\mathbb{C}$ with $\mathfrak{R}(\alpha)>0$, $\mathfrak{R}(\alpha)$ denotes the real part of the complex number $\alpha$, $\Gamma(\cdot)$ is Euler's Gamma function. The Mittag-Leffler function has been studied by many authors who have proposed and studied
various generalizations and applications. A very interesting work that meets many results about
this function is due to Kilbas et al. (see [[1], pp. 42-44].
For $\sigma=1$, $\sigma$ reduces to the Mittag-Leffler function of single parameter, i.e. $E_{\alpha,1}(z)=E_{\alpha}(z).$

\begin{proposition}\label{proposition1} [[1], pp. 40-45], [[3], pp. 29-37]
 Let $0<\alpha<2$ and $\sigma\in\mathbb{R}$ be arbitrary. We suppose that $\nu$ is such that $\pi\alpha/2<\nu<\min\{\pi,\pi\alpha\}$. Then there exists a constant $C=C(\alpha,\sigma,\nu)>0$ such that
\begin{equation*}
\left|E_{\alpha,\beta}(z)\right|\leq\frac{C}{1+|z|},\quad \nu\leq|\mbox{arg}(z)|\leq\pi.
\end{equation*}
\end{proposition}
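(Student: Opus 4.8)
The plan is to reduce the bound to a contour estimate via the classical Hankel-loop representation of the Mittag--Leffler function. Starting from the Hankel representation $1/\Gamma(s)=\tfrac{1}{2\pi i}\int_{Ha}e^{w}w^{-s}\,dw$, inserting it termwise into the series defining $E_{\alpha,\sigma}$, summing the resulting geometric series on a loop pushed sufficiently far from the origin, and substituting $w=\zeta^{1/\alpha}$, one obtains --- this is precisely the material of the cited references, which I would invoke --- the representation
\begin{equation*}
E_{\alpha,\sigma}(z)=\frac{1}{2\pi i\,\alpha}\int_{\gamma(1;\theta)}\frac{e^{\zeta^{1/\alpha}}\,\zeta^{(1-\sigma)/\alpha}}{\zeta-z}\,d\zeta ,
\end{equation*}
valid for every $z$ outside the closed set $\{|\zeta|\ge 1,\ |\arg\zeta|\le\theta\}$ --- i.e. for $|z|<1$ or $|\arg z|>\theta$ --- where $\gamma(1;\theta)$ is the loop made of the rays $\arg\zeta=\pm\theta$, $|\zeta|\ge1$, together with the arc $|\zeta|=1$, $|\arg\zeta|\le\theta$. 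Since the hypothesis gives $\pi\alpha/2<\nu$, I would fix an angle $\theta$ with $\pi\alpha/2<\theta<\nu<\min\{\pi,\pi\alpha\}$; then every $z$ in the target sector $\nu\le|\arg z|\le\pi$ satisfies $|\arg z|>\theta$, so the displayed formula holds for it with no restriction whatsoever on $|z|$, and the constant $\theta$ depends only on $\alpha$ and $\nu$.

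The heart of the matter is a uniform lower bound on $|\zeta-z|$: there is $c=c(\alpha,\nu,\theta)>0$ with $|\zeta-z|\ge c\,(1+|z|)$ for all $\zeta\in\gamma(1;\theta)$ and all $z$ with $\nu\le|\arg z|\le\pi$. Every contour point has $|\zeta|\ge1$ and $|\arg\zeta|\le\theta$, so the angle between the vectors $\zeta$ and $z$ is at least $\delta:=\nu-\theta>0$; combining the law of cosines $|\zeta-z|^2=|\zeta|^2+|z|^2-2|\zeta||z|\cos(\mathrm{angle})$ with $\cos(\mathrm{angle})\le\cos\delta$ and treating separately the cases $\cos\delta\ge0$ (use $2|\zeta||z|\le|\zeta|^2+|z|^2$) and $\cos\delta<0$ (the cross term is already nonnegative) gives $|\zeta-z|^2\ge c'(|\zeta|^2+|z|^2)\ge\tfrac{c'}{2}(1+|z|)^2$. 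This is the step where the hypothesis $\nu>\pi\alpha/2$ is genuinely used --- it is exactly what makes a feasible $\theta$, hence a positive angular gap $\delta$, available --- and it is the only place requiring care: one must verify the angular separation uniformly in $\arg z\in[\nu,\pi]\cup[-\pi,-\nu]$, in particular when $\arg z$ is near $\pm\pi$ and $|z|$ is of order $1$, so that the bound does not degenerate. (If one does not want to take the contour representation on faith, then deriving it --- justifying the interchange of summation and integration and the contour deformation, so that the residue term $\tfrac1\alpha z^{(1-\sigma)/\alpha}e^{z^{1/\alpha}}$ is indeed absent in this sector --- is the other substantive point; an alternative, less self-contained route is to quote the classical uniform asymptotics $E_{\alpha,\sigma}(z)=-z^{-1}/\Gamma(\sigma-\alpha)+O(|z|^{-2})$ for $|z|\to\infty$ in $\nu\le|\arg z|\le\pi$ and absorb the values on a compact disc, $E_{\alpha,\sigma}$ being entire, into the constant.)

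With these two ingredients the estimate follows immediately. On the rays of $\gamma(1;\theta)$ one has $|e^{\zeta^{1/\alpha}}|=\exp\!\big(|\zeta|^{1/\alpha}\cos(\theta/\alpha)\big)$, and since $\pi\alpha/2<\theta<\pi\alpha$ forces $\pi/2<\theta/\alpha<\pi$, the number $\cos(\theta/\alpha)$ is a strictly negative constant; hence the exponential decay on the rays dominates the polynomial factor $|\zeta|^{(1-\sigma)/\alpha}$, and, adding the bounded contribution of the unit arc, $C_0:=\int_{\gamma(1;\theta)}|e^{\zeta^{1/\alpha}}|\,|\zeta|^{(1-\sigma)/\alpha}\,|d\zeta|$ is finite and depends only on $\alpha,\sigma,\theta$. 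Therefore
\begin{equation*}
|E_{\alpha,\sigma}(z)|\le\frac{1}{2\pi\alpha}\int_{\gamma(1;\theta)}\frac{|e^{\zeta^{1/\alpha}}|\,|\zeta|^{(1-\sigma)/\alpha}}{|\zeta-z|}\,|d\zeta|\le\frac{C_0}{2\pi\alpha\,c}\cdot\frac{1}{1+|z|},\qquad \nu\le|\arg z|\le\pi,
\end{equation*}
which is the asserted inequality with $C=C_0/(2\pi\alpha c)$, a constant depending only on $\alpha,\sigma,\nu$.
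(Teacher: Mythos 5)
Your proposal is correct: it is the classical Hankel-contour argument (representation of $E_{\alpha,\sigma}$ by the loop $\gamma(1;\theta)$ with $\pi\alpha/2<\theta<\nu$, a uniform lower bound $|\zeta-z|\ge c(1+|z|)$ from the angular gap $\nu-\theta$, and exponential decay on the rays since $\cos(\theta/\alpha)<0$), and your handling of the delicate points (angular separation uniformly up to $\arg z=\pm\pi$, validity of the representation in the whole sector) is sound. The paper gives no proof of this proposition — it simply cites [1] and [3] — and the argument you reconstruct is exactly the one in those references (e.g.\ Podlubny, Theorem~1.6), so your write-up coincides with the proof the paper implicitly invokes.
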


\begin{proposition}\label{propthth1} The Mittag-Leffler function of the negative argument $E_\alpha(-x)$ is a monotonically decreasing function for all $0<\alpha<1$ and
$$
0<E_\alpha(-x)<1.
$$
\end{proposition}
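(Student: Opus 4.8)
The plan is to prove the two claims separately, using the classical integral/series representations of $E_\alpha(-x)$ for real $x\ge 0$. For the bound $0<E_\alpha(-x)<1$, I would start from the power series $E_\alpha(-x)=\sum_{k=0}^\infty \frac{(-x)^k}{\Gamma(\alpha k+1)}$, which at $x=0$ gives $E_\alpha(0)=1$; so it remains to show strict monotone decrease on $(0,\infty)$ together with positivity. The cleanest route is the known complete monotonicity of $E_\alpha(-x)$ for $0<\alpha<1$: there is a representation
$$
E_\alpha(-x)=\int_0^\infty e^{-xr}\,d\mu_\alpha(r),
$$
or equivalently the Pollard-type formula $E_\alpha(-x)=\int_0^\infty e^{-xr}\,\phi_\alpha(r)\,dr$ with a nonnegative density $\phi_\alpha$, valid for $0<\alpha\le 1$. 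From such a representation positivity is immediate ($\phi_\alpha\ge 0$, $\phi_\alpha\not\equiv 0$), and differentiating under the integral sign gives $\frac{d}{dx}E_\alpha(-x)=-\int_0^\infty r e^{-xr}\phi_\alpha(r)\,dr<0$, which is strict monotone decrease; combined with $E_\alpha(0)=1$ and $E_\alpha(-x)\to 0$ as $x\to+\infty$ (from Proposition~\ref{proposition1} with $\arg(-x)=\pi$, giving $|E_\alpha(-x)|\le C/(1+x)$), this yields $0<E_\alpha(-x)<1$ for all $x>0$.

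An alternative, more self-contained argument avoids invoking complete monotonicity: one uses the fact that $y(t)=E_\alpha(-\lambda t^\alpha)$ solves the fractional relaxation equation $\partial_t^\alpha y=-\lambda y$, $y(0)=1$, together with a maximum-principle / sign argument for the Caputo derivative (if $y$ attained a nonpositive minimum at some interior point its Caputo derivative there would be $\le 0$ forcing $-\lambda y\le 0$, i.e. $y\ge 0$, contradiction unless $y\equiv$ const). Monotonicity can then be extracted from $\partial_t^\alpha y=-\lambda y<0$ via Luchko's extremum principle applied to $y'$, or more directly from the series for the derivative $\frac{d}{dx}E_\alpha(-x)=-\frac{1}{\Gamma(\alpha)}\,x^{\alpha-1}E_{\alpha,\alpha}(-x)$ — so the monotonicity claim reduces to showing $E_{\alpha,\alpha}(-x)>0$ for $x\ge 0$, which again follows from the nonnegative integral representation $E_{\alpha,\alpha}(-x)=\int_0^\infty e^{-xr}\psi_\alpha(r)\,dr$ with $\psi_\alpha\ge 0$.

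I would structure the final write-up around the derivative identity $\frac{d}{dx}E_\alpha(-x)=-\frac{1}{\Gamma(\alpha)}x^{\alpha-1}E_{\alpha,\alpha}(-x)$: (i) verify this identity by term-by-term differentiation of the series; (ii) establish $E_{\alpha,\alpha}(-x)\ge 0$ for $0<\alpha<1$ and $x\ge 0$ via Pollard's integral formula (or cite it from [1], pp. 40–45); (iii) conclude that $E_\alpha(-x)$ is (strictly) decreasing since the prefactor $x^{\alpha-1}/\Gamma(\alpha)$ is positive for $x>0$; (iv) use $E_\alpha(0)=1$ and the decay $E_\alpha(-x)\to 0$ as $x\to\infty$ (Proposition~\ref{proposition1}) to pin down the range $(0,1)$.

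The main obstacle is step (ii): proving the nonnegativity/positivity of $E_{\alpha,\alpha}(-x)$ (equivalently, the complete monotonicity of $E_\alpha(-x)$) is the only nontrivial ingredient, since the alternating series for $E_\alpha(-x)$ does not have monotone terms and cannot be handled by a naive Leibniz-type estimate. I would resolve it by citing Pollard's classical result (available in the references [1], [3] already used in the paper) rather than reproving it; everything else is routine series manipulation plus an appeal to Proposition~\ref{proposition1}.
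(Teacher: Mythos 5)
The paper itself offers no proof of this proposition: like the neighbouring Propositions \ref{proposition1}, \ref{proposition2} and \ref{thproth3}, it is quoted as a classical property of the Mittag--Leffler function (ultimately Pollard's complete-monotonicity theorem, covered by the references [1], [3]), so there is no in-paper argument to compare yours against. Your overall plan is the standard one and is sound: the Laplace--Stieltjes representation $E_\alpha(-x)=\int_0^\infty e^{-xr}\,d\mu_\alpha(r)$ with a nonnegative (probability) measure gives positivity and strict decrease at once, and combining this with $E_\alpha(0)=1$ and the decay $|E_\alpha(-x)|\le C/(1+x)$ from Proposition \ref{proposition1} pins the range down to $(0,1)$ for $x>0$. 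Citing Pollard for the key nonnegativity, rather than reproving it, is exactly in the spirit of how the paper treats this statement; note also that the positivity you single out as the main obstacle, namely $E_{\alpha,\alpha}(-x)>0$, is already recorded in the paper as Proposition \ref{proposition2}(i), so you could cite that directly.

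One concrete slip: the derivative identity you plan to verify in step (i), $\frac{d}{dx}E_\alpha(-x)=-\frac{1}{\Gamma(\alpha)}x^{\alpha-1}E_{\alpha,\alpha}(-x)$, is false. Term-by-term differentiation of the series gives $\frac{d}{dx}E_\alpha(-x)=-\frac{1}{\alpha}E_{\alpha,\alpha}(-x)$, which is Proposition \ref{proposition2}(ii) evaluated at $t=-x$; the factor $x^{\alpha-1}$ only appears in the composite derivative $\frac{d}{dt}E_\alpha\left(-\lambda t^\alpha\right)=-\lambda t^{\alpha-1}E_{\alpha,\alpha}\left(-\lambda t^\alpha\right)$ of Proposition \ref{proposition4}(i). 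The error is harmless for the logic, since strict monotonicity still reduces to the positivity of $E_{\alpha,\alpha}(-x)$ with the positive prefactor $1/\alpha$ in place of $x^{\alpha-1}/\Gamma(\alpha)$, and your first route via complete monotonicity does not use the identity at all; but step (i) as written would not check out and should be corrected in the final write-up.
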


\begin{proposition}\label{proposition2} [[3], pp. 16-37] For $\alpha \in(0,1)$ and $t \in \mathbb{R}$ :

(i) $E_\alpha(t)>0,\,\, E_{\alpha,\alpha}(t)>0$,

(ii) $\left(E_\alpha(t)\right)'=\frac{1}{\alpha} E_{\alpha,\alpha}(t)$,

(iii) $\lim\limits_{t \rightarrow-\infty} E_\alpha(t)=\lim\limits_{t \rightarrow-\infty} E_{\alpha,\alpha}(t)=0$,

(iv) $\partial_t^\alpha E_\alpha\left(\omega t^\alpha\right)=\omega E_\alpha\left(\omega t^\alpha\right), \,\, \partial_t^{\alpha-1}\left(t^{\alpha-1} E_{\alpha,\alpha}\left(\omega t^\alpha\right)\right)=E_\alpha\left(\omega t^\alpha\right)$,

(v) $\partial_t^\alpha \left\{t^{\alpha-1}E_{\alpha, \alpha}\left(\omega t^\alpha\right)\right\}=\omega t^{\alpha-1}E_{\alpha, \alpha}\left(\omega t^\alpha\right),\,\,  \omega \in \mathbb{C}.$
\end{proposition}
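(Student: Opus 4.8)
The plan is to verify all five items by manipulating the defining power series of $E_\alpha$ and $E_{\alpha,\alpha}$ directly, using only the functional equation $\Gamma(z+1)=z\,\Gamma(z)$ together with the elementary rules
\[
I_t^{\mu}t^{\nu}=\frac{\Gamma(\nu+1)}{\Gamma(\nu+\mu+1)}\,t^{\nu+\mu},
\qquad
\partial_t^{\mu}t^{\nu}=\frac{\Gamma(\nu+1)}{\Gamma(\nu-\mu+1)}\,t^{\nu-\mu}
\qquad(\nu>-1,\;\mu>0),
\]
and the convention $1/\Gamma(0)=0$ (so constants are annihilated by $\partial_t^{\mu}$). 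Because $E_{\alpha,\sigma}$ is an entire function, each series obtained after applying one of these operators converges uniformly on compact subsets of $(0,\infty)$, so interchanging the operator with the summation is legitimate; verifying this uniform convergence, via the superexponential growth of $\Gamma$ in the denominators, is the only genuinely technical point and is otherwise routine.

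For (ii) I differentiate $E_\alpha(t)=\sum_{k\ge0}t^k/\Gamma(\alpha k+1)$ term by term, shift $k\mapsto k+1$, and use $\Gamma(\alpha k+\alpha+1)=\alpha(k+1)\Gamma(\alpha k+\alpha)$; the coefficient $(k+1)/\Gamma(\alpha k+\alpha+1)$ collapses to $1/\bigl(\alpha\,\Gamma(\alpha k+\alpha)\bigr)$, and summing gives exactly $\tfrac1\alpha E_{\alpha,\alpha}(t)$. Item (i) then follows with little extra work: for $t\ge0$ positivity is immediate from the nonnegative series coefficients, and for $t<0$ the inequality $0<E_\alpha(-x)<1$ is precisely Proposition~\ref{propthth1}. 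Moreover, since $x\mapsto E_\alpha(-x)$ is monotonically decreasing (again Proposition~\ref{propthth1}), differentiating in $x$ and invoking (ii) gives $\tfrac{d}{dx}E_\alpha(-x)=-\tfrac1\alpha E_{\alpha,\alpha}(-x)\le0$, hence $E_{\alpha,\alpha}(-x)\ge0$; strictness on $(0,\infty)$ is read off from the leading behaviour near $0$ (or from strict monotonicity of $E_\alpha(-x)$).

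For (iii) I invoke Proposition~\ref{proposition1}: since $0<\alpha<1$ we have $\min\{\pi,\pi\alpha\}=\pi\alpha$, so a $\nu$ with $\pi\alpha/2<\nu<\pi\alpha$ exists, and for that $\nu$ the estimate $|E_{\alpha,\sigma}(z)|\le C/(1+|z|)$ holds on $\nu\le|\arg z|\le\pi$; evaluating at $z=t<0$, where $\arg z=\pi$ lies in this range, yields $|E_\alpha(t)|\le C/(1+|t|)$ and likewise for $E_{\alpha,\alpha}$, whence both tend to $0$ as $t\to-\infty$.

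For (iv) the first identity is the classical eigenfunction property: applying $\partial_t^{\alpha}$ termwise to $E_\alpha(\omega t^\alpha)=\sum_{k\ge0}\omega^k t^{\alpha k}/\Gamma(\alpha k+1)$, the $k=0$ term (a constant) vanishes, and for $k\ge1$ the power rule gives $\Gamma(\alpha k+1)/\Gamma(\alpha(k-1)+1)\cdot t^{\alpha(k-1)}$, so after cancellation and the shift $k\mapsto k+1$ one recovers $\omega E_\alpha(\omega t^\alpha)$. For the second identity note that $\partial_t^{\alpha-1}=I_t^{1-\alpha}$ (a fractional integral, as $\alpha-1<0$); applying it termwise to $t^{\alpha-1}E_{\alpha,\alpha}(\omega t^\alpha)=\sum_{k\ge0}\omega^k t^{\alpha k+\alpha-1}/\Gamma(\alpha k+\alpha)$ and using $I_t^{1-\alpha}t^{\alpha k+\alpha-1}=\Gamma(\alpha k+\alpha)/\Gamma(\alpha k+1)\cdot t^{\alpha k}$ reproduces $E_\alpha(\omega t^\alpha)$. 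Item (v) is handled the same way: differentiating $t^{\alpha-1}E_{\alpha,\alpha}(\omega t^\alpha)$ by $\partial_t^\alpha$, the $k=0$ monomial $t^{\alpha-1}$ has vanishing fractional derivative (coefficient $1/\Gamma(0)=0$), while for $k\ge1$ one gets $\Gamma(\alpha k+\alpha)/\Gamma(\alpha k)\cdot t^{\alpha k-1}$, and the shift $k\mapsto k+1$ yields $\omega t^{\alpha-1}E_{\alpha,\alpha}(\omega t^\alpha)$. The one subtlety to flag is the meaning of the derivative in (v): since $t^{\alpha-1}$ is singular at $t=0$ one should use the Riemann--Liouville prescription there, but the Caputo and Riemann--Liouville derivatives obey the same power rule on this function precisely because the only problematic monomial, $t^{\alpha-1}$, differentiates to $0$, so no initial-value correction appears. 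That point, together with the locally uniform convergence needed to move the operators inside the sums, is the only real obstacle; everything else is bookkeeping with the Gamma function.
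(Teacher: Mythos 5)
The paper offers no proof of this proposition at all: it is imported verbatim from the literature (the citation to [3], pp.\ 16--37), so there is nothing internal to compare your argument against. Your term-by-term verification from the power series is the standard textbook route, and items (ii), (iii), (iv) and (v) are handled correctly: the Gamma-shift computation for (ii), the use of Proposition~\ref{proposition1} with $\arg z=\pi$ and $\nu<\pi\alpha<\pi$ for (iii), and the power rules $\partial_t^{\alpha}t^{\alpha k}=\frac{\Gamma(\alpha k+1)}{\Gamma(\alpha(k-1)+1)}t^{\alpha(k-1)}$, $I_t^{1-\alpha}t^{\alpha k+\alpha-1}=\frac{\Gamma(\alpha k+\alpha)}{\Gamma(\alpha k+1)}t^{\alpha k}$ for (iv)--(v) all check out. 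Your remark that (v) must be read in the Riemann--Liouville sense is apt and even sharper than the paper's own usage: the Caputo form $I_t^{1-\alpha}u_t$ does not literally apply to $t^{\alpha-1}$ (the integrand $\tau^{\alpha-2}$ is not integrable at $0$), so the RL reading, under which $t^{\alpha-1}$ is annihilated, is the correct one; this is exactly how the identity is used later in the paper (e.g.\ in deriving the formula for $\partial_t^\alpha u_j$).

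The one genuine soft spot is the strict positivity of $E_{\alpha,\alpha}(t)$ for $t<0$ in item (i). From the monotone decrease of $E_\alpha(-x)$ (Proposition~\ref{propthth1}) and your item (ii) you only get $E_{\alpha,\alpha}(-x)\ge 0$; strictness "near $0$" covers small $x$ only, and an analytic nonnegative function can still have isolated zeros, so strict monotonicity of $E_\alpha(-x)$ does not by itself rule out a zero of $E_{\alpha,\alpha}$ at some $-x_0$. To close this you need an extra ingredient, e.g.\ Pollard's complete monotonicity of $E_\alpha(-x)$ for $0<\alpha<1$ (so that $E_{\alpha,\alpha}(-x)=-\alpha\frac{d}{dx}E_\alpha(-x)$ is itself completely monotone, hence nonincreasing, nonnegative, and therefore strictly positive by analyticity and $E_{\alpha,\alpha}(0)=1/\Gamma(\alpha)>0$), or the known integral representation of $E_{\alpha,\alpha}$ on the negative axis. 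Since the statement is a classical cited fact, this is a gap in your write-up rather than in the result, but as it stands your argument proves only nonnegativity of $E_{\alpha,\alpha}$ on the negative half-line, not the stated strict inequality.
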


\begin{proposition}\label{thproth3}  [[3],pp. 29-37], [13]. Let $0<\alpha_0<\alpha_1<1$. Then there exist positive constants $M_1, M_2$ depending only on $\alpha_0, \alpha_1$ such that for all $\alpha \in\left[\alpha_0, \alpha_1\right]$,
$$
\frac{M_1}{1+z} \leq E_\alpha(-z) \leq \frac{M_2}{1+z}, \quad E_{\alpha, \sigma}(-z) \leq \frac{M_2}{1+z},
$$
for all $z \geq 0,\,\, \sigma \in \mathbb{R}$.
\end{proposition}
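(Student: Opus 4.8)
\emph{Proof strategy (proposal).} The plan is to split the range of $z$ at a threshold $z_*\ge 1$ depending only on $\alpha_0,\alpha_1$, treating $z\in[0,z_*]$ by monotonicity together with compactness in $\alpha$, and $z\in[z_*,\infty)$ by the classical large-argument asymptotics of the Mittag-Leffler function on the negative real axis. The preliminary facts I would record are: (a) for $0<\alpha<1$ and $x\ge 0$ one has $0<E_\alpha(-x)<1$ and $E_\alpha(-x)$ is decreasing in $x$ (Proposition~\ref{propthth1}); (b) $(\alpha,z)\mapsto E_\alpha(-z)$ and $(\alpha,z)\mapsto E_{\alpha,\sigma}(-z)$ are jointly continuous on $[\alpha_0,\alpha_1]\times[0,\infty)$, from locally uniform convergence of the defining series (Weierstrass $M$-test, comparing termwise with the convergent series for $E_{\alpha_0,\sigma}$); (c) $\alpha\mapsto 1/\Gamma(1-\alpha)$ is continuous and strictly positive on $[\alpha_0,\alpha_1]$, hence $c_0\le 1/\Gamma(1-\alpha)\le c_1$ there for some $0<c_0\le c_1$.

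On $[0,z_*]$ the upper bound is immediate, $E_\alpha(-z)<1\le\frac{1+z_*}{1+z}$; for the lower bound, monotonicity gives $E_\alpha(-z)\ge E_\alpha(-z_*)$, and $\alpha\mapsto E_\alpha(-z_*)$ is continuous and positive on the compact interval $[\alpha_0,\alpha_1]$, hence bounded below by some $m>0$, so $E_\alpha(-z)\ge m\ge\frac{m}{1+z}$. On $[z_*,\infty)$ I would invoke the asymptotic expansion in the form
$$E_\alpha(-x)=\frac{1}{\Gamma(1-\alpha)}\,\frac1x+\rho_\alpha(x),\qquad |\rho_\alpha(x)|\le \frac{C_*}{x^{2}}\quad(x\ge 1),$$
with $C_*$ depending only on $\alpha_0,\alpha_1$. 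Then $xE_\alpha(-x)=\frac{1}{\Gamma(1-\alpha)}+x\rho_\alpha(x)$ with $|x\rho_\alpha(x)|\le C_*/x\le C_*/z_*$, so choosing $z_*:=\max\{1,\,2C_*/c_0\}$ forces $\frac{c_0}{2}\le xE_\alpha(-x)\le 2c_1$ for $x\ge z_*$; since $\frac{1}{1+x}\le\frac1x\le\frac{2}{1+x}$ for $x\ge 1$, this yields $\frac{c_0/2}{1+x}\le E_\alpha(-x)\le\frac{4c_1}{1+x}$. Taking $M_1:=\min\{m,c_0/2\}$ and $M_2:=\max\{1+z_*,\,4c_1\}$ --- all of which ultimately depend only on $\alpha_0,\alpha_1$, since $z_*$ does and then $m$ does --- and combining the two ranges gives the two-sided bound for $E_\alpha(-z)$ for all $z\ge 0$.

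The only real obstacle is the uniformity in $\alpha$ of the remainder $\rho_\alpha$ (and of the corresponding remainder for $E_{\alpha,\sigma}$): the standard asymptotic theorem --- Proposition~\ref{proposition1} already gives the $O(1/x)$ upper bound, but with a constant depending on $\alpha$ and on an angular parameter $\nu$ that in general cannot be chosen independently of $\alpha$ on an arbitrary interval $[\alpha_0,\alpha_1]$. To obtain uniformity I would either track the $\alpha$-dependence through the Hankel-contour proof of the expansion (the contours and sectors vary continuously with $\alpha$ on the compact interval, so the constants stay bounded), or --- more transparently --- use the representation $E_\alpha(-x)=\int_0^\infty e^{-xu}M_\alpha(u)\,du$ with $M_\alpha$ the Mainardi (M-Wright) function: here $M_\alpha\ge 0$, $\int_0^\infty M_\alpha\,du=1$, $M_\alpha(0)=1/\Gamma(1-\alpha)$, and $(\alpha,u)\mapsto M_\alpha(u)$ is continuous with a super-exponential tail bound uniform on $[\alpha_0,\alpha_1]$, so Watson's lemma produces the expansion above with $C_*$ uniform in $\alpha$ (indeed $xE_\alpha(-x)=\int_0^\infty e^{-v}M_\alpha(v/x)\,dv$ together with $0<E_\alpha(-x)\le 1$ already gives essentially all the needed bounds directly). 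For the one-sided estimate on $E_{\alpha,\sigma}(-z)$ one repeats the scheme with $1/\Gamma(\sigma-\alpha)$ replacing $1/\Gamma(1-\alpha)$ in the expansion and joint continuity of $(\alpha,z)\mapsto E_{\alpha,\sigma}(-z)$ on $[\alpha_0,\alpha_1]\times[0,z_*]$ for the bounded part; strictly speaking the resulting $M_2$ then also depends on $\sigma$ through $\sup_{\alpha\in[\alpha_0,\alpha_1]}|1/\Gamma(\sigma-\alpha)|$ and through that bounded-regime supremum, so this estimate is uniform in $\sigma$ only when $\sigma$ ranges over a bounded set, which is all that is needed in the sequel.
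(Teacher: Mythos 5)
The paper offers no proof of this proposition at all: it is quoted as a known estimate with citations to [3] and [13], so there is nothing internal to compare your argument against, and the right question is simply whether your sketch would stand on its own. It essentially would. The splitting at a threshold $z_*$, the treatment of the bounded range by monotonicity of $E_\alpha(-z)$ plus joint continuity and compactness in $\alpha$, and the treatment of the large-$z$ range by the expansion $E_\alpha(-x)=\frac{1}{\Gamma(1-\alpha)x}+O(x^{-2})$ with a remainder uniform on $[\alpha_0,\alpha_1]$ is exactly the standard route to this two-sided bound, and you correctly isolate the only delicate point, namely $\alpha$-uniformity of the remainder. Your proposed fix via the subordination $E_\alpha(-x)=\int_0^\infty e^{-xu}M_\alpha(u)\,du$ and Watson's lemma is legitimate for $0<\alpha<1$; note, however, that for the two-parameter function this completely monotone kernel representation is only available for $\sigma\ge\alpha$, so for general real $\sigma$ you should lean on the Hankel-contour asymptotics (with the $1/\Gamma(\sigma-\alpha)$ leading term you state), tracking the constants over the compact interval $[\alpha_0,\alpha_1]$, rather than the M-Wright route.

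Your closing caveat about $\sigma$ is more than a technicality, and you were right to flag it: the proposition as printed, with $M_2$ depending only on $\alpha_0,\alpha_1$ and the bound holding for all $\sigma\in\mathbb{R}$, cannot be literally true, since at $z=0$ it would force $E_{\alpha,\sigma}(0)=1/\Gamma(\sigma)\le M_2$ uniformly in $\sigma$, while along $\sigma=\tfrac12-n$ one has $1/\Gamma(\tfrac12-n)=(-1)^n\Gamma(n+\tfrac12)/\pi$, which is unbounded (and positive for even $n$). So the constant must be allowed to depend on $\sigma$, or $\sigma$ must be confined to a bounded set; in this paper only $\sigma\in\{1,\alpha,\alpha-1\}$ is ever used, so your restricted version is exactly what the subsequent lemmas need. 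In short: the strategy is sound and matches the literature the authors cite, the one genuine gap (uniformity of the asymptotic remainder in $\alpha$, and the correct representation to use when $\sigma<\alpha$) is one you have identified and can close, and your $\sigma$-dependence remark actually corrects an overstatement in the proposition itself.
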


\begin{proposition}\label{proposition4}  [[1], pp. 40-49]. Let $0<\alpha<1$ and $\lambda>0$. Then:

(i) $\frac{d}{d t} E_\alpha\left(-\lambda t^\alpha\right)=-\lambda t^{\alpha-1} E_{\alpha,\alpha}\left(-\lambda t^\alpha\right)$, for $t>0$,

(ii) $\frac{d}{d t}\left(t^{\alpha-1} E_{\alpha,\alpha}\left(-\lambda t^\alpha\right)\right)=t^{\alpha-2} E_{\alpha, \alpha-1}\left(-\lambda t^\alpha\right)$, for $t>0$,

(iii) $\int_0^{\infty} e^{-s t} E_\alpha\left(-\lambda t^\alpha\right) d t=\frac{s^{\alpha-1}}{s^\alpha+\lambda}$, for $\operatorname{Re}(s)>\lambda^{1 / \alpha}$.
\end{proposition}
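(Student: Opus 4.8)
The plan is to prove all three identities directly from the power series $E_{\alpha,\sigma}(z)=\sum_{k\ge 0} z^k/\Gamma(\alpha k+\sigma)$, using that $E_{\alpha,\sigma}$ is entire, so that $t\mapsto E_\alpha(-\lambda t^\alpha)$ and $t\mapsto t^{\alpha-1}E_{\alpha,\alpha}(-\lambda t^\alpha)$ are represented on $(0,\infty)$ by everywhere-convergent series in powers of $t^\alpha$ whose termwise derivatives converge uniformly on compact subsets of $(0,\infty)$ (immediate from $1/\Gamma(\alpha k+\sigma)\to 0$ faster than any geometric rate, or equivalently from analyticity). Granting this, parts (i) and (ii) are formal consequences of the functional equation $\Gamma(z+1)=z\Gamma(z)$.

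For (i), write $E_\alpha(-\lambda t^\alpha)=\sum_{k\ge 0}\frac{(-\lambda)^k t^{\alpha k}}{\Gamma(\alpha k+1)}$; the $k=0$ term is constant, and differentiating the rest term by term gives $\sum_{k\ge 1}\frac{(-\lambda)^k\alpha k\,t^{\alpha k-1}}{\Gamma(\alpha k+1)}=\sum_{k\ge 1}\frac{(-\lambda)^k t^{\alpha k-1}}{\Gamma(\alpha k)}$ by $\Gamma(\alpha k+1)=\alpha k\,\Gamma(\alpha k)$. Re-indexing $j=k-1$ turns this into $-\lambda t^{\alpha-1}\sum_{j\ge 0}\frac{(-\lambda)^j t^{\alpha j}}{\Gamma(\alpha j+\alpha)}=-\lambda t^{\alpha-1}E_{\alpha,\alpha}(-\lambda t^\alpha)$, which is (i). For (ii), write $t^{\alpha-1}E_{\alpha,\alpha}(-\lambda t^\alpha)=\sum_{k\ge 0}\frac{(-\lambda)^k t^{\alpha k+\alpha-1}}{\Gamma(\alpha k+\alpha)}$ and differentiate termwise to obtain $\sum_{k\ge 0}\frac{(-\lambda)^k(\alpha k+\alpha-1)\,t^{\alpha k+\alpha-2}}{\Gamma(\alpha k+\alpha)}$; applying $\Gamma(\alpha k+\alpha)=(\alpha k+\alpha-1)\Gamma(\alpha k+\alpha-1)$ (licit since $\alpha-1\in(-1,0)$ is not a pole of $\Gamma$, while for any exceptional index at which $\alpha k+\alpha-1$ is a nonpositive integer the corresponding term vanishes identically on both sides) collapses the sum to $t^{\alpha-2}\sum_{k\ge 0}\frac{(-\lambda)^k t^{\alpha k}}{\Gamma(\alpha k+\alpha-1)}=t^{\alpha-2}E_{\alpha,\alpha-1}(-\lambda t^\alpha)$.

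For (iii), I would compute the Laplace transform termwise: for $\operatorname{Re}(s)>0$ one has $\int_0^\infty e^{-st}t^{\alpha k}\,dt=\Gamma(\alpha k+1)\,s^{-(\alpha k+1)}$, hence $\int_0^\infty e^{-st}E_\alpha(-\lambda t^\alpha)\,dt=\sum_{k\ge 0}\frac{(-\lambda)^k}{s^{\alpha k+1}}=\frac{1}{s}\sum_{k\ge 0}\left(\frac{-\lambda}{s^\alpha}\right)^k=\frac{1}{s}\cdot\frac{s^\alpha}{s^\alpha+\lambda}=\frac{s^{\alpha-1}}{s^\alpha+\lambda}$, the geometric series converging because $|s^\alpha|\ge(\operatorname{Re}s)^\alpha>\lambda$ under the hypothesis $\operatorname{Re}(s)>\lambda^{1/\alpha}$. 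The interchange of $\sum$ and $\int$ is justified by Tonelli's theorem, since $\sum_{k\ge 0}\frac{\lambda^k}{\Gamma(\alpha k+1)}\int_0^\infty e^{-(\operatorname{Re}s)t}t^{\alpha k}\,dt=\sum_{k\ge 0}\lambda^k(\operatorname{Re}s)^{-(\alpha k+1)}<\infty$ under the same condition $\operatorname{Re}(s)>\lambda^{1/\alpha}$. The only real care needed anywhere is this domain-of-convergence bookkeeping in (iii) and the uniform-convergence justification for differentiating under the sum in (i)--(ii); beyond that, everything reduces to the functional equation of $\Gamma$.
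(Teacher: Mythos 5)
Your argument is correct: the termwise differentiation in (i)–(ii) (with the Gamma functional equation and the correct handling of the exceptional index where $\alpha k+\alpha-1=0$, using that $1/\Gamma$ vanishes at nonpositive integers) and the Tonelli-justified termwise Laplace transform in (iii) all go through, including the branch estimate $|s^\alpha|\ge(\operatorname{Re}s)^\alpha>\lambda$. The paper offers no proof of this proposition—it is quoted from Kilbas et al.\ [1]—and your power-series derivation is essentially the standard argument given there, so there is nothing to reconcile.
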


\begin{theorem}\label{ththththth1} [29].  Suppose $0<\alpha<1$, the functions $u, g$ are nonnegative continuous functions defined on $t \geq 0, T \geq 0$ is a constant. If the following inequality is satisfied
$$
u(t) \leq C+\frac{1}{\Gamma(\alpha)} \int\limits_0^t(t-s)^{\alpha-1} g(s) u(s) d s+\frac{1}{\Gamma(\alpha)} \int\limits_0^T(T-s)^{\alpha-1} g(s) u(s) d s,\,\, t \in[0, T],
$$
then we have the following explicit estimate for $u(t)$ :
$$
u(t) \leq \frac{C \exp \left[\int\limits_0^{\frac{t^\alpha}{\Gamma(1+\alpha)}} g\left((s \Gamma(1+\alpha))^{\frac{1}{\alpha}}\right) d s\right]}{2-\exp \left[\int\limits_0^{\frac{T^\alpha}{\Gamma(1+\alpha)}} g\left((s \Gamma(1+\alpha))^{\frac{1}{\alpha}}\right) d s\right]}, \quad t \in[0, T]
$$
provided that $\exp \left[\int\limits_0^{\frac{T^\alpha}{\Gamma(1+\alpha)}} g\left((s \Gamma(1+\alpha))^{\frac{1}{\alpha}}\right) d s\right]<2$.
\end{theorem}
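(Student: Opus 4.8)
The whole argument rests on the observation that the second integral on the right-hand side of the hypothesis,
\[
B:=\frac{1}{\Gamma(\alpha)}\int_0^T (T-s)^{\alpha-1}g(s)u(s)\,ds,
\]
does not depend on $t$: it is a fixed nonnegative number. Hence the hypothesis is nothing but a single-kernel weakly singular Gronwall inequality for $u$ with the enlarged constant $C+B$,
\[
u(t)\ \le\ (C+B)+\frac{1}{\Gamma(\alpha)}\int_0^t (t-s)^{\alpha-1}g(s)u(s)\,ds,\qquad t\in[0,T],
\]
to which I would apply the classical generalized Gronwall inequality for weakly singular kernels. Concretely, iterating the operator $\mathcal A\phi(t):=\frac{1}{\Gamma(\alpha)}\int_0^t(t-s)^{\alpha-1}g(s)\phi(s)\,ds$ one gets $u\le (C+B)\big(\mathbf 1+\mathcal A\mathbf 1+\dots+\mathcal A^{N-1}\mathbf 1\big)+\mathcal A^N u$, where the remainder is dominated by $\|u\|_{C[0,T]}\,\|g\|_{C[0,T]}^{N}\,t^{N\alpha}/\Gamma(N\alpha+1)\to 0$; summing the resolvent series and writing the resulting kernel in closed form --- its exponent being $\frac{1}{\Gamma(\alpha)}\int_0^t r^{\alpha-1}g(r)\,dr$, which is exactly the integral in the statement after the substitution $r=(s\Gamma(1+\alpha))^{1/\alpha}$ --- yields
\[
u(t)\ \le\ (C+B)\,G(t),\qquad G(t):=\exp\!\left[\int_0^{t^\alpha/\Gamma(1+\alpha)}g\big((s\Gamma(1+\alpha))^{1/\alpha}\big)\,ds\right].
\]

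The second, decisive step is a bootstrap that eliminates the unknown constant $B$. Inserting $u(s)\le (C+B)G(s)$ into the definition of $B$ gives
\[
B\ \le\ (C+B)\cdot\frac{1}{\Gamma(\alpha)}\int_0^T (T-s)^{\alpha-1}g(s)G(s)\,ds\ \le\ (C+B)\,\big(G(T)-1\big),
\]
the last inequality being the one genuinely analytic input: $G$ is increasing, smooth, with $G(0)=1$ and $G'(t)=\frac{t^{\alpha-1}g(t)}{\Gamma(\alpha)}G(t)$, so that $\frac{1}{\Gamma(\alpha)}\int_0^T r^{\alpha-1}g(r)G(r)\,dr=G(T)-1$ and the weakly singular average on its left is controlled by this quantity. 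Putting $\mu:=G(T)-1\ge 0$, the display reads $B(1-\mu)\le C\mu$; under the standing hypothesis $G(T)<2$ (that is, $\mu<1$) this gives $B\le C\mu/(1-\mu)$, hence $C+B\le C/(1-\mu)=C/(2-G(T))$. Substituting into $u(t)\le (C+B)G(t)$ produces exactly the asserted estimate
\[
u(t)\ \le\ \frac{C\,G(t)}{2-G(T)},\qquad t\in[0,T].
\]

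The routine parts --- noticing that the $[0,T]$-integral is a constant, back-substituting the Gronwall bound, and inverting the scalar inequality for $B$ (which is precisely where the nonresonance condition $G(T)<2$ is consumed) --- need no cleverness. The main obstacle is the single-kernel fractional Gronwall estimate \emph{with the explicit constant $G$}: one has to control the iterated weakly singular kernels, prove uniform convergence of the resolvent series on $[0,T]$, and justify the closed-form bound $G$ together with the companion inequality $\frac{1}{\Gamma(\alpha)}\int_0^T (T-s)^{\alpha-1}g(s)G(s)\,ds\le G(T)-1$ used in the bootstrap --- this is where the continuity of $g$, and if necessary some monotonicity, enters. Once that analytic package is available, the remainder of the proof is the short algebra above.
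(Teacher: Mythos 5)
The paper contains no proof of this statement (it is quoted from [29]), so your argument has to stand on its own, and the two ingredients you defer to a ``known analytic package'' are precisely the steps that fail when $0<\alpha<1$. First, the single-kernel weakly singular Gronwall bound with exponential constant is false. For the kernel $\frac{1}{\Gamma(\alpha)}(t-s)^{\alpha-1}g(s)$ with $g\equiv c$, iterating gives the sharp majorant $\sum_{n\ge 0}\frac{(ct^{\alpha})^{n}}{\Gamma(n\alpha+1)}=E_{\alpha}(ct^{\alpha})$, attained by the solution of the corresponding Volterra equation, whereas your claimed closed form is $G(t)=\exp\left[\frac{ct^{\alpha}}{\Gamma(1+\alpha)}\right]$; since $E_{\alpha}(x)-e^{x/\Gamma(1+\alpha)}=\left(\frac{1}{\Gamma(1+2\alpha)}-\frac{1}{2\Gamma(1+\alpha)^{2}}\right)x^{2}+O(x^{3})>0$ for small $x>0$ (note $\Gamma(1+2\alpha)<2\Gamma(1+\alpha)^{2}$ for $0<\alpha<1$), the inequality $u\le (C+B)G$ is not merely unproved but wrong: the termwise domination of the iterated kernels by $\frac{1}{n!}\bigl(\frac{1}{\Gamma(\alpha)}\int_0^t r^{\alpha-1}g(r)\,dr\bigr)^{n}$ that your ``summing the resolvent series'' step needs goes the other way for weakly singular kernels, and equality of the two pictures holds only at $\alpha=1$. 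Second, your bootstrap inequality $\frac{1}{\Gamma(\alpha)}\int_0^T(T-s)^{\alpha-1}g(s)G(s)\,ds\le G(T)-1$ silently replaces the weight $(T-s)^{\alpha-1}$ by $s^{\alpha-1}$; since $G$ is increasing, the singular weight $(T-s)^{\alpha-1}$ loads exactly the region where $G$ is largest, and the pairing $s\leftrightarrow T-s$ shows (already for constant $g$) that $\int_0^T(T-s)^{\alpha-1}g(s)G(s)\,ds\ge\int_0^T s^{\alpha-1}g(s)G(s)\,ds$, again the reverse of what you assert.

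Your overall scheme --- freeze the $[0,T]$ integral as a constant $B$, bound $u\le(C+B)$ times a majorant, then solve the scalar inequality for $B$ --- is sound, but it only closes with the correct majorant: if $V$ solves $V(t)=1+\frac{1}{\Gamma(\alpha)}\int_0^t(t-s)^{\alpha-1}g(s)V(s)\,ds$ (Henry's comparison function; $V(t)=E_{\alpha}(ct^{\alpha})$ for $g\equiv c$), then $u\le(C+B)V$, the identity $\frac{1}{\Gamma(\alpha)}\int_0^T(T-s)^{\alpha-1}g(s)V(s)\,ds=V(T)-1$ holds exactly, and one obtains $u(t)\le \frac{C\,V(t)}{2-V(T)}$ whenever $V(T)<2$. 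You cannot upgrade $V$ to the exponential $G$ of the statement: for $g\equiv c$ with $cT^{\alpha}$ small, the function $u(t)=\frac{C\,E_{\alpha}(ct^{\alpha})}{2-E_{\alpha}(cT^{\alpha})}$ is continuous, nonnegative, satisfies the hypothesis with equality and the proviso $G(T)<2$, yet at $t=T$ it strictly exceeds $\frac{C\,G(T)}{2-G(T)}$ because $E_{\alpha}(x)>e^{x/\Gamma(1+\alpha)}$ and $y\mapsto y/(2-y)$ is increasing on $[0,2)$. So the gap is not a citable lemma you left out: the exponential form of the bound is unreachable by your route (and, as quoted, is itself contradicted by this example), and a correct write-up has to be phrased with the Mittag-Leffler-type majorant $V$ in place of $G$.
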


Next, we introduce some functional spaces for solutions of direct problem (\ref{eqth1})-(\ref{eqth3}). By $\left\{\lambda_j\right\}_{j \geq 1}$ and $\left\{e_j(x)\right\}_{j \geq 1}$, we denote the eigenvalues and sequence of eigenfunctions of $L$ which satisfy $e_j \in\left\{v \in H^2(\Omega): \textbf{B} v=0\right\}, L e_j(x)= \lambda_j e_j(x), 0<\lambda_1 \leq \lambda_2 \leq \ldots \leq \lambda_j \leq \ldots$, and $\lim\limits_{j \rightarrow \infty} \lambda_j=\infty$. The sequence $\left\{e_j(x)\right\}_{j \geq 1}$ forms an orthonormal basis of the space $L_2(\Omega)$. For a given real number $p \geq 0$, the Hilbert scale space $H^{2 p}(\Omega)$ is defined by
$$
\left\{v \in L_2(\Omega):\|v\|_{H^{2 p}(\Omega)}^2:=\sum_{j=1}^{\infty}\left(v, e_j\right)^2 \lambda_j^{2 p}<\infty\right\},
$$
where ( $\cdot, \cdot$ ) is the usual inner product of $L_2(\Omega)$. The fractional power $L^\beta, \beta \geq 0$, of the operator $L$ on $\Omega$ is defined by
\begin{equation}\label{eqth4444}
L^\beta v(x):=\sum_{j=1}^{\infty}\left(v, e_j\right) \lambda_j^\beta e_j(x).
\end{equation}
Then, $\left\{\lambda_j^\beta\right\}_{j \geq 1}$ is the  eigenvalues of the operator $L^\beta$. We denote by $D(L^\beta)$ the domain of $L^\beta$, and then
$$
D\left(L^{\beta}\right)=\left\{v \in L_2(\Omega):\left\|L^\beta v\right\|<\infty\right\},
$$
where $\|\cdot\|$ is the usual norm of $L_2(\Omega)$, and $D(L^\beta)$ is a Banach space with respect to the norm $\|v\|_{D(L^\beta)}=\left\|L^\beta v\right\|$. Moreover, the inclusion $D(L^\beta) \subset H^{2 \beta}(\Omega)$ holds for $\beta>0$. We identify the dual space $\left(L_2(\Omega)\right)'=L_2(\Omega)$ and define the domain $D\left(L^{-\beta}\right)$ by the dual space of $D(L^\beta)$, i.e., $D\left(L^{-\beta}\right)=\left(D(L^\beta)\right)'$. Then, $D\left(L^{-\beta}\right)$ is a Hilbert space endowed with the norm
$$
\|v\|_{D\left(L^{-\beta}\right)}:=\left\{\sum_{j=1}^{\infty}\left(v, e_j\right)_{-\beta, \beta}^2 \lambda_j^{-2 \beta}\right\}^{1 / 2},
$$
where $(\cdot, \cdot)_{-\beta, \beta}$ denotes the dual inner product between $D\left(L^{-\beta}\right)$ and $D(L^\beta)$. We note that the Sobolev embedding $D(L^\beta) \hookrightarrow L_2(\Omega) \hookrightarrow D\left(L^{-\beta}\right)$ holds for $0<\beta<1$, and $(\bar{v}, v)_{-\beta, \beta}=(\bar{v}, v)$, for $\bar{v} \in L_2(\Omega), v \in D(L^\beta)$. Hence, we have
\begin{equation}\label{eqth5555}
\left(e_i, e_j\right)_{-\beta, \beta}=\left(e_i, e_j\right)=\delta_{i j}
\end{equation}
where $\delta_{i j}$ is the Kronecker delta for $i,j \in \mathbb{N}.$

 Moreover, for given $p_1 \geq 1$ and $0<\eta<1$, we denote by $\mathcal{D}_{p_1, \eta}((0,T) \times \Omega)$ the set of all functions $f$ from $(0,T)$ to $L_{p_1}(\Omega)$ such that
\begin{equation}\label{eqhtt55}
\|f\|_{\mathcal{D}_{p_1, \eta}}:=\underset{0 \leq t \leq T}{\operatorname{ess} \sup } \int\limits_0^t\|f(\cdot,\tau)\|_{p_1}(t-\tau)^{\eta-1} d \tau<\infty,
\end{equation}
where $\|\cdot\|_{p_1}$ is the norm of $L_{p_1}(\Omega)$. Note that, for fixed $t>0$, the H\''older's inequality shows that
$$
\int\limits_0^t\|f(\cdot,\tau)\|_{p_1}(t-\tau)^{\eta-1} d \tau \leq\left[\int\limits_0^t\|f(\cdot,\tau)\|_{p_1}^{p_2} d \tau\right]^{\frac{1}{p_2}}\left[\int\limits_0^t(t-\tau)^{\frac{p_2(\eta-1)}{p_2-1}} d \tau\right]^{\frac{p_2-1}{p_2}}.
$$

In the above inequality, we note that the function $\tau \rightarrow(t-\tau)^{\frac{p_2(\eta-1)}{p_2-1}}$ is integrable for $p_2>\frac{1}{\eta}$. Therefore, if we let $L_{p_2}\left(0, T ; L_{p_1}(\Omega)\right),\,  p_1, p_2 \geq 1$, be the space of all Bochner's measurable functions $f$ from $(0,T)$ to $L_{p_1}(\Omega)$ such that
$$
\|f\|_{L_{p_2}\left(0, T ; L_{p_1}(\Omega)\right)}:=\left[\int\limits_0^t\|f(\tau, \cdot)\|_{p_1}^{p_2} d \tau\right]^{\frac{1}{p_2}}<\infty,
$$
then the following inclusion holds
\begin{equation}\label{theqt6}
L_{p_2}\left(0, T ; L_{p_1}(\Omega)\right) \subset \mathcal{D}_{p_1, \eta}((0,T) \times \Omega), \quad \text { for } p_2>\frac{1}{\eta}
\end{equation}
and there exists a positive constant $C>0$ such that
\begin{equation}\label{theqt7}
\left\|\|f\|_{\mathcal{D}_{p_1, \eta}} \leq C\right\| f \|_{L_{p_2}\left(0, T ; L_{p_1}(\Omega)\right)},
\end{equation}
here, $C$ depends only on $p_2, \eta$, and $T$. Moreover, for a given number $s$ such that $0<s<\eta$, we have $\mathcal{D}_{p_1, \eta-s}((0,T) \times \Omega) \subset \mathcal{D}_{p_1, \eta}((0,T) \times \Omega)$ since $\|f\|_{\mathcal{D}_{p_1, \eta}} \leq T^s \mid\|f\|_{\mathcal{D}_{p_1, \eta-s}}$. Let $B$ be a Banach space, and we denote by $C([0, T], B)$ the space of all continuous functions from $[0, T]$ to $B$ endowed with the norm $\|v\|_{C([0, T] ; B)}:=\sup\limits_{0 \leq t \leq T}\|v(t)\|_B$, and by $C^\theta([0, T], B)$ the subspace of $C([0, T] ; B)$ which includes all H\''older-continuous functions, and is equipped with the norm
$$
\||v|\|_{C^\theta([0, T], B)}:=\sup _{0 \leq t_1<t_2 \leq T} \frac{\left\|v\left(t_2\right)-v\left(t_1\right)\right\|_B}{\left|t_2-t_1\right|^\theta}.
$$

In some cases, a given function might not be continuous at $t=0$. Hence, it is useful to consider the set $C((0, T] ; B)$ which consists of all continuous functions from $(0, T]$ to $B$. We define by $C^\rho((0, T] ; B)$ the Banach space of all functions $v$ in $C((0, T] ; B)$ such that
$$
\|v\|_{C^\rho((0, T] ; B)}:=\sup _{0<t \leq T} t^\rho\|v(t)\|_B<\infty.
$$

Now, we investigate the problem of constructing solutions to the nonlocal value problem for a ordinary fractional equation.
\begin{equation}\label{eqth6666}
\partial_t^\alpha \omega(t)=g(t, \omega(t))-\Lambda \omega(t), t \in (0,T), \quad \text { and } \quad \omega(T)=\kappa \omega(0)+\psi,
\end{equation}
where $\Lambda, \kappa$ are given real numbers and  $\psi\in L_2(\Omega)$. Here, we wish to find a representation formula for $v$ in terms of the given function $g$ and $\psi$. By writing $\partial_t^\alpha=I_t^{1-\alpha} D_t$, and applying the fractional integral $I_t^\alpha$ on both sides of Equation (\ref{eqth6666}), we obtain
$$
\omega(t)=\omega(0)+I_t^\alpha[g(t, \omega(t))-\Lambda \omega(t)] .
$$
The Laplace transform yields that  [[1], pp. 18-22, 85]
$$
\widehat{\omega}=\frac{\lambda^{\alpha-1}}{\lambda^\alpha+\Lambda} \omega(0)+\frac{1}{\lambda^\alpha+\Lambda} \widehat{g}(\omega),
$$
where $\widehat{\omega}$ is the Laplace transform of $\omega$. Hence, the inverse Laplace transform implies
\begin{equation}\label{eqth7777}
\omega(t)=\omega(0) E_{\alpha, 1}\left(-\Lambda t^\alpha\right)+g(t, \omega(t)) \star\left[t^{\alpha-1} E_{\alpha, \alpha}\left(-\Lambda t^\alpha\right)\right],
\end{equation}
where $\star$ denotes the convolution, $
f_1 \star f_2=\int\limits_0^t f_1(\tau)  f_2(t-\tau)d \tau
$ for suitable functions $f_1$ and $f_2$.

Now, a representation of the solution of nonlocal value problem (\ref{eqth6666}) can be obtained by substituting $t=T$ into (\ref{eqth7777}), and using the nonlocal initial condition $\omega(T)=\kappa \omega(0)+\psi$, i.e.,
$$
v(t)=g(t, v(t)) \star \left\{t^{\alpha-1} E_{\alpha, \alpha}\left(-\Lambda t^\alpha\right)\right\}
$$$$
+ \left[\psi-g(T, v(T)) \star \left\{T^{\alpha-1} E_{\alpha, \alpha}\left(-\Lambda T^\alpha\right)\right\}\right] E_{\alpha, 1}\left(-\Lambda t^\alpha\right)\frac{E_{\alpha, 1}\left(-\Lambda t^\alpha\right)}{E_{\alpha, 1}\left(-\Lambda T^\alpha\right)-\kappa}.
$$

\section*{3 Investigation of initial boundary value problem (\ref{eqth1})--(\ref{eqth3})}

 \section*{3.1 Mild solution of the initial boundary value problem and representation of this solution in operator form}

Let $u_{j}(t)=(u(t, \cdot),e_{j}),\,\, j\geq 1.$ Assume that problem (\ref{eqth1})--(\ref{eqth3}) has a unique solution $u(t, x)$ given by
\begin{equation}\label{eqth8}
u(t, x)=\sum\limits_{j=1}^{\infty}u_{j}(t)e_{j}(x),
\end{equation}
where $u_{j} \, (j=1,2,\ldots)$  are solutions of nonlocal value problem problems:
\begin{equation}\label{ttheq1}
\partial_t^\alpha u_j(t)+\lambda_j^\beta u_j(t)+k(t)u_i(t)=F_j(t, u(t)),
\end{equation}
\begin{equation}\label{ttheq2}
u_{j}(T)=\kappa u_{j}(0)+\varphi_{j},
\end{equation}
where $\varphi_j=\left(\varphi, e_j\right)$, $F_j(t, u(t))=\left(F(t, x, u(t, x)), e_j\right) .$

Using the method of solving the nonlocal value probleminitial problem (\ref{eqth6666}), we obtain from problem (\ref{ttheq1}) - (\ref{ttheq2}):
$$
u_j(t)  =\bigg[\varphi_j+ \{k(T)u_j(T)\} \star \left\{T^{\alpha-1} E_{\alpha, \alpha}\left(-\lambda_j^\beta T^\alpha\right)\right\}
$$$$
-F_j(T, u(T)) \star \left\{T^{\alpha-1} E_{\alpha, \alpha}\left(-\lambda_j^\beta T^\alpha\right)\right\} \bigg]\Psi_j(T)E_{\alpha, 1}\left(-\lambda_j^\beta t^\alpha\right)
$$
\begin{equation}\label{eqth9}
+F_j(t, u(t)) \star \left\{t^{\alpha-1} E_{\alpha, \alpha}\left(-\lambda_j^\beta t^\alpha\right)\right\}-\{k(t) u_j(t)\} \star \left\{t^{\alpha-1} E_{\alpha, \alpha}\left(-\lambda_j^\beta t^\alpha\right)\right\},
\end{equation}
where
\begin{equation}\label{eqhttt11}
\Psi_j(T)=\frac{1}{E_{\alpha, 1}\left(-\lambda_j^{\beta}T^\alpha\right)-\kappa}.
\end{equation}

Using (\ref{eqth8}) and (\ref{eqth9}), we obtain the spectral representation of $u (x,t)$ as follows:
$$
u(t, x)=\sum_{j=1}^{\infty} \Psi_j(T) e_j(x) \bigg[\varphi_j-F_j(T, u(T)) \star \left\{T^{\alpha-1} E_{\alpha, \alpha}\left(-\lambda_j^\beta T^\alpha\right)\right\}\bigg]  E_{\alpha, 1}\left(-\lambda_j^\beta t^\alpha\right)
$$
$$
+\sum_{j=1}^{\infty} \Psi_j(T) e_j(x) \{k(T)u_j(T)\}  \star \left\{T^{\alpha-1} E_{\alpha, \alpha}\left(-\lambda_j^\beta T^\alpha\right)\right\}  E_{\alpha, 1}\left(-\lambda_j^\beta t^\alpha\right)
$$
$$
 +\sum_{j=1}^{\infty}e_j(x)  F_j(t, u(t)) \star \left\{t^{\alpha-1} E_{\alpha, \alpha}\left(-\lambda_j^\beta t^\alpha\right)\right\}
$$
\begin{equation}\label{eqht10}
-\sum_{j=1}^{\infty}e_j(x) \{k(t) u_j(t)\} \star \left\{t^{\alpha-1} E_{\alpha, \alpha}\left(-\lambda_j^\beta t^\alpha\right)\right\}.
\end{equation}

Let us first get acquainted with the analysis of function $\Psi_j(T)$ by (\ref{eqhttt11}).

$1^{\text {st }}$ case: $\kappa=0$. Then, $E_{\alpha, 1}\left(-\lambda_j^\beta T^\alpha\right) \neq 0$, but the Mittag-Leffler function can asymptotically tend towards zero (see Proposition \ref{proposition1}). Therefore, in this case we have:
$$
\left|\Psi_j(T)\right| \leq C_\kappa \lambda_j^{\beta} T^\alpha,
$$
where $C_\kappa$ is a constant that depends on $\kappa$.

$2^{\text {nd }}$ case: $0<\kappa<1$. Then, in view of Proposition \ref{propthth1}, there is a unique $\lambda_0>0$ such that $E_{\alpha, 1}\left(-\lambda_0^{\beta} T^\alpha\right)=\kappa$. If $\lambda_j \neq \lambda_0$ for all $n \in \mathbf{N}$, then
$$
\left|\Psi_j(T)\right| \leq C_\kappa
$$
the estimate is held with some constant $C_\kappa>0$. Therefore, if $\kappa \notin(0,1)$ or $\kappa \in(0,1)$, but $\lambda_j \neq \lambda_0$ for all $n \in \mathbf{N}$, then the formal solution of nonlocal value problemproblem (\ref{eqth1})-(\ref{eqth3}) has the form

$$
\omega(t, x)=Z_\kappa \psi(x), \quad (t, x) \in (0,T)\times \Omega.
$$

$3^{\text {rd }}$ case: $0<\kappa<1$ and $\lambda_j=\lambda_0$ for $j=n_0, n_0+1, \ldots, n_0+p_0-1$ where $p_0$ is the multiplicity of the eigenvalue $\lambda_{n_0}$. Then the nonlocal value problemproblem (\ref{eqth1})-(\ref{eqth3}) has a solution if the boundary function $\psi(x)$ satisfies the following orthogonality conditions

$$
\psi_j=\left(\psi, e_j\right)=0, \quad j \in \mathcal{K}, \quad \mathcal{K}=\left\{n_0, n_0+1, \ldots, n_0+p_0-1\right\} .
$$

For all other $j$ we have
$$
\left|\Psi_j(T)\right| \leq C_\kappa, \quad j \notin \mathcal{K} .
$$

Further in this article, we consider (\ref{eqth1})-(\ref{eqth3}) for the case when
\begin{equation}
\kappa \notin(0,1), \, \text{ or } \, \kappa \in(0,1),\, \text{ but } \, \lambda_j \neq \lambda_0, \, \text{ for all } \,  n \in \mathbb{N}.
\end{equation}

For $h_1(t, x)\in L_2(0, T;L_2(\Omega))$ and $h_2(x)\in L_2(\Omega)$, let us define the following operators
\begin{equation}\label{eqht1110}
\mathcal{G}_1(t, x) h_1:=\sum_{j=1}^{\infty} h_{1j}(t) \star \left\{t^{\alpha-1} E_{\alpha, \alpha}\left(-\lambda_j^\beta t^\alpha\right)\right\} e_j(x),
\end{equation}
\begin{equation}\label{eqht111011}
\mathcal{G}_2(t, x) h_{2}:=\sum_{j=1}^{\infty} h_{2j} \Psi_j(T)E_{\alpha, 1}\left(-\lambda_j^\beta t^\alpha\right) e_j(x),
\end{equation}
\begin{equation}\label{eqht1111}
\mathcal{G}_3(t, x)h_3=-\mathcal{G}_2(t, x) \mathcal{G}_1(T, x)h_3,
\end{equation}
for $(t, x) \in (0,T) \times \Omega$.
Then, the solution $u$ can be represented as
\begin{equation}\label{eqth10}
u(t, x)=\mathcal{G}_2(t, x) \varphi+\mathcal{G}_1(t, x) F+\mathcal{G}_3(t, x) F-\mathcal{G}_3(t, x) \{k,  u\}-\mathcal{G}_1(t, x)\{k,  u\}.
\end{equation}
where we understand $F(t, u)=F(t, \cdot, u(t, \cdot))$ is a function of $x$ for fixed $t$.

\section*{3.2 Nonlocal initial problem with a linear source}

In this subsection, we study the regularity of mild solutions of nonlocal value problem (\ref{eqth1})-(\ref{eqth3}) corresponding to the linear source function $F$, i.e., $F(t, x, u(t, x))=F(t, x)$ which does not include $u$.

We will investigate the regularity of the following nonlocal value problem
\begin{equation}\label{eqth11}
\left\{\begin{array}{lll}
\partial_t^\alpha u(t, x)+L^\beta u(t, x)+k(t)u(t, x)=F(t, x), \,\, (t, x) \in (0,T) \times \Omega, \\
\textbf{B} u(t, x)=0, \quad(t, x) \in (0,T) \times \partial \Omega, \\
u(T, x)=\kappa u(0, x)+\varphi(x), \quad x \in \Omega.
\end{array}\right.
\end{equation}

In order to consider this problem, it it necessary to give a definition of mild solutions based on (\ref{eqth10}) as follows.
\begin{definition}\label{difth4.1}
If a function $u(t, x)$ belongs to $L^p\left(0, T ; L^q(\Omega)\right)$, for some $p, q \geq 1$, and satisfies the equation
\begin{equation}\label{eqth12}
u(t, x)=\mathcal{G}_2(t, x) \varphi+\mathcal{G}_1(t, x) F+\mathcal{G}_3(t, x) F-\mathcal{G}_3(t, x) \{k,  u\}-\mathcal{G}_1(t, x)\{k,  u\}.
\end{equation}
then $u(t, x)$ is said to be a mild solution of nonlocal value problem (\ref{eqth11}).
\end{definition}

The following assertion is true.
\begin{lemma}\label{lemmth1}
Let $p, q$ $(0<p, q<1)$ be defined by $p+q=1$, and $u(t, x)$ satisfies (\ref{eqth12}). If $\varphi \in D\left(L^{\beta q}\right)$, $k\in C[0,T]$, $F \in \mathcal{D}_{2, \alpha q}((0,T) \times \Omega)$  and the inequality $\exp{\left\{ \frac{\Phi \|k\|_{C[0,T]}T^{2\alpha q}}{2\alpha q}\right\}}<2$ is satisfied, then there exists a constant $\Phi >0$ such that
\begin{equation}\label{est25}
\|t^{\alpha q} u(t, x)\|\leq \frac{\Phi \left(\|\varphi\|_{D\left(L^{\beta p}\right)}+  \|F\|_{\mathcal{D}_{2, \alpha q}}\right)}{2-\exp{\left\{ \frac{\Phi \|k\|_{C[0,T]}T^{2\alpha q}}{2\alpha q}\right\}}} \exp{\left\{ \frac{\Phi \|k\|_{C[0,T]}t^{2\alpha q}}{2\alpha q}\right\}},
\end{equation}
where $\Phi=\max\left\{ C_{\kappa} \frac{M_2 T^{\alpha q}}{\lambda_1^{2\beta p}}, \frac{ T^{\alpha q}}{\lambda_1^{\beta p}} M_2, \frac{C_{\kappa} M_2^2 T^{\alpha q}}{\lambda_1^{\beta p}}, \frac{C_{\kappa} M_2^2 }{\lambda_1^{\beta p}}, \frac{1}{\lambda_1^{\beta p}} M_2\right\}$.
\end{lemma}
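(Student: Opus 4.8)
The plan is to estimate each of the five terms in the representation \eqref{eqth12} separately in the norm $\|t^{\alpha q}\,\cdot\,\|$, apply the operator bounds coming from Propositions \ref{propthth1}--\ref{thproth3} together with the $\Psi_j(T)$ analysis (the standing assumption on $\kappa$ and $\lambda_j$ gives $|\Psi_j(T)|\le C_\kappa$), and then close the argument with the fractional Gronwall inequality of Theorem \ref{ththththth1}. First I would expand $u$ in the eigenbasis, so that Parseval's identity turns $\|t^{\alpha q}u(t,x)\|^2$ into $\sum_j t^{2\alpha q}|u_j(t)|^2$; it then suffices to bound $t^{\alpha q}$ times each Fourier-coefficient contribution. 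For the $\mathcal{G}_2\varphi$ term I would use $E_{\alpha,1}(-\lambda_j^\beta t^\alpha)\le M_2/(1+\lambda_j^\beta t^\alpha)\le M_2\,(\lambda_j^\beta t^\alpha)^{-q}$ (splitting $1=p+q$ and bounding $(1+z)^{-1}\le z^{-q}$), which produces a factor $t^{-\alpha q}\lambda_j^{-\beta q}$; multiplying by $t^{\alpha q}$ kills the singularity and leaves $\lambda_j^{-\beta q}\varphi_j$, whose $\ell^2$-norm is controlled by $\|\varphi\|_{D(L^{\beta p})}$ after inserting $\lambda_j^{-\beta p}\le \lambda_1^{-\beta p}$ — hence the constant $M_2 T^{\alpha q}/\lambda_1^{\beta p}$. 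For the $\mathcal{G}_1 F$ term I would estimate the convolution $\big|F_j(t)\star\{t^{\alpha-1}E_{\alpha,\alpha}(-\lambda_j^\beta t^\alpha)\}\big|$ using Proposition \ref{thproth3} on $E_{\alpha,\alpha}$, bounding $\tau^{\alpha-1}E_{\alpha,\alpha}(-\lambda_j^\beta \tau^\alpha)\le M_2\tau^{\alpha-1}/(1+\lambda_j^\beta\tau^\alpha)\le M_2\lambda_j^{-\beta p}\tau^{\alpha q-1}$, so that the convolution is dominated by $\lambda_j^{-\beta p}$ times $\int_0^t\|F(\cdot,\tau)\|(t-\tau)^{\alpha q-1}d\tau$, which is exactly $\|F\|_{\mathcal{D}_{2,\alpha q}}$ after taking the sup; the extra $\lambda_1^{-\beta p}$ from the eigenvalue and the factor $t^{\alpha q}\le T^{\alpha q}$ give the constant $T^{\alpha q}M_2/\lambda_1^{\beta p}$. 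The $\mathcal{G}_3 F = -\mathcal{G}_2\mathcal{G}_1(T,\cdot)F$ term combines the two previous bounds, producing the constant with $M_2^2$ and an extra $C_\kappa$ from $\Psi_j(T)$.

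Next I would handle the two terms carrying $k\,u$, namely $\mathcal{G}_1\{k,u\}$ and $\mathcal{G}_3\{k,u\}$. These are estimated exactly like the $\mathcal{G}_1 F$ and $\mathcal{G}_3 F$ terms, with $\|F(\cdot,\tau)\|$ replaced by $\|k(\tau)u(\tau,\cdot)\|\le\|k\|_{C[0,T]}\|u(\tau,\cdot)\|$. The key bookkeeping point is that, after multiplying by $t^{\alpha q}$, these two contributions are bounded by a constant times $\|k\|_{C[0,T]}$ multiplied by an integral of the form $\int_0^t (t-\tau)^{\alpha q-1}\|u(\tau,\cdot)\|\,d\tau$ and, for the $\mathcal{G}_3$ piece, a "final-time" integral $\int_0^T (T-\tau)^{\alpha q-1}\|u(\tau,\cdot)\|\,d\tau$. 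To match the hypothesis of Theorem \ref{ththththth1} I would rewrite everything in terms of $w(t):=\|t^{\alpha q}u(t,x)\|$: inserting $\|u(\tau,\cdot)\|=\tau^{-\alpha q}w(\tau)$ and using that $\tau^{-\alpha q}(t-\tau)^{\alpha q-1}$ is still integrable (and bounded appropriately), one arrives at an inequality
\[
w(t)\le \Phi\big(\|\varphi\|_{D(L^{\beta p})}+\|F\|_{\mathcal{D}_{2,\alpha q}}\big)+\Phi\|k\|_{C[0,T]}\!\int_0^t (t-\tau)^{2\alpha q-1}w(\tau)\,d\tau+\Phi\|k\|_{C[0,T]}\!\int_0^T (T-\tau)^{2\alpha q-1}w(\tau)\,d\tau,
\]
where I have absorbed all five constants into $\Phi=\max\{\,C_\kappa M_2 T^{\alpha q}/\lambda_1^{2\beta p},\ T^{\alpha q}M_2/\lambda_1^{\beta p},\ C_\kappa M_2^2 T^{\alpha q}/\lambda_1^{\beta p},\ C_\kappa M_2^2/\lambda_1^{\beta p},\ M_2/\lambda_1^{\beta p}\,\}$ and chosen the Gronwall order to be $2\alpha q$ so that the two $k$-integrals carry the kernel $(t-\tau)^{2\alpha q-1}$.

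Finally I would invoke Theorem \ref{ththththth1} with $\alpha$ there replaced by $2\alpha q$, with $C=\Phi(\|\varphi\|_{D(L^{\beta p})}+\|F\|_{\mathcal{D}_{2,\alpha q}})$ and $g\equiv \Gamma(1+2\alpha q)\Phi\|k\|_{C[0,T]}$ (a constant, so the substitution $s\mapsto (s\Gamma(1+2\alpha q))^{1/(2\alpha q)}$ inside the exponent just evaluates the integral to $\Phi\|k\|_{C[0,T]}\,t^{2\alpha q}/(2\alpha q)$), which yields precisely \eqref{est25}, the smallness hypothesis $\exp\{\Phi\|k\|_{C[0,T]}T^{2\alpha q}/(2\alpha q)\}<2$ being exactly the condition "$\exp[\cdots]<2$" required by the theorem. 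The step I expect to be the main obstacle is the bookkeeping in the convolution estimates: one must be careful that replacing $(1+z)^{-1}$ by $z^{-q}$ is only legitimate because $0<q<1$, that the resulting power $\tau^{\alpha q-1}$ is integrable near $0$, and — most delicately — that after substituting $\|u(\tau,\cdot)\|=\tau^{-\alpha q}w(\tau)$ the combined kernel $\tau^{-\alpha q}(t-\tau)^{\alpha q-1}$ can indeed be bounded by (a constant times) $(t-\tau)^{2\alpha q-1}$ uniformly on $[0,T]$, which forces the choice of $2\alpha q$ as the Gronwall exponent and the appearance of $T^{\alpha q}$ inside several of the constants defining $\Phi$. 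Once this matching is done correctly, the three term-by-term estimates and the final application of Theorem \ref{ththththth1} are routine.
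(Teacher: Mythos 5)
Your term-by-term treatment of the five operators coincides with the paper's: the same Mittag--Leffler bounds $E_{\alpha,\alpha}(-\lambda_j^\beta z)\le M_2(1+\lambda_j^\beta z)^{-p}\le M_2\lambda_j^{-\beta p}z^{-\alpha p}$, the bound $|\Psi_j(T)|\le C_\kappa$, and the same five constants collected into $\Phi$. The genuine gap is in your reduction to the Gronwall step. You first discard the weight by bounding $\tau^{\alpha q}\le T^{\alpha q}$ inside the $k$-integrals, and then try to reinstate it through the substitution $\|u(\tau,\cdot)\|=\tau^{-\alpha q}w(\tau)$ together with the claimed pointwise bound $\tau^{-\alpha q}(t-\tau)^{\alpha q-1}\le C\,(t-\tau)^{2\alpha q-1}$. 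That bound is false: the ratio of the two sides is $\tau^{-\alpha q}(t-\tau)^{-\alpha q}$, which blows up as $\tau\to t^-$ (and also as $t\to 0$), so no uniform constant exists on $[0,T]$. Nor can you rescue it by moving the singular factor into $g$, taking $g(\tau)\propto\tau^{-\alpha q}$ in Theorem \ref{ththththth1}: that theorem requires $g$ continuous on $[0,T]$, and after the substitution $s\mapsto(s\Gamma(1+\alpha q))^{1/(\alpha q)}$ the exponent integral $\int_0 (s\Gamma(1+\alpha q))^{-1}\,ds$ diverges logarithmically. So your passage to a kernel of order $2\alpha q$ with constant $g$ does not go through as written.

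The paper avoids this entirely by never separating the weight from the unknown in the $k$-terms: in the estimates of $\|t^{\alpha q}\mathcal{G}_1(t,\cdot)\{k,u\}\|$ and $\|t^{\alpha q}\mathcal{G}_3(t,\cdot)\{k,u\}\|$ the integrand is kept in the form $(t-\tau)^{\alpha q-1}\,\tau^{\alpha q}|k(\tau)|\,\|u(\tau,\cdot)\|$, i.e.\ exactly $(t-\tau)^{\alpha q-1}|k(\tau)|\,w(\tau)$ with $w(\tau)=\tau^{\alpha q}\|u(\tau,\cdot)\|$, so the resulting integral inequality already has the admissible weakly singular kernel of order $\alpha q$. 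Theorem \ref{ththththth1} is then applied with that order, the factor $s^{\alpha q}$ being carried by $g$ (i.e.\ $g(s)$ proportional to $|k(s)|\,s^{\alpha q}\le\|k\|_{C[0,T]}s^{\alpha q}$), and it is precisely the evaluation of $\int_0^{t^{\alpha q}/\Gamma(1+\alpha q)}g\bigl((s\Gamma(1+\alpha q))^{1/(\alpha q)}\bigr)\,ds$ with this $g$ that produces the exponent $\Phi\|k\|_{C[0,T]}t^{2\alpha q}/(2\alpha q)$ and the smallness condition $\exp\{\Phi\|k\|_{C[0,T]}T^{2\alpha q}/(2\alpha q)\}<2$ in \eqref{est25}. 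In short: your error is not in the operator estimates but in trying to convert the weighted inequality into one with kernel $(t-\tau)^{2\alpha q-1}$; keep $\tau^{\alpha q}$ attached either to the unknown or to $g$, and the application of Theorem \ref{ththththth1} at order $\alpha q$ gives the stated bound without any kernel comparison.
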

\begin{proof}
 Boundedness of operators is one of the most important means of proving the correctness of differential equations. The inequalities in Proposition \ref{thproth3} show  that
$$
E_{\alpha, \alpha}\left(-\lambda_j^\beta(t-\tau)^\alpha\right) \leq \frac{M_2}{1+\lambda_j^\beta(t-\tau)^\alpha}  \leq \frac{M_2}{\left[1+\lambda_j^\beta(t-\tau)^\alpha\right]^{p}}
\leq \frac{M_2}{\lambda_j^{\beta p}(t-\tau)^{\alpha p}}.
$$
For the operator $\mathcal{G}_1(t, x) F$ defined by formula (\ref{eqht1110}), we have that
$$
\left\|t^{\alpha q}\mathcal{G}_1(t, \cdot) F\right\|  \leq  \int\limits_0^t \tau^{\alpha q} \left\|\sum_{j=1}^{\infty} F_j(\tau) (t-\tau)^{\alpha-1} E_{\alpha, \alpha}\left(-\lambda_j^\beta (t-\tau)^\alpha\right) e_j\right\| d \tau
$$$$
\leq   \int\limits_0^t \tau^{\alpha q} \left\{\sum_{j=1}^{\infty} \bigg( F_j (\tau) E_{\alpha, \alpha} \left(-\lambda_j^\beta(t-\tau)^\alpha\right)(t-\tau)^{ \alpha-1}\bigg)^2\right\}^{\frac{1}{2}}\left\{\sum_{j=1}^{\infty} |e_j|^2\right\}^{\frac{1}{2}} d \tau
$$$$
=  \int\limits_0^t \tau^{\alpha q} \left\{\sum_{j=1}^{\infty} F_j^2(\tau) E_{\alpha, \alpha}^2\left(-\lambda_j^\beta(t-\tau)^\alpha\right)(t-\tau)^{2 \alpha-2}\right\}^{\frac{1}{2}} d \tau
$$$$
 \leq M_2 \int\limits_0^t \tau^{\alpha q} \left\{\sum_{j=1}^{\infty} \frac{1}{\lambda_j^{2\beta p}(t-\tau)^{2\alpha p}}F_j^2(\tau) (t-\tau)^{2 \alpha-2}\right\}^{\frac{1}{2}} d \tau
$$
\begin{equation}\label{est20}
\leq T^{\alpha q} \frac{1}{\lambda_1^{\beta p}} M_2 \int\limits_0^t (t-\tau)^{ \alpha-1-\alpha p} \|F(\tau, \cdot)\|_{L_2(\Omega)}d \tau.
\end{equation}
Using the above inequality and noting (\ref{eqhtt55}), we obtain the following estimate
\begin{equation}\label{est21}
\left\| t^{\alpha q} \mathcal{G}_1(t, \cdot) F\right\| \leq \frac{1}{\lambda_1^{\beta p}} M_2   \|F\|_{\mathcal{D}_{2, \alpha q}}T^{\alpha q}.
\end{equation}

Now, $\left\| t^{\alpha q} \mathcal{G}_2(t, x) \varphi\right\|$ can be estimated as follows:
$$
\left\| t^{\alpha q} \mathcal{G}_2(t, \cdot) \varphi\right\|=  \left\|t^{\alpha q} \sum_{j=1}^{\infty}  \varphi_j \Psi_j(T)E_{\alpha, 1}\left(-\lambda_j^\beta t^\alpha\right) e_j(x)  \right\|
$$
$$
\leq T^{\alpha q} \left\{
\sum_{j=1}^{\infty} \left(\varphi_j \Psi_j(T)E_{\alpha, 1}\left(-\lambda_j^\beta t^\alpha\right) \right)^2\right\}^{1 / 2} \left\{\sum_{j=1}^{\infty} |e_j|^2\right\}^{\frac{1}{2}}
$$$$
\leq C_{\kappa} M_2 T^{\alpha q}
\left\{\sum_{j=1}^{\infty} \varphi_j^2 \frac{1}{\left(1+\lambda_j^\beta t^\alpha\right)^2}\right\}^{1 / 2} \leq C_{\kappa} M_2
T^{\alpha q}\left\{\sum_{j=1}^{\infty} \varphi_j^2 \frac{1}{\left(1+\lambda_j^\beta t^\alpha\right)^{2q}}\right\}^{1 / 2}
$$
\begin{equation}\label{est22}
\leq C_{\kappa}\frac{M_2 T^{\alpha q}}{\lambda_1^{2\beta p}}
\left\{\sum_{j=1}^{\infty} \varphi_j^2 \lambda_j^{2\beta p}\right\}^{1 / 2} = C_{\kappa} \frac{M_2 T^{\alpha q}}{\lambda_1^{2\beta p}}   \|\varphi\|_{D\left(L^{\beta p}\right)}.
\end{equation}

Now, by using the same techniques as in  (\ref{est20}) and (\ref{est22}), we estimates $\mathcal{G}_3(t, x)F$ and $\mathcal{G}_3(t, x)\{k,  u\}$.
As a result
$$
\left\|t^{\alpha q} \mathcal{G}_3(t, x)F\right\|=\left\|t^{\alpha q} \mathcal{G}_2(t, x) \mathcal{G}_1(T, x)F\right\|
$$
$$
=\left\|t^{\alpha q} \sum_{j=1}^{\infty} \Psi_j(T) e_j(x) F_j(T) \star \left\{T^{\alpha-1} E_{\alpha, \alpha}\left(-\lambda_j^\beta T^\alpha\right)\right\} E_{\alpha, 1}\left(-\lambda_j^\beta t^\alpha\right)\right\|
$$
$$
\leq \int\limits_0^T \tau^{\alpha q}\left\|\sum_{j=1}^{\infty} F_j(\tau) (T-\tau)^{\alpha-1} E_{\alpha, \alpha}\left(-\lambda_j^\beta (T-\tau)^\alpha\right) \Psi_j(T) e_j(x) E_{\alpha, 1}\left(-\lambda_j^\beta t^\alpha\right)\right\| d \tau
$$
$$
\leq T^{\alpha q}\int\limits_0^T\left\{\sum_{j=1}^{\infty} \bigg( F_j(\tau) (T-\tau)^{\alpha-1} E_{\alpha, \alpha}\left(-\lambda_j^\beta (T-\tau)^\alpha\right) \Psi_j(T)  E_{\alpha, 1}\left(-\lambda_j^\beta t^\alpha\right)\bigg)^2\right\}^{\frac{1}{2}} d \tau
$$
$$
\leq \frac{C_{\kappa} M_2^2 T^{\alpha q}}{\lambda_1^{\beta p}}    \int\limits_0^T \left\{\sum_{j=1}^{\infty} F_j^2(\tau) (T-\tau)^{2\alpha-2-2\alpha p}
\right\}^{\frac{1}{2}} d \tau
$$
\begin{equation}\label{est23}
\leq\frac{C_{\kappa} M_2^2 T^{\alpha q}}{\lambda_1^{\beta p}}    \int\limits_0^T  \| F_j(\tau,\cdot)\|_{L_2(\Omega)} (T-\tau)^{\alpha q-1}   d \tau  \leq \frac{C_{\kappa} M_2^2 T^{\alpha q}}{\lambda_1^{\beta p}}    \|F\|_{\mathcal{D}_{2, \alpha q}},
\end{equation}
$$
\left\|t^{\alpha q} \mathcal{G}_3(t, x)\{k,  u\}\right\|=\left\|t^{\alpha q} \mathcal{G}_2(t, x) \mathcal{G}_1(T, x)\{k,  u\}\right\|
$$
$$
=\left\|t^{\alpha q} \sum_{j=1}^{\infty} \Psi_j(T) e_j(x) k(T)  u_j(T) \star \left\{T^{\alpha-1} E_{\alpha, \alpha}\left(-\lambda_j^\beta T^\alpha\right)\right\} E_{\alpha, 1}\left(-\lambda_j^\beta t^\alpha\right)\right\|
$$
$$
\leq \int\limits_0^T \tau^{\alpha q}\left\|\sum_{j=1}^{\infty} k(\tau) u_j(\tau) (T-\tau)^{\alpha-1} E_{\alpha, \alpha}\left(-\lambda_j^\beta (T-\tau)^\alpha\right) \Psi_j(T) e_j(x) E_{\alpha, 1}\left(-\lambda_j^\beta t^\alpha\right)\right\| d \tau
$$
$$
\leq \int\limits_0^T \tau^{\alpha q} \left\{\sum_{j=1}^{\infty} \bigg( k(\tau) u_j(\tau) (T-\tau)^{\alpha-1} E_{\alpha, \alpha}\left(-\lambda_j^\beta (T-\tau)^\alpha\right) \Psi_j(T)  E_{\alpha, 1}\left(-\lambda_j^\beta t^\alpha\right)\bigg)^2\right\}^{\frac{1}{2}} d \tau
$$
$$
\leq \frac{C_{\kappa} M_2^2 }{\lambda_1^{\beta p}}    \int\limits_0^T \tau^{\alpha q} |k(\tau)| \left\{\sum_{j=1}^{\infty} u_j^2(\tau) (T-\tau)^{2\alpha-2-2\alpha p}
\right\}^{\frac{1}{2}} d \tau
$$
\begin{equation}\label{est233223}
\leq\frac{C_{\kappa} M_2^2 }{\lambda_1^{\beta p}}    \int\limits_0^T (T-\tau)^{\alpha q-1} \tau^{\alpha q} |k(\tau)| \| u(\tau,\cdot)\|_{L_2(\Omega)}   d \tau.
\end{equation}
Similar to $\mathcal{G}_1(t, x) F$, we estimate $\mathcal{G}_1(t, x)\{k,  u\}$
$$
\left\|t^{\alpha q} \mathcal{G}_1(t, x)\{k,  u\}\right\|  \leq \int\limits_0^t \tau^{\alpha q} \left\|\sum_{j=1}^{\infty} k(\tau) u_j(\tau) (t-\tau)^{\alpha-1} E_{\alpha, \alpha}\left(-\lambda_j^\beta (t-\tau)^\alpha\right) e_j\right\| d \tau
$$$$
\leq \int\limits_0^t \tau^{\alpha q} \left\{\sum_{j=1}^{\infty} \bigg( k(\tau)u_j (\tau) E_{\alpha, \alpha} \left(-\lambda_j^\beta(t-\tau)^\alpha\right)(t-\tau)^{ \alpha-1}\bigg)^2\right\}^{\frac{1}{2}}\left\{\sum_{j=1}^{\infty} |e_j|^2\right\}^{\frac{1}{2}} d \tau
$$$$
\leq \int\limits_0^t \tau^{\alpha q}  |k(\tau)| \left\{\sum_{j=1}^{\infty} u_j^2(\tau) E_{\alpha, \alpha}^2\left(-\lambda_j^\beta(t-\tau)^\alpha\right)(t-\tau)^{2 \alpha-2}\right\}^{\frac{1}{2}} d \tau
$$$$
 \leq M_2 \int\limits_0^t  \tau^{\alpha q}  |k(\tau)| \left\{\sum_{j=1}^{\infty} \frac{1}{\lambda_j^{2\beta p}(t-\tau)^{2\alpha p}}u_j^2(\tau) (t-\tau)^{2 \alpha-2}\right\}^{\frac{1}{2}} d \tau
$$
\begin{equation}\label{est24}
\leq \frac{1}{\lambda_1^{\beta p}} M_2   \int\limits_0^t (t-\tau)^{ \alpha q-1} \tau^{\alpha q}  |k(\tau)| \|u(\tau, \cdot)\|_{L_2(\Omega)}d \tau.
\end{equation}

In conclusion, it follows from (\ref{est20}) - (\ref{est24}) and the  integral equation solution (\ref{eqth12})  satisfyies estimates
\begin{equation*}
\|t^{\alpha q} u(t, x)\|\leq\|t^{\alpha q} \mathcal{G}_2(t, x) \varphi\|+\|t^{\alpha q} \mathcal{G}_1(t, x) F\|+\|t^{\alpha q} \mathcal{G}_3(t, x) F\|
+\|t^{\alpha q} \mathcal{G}_3(t, x) \{k,  u\}\|
\end{equation*}
$$
+\|t^{\alpha q}\mathcal{G}_1(t, x)\{k,  u\}\|   \leq C_{\kappa} \frac{M_2 T^{\alpha q}}{\lambda_1^{2\beta p}}   \|\varphi\|_{D\left(L^{\beta p}\right)}+ \frac{1}{\lambda_1^{\beta p}} M_2  \|F\|_{\mathcal{D}_{2, \alpha q}}T^{\alpha q}
$$
$$
+\frac{C_{\kappa} M_2^2 T^{\alpha q}}{\lambda_1^{\beta p}}    \|F\|_{\mathcal{D}_{2, \alpha q}}+
\frac{C_{\kappa} M_2^2 }{\lambda_1^{\beta p}}    \int\limits_0^T (T-\tau)^{\alpha q-1} \tau^{\alpha q} |k(\tau)| \| u(\tau,\cdot)\|_{L_2(\Omega)}   d \tau
$$
$$
+\frac{1}{\lambda_1^{\beta p}} M_2   \int\limits_0^t (t-\tau)^{ \alpha q-1} \tau^{\alpha q}  |k(\tau)| \|u(\tau, \cdot)\|_{L_2(\Omega)}d \tau.
$$
From this
$$
\|t^{\alpha q} u(t, x)\|\leq \Phi \left(\|\varphi\|_{D\left(L^{\beta p}\right)}+  \|F\|_{\mathcal{D}_{2, \alpha q}}\right)
$$
$$
+
\Phi \int\limits_0^T (T-\tau)^{\alpha q-1} \tau^{\alpha q} |k(\tau)| \| u(\tau,\cdot)\|_{L_2(\Omega)}   d \tau
+  \Phi \int\limits_0^t (t-\tau)^{ \alpha q-1} \tau^{\alpha q}  |k(\tau)| \|u(\tau, \cdot)\|_{L_2(\Omega)}d \tau.
$$
Then, applying the weak singular variant of the Gronwall-Bellman-Gamidov inequality given in Theorem \ref{ththththth1} to the above integral inequality, we obtain (\ref{est25}).  Lemma \ref{lemmth1} is proven.
\end{proof}

We require the following conditions to be satisfied for $p, q, s, r, p', q'$

 A1) $0<p, q<1$ such that $p+q=1$;

  A2) $0<r \leq \frac{1-\alpha q}{\alpha q}$;

  A3) $0<s<\min (\alpha q, 1-\alpha q)$;

  A4) $0<p' \leq p-\frac{s}{\alpha}, \quad q'=1-p', \quad 0<r \leq \frac{1-\alpha q'}{\alpha q'}$;

  A5) $0 \leq \hat{q} \leq \min \left(p, q, \frac{s}{\alpha}\right), \quad \hat{p}=1-\hat{q}, \quad 0<\hat{r} \leq \frac{1-\alpha}{\alpha}$.

In what follows, we introduce some assumptions on the nonlocal value problem value data $\varphi$ and the linear source function $F$.

 Based on Lemma \ref{lemmth1}, we consider the existence, uniqueness, and regularity of the solution in the following lemma.

\begin{lemma}\label{lemmth2}
Let  conditions A1), A2) be satisfied. If $\varphi \in D\left(L^{\beta p}\right)$, $k\in C[0,T]$, $F \in \mathcal{D}_{2, \alpha q}((0,T) \times \Omega)$ and the inequality $\exp{\left\{ \frac{\Phi \|k\|_{C[0,T]}T^{2\alpha q}}{2\alpha q}\right\}}<2$ is satisfied, then problem (\ref{eqth11}) has a unique solution $u(t, x)$ in $L^{\frac{1}{\alpha q}-r}\left(0, T ; L_2(\Omega)\right)$. Moreover, there exists a positive constant $C_1$ such that
\begin{equation}\label{eqth2727}
\|u(t, x)\|_{L^{\frac{1}{\alpha q}-r}(0,T;L_2(\Omega))} \leq \frac{C_1 \left(\|\varphi\|_{D\left(L^{\beta p}\right)}+  \|F\|_{\mathcal{D}_{2, \alpha q}}\right)}{2-\exp{\left\{ \frac{\Phi \|k\|_{C[0,T]}T^{2\alpha q}}{2\alpha q}\right\}}} \exp{\left\{ \frac{\Phi \|k\|_{C[0,T]}T^{2\alpha q}}{2\alpha q}\right\}},
\end{equation}
where $C_1$ is a constant depending on $\alpha,\, \beta,\, p,\, q$ and $T$.
\end{lemma}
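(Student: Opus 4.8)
The plan is to realize the mild solution as the unique fixed point of the affine map associated with (\ref{eqth12}) in a weighted-in-time Banach space, to control the Picard iteration by means of the a priori estimate of Lemma \ref{lemmth1} and the generalized Gronwall inequality of Theorem \ref{ththththth1}, and finally to read off the $L^{1/(\alpha q)-r}$ regularity directly from the weight. Concretely, I would work in $\mathcal{X}:=\{u\in C((0,T];L_2(\Omega)):\ \|u\|_{\mathcal{X}}:=\sup_{0<t\le T}t^{\alpha q}\|u(t,\cdot)\|_{L_2(\Omega)}<\infty\}$, which is a Banach space, and set $\mathcal{T}u:=\mathcal{G}_2(t,x)\varphi+\mathcal{G}_1(t,x)F+\mathcal{G}_3(t,x)F-\mathcal{G}_3(t,x)\{k,u\}-\mathcal{G}_1(t,x)\{k,u\}$, so that, by Definition \ref{difth4.1}, a function is a mild solution of (\ref{eqth11}) precisely when it is a fixed point of $\mathcal{T}$. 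The estimates (\ref{est21})--(\ref{est24}) already obtained in the proof of Lemma \ref{lemmth1} show at once that $\mathcal{T}$ maps $\mathcal{X}$ into itself: the data part $\mathcal{G}_2\varphi+(\mathcal{G}_1+\mathcal{G}_3)F$ has $\mathcal{X}$-norm at most a constant times $\|\varphi\|_{D(L^{\beta p})}+\|F\|_{\mathcal{D}_{2,\alpha q}}$, while the linear operator $\mathcal{L}u:=-(\mathcal{G}_1+\mathcal{G}_3)\{k,u\}$ satisfies, by (\ref{est233223}) and (\ref{est24}), $t^{\alpha q}\|\mathcal{L}u(t,\cdot)\|\le\Phi\int_0^t(t-\tau)^{\alpha q-1}|k(\tau)|\tau^{\alpha q}\|u(\tau,\cdot)\|\,d\tau+\Phi\int_0^T(T-\tau)^{\alpha q-1}|k(\tau)|\tau^{\alpha q}\|u(\tau,\cdot)\|\,d\tau$, which is finite for $u\in\mathcal{X}$.

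For existence I would run the Picard scheme $u^{(0)}:=\mathcal{G}_2\varphi+(\mathcal{G}_1+\mathcal{G}_3)F$ and $u^{(m+1)}:=u^{(0)}+\mathcal{L}u^{(m)}$. Applying the bound above to the successive differences $w_m:=u^{(m+1)}-u^{(m)}=\mathcal{L}w_{m-1}$ and writing $v_m(t):=t^{\alpha q}\|w_m(t,\cdot)\|$ gives the two-sided weakly singular recursion $v_m(t)\le\Phi\int_0^t(t-\tau)^{\alpha q-1}|k(\tau)|v_{m-1}(\tau)\,d\tau+\Phi\int_0^T(T-\tau)^{\alpha q-1}|k(\tau)|v_{m-1}(\tau)\,d\tau$; summing over $m$ and interchanging sum and integral, $S:=\sum_{m\ge0}v_m$ satisfies the hypothesis of Theorem \ref{ththththth1} with constant $\|u^{(0)}\|_{\mathcal{X}}$ and $g=\Phi\Gamma(\alpha q)|k|$, so $S$ is finite precisely under the assumption $\exp\{\Phi\|k\|_{C[0,T]}T^{2\alpha q}/(2\alpha q)\}<2$ (this is the same computation that produced (\ref{est25})). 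Hence $\sum_m w_m$ converges in $\mathcal{X}$, its limit $u=\lim_m u^{(m)}$ is a fixed point of the continuous affine map $\mathcal{T}$, and it obeys the bound of Lemma \ref{lemmth1}. Uniqueness follows from the same mechanism: if $u_1,u_2$ are two mild solutions then, after a routine bootstrap through the smoothing estimates for $\mathcal{G}_1$ and $\mathcal{G}_3$ that places both in $\mathcal{X}$, the difference $w=u_1-u_2=\mathcal{L}w$ yields $v(t)=t^{\alpha q}\|w(t,\cdot)\|$ satisfying the inequality of Theorem \ref{ththththth1} with $C=0$, whence $v\equiv0$.

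It remains to convert the weighted bound into the stated regularity. The estimate (\ref{est25}) gives $\|u(t,\cdot)\|_{L_2(\Omega)}\le\widetilde{C}\,t^{-\alpha q}$ on $(0,T]$, where $\widetilde{C}$ denotes the right-hand side of (\ref{est25}) evaluated at $t=T$; writing $p_2:=\tfrac{1}{\alpha q}-r$, condition A2) forces $p_2\ge1$ and $\alpha q\,p_2=1-\alpha q\,r<1$, so $t\mapsto t^{-\alpha q p_2}$ is integrable on $(0,T)$ and $\|u\|_{L^{p_2}(0,T;L_2(\Omega))}\le\widetilde{C}\left(\frac{T^{\alpha q r}}{\alpha q r}\right)^{1/p_2}$, which is (\ref{eqth2727}) with $C_1$ depending only on $\alpha,\beta,p,q$ and $T$. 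The main obstacle is the existence step: the genuinely nonlocal integral $\int_0^T$ sitting inside $\mathcal{G}_3\{k,u\}$ destroys the factorial decay that a purely Volterra kernel would supply, so a short-interval contraction does not automatically globalize; it is precisely the Gronwall--Bellman--Gamidov inequality of Theorem \ref{ththththth1} that furnishes a summable majorant for the Picard differences under the hypothesis $\exp\{\Phi\|k\|_{C[0,T]}T^{2\alpha q}/(2\alpha q)\}<2$, and carrying out the two-sided iterated-kernel estimate with the correct constants is the delicate point.
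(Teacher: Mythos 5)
Your closing step is exactly the paper's own proof: Lemma \ref{lemmth1} supplies the weighted bound, and since A2) gives $p_2:=\tfrac{1}{\alpha q}-r\ge 1$ with $\alpha q\,p_2=1-\alpha q\,r<1$, the weight $t^{-\alpha q}$ lies in $L^{p_2}(0,T)$ and (\ref{eqth2727}) follows; your exponent $1/p_2$ on the integral factor is in fact the correct one (the paper's formula for $C_1$ carries the exponent $p_2$ by a slip). Where you genuinely diverge is before that: the paper simply invokes the a priori estimate (\ref{est25}) and declares uniqueness ``obvious,'' whereas you construct the solution by Picard iteration in the weighted space $\mathcal{X}$ and control the iterates with the two-sided Gronwall--Bellman--Gamidov inequality of Theorem \ref{ththththth1}; this makes the role of the hypothesis $\exp\{\Phi\|k\|_{C[0,T]}T^{2\alpha q}/(2\alpha q)\}<2$ explicit and supplies an existence argument the paper leaves implicit, at the cost of extra bookkeeping. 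Two points need tightening. First, Theorem \ref{ththththth1} presupposes a finite (continuous) function, so it cannot be applied to $S=\sum_m v_m$ before $S$ is known to be finite; apply it instead to the partial sums $S_N$, which satisfy $S_N\le v_0+KS_N$ by monotonicity of the kernel, get an $N$-independent bound, and pass to the limit, noting this yields pointwise domination of the tails (enough, via dominated convergence, to pass to the limit in the affine equation) rather than convergence in the sup-weighted norm. Second, the uniqueness ``bootstrap'' is not routine: a function merely in $L^{1/(\alpha q)-r}(0,T;L_2(\Omega))$ sits below the threshold $p_2>1/(\alpha q)$ required for the one-shot H\"older bound of $\mathcal{G}_1\{k,\cdot\}$ and $\mathcal{G}_3\{k,\cdot\}$ (cf.\ (\ref{theqt6})), so one must iterate the smoothing of the weakly singular kernel finitely many times before landing in $\mathcal{X}$ and applying Theorem \ref{ththththth1} with $C=0$. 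With these repairs your route is sound and is more complete than the paper's.
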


\begin{proof}
The proof of Lemma \ref{lemmth2} can be easily obtained using Lemma \ref{lemmth1}. Indeed, from the inequality (\ref{est25}), we have:
$$
\|u(t, x)\|_{L^{\frac{1}{\alpha q}-r}(0,T;L_2(\Omega))} \leq \frac{\Phi \left(\|\varphi\|_{D\left(L^{\beta p}\right)}+  \|F\|_{\mathcal{D}_{2, \alpha q}}\right)}{2-
\exp{\left\{ \frac{\Phi \|k\|_{C[0,T]}T^{2\alpha q}}{2\alpha q}\right\}}}
$$$$
\times
\exp{\left\{ \frac{\Phi \|k\|_{C[0,T]}t^{2\alpha q}}{2\alpha q}\right\}}\left(\int\limits_0^T t^{-\alpha q (\frac{1}{\alpha q}-r)}d t\right)^{\frac{1}{\alpha q}-r}.
$$
Since $-\alpha q (\frac{1}{\alpha q}-r)>-1$, the integral in the above inequality exists, i.e., $t^{-\alpha q}$ belongs to $L^{\frac{1}{\alpha q}-r}(0,T;L_2(\mathbb{R}))$. Hence, problem (\ref{eqth11}) has a solution $u(t, x)$ in $L^{\frac{1}{\alpha q}-r}(0,T;L_2(\Omega))$. The uniqueness of $u(t, x)$ is obvious. Moreover, the inequality (\ref{eqth2727}) is derived by letting $C_1 =  \Phi \left(\int\limits_0^T t^{-\alpha q (\frac{1}{\alpha q}-r)}d t\right)^{\frac{1}{\alpha q}-r}$.
\end{proof}

\begin{lemma} \label{lemmth3}
 Let  conditions A1), A3) and A4) be satisfied.
If $\varphi \in D\left(L^{\beta p}\right)$, $k\in C[0,T]$, $F \in \mathcal{D}_{2, \alpha q-s}((0,T) \times \Omega)$ and the inequality $\exp{\left\{ \frac{\Phi_1 \|k\|_{C[0,T]}T^{\alpha q +\alpha q'}}{ \alpha q +\alpha q'}\right\}}<2$ is satisfied, then problem (\ref{eqth11}) has a unique solution $u(t, x)$, such that $u \in L^{\frac{1}{\alpha q}-r}\left(0, T ; D\left(L^{\beta\left(p-q\right)}\right)\right)$ and
\begin{equation*}
\|u(t, x)\|_{L^{\frac{1}{\alpha q'}-r}\left((0,T;D\left(L^{\beta\left(p-p'\right)}\right)\right)}\leq  \frac{C_2 \left(\|\varphi\|_{D\left(L^{\beta p}\right)}+  \|F\|_{\mathcal{D}_{2, \alpha q-s}}\right)}{2-\exp{\left\{ \frac{\Phi_1 \|k\|_{C[0,T]}T^{\alpha q +\alpha q'}}{ \alpha q +\alpha q'}\right\}}}
\end{equation*}
\begin{equation}\label{theq28}
\times \exp{\left\{ \frac{\Phi_1 \|k\|_{C[0,T]}T^{\alpha q +\alpha q'}}{ \alpha q +\alpha q'}\right\}},
\end{equation}
where $C_2$ is a constant depending on $\alpha,\, \beta,\, p,\, q,\, r$, $p'$, $q'$, $\kappa$ and $T$.
\end{lemma}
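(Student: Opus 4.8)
The plan is to run the argument of Lemma~\ref{lemmth1} a second time, but applied to $L^{\beta(p-p')}u$ in place of $u$, using the sharper hypothesis $F\in\mathcal{D}_{2,\alpha q-s}$ to pay for the loss of $(t-\tau)$--powers produced by the extra spectral weight $\lambda_j^{\beta(p-p')}$. Note first that the gain is genuine: by A3)--A4), $0<p'\le p-s/\alpha<p$, so $\beta(p-p')>0$. Since $L^{\beta(p-p')}$ acts on the eigenbasis by multiplication by $\lambda_j^{\beta(p-p')}$ and $k(t)$ is a scalar, applying $L^{\beta(p-p')}$ to the mild identity \eqref{eqth12} gives an identity of the same structure for $L^{\beta(p-p')}u$, in which each of the five operator terms $\mathcal{G}_2\varphi$, $\mathcal{G}_1F$, $\mathcal{G}_3F$, $\mathcal{G}_3\{k,u\}$, $\mathcal{G}_1\{k,u\}$ carries one extra factor $\lambda_j^{\beta(p-p')}$ inside its $j$--sum. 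One then estimates $\bigl\|t^{\alpha q'}L^{\beta(p-p')}u(t,\cdot)\bigr\|_{L_2(\Omega)}$ term by term, following \eqref{est20}--\eqref{est24}, and integrates in $t$.

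For the $\mathcal{G}_1F$ term I would use Proposition~\ref{thproth3} in the form $E_{\alpha,\alpha}(-\lambda_j^\beta(t-\tau)^\alpha)\le M_2(1+\lambda_j^\beta(t-\tau)^\alpha)^{-1}\le M_2\lambda_j^{-\beta\tilde p}(t-\tau)^{-\alpha\tilde p}$ with the choice $\tilde p:=p+s/\alpha$. Condition A3) gives $s<\alpha q$, hence $\tilde p<1$, so this interpolation is legitimate; moreover $\tilde p\ge p-p'$ trivially, so the residual power $\lambda_j^{\beta(p-p'-\tilde p)}$ has nonpositive exponent and is bounded by $\lambda_1^{\beta(p-p'-\tilde p)}$, while the time factor becomes $(t-\tau)^{\alpha-1-\alpha\tilde p}=(t-\tau)^{\alpha q-s-1}$, precisely the kernel in the definition \eqref{eqhtt55} of $\mathcal{D}_{2,\alpha q-s}$. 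This yields $\bigl\|t^{\alpha q'}L^{\beta(p-p')}\mathcal{G}_1F\bigr\|\le C\,T^{\alpha q'}\|F\|_{\mathcal{D}_{2,\alpha q-s}}$. For $\mathcal{G}_2\varphi$, and for the outer factor of $\mathcal{G}_3F=-\mathcal{G}_2(t,x)\mathcal{G}_1(T,x)F$ (see \eqref{eqht1111}), I would argue as in \eqref{est22}--\eqref{est23}, using $E_{\alpha,1}(-\lambda_j^\beta t^\alpha)^2\le M_2^2(1+\lambda_j^\beta t^\alpha)^{-2}\le M_2^2\lambda_j^{-2\beta q'}t^{-2\alpha q'}$ so that the weight $t^{2\alpha q'}$ is exactly cancelled and the remaining spectral factor $\lambda_j^{2\beta(p-p')}\lambda_j^{-2\beta q'}=\lambda_j^{-2\beta q}\le\lambda_1^{-2\beta}\lambda_j^{2\beta p}$ reproduces $\|\varphi\|_{D(L^{\beta p})}$; for $\mathcal{G}_3F$ the inner convolution at $t=T$ contributes, exactly as for $\mathcal{G}_1F$, the kernel $(T-\tau)^{\alpha q-s-1}$ and hence $\|F\|_{\mathcal{D}_{2,\alpha q-s}}$. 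Throughout one uses $|\Psi_j(T)|\le C_\kappa$, valid under the standing hypothesis on $\kappa$ from the end of \S3.1.

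The two $\{k,u\}$ terms are treated as in \eqref{est233223}--\eqref{est24}: here the whole factor $\lambda_j^{2\beta(p-p')}u_j^2(\tau)$ is kept inside the $j$--sum so that $\|L^{\beta(p-p')}u(\tau,\cdot)\|$ reappears under the integral, and the Mittag--Leffler exponents are split so that the surviving time kernels are weakly singular, of orders linked to $\alpha q'$ and $\alpha q$ (which is what produces the exponent $T^{\alpha q+\alpha q'}$ in the hypothesis). Collecting the five estimates, the quantity $V(t):=\bigl\|t^{\alpha q'}L^{\beta(p-p')}u(t,\cdot)\bigr\|$ satisfies a weakly singular integral inequality of exactly the Gronwall--Bellman--Gamidov form handled by Theorem~\ref{ththththth1} with $g=|k|$, whose applicability is guaranteed precisely by the hypothesis $\exp\{\Phi_1\|k\|_{C[0,T]}T^{\alpha q+\alpha q'}/(\alpha q+\alpha q')\}<2$; applying it gives the pointwise estimate \eqref{theq28} for $V(t)$. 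Finally, A4) gives $-\alpha q'(\tfrac1{\alpha q'}-r)>-1$, so the integral $\int_0^T t^{-\alpha q'(1/(\alpha q')-r)}\,dt$ is finite, i.e.\ $t^{-\alpha q'}\in L^{\frac1{\alpha q'}-r}(0,T)$; integrating the pointwise bound in $t$ over $(0,T)$ yields $u\in L^{\frac1{\alpha q'}-r}\bigl(0,T;D(L^{\beta(p-p')})\bigr)$ together with \eqref{theq28}, with $C_2$ absorbing the time integral and the spectral constants. Uniqueness is inherited: $u$ is the mild solution already furnished by Lemma~\ref{lemmth2}, and only its additional spatial regularity is new.

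The main obstacle is the bookkeeping in the second and third steps: for each operator term one must choose a splitting of the Mittag--Leffler decay that simultaneously absorbs the full extra weight $\lambda_j^{\beta(p-p')}$ into a $j$--uniformly bounded constant, leaves a $(t-\tau)$-- (or $t$--, or $T-\tau$--) power that is still integrable, and matches the H\"older exponent $\alpha q-s$ of $\mathcal{D}_{2,\alpha q-s}$ in the $F$--terms (respectively the exponent needed for Theorem~\ref{ththththth1} in the $\{k,u\}$--terms). It is exactly the interplay of A3) (forcing $\tilde p=p+s/\alpha<1$) and A4) ($p'\le p-s/\alpha$, $q'=1-p'$, $0<r\le(1-\alpha q')/(\alpha q')$) that makes these three requirements mutually compatible, and tracking the resulting constants through is what defines $\Phi_1$ and $C_2$.
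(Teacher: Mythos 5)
Your proposal is correct and follows essentially the same route as the paper: carry the extra spectral weight $\lambda_j^{\beta(p-p')}$ through the five terms of the mild formula (\ref{eqth12}), split the Mittag--Leffler decay to absorb it, arrive at the weakly singular Volterra--Fredholm inequality for $\|t^{\alpha q'}u(t,\cdot)\|_{D(L^{\beta(p-p')})}$, apply Theorem \ref{ththththth1} under the stated smallness condition, and then integrate in time using $t^{-\alpha q'}\in L^{\frac{1}{\alpha q'}-r}(0,T)$. Your only deviations are cosmetic: you choose the interpolation exponent $p+s/\alpha$ in the $F$--terms so as to produce the $\mathcal{D}_{2,\alpha q-s}$ kernel directly, and you cancel the weight $t^{\alpha q'}$ in the $\mathcal{G}_2\varphi$ term via the decay of $E_{\alpha,1}$, whereas the paper splits with exponent $p$, invokes $\|F\|_{\mathcal{D}_{2,\alpha q}}\le T^{s}\|F\|_{\mathcal{D}_{2,\alpha q-s}}$, and simply bounds $t^{\alpha q'}\le T^{\alpha q'}$ --- both choices lead to the same structure of $\Phi_1$ and $C_2$.
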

\begin{proof}
  We prove $u(t, x) \in L^{\frac{1}{\alpha q}-r}\left(0, T ; D\left(L^{-\beta\left(p-p'\right)}\right)\right)$. Firstly, by the same argument as in the proof of (\ref{est20}), we derive the following chain of inequalities
$$
\left\|t^{\alpha q'} \mathcal{G}_1(t, \cdot) F\right\|_{D\left(L^{\beta\left(p-p'\right)}\right)}
$$$$
\leq \int\limits_0^t \tau^{\alpha q'} \left\|L^{\beta\left(p-p'\right)}\sum_{j=1}^{\infty} F_j(\tau) (t-\tau)^{\alpha-1} E_{\alpha, \alpha}\left(-\lambda_j^\beta (t-\tau)^\alpha\right) e_j\right\| d \tau
$$
$$
 \leq M_2 \int\limits_0^t \tau^{\alpha q'} \left\{\sum_{j=1}^{\infty} \frac{1}{\lambda_j^{2\beta p}(t-\tau)^{2\alpha p}}F_j^2(\tau) (t-\tau)^{2 \alpha-2} \lambda_j^{2\beta (p-p')} \right\}^{\frac{1}{2}} d \tau
$$
$$
\leq \frac{M_2}{\lambda_1^{\beta p'}} T^{\alpha q'} \int\limits_0^t (t-\tau)^{ \alpha-1-\alpha p} \|F(\tau, \cdot)\|_{L_2(\Omega)}d \tau
$$
\begin{equation}\label{est2828}
\leq  \frac{M_2 T^{\alpha q'}}{\lambda_1^{\beta p'}}   \|F\|_{\mathcal{D}_{2, \alpha q }}
\leq \frac{M_2 T^{\alpha q'+s}}{\lambda_1^{\beta p'}}   \|F\|_{\mathcal{D}_{2, \alpha q-s}},
\end{equation}
where the inequality $\|F\|_{\mathcal{D}_{2, \alpha q}} \leq T^s\|F\|_{\mathcal{D}_{2, \alpha q-s}}$ holds.

Similarly, from $\left\|t^{\alpha q'}\mathcal{G}_2(t, \cdot) \varphi\right\|_{D\left(L^{\beta\left(p-p'\right)}\right)}=\left\|t ^{\alpha q'} L^{\beta\left(p-p'\right)} \mathcal{G}_2(t,\cdot) \varphi\right\|$, and the same way as in the proof of (\ref{est2828}), we have
$$
\left\|t^{\alpha q'} \mathcal{G}_2(t, \cdot) \varphi\right\|_{D\left(L^{\beta\left(p-p'\right)}\right)}
\leq T^{\alpha q'}\left\{
\sum_{j=1}^{\infty} \left(\varphi_j \Psi_j(T)E_{\alpha, 1}\left(-\lambda_j^\beta t^\alpha\right) \right)^2 \lambda_j^{2 \beta\left(p-p'\right)}\right\}^{1 / 2}
$$
\begin{equation}\label{est29}
\leq \frac{C_{\kappa} M_2 T^{\alpha q'}}{\lambda_1^{\beta p'}}
\left\{\sum_{j=1}^{\infty} \varphi_j^2 \lambda_j^{2 \beta p}\right\}^{1 / 2}
\leq \frac{C_{\kappa} M_2 T^{\alpha q'}}{\lambda_1^{\beta p'}}
\|\varphi\|_{D\left(L^{\beta p}\right)}.
\end{equation}
Using the above methods, we estimate $\mathcal{G}_3(t, x)F$ and $\mathcal{G}_3(t, x) \{k, u\}$
$$
\left\|t^{\alpha q'} \mathcal{G}_3(t, x)F\right\|_{D\left(L^{\beta\left(p-p'\right)}\right)}=\left\|t^{\alpha q'} L^{\beta\left(p-p'\right)}\mathcal{G}_2(t, x) \mathcal{G}_1(T, x)F\right\|
$$
$$
=\left\|t^{\alpha q'} L^{\beta\left(p-p'\right)}\sum_{j=1}^{\infty} \Psi_j(T) e_j(x) F_j(T) \star \left\{T^{\alpha-1} E_{\alpha, \alpha}\left(-\lambda_j^\beta T^\alpha\right)\right\} E_{\alpha, 1}\left(-\lambda_j^\beta t^\alpha\right)\right\|
$$
$$
\leq M_2 \int\limits_0^T \tau^{\alpha q'}\left\{\sum_{j=1}^{\infty} \bigg( F_j(\tau) (T-\tau)^{\alpha-1} E_{\alpha, \alpha}\left(-\lambda_j^\beta (T-\tau)^\alpha\right) \Psi_j(T)   \lambda_j^{\beta(p-p')}\bigg)^2\right\}^{\frac{1}{2}} d \tau
$$
$$
\leq \frac{C_{\kappa} M_2^2}{\lambda_1^{\beta p'}}    \int\limits_0^T \left\{\sum_{j=1}^{\infty} F_j^2(\tau) (T-\tau)^{2\alpha-2-2\alpha p}
\right\}^{\frac{1}{2}} d \tau
$$
\begin{equation}\label{est30}
\leq \frac{C_{\kappa} M_2^2 T^{\alpha q'}}{\lambda_1^{\beta p'}}    \int\limits_0^T  \| F(\tau,\cdot)\|_{L_2(\Omega)} (T-\tau)^{\alpha q-1}  d \tau  \leq \frac{C_{\kappa} M_2^2 T^{\alpha q'+s}}{\lambda_1^{\beta p'}}   \|F\|_{\mathcal{D}_{2, \alpha q-s}},
\end{equation}
$$
\left\|t^{\alpha q'} \mathcal{G}_3(t, x) \{k, u\} \right\|_{D\left(L^{\beta\left(p-p'\right)}\right)}=\left\|t^{\alpha q'} L^{\beta\left(p-p'\right)}\mathcal{G}_2(t, x) \mathcal{G}_1(T, x) \{k, u\} \right\|
$$
$$
=\left\|t^{\alpha q'} L^{\beta\left(p-p'\right)}\sum_{j=1}^{\infty} \Psi_j(T) e_j(x) k(T) u_j(T) \star \left\{T^{\alpha-1} E_{\alpha, \alpha}\left(-\lambda_j^\beta T^\alpha\right)\right\} E_{\alpha, 1}\left(-\lambda_j^\beta t^\alpha\right)\right\|
$$
$$
\leq M_2 \int\limits_0^T \tau^{\alpha q'}\left\{\sum_{j=1}^{\infty} \bigg( k(\tau)u_j(\tau) (T-\tau)^{\alpha-1} E_{\alpha, \alpha}\left(-\lambda_j^\beta (T-\tau)^\alpha\right) \Psi_j(T)   \lambda_j^{\beta(p-p')}\bigg)^2\right\}^{\frac{1}{2}} d \tau
$$
\begin{equation}\label{thest303030}
\leq \frac{C_{\kappa} M_2^2}{\lambda_1^{\beta p'}}    \int\limits_0^T (T-\tau)^{\alpha q -1} \tau^{\alpha q'} |k(\tau)|   \|u(\tau, \cdot)\|_{D\left(L^{\beta\left(p-p'\right)}\right)}d \tau.
\end{equation}

Using the same methods for estimating $\mathcal{G}_1 (t, x) F$, we estimate $\mathcal{G}_1 (t, x) \{k, u\}$
$$
\left\|t^{\alpha q'} \mathcal{G}_1(t, x)\{k,  u\}\right\|_{D\left(L^{\beta\left(p-p'\right)}\right)}
$$$$
\leq \int\limits_0^t \tau^{\alpha q'} \left\|L^{\beta\left(p-p'\right)}\sum_{j=1}^{\infty} k(\tau) u_j(\tau) (t-\tau)^{\alpha-1} E_{\alpha, \alpha}\left(-\lambda_j^\beta (t-\tau)^\alpha\right) e_j\right\| d \tau
$$$$
\leq \int\limits_0^t \tau^{\alpha q'} \left\{\sum_{j=1}^{\infty} \bigg( k(\tau)u_j (\tau) E_{\alpha, \alpha} \left(-\lambda_j^\beta(t-\tau)^\alpha\right)(t-\tau)^{ \alpha-1}\lambda_j^{\beta(p-p')}\bigg)^2\right\}^{\frac{1}{2}}
$$$$
\leq T^{\alpha q'} \int\limits_0^t  \tau^{\alpha q'} |k(\tau)|   \left\{\sum_{j=1}^{\infty} u_j^2(\tau) E_{\alpha, \alpha}^2\left(-\lambda_j^\beta(t-\tau)^\alpha\right)(t-\tau)^{2 \alpha-2}\lambda_j^{2\beta(p-p')}\right\}^{\frac{1}{2}} d \tau
$$$$
 \leq \frac{M_2 }{\lambda_1^{\beta p}}   \int\limits_0^t \tau^{\alpha q'}  |k(\tau)|  \left\{\sum_{j=1}^{\infty} \frac{1}{(t-\tau)^{2\alpha p}}u_j^2(\tau) (t-\tau)^{2 \alpha-2}\right\}^{\frac{1}{2}} d \tau
$$
\begin{equation}\label{est31}
\leq \frac{M_2 }{\lambda_1^{\beta p}}  \int\limits_0^t (t-\tau)^{ \alpha q-1} \tau^{\alpha q'} |k(\tau)|  \|u(\tau, \cdot)\|_{D\left(L^{\beta\left(p-p'\right)}\right)}d \tau.
\end{equation}

Finally, it follows from (\ref{est2828})-(\ref{est31}), and the identity (\ref{eqth12}) that
\begin{equation*}
\|t^{\alpha q'} u(t, x)\|_{D\left(L^{\beta\left(p-p'\right)}\right)}\leq\|t^{\alpha q'} \mathcal{G}_2(t, x) \varphi\|_{D\left(L^{\beta\left(p-p'\right)}\right)}+\|t^{\alpha q'} \mathcal{G}_1(t, x) F\|_{D\left(L^{\beta\left(p-p'\right)}\right)}
\end{equation*}
$$
+\|t^{\alpha q'} \mathcal{G}_3(t, x) F\|_{D\left(L^{\beta\left(p-p'\right)}\right)}+\|t^{\alpha q'} \mathcal{G}_1(t, x)\{k,  u\}\|_{D\left(L^{\beta\left(p-p'\right)}\right)}
$$
$$
\leq \frac{C_{\kappa} M_2 T^{\alpha q'}}{\lambda_1^{\beta p'}}
\|\varphi\|_{D\left(L^{\beta p}\right)}+
\frac{M_2 T^{\alpha q'+s}}{\lambda_1^{\beta p'}}   \|F\|_{\mathcal{D}_{2, \alpha q-s}}+
\frac{C_{\kappa} M_2^2 T^{\alpha q'+s}}{\lambda_1^{\beta p'}}  \|F\|_{\mathcal{D}_{2, \alpha q-s}}
$$
$$
+\frac{C_{\kappa} M_2^2}{\lambda_1^{\beta p'}}    \int\limits_0^T (T-\tau)^{\alpha q -1} \tau^{\alpha q'} |k(\tau)|   \|u(\tau, \cdot)\|_{D\left(L^{\beta\left(p-p'\right)}\right)}d \tau
$$
\begin{equation*}
+\frac{1}{\lambda_1^{\beta p}} M_2  \int\limits_0^t (t-\tau)^{ \alpha-1-\alpha p} \tau^{\alpha q'} |k(\tau)|  \|u(\tau, \cdot)\|_{D\left(L^{\beta\left(p-p'\right)}\right)}d \tau.
\end{equation*}
We can write the above inequality as follows
\begin{equation*}
\|t^{\alpha q'} u(t, x)\|_{D\left(L^{\beta\left(p-p'\right)}\right)}\leq \Phi_1\left(\|\varphi\|_{D\left(L^{\beta p}\right)}+
2\|F\|_{\mathcal{D}_{2, \alpha q-s}}\right)
\end{equation*}
$$
+  \Phi_1  \int\limits_0^T (T-\tau)^{\alpha q -1} \tau^{\alpha q'} |k(\tau)|   \|u(\tau, \cdot)\|_{D\left(L^{\beta\left(p-p'\right)}\right)}d \tau
$$
\begin{equation}\label{est32}
+\Phi_1 \int\limits_0^t (t-\tau)^{ \alpha q-1} \tau^{\alpha q'} |k(\tau)|  \|u(\tau, \cdot)\|_{D\left(L^{\beta\left(p-p'\right)}\right)}d \tau,
\end{equation}
where $\Phi_1=\max\left\{ \frac{C_{\kappa} M_2 T^{\alpha q'}}{\lambda_1^{\beta p'}}, \frac{M_2 T^{\alpha q'+s}}{\lambda_1^{\beta p'}}, \frac{C_{\kappa} M_2^2 T^{\alpha q'+s}}{\lambda_1^{\beta p'}}, \frac{C_{\kappa} M_2^2}{\lambda_1^{\beta p'}}, \frac{1}{\lambda_1^{\beta p}} M_2\right\}$.

According to the Gronwall-Bellman-Gamidov inequality given in Theorem \ref{ththththth1}, from the integral inequality (\ref{est32}) we get
\begin{equation}\label{thineq35}
\|t^{\alpha q'} u(t, x)\|_{D\left(L^{\beta\left(p-p'\right)}\right)} \leq \frac{\Phi_1 \left(\|\varphi\|_{D\left(L^{\beta p}\right)}+  \|F\|_{\mathcal{D}_{2, \alpha q-s}}\right)}{2-\exp{\left\{ \frac{\Phi_1 \|k\|_{C[0,T]}T^{\alpha q +\alpha q'}}{ \alpha q +\alpha q'}\right\}}} \exp{\left\{ \frac{\Phi_1 \|k\|_{C[0,T]}t^{\alpha q +\alpha q'}}{ \alpha q +\alpha q'}\right\}}.
\end{equation}

Since the function $t\rightarrow t^{-\alpha q'}$
is clearly contained in the space $L^{\frac{1}{\alpha q'}-r}(0,T; \mathbb{R})$, we can take the $L^{\frac{1}{\alpha q'}-r}(0,T; \mathbb{R})$-norm on both sides of the above inequality
\begin{equation*}
\|u(t, x)\|_{L^{\frac{1}{\alpha q'}-r}\left((0,T;D\left(L^{\beta\left(p-p'\right)}\right)\right)}\leq  \frac{C_2 \left(\|\varphi\|_{D\left(L^{\beta p}\right)}+  \|F\|_{\mathcal{D}_{2, \alpha q-s}}\right)}{2-\exp{\left\{ \frac{\Phi_1 \|k\|_{C[0,T]}T^{\alpha q +\alpha q'}}{ \alpha q +\alpha q'}\right\}}}
\end{equation*}
$$ \times \exp{\left\{ \frac{\Phi_1 \|k\|_{C[0,T]}t^{\alpha q +\alpha q'}}{ \alpha q +\alpha q'}\right\}},
$$
where $C_2 =  \Phi_1 \left(\int\limits_0^T t^{-\alpha q' (\frac{1}{\alpha q'}-r)}d t\right)^{\frac{1}{\alpha q'}-r}$.
From this it follows that the inequality (\ref{theq28}) in Lemma \ref{lemmth3} holds.
\end{proof}

 Now, we will prove that the solution belongs to the class of Hilbert continuous functions.

\begin{lemma}\label{lemmth4}
 Let  satisfied  conditions A1), A3) and A4).
If $\varphi \in D\left(L^{\beta p}\right)$, $k\in C[0,T]$ and $F \in \mathcal{D}_{2, \alpha q-s}((0,T) \times \Omega)$, then problem (\ref{eqth11}) has a unique solution $u(t, x)$, such that $u \in  C^{\alpha q}\left((0, T] ; L_2(\Omega)\right)$ and
\begin{equation}\label{theq29}
\|u(t, x)\|_{C^{\alpha q}\left((0,T];L_2(\Omega)\right)}\leq \frac{\Phi_2 \left(\|\varphi\|_{D\left(L^{\beta p}\right)}+  \|F\|_{\mathcal{D}_{2, \alpha q-s}}\right)}{2-\exp{\left\{ \frac{\Phi_1 \|k\|_{C[0,T]}T^{\alpha q +\alpha q'}}{ \alpha q +\alpha q'}\right\}}} \exp{\left\{ \frac{\Phi_1 \|k\|_{C[0,T]}T^{\alpha q +\alpha q'}}{ \alpha q +\alpha q'}\right\}}.
\end{equation}
\end{lemma}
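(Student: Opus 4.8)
The plan is to deduce the statement from the representation formula \eqref{eqth12} together with the a priori bounds already established in Lemmas \ref{lemmth1}--\ref{lemmth3}. Existence and uniqueness of the mild solution are inherited from Lemma \ref{lemmth2} (for $F\in\mathcal{D}_{2,\alpha q-s}\subset\mathcal{D}_{2,\alpha q}$), so the genuine work is twofold: (i) prove that the map $t\mapsto u(t,\cdot)$ is continuous from $(0,T]$ into $L_2(\Omega)$, and (ii) produce the quantitative weighted bound \eqref{theq29}, that is, control $\sup_{0<t\le T}t^{\alpha q}\|u(t,\cdot)\|$.

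For step (i) I would treat the five terms of \eqref{eqth12} separately. For $\mathcal{G}_2(t,\cdot)\varphi=\sum_j\varphi_j\Psi_j(T)E_{\alpha,1}(-\lambda_j^\beta t^\alpha)e_j$ each summand is continuous in $t$ on $(0,\infty)$ by Proposition \ref{proposition2}(ii) and Proposition \ref{proposition4}(i); by Proposition \ref{thproth3} and $|\Psi_j(T)|\le C_\kappa$ the $j$-th term has $L_2$-norm at most $C_\kappa M_2|\varphi_j|/(1+\lambda_j^\beta\varepsilon^\alpha)$ for $t\ge\varepsilon$, which is square summable since $\varphi\in D(L^{\beta p})$, so on each $[\varepsilon,T]$ the series converges uniformly in $L_2(\Omega)$ and its sum is continuous. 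For the convolution terms $\mathcal{G}_1(t,\cdot)F$ and $\mathcal{G}_3(t,\cdot)F$ the kernel $t^{\alpha-1}E_{\alpha,\alpha}(-\lambda_j^\beta t^\alpha)$ is continuous on $(0,\infty)$, and by exactly the Mittag--Leffler estimates used in \eqref{est20}--\eqref{est23} the relevant sums admit, on $[\varepsilon,T]$, a $\tau$-integrable majorant built from $\|F(\tau,\cdot)\|$ and the $\mathcal{D}_{2,\alpha q-s}$-structure (integrability of $(t-\tau)^{\alpha-1-\alpha p}$); continuity of $t\mapsto\int_0^t(\cdots)\,d\tau$ then follows from the dominated convergence theorem. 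The two $\{k,u\}$-terms are handled identically, now also using $k\in C[0,T]$ and the a priori integrability $u\in L^{1/\alpha q-r}(0,T;L_2(\Omega))$ from Lemma \ref{lemmth2}, which makes $\tau\mapsto|k(\tau)|\|u(\tau,\cdot)\|$ an admissible integrand and keeps the argument non-circular.

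For step (ii) I would rerun the term-by-term estimates from the proofs of Lemmas \ref{lemmth1} and \ref{lemmth3}. Because $F\in\mathcal{D}_{2,\alpha q-s}$ with $\|F\|_{\mathcal{D}_{2,\alpha q}}\le T^s\|F\|_{\mathcal{D}_{2,\alpha q-s}}$, the bounds \eqref{est21}, \eqref{est22}, \eqref{est23} give, after multiplication by $t^{\alpha q}$, fixed estimates of the form $\mathrm{const}\cdot(\|\varphi\|_{D(L^{\beta p})}+\|F\|_{\mathcal{D}_{2,\alpha q-s}})$, while the two $\{k,u\}$-terms reproduce, as in \eqref{est233223} and \eqref{est24}, the weakly singular integral terms $\int_0^T(T-\tau)^{\alpha q-1}\tau^{\alpha q}|k(\tau)|\|u(\tau,\cdot)\|\,d\tau$ and $\int_0^t(t-\tau)^{\alpha q-1}\tau^{\alpha q}|k(\tau)|\|u(\tau,\cdot)\|\,d\tau$. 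Collecting the prefactors into a single constant $\Phi_2$ and invoking the weakly singular Gronwall--Bellman--Gamidov inequality of Theorem \ref{ththththth1} (its smallness hypothesis being guaranteed, after enlarging $\Phi_1$ if necessary, by the assumed $\exp\{\Phi_1\|k\|_{C[0,T]}T^{\alpha q+\alpha q'}/(\alpha q+\alpha q')\}<2$) yields \eqref{theq29}. Equivalently, one may combine \eqref{est25} of Lemma \ref{lemmth1} to control $t^{\alpha q}\|u(t,\cdot)\|$ near $t=0$ with the embedding $D(L^{\beta(p-p')})\hookrightarrow L_2(\Omega)$ applied to \eqref{thineq35} of Lemma \ref{lemmth3} to control $\|u(t,\cdot)\|$ on each $[\varepsilon,T]$.

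The main obstacle is step (i): upgrading $L^p$-membership to genuine continuity of the convolution and series terms at an arbitrary interior point $t_0\in(0,T]$, uniformly across the eigenfunction expansion. The delicate point is exhibiting, on each compact $[\varepsilon,T]$, a $\tau$-integrable (resp. $j$-summable) majorant independent of $t$; this is precisely where the regularity hypotheses $\varphi\in D(L^{\beta p})$ and $F\in\mathcal{D}_{2,\alpha q-s}$ are used, through the decay $E_{\alpha,1}(-z),E_{\alpha,\alpha}(-z)\le M_2/(1+z)$ of Proposition \ref{thproth3} together with the integrability of $(t-\tau)^{\alpha-1-\alpha p}$ near $\tau=t$. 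Once such majorants are available, continuity follows automatically from dominated convergence, and the blow-up rate at $t=0$ is exactly the $t^{-\alpha q}$ encoded in the definition of $C^{\alpha q}((0,T];L_2(\Omega))$.
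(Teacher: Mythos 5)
There is a genuine gap in your step (i), which is exactly the step the paper has to work for. You reduce continuity of the singular convolution terms $\mathcal{G}_1(t,\cdot)F$ and $\mathcal{G}_1(t,\cdot)\{k,u\}$ to the dominated convergence theorem with a $\tau$-integrable majorant that is independent of $t$ on a compact $[\varepsilon,T]$. Such a majorant does not exist: the natural bound for the integrand is $\|F(\tau,\cdot)\|\,(t-\tau)^{\alpha q-1}\mathbf{1}_{\{\tau<t\}}$, and since the singularity sits at the moving point $\tau=t$, the pointwise supremum over $t$ in any neighbourhood of $t_0$ is $+\infty$ for every $\tau$ in that neighbourhood; moreover $F$ is only assumed to lie in $\mathcal{D}_{2,\alpha q-s}$, so $\|F(\tau,\cdot)\|$ need not even be essentially bounded in $\tau$. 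Hence the sentence ``once such majorants are available, continuity follows automatically'' begs the question: the majorants are precisely what cannot be produced, and your own ``main obstacle'' remains unresolved. The paper avoids this by estimating the difference $u(t_2,\cdot)-u(t_1,\cdot)$ directly: it decomposes it into the seven terms of (\ref{eqth37}), rewrites the difference of convolution kernels via Proposition \ref{proposition4}(ii) as $\int_{t_1-\tau}^{t_2-\tau}\eta^{\alpha-2}E_{\alpha,\alpha-1}(-\lambda_j^{\beta}\eta^{\alpha})\,d\eta$ plus a boundary integral over $[t_1,t_2]$ (see (\ref{eqht35})), uses (\ref{eqht36}) for the $E_{\alpha,1}$ differences, and then extracts the factors $(t_2-t_1)^{s}$ and $(t_2-t_1)^{\alpha q}$ through the elementary inequalities (\ref{eqth38})--(\ref{esth39}); it is exactly here that the extra gap $s>0$ in $F\in\mathcal{D}_{2,\alpha q-s}$ is consumed, whereas your argument never uses it for continuity. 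In addition, for the terms containing $k\,u$ (the analogues of $\mathcal{I}_5,\mathcal{I}_6,\mathcal{I}_7$) the mere integrability $u\in L^{\frac{1}{\alpha q}-r}(0,T;L_2(\Omega))$ you invoke is not enough to close the modulus-of-continuity estimates; the paper uses the pointwise-in-time bound $\|u(\tau,\cdot)\|\le\|u(\tau,\cdot)\|_{D(L^{\beta(p-p')})}\lesssim \tau^{-\alpha q'}$ coming from (\ref{thineq35}) of Lemma \ref{lemmth3} together with the embedding $D(L^{\beta(p-p')})\hookrightarrow L_2(\Omega)$, which is why conditions A3)--A4) and Lemma \ref{lemmth3} appear in the hypotheses at all.

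Your step (ii) is essentially the paper's: since $\mathcal{D}_{2,\alpha q-s}\subset\mathcal{D}_{2,\alpha q}$ (with $\|F\|_{\mathcal{D}_{2,\alpha q}}\le T^{s}\|F\|_{\mathcal{D}_{2,\alpha q-s}}$), the weighted bound (\ref{est25}) of Lemma \ref{lemmth1} applies and yields (\ref{est46}), hence (\ref{theq29}) after taking the supremum, so no objection there. One small correction: your remark that the Gronwall--Gamidov smallness condition can be ``guaranteed, after enlarging $\Phi_1$ if necessary'' is backwards, since enlarging $\Phi_1$ only increases the exponential and makes the condition $\exp\{\cdot\}<2$ harder, not easier, to satisfy; the condition must simply be assumed, as in Lemmas \ref{lemmth1}--\ref{lemmth3}.
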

\begin{proof}

We prove $u(t, x) \in  C^{\alpha q}((0, T];L_2(\Omega))$. Let us consider $0<t_1<t_2 \leq T$. By  (\ref{eqth12}), the difference $u(t_2, x) -u(t_1, x)$ can be calculated as
$$
u(t_2, x) -u(t_1, x)
=\sum_{j=1}^{\infty} \Psi_j(T) e_j(x) \varphi_j  E_{\alpha, 1}\left(-\lambda_j^\beta t^\alpha\right)\Bigg|_{t=t_1}^{t=t_2}
$$
$$
-\sum_{j=1}^{\infty} \Psi_j(T) e_j(x)  F_j(T) \star \left\{T^{\alpha-1} E_{\alpha, \alpha}\left(-\lambda_j^\beta T^\alpha\right)\right\}  E_{\alpha, 1}\left(-\lambda_j^\beta t^\alpha\right)\Bigg|_{t=t_1}^{t=t_2}
$$
$$
-\sum_{j=1}^{\infty} \Psi_j(T) e_j(x) \{k(T) u_j(T)\} \star \left\{T^{\alpha-1} E_{\alpha, \alpha}\left(-\lambda_j^\beta T^\alpha\right)\right\}  E_{\alpha, 1}\left(-\lambda_j^\beta t^\alpha\right)\Bigg|_{t=t_1}^{t=t_2}
$$
$$
 +\sum_{j=1}^{\infty}e_j(x)  F_j(t) \star \left\{t^{\alpha-1} E_{\alpha, \alpha}\left(-\lambda_j^\beta t^\alpha\right)\right\} \Bigg|_{t=t_1}^{t=t_2}
$$
\begin{equation}\label{eqht34}
-\sum_{j=1}^{\infty}e_j(x) \{k(t) u_j(t)\} \star \left\{t^{\alpha-1} E_{\alpha, \alpha}\left(-\lambda_j^\beta t^\alpha\right)\right\}\Bigg|_{t=t_1}^{t=t_2}.
\end{equation}
Using the differentiation identities given in Proposition \ref{proposition4}(ii),
we get
$$
\left.F_j(t) \star \left\{t^{\alpha-1} E_{\alpha, \alpha}\left(-\lambda_j^\beta t^\alpha\right)\right\} \right|_{t=t_1} ^{t=t_2}
=\int_0^{t_2} F_j(\tau) (t_2-\tau)^{\alpha-1} E_{\alpha, \alpha}\left(-\lambda_j^\beta (t_2-\tau)^{\alpha}\right) d \tau
$$
$$
-\int\limits_0^{t_1} F_j(\tau) (t_1-\tau)^{\alpha-1} E_{\alpha, \alpha}\left(-\lambda_j^\beta (t_1-\tau)^{\alpha}\right) d \tau
$$
$$
= \left.\int\limits_0^{t_1} F_j(\tau) (\omega-\tau)^{\alpha-1}E_{\alpha, \alpha}\left(-\lambda_j^\beta (\omega-\tau)^\alpha\right)\right|_{\omega=t_1} ^{\omega=t_2} d \tau
$$$$
+\int\limits_{t_1}^{t_2} F_j(\tau) (t_2-\tau)^{\alpha-1} E_{\alpha, \alpha}\left(-\lambda_j^\beta (t_2-\tau)^{\alpha}\right) d \tau
$$$$
= \int\limits_0^{t_1} \int\limits_{t_1-\tau}^{t_2-\tau} F_j(\tau) \eta^{\alpha-2} E_{\alpha, \alpha-1}\left(-\lambda_j^\beta \eta^\alpha\right) d \eta d \tau
$$
\begin{equation}\label{eqht35}
+\int\limits_{t_1}^{t_2} F_j(\tau) (t_2-\tau)^{\alpha-1} E_{\alpha, \alpha}\left(-\lambda_j^\beta (t_2-\tau)^{\alpha}\right) d \tau.
\end{equation}
In addition, we have
\begin{equation}\label{eqht36}
\left.E_{\alpha, 1}\left(-\lambda_j^\beta t^\alpha\right)\right|_{t=t_1} ^{t=t_2}=-\lambda_j^\beta \int\limits_{t_1}^{t_2} \tau^{\alpha-1} E_{\alpha, \alpha}\left(-\lambda_j^\beta \tau^\alpha\right) d \tau.
\end{equation}
According to the above (\ref{eqht34})-(\ref{eqht36}), we obtain
$$
u\left(t_2, x\right)-u\left(t_1, x\right)=\sum_{j=1}^{\infty} \int\limits_0^{t_1} \int\limits_{t_1-\tau}^{t_2-\tau} F_j(\tau) \eta^{\alpha-2} E_{\alpha, \alpha-1}\left(-\lambda_j^\beta \eta^\alpha\right) d \eta d \tau e_j(x)
$$
$$
+\sum_{j=1}^{\infty} \int\limits_{t_1}^{t_2} F_j(\tau) (t_2-\tau)^{\alpha-1} E_{\alpha, \alpha}\left(-\lambda_j^\beta (t_2-\tau)^{\alpha}\right) d \tau  e_j(x)
$$
$$
-L^\beta \sum_{j=1}^{\infty} \Psi_j(T)  \varphi_j  \int\limits_{t_1}^{t_2} \tau^{\alpha-1} E_{\alpha, \alpha}\left(-\lambda_j^\beta \tau^\alpha\right) d \tau e_j(x)
$$
$$
+L^\beta\sum_{j=1}^{\infty} \Psi_j(T)   F_j(T) \star \left\{T^{\alpha-1} E_{\alpha, \alpha}\left(-\lambda_j^\beta T^\alpha\right)\right\}  \int\limits_{t_1}^{t_2} \tau^{\alpha-1} E_{\alpha, \alpha}\left(-\lambda_j^\beta \tau^\alpha\right) d \tau e_j(x)
$$
$$
+L^\beta\sum_{j=1}^{\infty} \Psi_j(T)  \{k(T) u_j(T)\} \star \left\{T^{\alpha-1} E_{\alpha, \alpha}\left(-\lambda_j^\beta T^\alpha\right)\right\}  \int\limits_{t_1}^{t_2} \tau^{\alpha-1} E_{\alpha, \alpha}\left(-\lambda_j^\beta \tau^\alpha\right) d \tau e_j(x)
$$
$$
+\sum_{j=1}^{\infty} \int\limits_0^{t_1} \int\limits_{t_1-\tau}^{t_2-\tau} k(\tau) u_j(\tau) \eta^{\alpha-2} E_{\alpha, \alpha-1}\left(-\lambda_j^\beta \eta^\alpha\right) d \eta d \tau e_j(x)
$$
$$
+\sum_{j=1}^{\infty} \int\limits_{t_1}^{t_2} k(\tau) u_j(\tau) (t_2-\tau)^{\alpha-1} E_{\alpha, \alpha}\left(-\lambda_j^\beta (t_2-\tau)^{\alpha}\right) d \tau  e_j(x)
$$
\begin{equation}\label{eqth37}
:=\mathcal{I}_1+\mathcal{I}_2+\mathcal{I}_3+\mathcal{I}_4+\mathcal{I}_5+\mathcal{I}_6+\mathcal{I}_7.
\end{equation}

Now, we will establish an estimate for $u\left(t_2, x\right)-u\left(t_1, x\right)$, and show that $u\left(t_2, x\right)-u\left(t_1, x\right)$ tends to 0 as $t_2-t_1 \rightarrow 0$.

Firstly, according to the inequalities in Proposition \ref{thproth3}, we see that the absolute value of $E_{\alpha,\alpha-1}\left(-\lambda_j^\beta t^\alpha\right)$ is bounded by $\frac{M_2}{\lambda_j^{\beta p}} t^{-\alpha p}$. This implies
$$
t^{\alpha-2}\left|E_{\alpha, \alpha-1}\left(-\lambda_j^\beta t^\alpha\right)\right| \leq t^{\alpha-2}
\frac{M_2}{\left(1+\lambda_j^\beta t^\alpha\right)^p}
\leq \frac{M_2}{ \lambda_1^{\beta p}}   t^{\alpha q-2} .
$$

Moreover, for $0<\tau<t_1$, we have
$$
\int\limits_{t_1-\tau}^{t_2-\tau} \eta^{\alpha q-2} d \eta=\frac{1}{\alpha q-1} \left(\left(t_2-\tau\right)^{\alpha q-1}-\left(t_1-\tau\right)^{\alpha q-1}\right)
$$
$$
=\frac{1}{1-\alpha q}\left(\frac{1}{\left(t_1-\tau\right)^{1-\alpha q}}-\frac{1}{\left(t_2-\tau\right)^{1-\alpha q}}\right)
$$
\begin{equation}\label{eqth38}
 =\frac{1}{1-\alpha q} \frac{\left(t_2-\tau\right)^{1-\alpha q}-\left(t_1-\tau\right)^{1-\alpha q}}{\left(t_1-\tau\right)^{1-\alpha q}\left(t_2-\tau\right)^{1-\alpha q}}.
\end{equation}

We note that the estimates
\begin{equation}\label{esth39}
\left\{\begin{array}{l}
\left(t_2-\tau\right)^{1-\alpha q}-\left(t_1-\tau\right)^{1-\alpha q} \leq\left(t_2-t_1\right)^{1-\alpha q}, \\
\left(t_1-\tau\right)^s\left(t_2-t_1\right)^{1-\alpha q-s}\leq \left(t_2-\tau\right)^{1-\alpha q},
\end{array}\right.
\end{equation}
hold.  Inequality (\ref{esth39}) can be showed easily from $0<\alpha q<1$, and $1-\alpha q>s>0$. This is follows from $f(t)=t^\kappa, 0<\kappa<1: f\left(t_2\right)-f\left(t_1\right) \leq f\left(t_2-t_1\right)$ if $0<t_1<t_2$.

Therefore, we will examine the above statements
$$
\left\|\mathcal{I}_1\right\|   \leq \int_0^{t_1}\left\|\sum_{j=1}^{\infty}   \int\limits_{t_1-\tau}^{t_2-\tau} F_j(\tau) \eta^{\alpha-2} E_{\alpha, \alpha-1}\left(-\lambda_j^\beta \eta^\alpha\right) d \eta d \tau e_j(x)\right\| d \tau
$$$$
\leq  \frac{M_2}{ \lambda_1^{\beta p}}  \int\limits_0^{t_1} \int\limits_{t_1-\tau}^{t_2-\tau} \eta^{\alpha q-2} d \eta \|F(\tau, \cdot)\| d \tau.
$$
Using (\ref{eqth38}) and (\ref{esth39}) in the last integral of the above inequality, we get
$$
\left\|\mathcal{I}_1\right\|   \leq    \frac{M_2 \left(t_2-t_1\right)^s}{ (1-\alpha q)\lambda_1^{\beta p}}   \int_0^{t_1}\|F(\tau, \cdot)\|\left(t_1-\tau\right)^{\alpha q -s-1} d \tau
$$
\begin{equation}\label{est40}
  \leq  \frac{M_2 \left(t_2-t_1\right)^s}{ (1-\alpha q)\lambda_1^{\beta p}} \|F\|_{\mathcal{D}_{2, \alpha q-s}}.
\end{equation}

Secondly, an estimate for the term $\mathcal{I}_2$ can be shown using inequalities in Proposition \ref{thproth3} as follows
$$
\left\|\mathcal{I}_2\right\|  \leq
\int\limits_{t_1}^{t_2}\left\|\sum_{j=1}^{\infty}   F_j(\tau) (t_2-\tau)^{\alpha-1} E_{\alpha, \alpha}\left(-\lambda_j^\beta (t_2-\tau)^{\alpha}\right) e_j(x)\right\| d \tau
$$
$$
\leq \frac{M_2}{ \lambda_1^{\beta p}} \int\limits_{t_1}^{t_2}\|F(\tau, \cdot)\|\left(t_2-\tau\right)^{\alpha q-1} d \tau
$$
\begin{equation}\label{est41}
\leq \frac{M_2 \left(t_2-t_1\right)^s}{ \lambda_1^{\beta p}}  \int\limits_{t_1}^{t_2}\|F(\tau, \cdot)\|\left(t_2-\tau\right)^{\alpha q-s-1} d \tau \leq \frac{M_2 \left(t_2-t_1\right)^s}{ \lambda_1^{\beta p}}\|F\|_{\mathcal{D}_{2, \alpha q-s}}.
\end{equation}

 Third, we will estimate the term $\mathcal{I}_3$. We have
$$
\left\|\mathcal{I}_3\right\|=\left\|L^\beta \sum_{j=1}^{\infty} \Psi_j(T)  \varphi_j  \int\limits_{t_1}^{t_2} \tau^{\alpha-1} E_{\alpha, \alpha}\left(-\lambda_j^\beta \tau^\alpha\right) d \tau e_j(x)\right\|
$$
$$
\leq \left\{
\sum_{j=1}^{\infty}  \varphi_j^2 \Psi_j^2(T)\left[\int\limits_{t_1}^{t_2} \tau^{\alpha-1} E_{\alpha, \alpha}\left(-\lambda_j^\beta \tau^\alpha\right) d \tau\right]^2 \lambda_j^{2 \beta}\right\}^{1 / 2}
$$
$$
\leq C_{\kappa} M_2  \left\{ \sum_{j=1}^{\infty}  \varphi_j^2  \lambda_j^{2 \beta} \left[\int\limits_{t_1}^{t_2} \tau^{\alpha-1} \lambda_j^{-\beta p} \tau^{-\alpha p} d \tau\right]^2 \right\}^{1 / 2}
$$
$$
\leq C_{\kappa} M_2  \left\{ \sum_{j=1}^{\infty}  \varphi_j^2  \lambda_j^{2 \beta q} \left[\int\limits_{t_1}^{t_2} \tau^{\alpha q-1}  d \tau\right]^2 \right\}^{1 / 2}
$$
$$
\leq \frac{C_{\kappa} M_2}{\alpha q}   \left\{ \sum_{j=1}^{\infty}  \varphi_j^2  \lambda_j^{2 \beta q} \left[ t_2^{\alpha q}-t_1^{\alpha q}  \right]^2 \right\}^{1 / 2}.
$$
From inequality (\ref{esth39}), we have
\begin{equation}\label{est42}
\left\|\mathcal{I}_3\right\| \leq
\frac{C_{\kappa} M_2 T^{\alpha q}}{\alpha q}t^{-\alpha q} \left( t_2-t_1 \right)^{\alpha q}
\|\varphi\|_{D\left(L^{\beta q}\right)}.
\end{equation}

Fourth, we proceed to estimate
$\mathcal{I}_4$ and $\mathcal{I}_5$. Using the estimation method for $\mathcal{I}_3$ and inequalities in Proposition \ref{thproth3}, we get the following estimate for $\mathcal{I}_4$ and $\mathcal{I}_5$
$$
\left\|\mathcal{I}_4\right\|
\leq \int\limits_0^T \Bigg\{
\sum_{j=1}^{\infty} \lambda_j^{2 \beta} F_j^2(\tau) \Psi_j^2(T)(T-\tau)^{2\alpha-2} \left(E_{\alpha, \alpha}\left(-\lambda_j^\beta (T-\tau)^\alpha\right)\right)^2
$$
$$
\left[\int\limits_{t_1}^{t_2} \eta^{\alpha-1} E_{\alpha, \alpha}\left(-\lambda_j^\beta \eta^\alpha\right) d \eta\right]^2 \Bigg\}^{1 / 2}  d \tau
\leq  C_{\kappa} M_2 \int\limits_0^T \Bigg\{
\sum_{j=1}^{\infty} \lambda_j^{2 \beta} F_j^2(\tau)
$$
$$
 (T-\tau)^{2\alpha-2} \frac{1}{\lambda_j^{2\beta p}}
(T-\tau)^{-2\alpha p}\left[\int\limits_{t_1}^{t_2} \eta^{\alpha-1} \lambda_j^{-\beta q} \eta^{-\alpha q} d \eta\right]^2 \Bigg\}^{1 / 2}  d \tau
$$
$$
\leq  C_{\kappa} M_2 \int\limits_0^T \Bigg\{
\sum_{j=1}^{\infty} \lambda_j^{2 \beta} F_j^2(\tau) (T-\tau)^{2\alpha q-2}
 \frac{T^\alpha}{(\alpha q)^2 \lambda_j^{2\beta}}  \left[t_1^{-\alpha q}-t_2^{-\alpha q} \right]^2 \Bigg\}^{1 / 2}  d \tau
$$
$$
\leq  \frac{ C_{\kappa} M_2 T^\alpha}{\alpha q }  \int\limits_0^T   \|F(\tau, \cdot)\| (T-\tau)^{\alpha q-1}
\frac{t_2^{\alpha q}-t_1^{\alpha q}}{t_1^{\alpha q}t_2^{\alpha q} }    d \tau
$$
\begin{equation}\label{tfor43}
\leq  \frac{ C_{\kappa} M_2 T^\alpha}{\alpha q }  \frac{t_2^{\alpha q}-t_1^{\alpha q}}{t_1^{2\alpha q}} \|F\|_{\mathcal{D}_{2, \alpha q}}
\leq  \frac{ C_{\kappa} M_2 T^{\alpha+s}}{\alpha q }  t_1^{-2\alpha q}(t_2 -t_1)^{\alpha q}\|F\|_{\mathcal{D}_{2, \alpha q-s}}.
\end{equation}
$$
\left\|\mathcal{I}_5\right\|
\leq \int\limits_0^T \Bigg\{
\sum_{j=1}^{\infty} \lambda_j^{2 \beta} k^2(\tau) u_j^2(\tau) \Psi_j^2(T)(T-\tau)^{2\alpha-2} \left(E_{\alpha, \alpha}\left(-\lambda_j^\beta (T-\tau)^\alpha\right)\right)^2
$$
$$
\left[\int\limits_{t_1}^{t_2} \eta^{\alpha-1} E_{\alpha, \alpha}\left(-\lambda_j^\beta \eta^\alpha\right) d \eta\right]^2 \Bigg\}^{1 / 2}  d \tau
\leq  C_{\kappa} M_2 \int\limits_0^T \Bigg\{
\sum_{j=1}^{\infty} \lambda_j^{2 \beta} k^2(\tau) u_j^2(\tau)
$$
$$
 (T-\tau)^{2\alpha-2} \frac{1}{\lambda_j^{2\beta p}}
(T-\tau)^{-2\alpha p}\left[\int\limits_{t_1}^{t_2} \eta^{\alpha-1} \lambda_j^{-\beta q} \eta^{-\alpha q} d \eta\right]^2 \Bigg\}^{1 / 2}  d \tau
$$
$$
\leq  C_{\kappa} M_2 \int\limits_0^t \Bigg\{
\sum_{j=1}^{\infty} \lambda_j^{2 \beta} k^2(\tau) u_j^2(\tau) (T-\tau)^{2\alpha q-2}
 \frac{T^\alpha}{(\alpha q)^2 \lambda_j^{2\beta}}  \left[t_1^{-\alpha q}-t_2^{-\alpha q} \right]^2 \Bigg\}^{1 / 2}  d \tau
$$
$$
\leq  \frac{ C_{\kappa} M_2 T^\alpha}{\alpha q }  \int\limits_0^T  (T-\tau)^{\alpha q-1}  |k(\tau)|  \|u(\tau, \cdot)\|  \frac{t_2^{\alpha q}-t_1^{\alpha q}}{t_1^{\alpha q}t_2^{\alpha q} }   d \tau
$$
$$
\leq  \frac{ C_{\kappa} M_2 T^{\alpha}}{\alpha q }  t_1^{-2\alpha q}(t_2 -t_1)^{\alpha q}  \int\limits_0^T  (T-\tau)^{\alpha q-1}  |k(\tau)|  \|u(\tau, \cdot)\|    d \tau
$$
\begin{equation*}
\leq  \frac{ C_{\kappa} M_2 T^{\alpha}}{\alpha q }  t_1^{-2\alpha q}(t_2 -t_1)^{\alpha q}  \int\limits_0^T  (T-\tau)^{\alpha q-1}  |k(\tau)|  \|u(\tau, \cdot)\|    d \tau.
\end{equation*}
We note that the Sobolev embedding $D(L^{\beta\left(p-p'\right)}) \hookrightarrow L_2(\Omega)$ hold for $0< \beta\left(p-p'\right) < 1.$ Furthermore, based on this embedding, the inequality $\|\cdot\|_{L_2(\Omega)}\leq \|\cdot\|_{D(L^{\beta\left(p-p'\right)})}$ is satisfied. Using the above inequality and (\ref{thineq35}), we get the following estimate for $\mathcal{I}_5$
$$
\mathcal{I}_5 \leq  \frac{ C_{\kappa}  M_2 T^{\alpha q + \alpha q'+s} \|k\|_{C[[0,T]}}{\alpha q}  t_1^{-2\alpha q}(t_2 -t_1)^{\alpha q} B(\alpha q,1-\alpha q')
$$
\begin{equation}\label{esthth43}
\frac{\Phi_1 \left(\|\varphi\|_{D\left(L^{\beta p}\right)}+  \|F\|_{\mathcal{D}_{2, \alpha q-s}}\right)}{2-\exp{\left\{ \frac{\Phi_1 \|k\|_{C[0,T]}T^{\alpha q +\alpha q'}}{ \alpha q +\alpha q'}\right\}}} \exp{\left\{ \frac{\Phi_1 \|k\|_{C[0,T]}t^{\alpha q +\alpha q'}}{ \alpha q +\alpha q'}\right\}},
\end{equation}
where $B(\cdot, \cdot)$ is the Beta function.

Now let's estimate $\mathcal{I}_6$ and $\mathcal{I}_7$ in the same way. First, we'll estimate $\mathcal{I}_6$
$$
\left\|\mathcal{I}_6\right\|   \leq \int_0^{t_1}\left\|\sum_{j=1}^{\infty}   \int\limits_{t_1-\tau}^{t_2-\tau} k(\tau) u_j(\tau) \eta^{\alpha-2} E_{\alpha, \alpha-1}\left(-\lambda_j^\beta \eta^\alpha\right) d \eta d \tau e_j(x)\right\| d \tau
$$
$$
\leq  \frac{M_2 }{ \lambda_1^{\beta p}}  \int_0^{t_1} |k(\tau)|\|u(\tau, \cdot)\| d \tau \int\limits_{t_1-\tau}^{t_2-\tau} \eta^{\alpha q-2} d \eta.
$$
Using (\ref{eqth38}) and (\ref{esth39}) in the last integral of the above inequality, we get
$$
\left\|\mathcal{I}_6\right\| \leq  \frac{M_2 \|k\|_{C[[0,T]} \left(t_2-t_1\right)^s}{(1-\alpha q) \lambda_1^{\beta p}}  \int_0^{t_1}\|u(\tau, \cdot)\| \left(t_1-\tau\right)^{\alpha q -s-1} d \tau ,
$$
Let's estimate $\mathcal{I}_7$
$$
\left\|\mathcal{I}_7\right\|  \leq
\int\limits_{t_1}^{t_2}\left\|\sum_{j=1}^{\infty}  k(\tau) u_j(\tau) (t_2-\tau)^{\alpha-1} E_{\alpha, \alpha}\left(-\lambda_j^\beta (t_2-\tau)^{\alpha}\right) e_j(x)\right\| d \tau
$$
$$
\leq \frac{M_2 \|k\|_{C[[0,T]}}{ \lambda_1^{\beta p}} \int\limits_{t_1}^{t_2}\|u(\tau, \cdot)\|\left(t_2-\tau\right)^{\alpha q-s-1}\left(t_2-\tau\right)^{s} d \tau
$$
$$
\leq \frac{M_2 \|k\|_{C[[0,T]} \left(t_2-t_1\right)^{s}}{ \lambda_1^{\beta p}} \int\limits_{t_1}^{t_2}\|u(\tau, \cdot)\|\left(t_2-\tau\right)^{\alpha q-s-1} d \tau.
$$
We note that the Sobolev embedding $D(L^\beta) \hookrightarrow L_2(\Omega)$ hold for $0< \beta< 1.$ Furthermore, based on this embedding, the inequality $\|\cdot\|_{L_2(\Omega)}\leq \|\cdot\|_{D(L^\beta)}$ is satisfied. Using the above inequality and (\ref{thineq35}), we get the following estimates for $\mathcal{I}_6$ and $\mathcal{I}_7$
$$
\left\|\mathcal{I}_6\right\|   \leq    \frac{C_1 M_2 \|k\|_{C[[0,T]} \left(t_2-t_1\right)^s}{ (1-\alpha q)\lambda_1^{\beta p}} t_1^{\alpha(q - q') - s}   B(1 - \alpha q', \alpha q- s)
$$
\begin{equation}\label{est44}
\times \frac{\Phi_1 \left(\|\varphi\|_{D\left(L^{\beta p}\right)}+  \|F\|_{\mathcal{D}_{2, \alpha q-s}}\right)}{2-\exp{\left\{ \frac{\Phi_1 \|k\|_{C[0,T]}T^{\alpha q +\alpha q'}}{ \alpha q +\alpha q'}\right\}}} \exp{\left\{ \frac{\Phi_1 \|k\|_{C[0,T]}t^{\alpha q +\alpha q'}}{ \alpha q +\alpha q'}\right\}},
\end{equation}
$$
\left\|\mathcal{I}_7\right\|
\leq \frac{C_1 M_2 \|k\|_{C[[0,T]} \left(t_2-t_1\right)^{s}}{ \lambda_1^{\beta p}} t_2^{\alpha(q- q') - s}  B(1 - \alpha q', \alpha q- s)
$$
\begin{equation}\label{est45}
\times \frac{\Phi_1 \left(\|\varphi\|_{D\left(L^{\beta p}\right)}+  \|F\|_{\mathcal{D}_{2, \alpha q-s}}\right)}{2-\exp{\left\{ \frac{\Phi_1 \|k\|_{C[0,T]}T^{\alpha q +\alpha q'}}{ \alpha q +\alpha q'}\right\}}} \exp{\left\{ \frac{\Phi_1 \|k\|_{C[0,T]}t^{\alpha q +\alpha q'}}{ \alpha q +\alpha q'}\right\}}.
\end{equation}

Using (\ref{est40})-(\ref{est45}), we estimate the difference $u\left(t_2, x\right)-u\left(t_1, x\right)$ and obtain
$$
\|u\left(t_2, x\right)-u\left(t_1, x\right)\| \leq \sum\limits_{j=1}^{7}\left\|\mathcal{I}_j\right\| \leq
\frac{ (2-\alpha q) M_2}{ (1-\alpha q)\lambda_1^{\beta p}} \|F\|_{\mathcal{D}_{2, \alpha q-s}}  \left(t_2-t_1\right)^s
$$
$$
+
\frac{C_{\kappa} M_2 }{\alpha q}\left(T^{\alpha q}t^{-\alpha q}
\|\varphi\|_{D\left(L^{\beta q}\right)}+T^{\alpha+s}t_1^{-2\alpha q}\|F\|_{\mathcal{D}_{2, \alpha q-s}}\right)\left( t_2-t_1 \right)^{\alpha q}
$$
$$
+\Bigg(\frac{ C_{\kappa}  M_2 T^{\alpha q + \alpha q'+s} \|k\|_{C[[0,T]}}{\alpha q}  t_1^{-2\alpha q}(t_2 -t_1)^{\alpha q} B(\alpha q,1-\alpha q')
$$
$$
+  \frac{C_1 M_2 \|k\|_{C[[0,T]} \left(t_2-t_1\right)^s}{ (1-\alpha q)\lambda_1^{\beta p}} t_1^{\alpha(q - q') - s}   B(1 - \alpha q', \alpha q- s)
$$
$$
+\frac{C_1 M_2 \|k\|_{C[[0,T]} \left(t_2-t_1\right)^{s}}{ \lambda_1^{\beta p}} t_2^{\alpha(q- q') - s}  B(1 - \alpha q', \alpha q- s) \Bigg)
$$
$$
\times \frac{\Phi_1 \left(\|\varphi\|_{D\left(L^{\beta p}\right)}+  \|F\|_{\mathcal{D}_{2, \alpha q-s}}\right)}{2-\exp{\left\{ \frac{\Phi_1 \|k\|_{C[0,T]}T^{\alpha q +\alpha q'}}{ \alpha q +\alpha q'}\right\}}} \exp{\left\{ \frac{\Phi_1 \|k\|_{C[0,T]}t^{\alpha q +\alpha q'}}{ \alpha q +\alpha q'}\right\}}.
$$

We derive from the above that $\|u\left(t_2, x\right)-u\left(t_1, x\right)\|$ tends to $0$ as $t_2-t_1$ tends $0$ for $0<t_1<t_2\leq T$.  Thus, $u(t, x)$ belongs to the set $C\left((0,T];L_2(\Omega)\right)$. On the other hand, according to the condition $0<s<\min (\alpha q, 1-\alpha q)$, we have $0< \alpha q-s< \alpha q$ and the Sobolev embedding $\mathcal{D}_{2, \alpha q-s}\left((0,T)\times \Omega\right) \subset
 \mathcal{D}_{2, \alpha q}\left((0,T)\times \Omega\right)$.
Therefore, the functions $\varphi(x)$ and $F(t, x)$, satisfying the conditions of Lemma \ref{lemmth4}, also satisfy the conditions of Lemma \ref{lemmth1}. Hence, the inequality (\ref{est25}) holds, i.e.,
\begin{equation}\label{est46}
t^{\alpha q}\|u(t, x)\|\leq  \frac{\Phi_2 \left(\|\varphi\|_{D\left(L^{\beta p}\right)}+  \|F\|_{\mathcal{D}_{2, \alpha q-s}}\right)}{2-\exp{\left\{ \frac{\Phi_1 \|k\|_{C[0,T]}T^{\alpha q +\alpha q'}}{ \alpha q +\alpha q'}\right\}}} \exp{\left\{ \frac{\Phi_1 \|k\|_{C[0,T]}T^{\alpha q +\alpha q'}}{ \alpha q +\alpha q'}\right\}}, \,\, t>0.
\end{equation}
Therefore, the function $u(t, x) $ belongs to $C^{\alpha q}\left((0,T];L_2(\Omega)\right)$. In addition, by taking the supremum on both sides of (\ref{est46}) at $(0, T]$, we obtain the inequality (\ref{theq29}).  Lemma \ref{lemmth4} is proven.
\end{proof}

\begin{lemma}\label{lemmth5}
 Let  satisfied  conditions A1), A2).
If $\varphi \in D\left(L^{\beta p}\right)$, $k\in C[0,T]$ and $F \in \mathcal{D}_{2, \alpha q-s}((0,T) \times \Omega)$, then problem (\ref{eqth11}) has a unique solution $u(t, x)$, such that  $u \in   C^s\left([0, T] ; D\left(L^{-\beta q}\right)\right)$ and
$$
\|u\left(t, x\right)\|_{C^{s}\left([0,T];D\left(L^{-\beta q'}\right)\right)}
\leq
\frac{C_{\kappa} M_2 T^{\alpha q-s}}{\alpha q \lambda_1^{2 \beta q'}}
\|\varphi\|_{D\left(L^{\beta q}\right)}
+\frac{(2-\alpha q)M_2 }{ (1-\alpha q)\lambda_1^{\beta (p+q')}} \|F\|_{\mathcal{D}_{2, \alpha q-s}}
$$
$$
+\frac{ C_{\kappa} M_2 T^{\alpha + \alpha q-s}}{\alpha q }  t_1^{-2\alpha q} \|F\|_{\mathcal{D}_{2, \alpha q-s}}
+\Bigg(\frac{ C_{\kappa} T^{\alpha + \alpha q+\alpha q'}  }{\alpha q }  t_1^{-2\alpha q}  B(\alpha q,1-\alpha q')
$$$$
+\frac{t_1^{\alpha(q-q') - s} }{ (1-\alpha q)\lambda_1^{\beta (p+q')}}   B(1 - \alpha q', \alpha q- s)
 +\frac{t_2^{\alpha(q- q') - s} }{ \lambda_1^{\beta (p+q')}}   B(1 - \alpha q', \alpha q- s)\Bigg)M_2 \|k\|_{C[0,T]}
$$
\begin{equation}\label{theq30}
\times \frac{\Phi_1 \left(\|\varphi\|_{D\left(L^{\beta p}\right)}+  \|F\|_{\mathcal{D}_{2, \alpha q-s}}\right)}{2-\exp{\left\{ \frac{\Phi_1 \|k\|_{C[0,T]}T^{\alpha q +\alpha q'}}{ \alpha q +\alpha q'}\right\}}} \exp{\left\{ \frac{\Phi_1 \|k\|_{C[0,T]}T^{\alpha q +\alpha q'}}{ \alpha q +\alpha q'}\right\}}.
\end{equation}
\end{lemma}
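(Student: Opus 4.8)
The plan is to follow the same route as in Lemma~\ref{lemmth4}: start from the spectral identity (\ref{eqth12}) and the decomposition (\ref{eqth37}) of $u(t_2,x)-u(t_1,x)$ into the seven pieces $\mathcal{I}_1,\dots,\mathcal{I}_7$, but now estimate each $\mathcal{I}_k$ in the weaker norm $\|\cdot\|_{D(L^{-\beta q'})}$ instead of $\|\cdot\|_{L_2(\Omega)}$ (here, as in Lemmas~\ref{lemmth3} and~\ref{lemmth4}, $p',q'$ are the exponents of conditions A3)--A4)). Concretely, for $\mathcal{I}_k=\sum_j(\mathcal{I}_k)_j\,e_j(x)$ one writes $\|\mathcal{I}_k\|_{D(L^{-\beta q'})}^2=\sum_j\lambda_j^{-2\beta q'}\,|(\mathcal{I}_k)_j|^2$, and the extra weight $\lambda_j^{-2\beta q'}$ is what does the work. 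In $\mathcal{I}_1,\mathcal{I}_2,\mathcal{I}_6,\mathcal{I}_7$ it merely contributes a constant factor $\lambda_1^{-\beta q'}$ (since $\lambda_j\ge\lambda_1$) on top of the bounds already obtained in (\ref{est40}), (\ref{est41}) and in the $\mathcal{I}_6,\mathcal{I}_7$ estimates of Lemma~\ref{lemmth4}; in $\mathcal{I}_3,\mathcal{I}_4,\mathcal{I}_5$, which carry a factor $L^\beta$ (hence $\lambda_j^\beta$), it cancels part of that growth: after the Mittag-Leffler bound $E_{\alpha,\alpha}(-\lambda_j^\beta\sigma^\alpha)\le M_2\lambda_j^{-\beta p}\sigma^{-\alpha p}$ from Proposition~\ref{thproth3} one is left with $\lambda_j^{2\beta(q-q')}$ inside the sums, and since $q-q'\le-s/\alpha<p$ under A3)--A4), this is $\le\lambda_1^{2\beta(q-q'-p)}\lambda_j^{2\beta p}$, so the sums are controlled by $\|\varphi\|_{D(L^{\beta p})}$ (or $\|\varphi\|_{D(L^{\beta q})}$ after the obvious reshuffling) and by $\|F\|_{\mathcal{D}_{2,\alpha q-s}}$.

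Next I would treat the time factors exactly as in Lemma~\ref{lemmth4}. For $\mathcal{I}_1$ and $\mathcal{I}_6$ the inner integral $\int_{t_1-\tau}^{t_2-\tau}\eta^{\alpha q-2}\,d\eta$ is evaluated via (\ref{eqth38}) and the two inequalities in (\ref{esth39}), which extract the factor $(t_2-t_1)^s$ and leave a kernel $(t_1-\tau)^{\alpha q-s-1}$ integrated against $\|F(\tau,\cdot)\|$, resp.\ $|k(\tau)|\,\|u(\tau,\cdot)\|$; for $\mathcal{I}_2,\mathcal{I}_7$ the integral over $[t_1,t_2]$ is handled by inserting $1=(t_2-\tau)^{-s}(t_2-\tau)^s\le(t_2-t_1)^{-s}(t_2-\tau)^s$ as in (\ref{est41}); and for $\mathcal{I}_3,\mathcal{I}_4,\mathcal{I}_5$ the integral $\int_{t_1}^{t_2}\eta^{\alpha q-1}\,d\eta=\tfrac1{\alpha q}(t_2^{\alpha q}-t_1^{\alpha q})$ is bounded using $t_2^{\alpha q}-t_1^{\alpha q}\le(t_2-t_1)^{\alpha q}\le T^{\alpha q-s}(t_2-t_1)^s$, valid because $\alpha q<1$ and $0<s<\alpha q$. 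In the three terms $\mathcal{I}_5,\mathcal{I}_6,\mathcal{I}_7$ that contain $u$ itself I would then plug in the a priori bound (\ref{thineq35}) for $\|t^{\alpha q'}u(t,\cdot)\|_{D(L^{\beta(p-p')})}$ from Lemma~\ref{lemmth3}, together with the embedding $D(L^{\beta(p-p')})\hookrightarrow L_2(\Omega)$ (legitimate since $p'<p$), so the remaining time integrals reduce to the Beta integrals $B(\alpha q,1-\alpha q')$ and $B(1-\alpha q',\alpha q-s)$ that appear in (\ref{esthth43})--(\ref{est45}).

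Collecting the seven estimates produces a bound for $\|u(t_2,\cdot)-u(t_1,\cdot)\|_{D(L^{-\beta q'})}$ of the form $C\,(t_2-t_1)^s$, with $C$ exactly the sum of constants written on the right-hand side of (\ref{theq30}); dividing by $(t_2-t_1)^s$ and taking the supremum over $0\le t_1<t_2\le T$ yields (\ref{theq30}) and shows $u\in C^s([0,T];D(L^{-\beta q'}))$ — in contrast with Lemma~\ref{lemmth4}, where the $L_2$-estimate only gave Hölder continuity on the half-open interval $(0,T]$ after weighting by $t^{\alpha q}$. Existence and uniqueness of the mild solution are already furnished by Lemma~\ref{lemmth2} once one notes that $\mathcal{D}_{2,\alpha q-s}\subset\mathcal{D}_{2,\alpha q}$, so no new argument is needed there. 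The step I expect to be the main obstacle is the exponent bookkeeping in $\mathcal{I}_3$--$\mathcal{I}_5$: one must check simultaneously that the weight $\lambda_j^{-2\beta q'}$ keeps the $\lambda_j$-sums square-summable and controlled by the stated data norms, and that, after pulling out the factor $(t_2-t_1)^s$, the residual powers of $t_1$ (namely $t_1^{-2\alpha q}$ and $t_1^{\alpha(q-q')-s}$) are precisely those displayed in (\ref{theq30}); keeping the roles of $p,q,p',q',s$ straight there is where an error is most likely to creep in.
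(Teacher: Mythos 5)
Your proposal matches the paper's own proof in essentially every respect: the same decomposition (\ref{eqth37}) into $\mathcal{I}_1,\dots,\mathcal{I}_7$ estimated in the $D\left(L^{-\beta q'}\right)$ norm, the same use of (\ref{eqth38})--(\ref{esth39}) and of the a priori bound (\ref{thineq35}) (via the embedding into $L_2(\Omega)$) for the terms containing $u$, the same Beta-function bookkeeping, and the final division by $(t_2-t_1)^s$ to reach (\ref{theq30}). The only blemish is the inequality you display for $\mathcal{I}_2,\mathcal{I}_7$, which is stated with the direction reversed; the intended (and correct) manipulation is the one in (\ref{est41}), namely $(t_2-\tau)^{\alpha q-1}=(t_2-\tau)^{\alpha q-s-1}(t_2-\tau)^{s}\le(t_2-\tau)^{\alpha q-s-1}(t_2-t_1)^{s}$.
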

\begin{proof}
We prove $u \in C^s\left([0, T] ; D\left(L^{-\beta q'}\right)\right)$. In this step, we establish the continuity of the solution on the closed interval $[0, T]$. Now, we consider $0 \leq t_1<t_2 \leq T$. If $t_1=0$, then $\mathcal{I}_1=0$. If $t_1>0$, then combining (\ref{eqth5555}) in the same way as in (\ref{est40}) gives
$$
\left\|\mathcal{I}_1\right\|_{D\left(L^{-\beta q'}\right)}   \leq \int_0^{t_1}\left\|\sum_{j=1}^{\infty}   \int\limits_{t_1-\tau}^{t_2-\tau} F_j(\tau) \eta^{\alpha-2} E_{\alpha, \alpha-1}\left(-\lambda_j^\beta \eta^\alpha\right) d \eta   e_j(x)\right\|_{D\left(L^{-\beta q'}\right)} d \tau
$$
$$
\leq \int_0^{t_1}\left\{\sum_{j=1}^{\infty} \lambda_j^{-2\beta q'}  \left(\int\limits_{t_1-\tau}^{t_2-\tau} F_j(\tau) \eta^{\alpha-2} E_{\alpha, \alpha-1}\left(-\lambda_j^\beta \eta^\alpha\right) d \eta \right)_{-\beta q',\beta q'}^2 \right\}^{\frac{1}{2}} d \tau
$$
$$
\leq \int_0^{t_1}\left\{\sum_{j=1}^{\infty} \lambda_j^{-2\beta q'}  F_j^2(\tau) \left|\int\limits_{t_1-\tau}^{t_2-\tau} \eta^{\alpha-2} E_{\alpha, \alpha-1}\left(-\lambda_j^\beta \eta^\alpha\right) d \eta \right|^2 \right\}^{\frac{1}{2}} d \tau
$$
$$
\leq  \frac{M_2}{ \lambda_1^{\beta (p+q')}}  \int\limits_0^{t_1} \int\limits_{t_1-\tau}^{t_2-\tau} \eta^{\alpha q-2} d \eta \|F(\tau, \cdot)\| d \tau.
$$
Using (\ref{eqth38}) and (\ref{esth39}) in the last integral of the above inequality, we get
$$
\left\|\mathcal{I}_1\right\|_{D\left(L^{-\beta q'}\right)}   \leq    \frac{M_2 \left(t_2-t_1\right)^s}{ (1-\alpha q)\lambda_1^{\beta (p+q')}}   \int_0^{t_1}\|F(\tau, \cdot)\|\left(t_1-\tau\right)^{\alpha q -s-1} d \tau
$$
\begin{equation}\label{est48}
  \leq  \frac{M_2 \left(t_2-t_1\right)^s}{ (1-\alpha q)\lambda_1^{\beta (p+q')}} \|F\|_{\mathcal{D}_{2, \alpha q-s}}.
\end{equation}

On the other hand, the inequality (\ref{est41}) also holds for all $0 \leq t_1<t_2 \leq T$. Hence, the same way as in the proof (\ref{est41}) shows that
$$
\left\|\mathcal{I}_2\right\|_{D\left(L^{-\beta q'}\right)}  \leq
\int\limits_{t_1}^{t_2}\left\|\sum_{j=1}^{\infty}   F_j(\tau) (t_2-\tau)^{\alpha-1} E_{\alpha, \alpha}\left(-\lambda_j^\beta (t_2-\tau)^{\alpha}\right) e_j \right\|_{D\left(L^{-\beta q'}\right)} d \tau
$$
$$
 \leq
\int\limits_{t_1}^{t_2}\left\|\sum_{j=1}^{\infty}   F_j(\tau)  E_{\alpha, \alpha}\left(-\lambda_j^\beta (t_2-\tau)^{\alpha}\right) e_j \right\|_{D\left(L^{-\beta q'}\right)} (t_2-\tau)^{\alpha-1} d \tau
$$
$$
 \leq
\int\limits_{t_1}^{t_2}\left\{\left(\sum_{j=1}^{\infty}  \lambda_j^{-\beta q'} F_j(\tau)  E_{\alpha, \alpha}\left(-\lambda_j^\beta (t_2-\tau)^{\alpha}\right) \right)^2 \right\}^{\frac{1}{2}} (t_2-\tau)^{\alpha-1} d \tau
$$
$$
\leq \frac{M_2}{ \lambda_1^{\beta (p+q')}} \int\limits_{t_1}^{t_2}\|F(\tau, \cdot)\|\left(t_2-\tau\right)^{\alpha q-1} d \tau
$$
\begin{equation}\label{est49}
\leq \frac{M_2 \left(t_2-t_1\right)^s}{ \lambda_1^{\beta (p+q')}}  \int\limits_{t_1}^{t_2}\|F(\tau, \cdot)\|\left(t_2-\tau\right)^{\alpha q-s-1} d \tau
\leq
\frac{M_2 \left(t_2-t_1\right)^s}{ \lambda_1^{\beta (p+q')}}\|F\|_{\mathcal{D}_{2, \alpha q-s}}.
\end{equation}

Now, we will establish estimate for   $\mathcal{I}_3$. We have
$$
\left\|\mathcal{I}_3\right\|_{D\left(L^{-\beta q'}\right)}=\left\|L^\beta \sum_{j=1}^{\infty} \Psi_j(T)  \varphi_j  \int\limits_{t_1}^{t_2} \tau^{\alpha-1} E_{\alpha, \alpha}\left(-\lambda_j^\beta \tau^\alpha\right) d \tau e_j(x)\right\|_{D\left(L^{-\beta q'}\right)}
$$
$$
\leq \left\{
\sum_{j=1}^{\infty}  \varphi_j^2 \Psi_j^2(T)\left[\int\limits_{t_1}^{t_2} \tau^{\alpha-1} E_{\alpha, \alpha}\left(-\lambda_j^\beta \tau^\alpha\right) d \tau\right]^2 \lambda_j^{2 \beta} \lambda_j^{-2 \beta q'}\right\}^{1 / 2}
$$
$$
\leq \frac{C_{\kappa} M_2}{\lambda_1^{2 \beta q'}}  \left\{ \sum_{j=1}^{\infty}  \varphi_j^2  \lambda_j^{2 \beta} \left[\int\limits_{t_1}^{t_2} \tau^{\alpha-1} \lambda_j^{-\beta p} \tau^{\alpha p} d \tau\right]^2 \right\}^{1 / 2}
$$
$$
\leq \frac{C_{\kappa} M_2}{\lambda_1^{2 \beta q'}}  \left\{ \sum_{j=1}^{\infty}  \varphi_j^2  \lambda_j^{2 \beta q} \left[\int\limits_{t_1}^{t_2} \tau^{\alpha q-1}  d \tau\right]^2 \right\}^{1 / 2}
$$
$$
\leq \frac{C_{\kappa} M_2}{\alpha q \lambda_1^{2 \beta q'}}   \left\{ \sum_{j=1}^{\infty}  \varphi_j^2  \lambda_j^{2 \beta q} \left[ t_2^{\alpha q}-t_1^{\alpha q}  \right]^2 \right\}^{1 / 2}.
$$
From inequality (\ref{esth39}) and
$$
\left( t_2-t_1 \right)^{\alpha q} = \left( t_2-t_1 \right)^{\alpha q-s} \left( t_2-t_1 \right)^{s} \leq T^{\alpha q-s} \left( t_2-t_1 \right)^{s},
$$
we have
\begin{equation}\label{est50}
\left\|\mathcal{I}_3\right\|_{D\left(L^{-\beta q'}\right)} \leq
\frac{C_{\kappa} M_2 T^{\alpha q-s}}{\alpha q \lambda_1^{2 \beta q'}} \left( t_2-t_1 \right)^{s}
\|\varphi\|_{D\left(L^{\beta q}\right)}.
\end{equation}

Fourthly, we proceed to estimate
$\mathcal{I}_4$. Using the estimation method for $\mathcal{I}_3$, we get the following estimate for $\mathcal{I}_4$
$$
\left\|\mathcal{I}_4\right\|_{D\left(L^{-\beta q'}\right)}
\leq \int\limits_0^T \Bigg\{
\sum_{j=1}^{\infty} \lambda_j^{2 \beta} \lambda_j^{-2 \beta q'} F_j^2(\tau) \Psi_j^2(T)(T-\tau)^{2\alpha-2} \left(E_{\alpha, \alpha}\left(-\lambda_j^\beta (T-\tau)^\alpha\right)\right)^2
$$
$$
\left[\int\limits_{t_1}^{t_2} \eta^{\alpha-1} E_{\alpha, \alpha}\left(-\lambda_j^\beta \eta^\alpha\right) d \eta\right]^2 \Bigg\}^{1 / 2}  d \tau
\leq  \frac{C_{\kappa} M_2}{ \lambda_1^{\beta q'}} \int\limits_0^T \Bigg\{
\sum_{j=1}^{\infty} \lambda_j^{2 \beta} F_j^2(\tau)
$$
$$
 (T-\tau)^{2\alpha-2} \frac{1}{\lambda_j^{2\beta p}}
(T-\tau)^{-2\alpha p}\left[\int\limits_{t_1}^{t_2} \eta^{\alpha-1} \lambda_j^{-\beta q} \eta^{-\alpha q} d \eta\right]^2 \Bigg\}^{1 / 2}  d \tau
$$
$$
\leq  \frac{C_{\kappa} M_2}{ \lambda_1^{\beta q'}} \int\limits_0^T \Bigg\{
\sum_{j=1}^{\infty} \lambda_j^{2 \beta} F_j^2(\tau) (T-\tau)^{2\alpha q-2}
 \frac{T^\alpha}{(\alpha q)^2 \lambda_j^{2\beta}}  \left[t_1^{-\alpha q}-t_2^{-\alpha q} \right]^2 \Bigg\}^{1 / 2}  d \tau
$$
$$
\leq  \frac{ C_{\kappa} M_2 T^\alpha}{\alpha q  \lambda_1^{\beta q'}}  \int\limits_0^T   \|F(\tau, \cdot)\| (T-\tau)^{\alpha q-1}
\frac{t_2^{\alpha q}-t_1^{\alpha q}}{t_1^{\alpha q}t_2^{\alpha q} }    d \tau
$$
\begin{equation}\label{est51}
\leq  \frac{ C_{\kappa} M_2 T^\alpha}{\alpha q  \lambda_1^{ \beta q'}}  \frac{t_2^{\alpha q}-t_1^{\alpha q}}{t_1^{2\alpha q}} \|F\|_{\mathcal{D}_{2, \alpha q}}
\leq  \frac{ C_{\kappa} M_2 T^{\alpha + \alpha q-s}}{\alpha q }  t_1^{-2\alpha q}(t_2 -t_1)^{s}\|F\|_{\mathcal{D}_{2, \alpha q-s}}.
\end{equation}

Using the above method, we get an estimate of $\mathcal{I}_5$

$$
\left\|\mathcal{I}_5\right\|_{D\left(L^{-\beta q'}\right)}
\leq \int\limits_0^T \Bigg\{
\sum_{j=1}^{\infty} \lambda_j^{2 \beta} \lambda_j^{-2 \beta q'} k^2(\tau) u_j^2(\tau) \Psi_j^2(T)(T-\tau)^{2\alpha-2} \left(E_{\alpha, \alpha}\left(-\lambda_j^\beta (T-\tau)^\alpha\right)\right)^2
$$
$$
\left[\int\limits_{t_1}^{t_2} \eta^{\alpha-1} E_{\alpha, \alpha}\left(-\lambda_j^\beta \eta^\alpha\right) d \eta\right]^2 \Bigg\}^{1 / 2}  d \tau
\leq  \frac{C_{\kappa} M_2}{ \lambda_1^{\beta q'}} \int\limits_0^T \Bigg\{
\sum_{j=1}^{\infty} \lambda_j^{2 \beta} k^2(\tau) u_j^2(\tau)
$$
$$
 (T-\tau)^{2\alpha-2} \frac{1}{\lambda_j^{2\beta p}}
(T-\tau)^{-2\alpha p}\left[\int\limits_{t_1}^{t_2} \eta^{\alpha-1} \lambda_j^{-\beta q} \eta^{-\alpha q} d \eta\right]^2 \Bigg\}^{1 / 2}  d \tau
$$
$$
\leq  \frac{C_{\kappa} M_2}{ \lambda_1^{\beta q'}} \int\limits_0^T \Bigg\{
\sum_{j=1}^{\infty} \lambda_j^{2 \beta} k^2(\tau) u_j^2(\tau) (T-\tau)^{2\alpha q-2}
 \frac{T^\alpha}{(\alpha q)^2 \lambda_j^{2\beta}}  \left[t_1^{-\alpha q}-t_2^{-\alpha q} \right]^2 \Bigg\}^{1 / 2}  d \tau
$$
$$
\leq  \frac{ C_{\kappa} M_2 T^\alpha}{\alpha q  \lambda_1^{\beta q'}}  \int\limits_0^T  |k(\tau)|  \|u(\tau, \cdot)\| (T-\tau)^{\alpha q-1}
\frac{t_2^{\alpha q}-t_1^{\alpha q}}{t_1^{\alpha q}t_2^{\alpha q} }    d \tau
$$
\begin{equation*}
\leq  \frac{ C_{\kappa} M_2 T^{\alpha + \alpha q-s} \|k\|_{C[0,T]}}{\alpha q }  t_1^{-2\alpha q}(t_2 -t_1)^{s}\int\limits_0^T  (T-\tau)^{\alpha q-1}    \|u(\tau, \cdot)\|
  d \tau.
\end{equation*}
We note that the Sobolev embedding $D(L^\beta) \hookrightarrow L_2(\Omega)$ hold for $0< \beta< 1.$ Furthermore, based on this embedding, the inequality $\|\cdot\|_{L_2(\Omega)}\leq \|\cdot\|_{D(L^\beta)}$ is satisfied. Using the above inequality, we get the following estimates for $\mathcal{I}_5$
$$
\left\|\mathcal{I}_5\right\|_{D\left(L^{-\beta q'}\right)}  \leq  \frac{ C_{\kappa} M_2 T^{\alpha + \alpha q+\alpha q'} \|k\|_{C[0,T]}}{\alpha q }  t_1^{-2\alpha q}(t_2 -t_1)^{s} B(\alpha q,1-\alpha q')
$$
\begin{equation}\label{ththest515th}
\times\frac{\Phi_1 \left(\|\varphi\|_{D\left(L^{\beta p}\right)}+  \|F\|_{\mathcal{D}_{2, \alpha q-s}}\right)}{2-\exp{\left\{ \frac{\Phi_1 \|k\|_{C[0,T]}T^{\alpha q +\alpha q'}}{ \alpha q +\alpha q'}\right\}}} \exp{\left\{ \frac{\Phi_1 \|k\|_{C[0,T]}T^{\alpha q +\alpha q'}}{ \alpha q +\alpha q'}\right\}}.
\end{equation}

Now let's estimates $\mathcal{I}_6$ and $\mathcal{I}_7$ in the same way:
$$
\left\|\mathcal{I}_6\right\|_{D\left(L^{-\beta q'}\right)}
$$$$
\leq \int_0^{t_1}\left\|\sum_{j=1}^{\infty}   \int\limits_{t_1-\tau}^{t_2-\tau} k(\tau) u_j(\tau) \eta^{\alpha-2} E_{\alpha, \alpha-1}\left(-\lambda_j^\beta \eta^\alpha\right) d \eta d \tau e_j(x)\right\|_{D\left(L^{-\beta q'}\right)}  d \tau
$$
$$
\leq  \frac{M_2 \|k\|_{C[[0,T]}}{ \lambda_1^{\beta (p+q')}}  \int_0^{t_1}\|u(\tau, \cdot)\| d \tau \int\limits_{t_1-\tau}^{t_2-\tau} \eta^{\alpha q-2} d \eta.
$$
$$
\leq  \frac{M_2 \|k\|_{C[[0,T]} \left(t_2-t_1\right)^s}{(1-\alpha q) \lambda_1^{\beta (p+q')}}  \int_0^{t_1}\|u(\tau, \cdot)\| \left(t_1-\tau\right)^{\alpha q -s-1} d \tau,
$$
$$
\left\|\mathcal{I}_7\right\|_{D\left(L^{-\beta q'}\right)}
$$$$
\leq
\int\limits_{t_1}^{t_2}\left\|\sum_{j=1}^{\infty}  k(\tau) u_j(\tau) (t_2-\tau)^{\alpha-1} E_{\alpha, \alpha}\left(-\lambda_j^\beta (t_2-\tau)^{\alpha}\right) e_j(x)\right\|_{D\left(L^{-\beta q'}\right)} d \tau
$$
$$
\leq \frac{M_2 \|k\|_{C[[0,T]}}{ \lambda_1^{\beta (p+q')}} \int\limits_{t_1}^{t_2}\|u(\tau, \cdot)\|\left(t_2-\tau\right)^{\alpha q-s-1}\left(t_2-\tau\right)^{s} d \tau
$$
$$
\leq \frac{M_2 \|k\|_{C[[0,T]} \left(t_2-t_1\right)^{s}}{ \lambda_1^{\beta (p+q')}} \int\limits_{t_1}^{t_2}\|u(\tau, \cdot)\|\left(t_2-\tau\right)^{\alpha q-s-1} d \tau.
$$
We note that the Sobolev embedding $D(L^\beta) \hookrightarrow L_2(\Omega)$ hold for $0< \beta< 1.$ Furthermore, based on this embedding, the inequality $\|\cdot\|_{L_2(\Omega)}\leq \|\cdot\|_{D(L^\beta)}$ is satisfied. Using the above inequality, we get the following estimates for $\mathcal{I}_6$ and $\mathcal{I}_7$
$$
\left\|\mathcal{I}_6\right\|_{D\left(L^{-\beta q'}\right)}   \leq    \frac{ M_2 \|k\|_{C[[0,T]} \left(t_2-t_1\right)^s}{ (1-\alpha q)\lambda_1^{\beta (p+q')}} t_1^{\alpha(q-q') - s}  B(1 - \alpha q', \alpha q- s)
$$
\begin{equation}\label{est52}
\times \frac{\Phi_1 \left(\|\varphi\|_{D\left(L^{\beta p}\right)}+  \|F\|_{\mathcal{D}_{2, \alpha q-s}}\right)}{2-\exp{\left\{ \frac{\Phi_1 \|k\|_{C[0,T]}T^{\alpha q +\alpha q'}}{ \alpha q +\alpha q'}\right\}}} \exp{\left\{ \frac{\Phi_1 \|k\|_{C[0,T]}T^{\alpha q +\alpha q'}}{ \alpha q +\alpha q'}\right\}},
\end{equation}
$$
\left\|\mathcal{I}_7\right\|_{D\left(L^{-\beta q'}\right)}
\leq \frac{ M_2 \|k\|_{C[[0,T]} \left(t_2-t_1\right)^{s}}{ \lambda_1^{\beta (p+q')}} t_2^{\alpha(q- q') - s}  B(1 - \alpha q', \alpha q- s)
$$
\begin{equation}\label{est53}
\times \frac{\Phi_1 \left(\|\varphi\|_{D\left(L^{\beta p}\right)}+  \|F\|_{\mathcal{D}_{2, \alpha q-s}}\right)}{2-\exp{\left\{ \frac{\Phi_1 \|k\|_{C[0,T]}T^{\alpha q +\alpha q'}}{ \alpha q +\alpha q'}\right\}}} \exp{\left\{ \frac{\Phi_1 \|k\|_{C[0,T]}T^{\alpha q +\alpha q'}}{ \alpha q +\alpha q'}\right\}}.
\end{equation}
Using (\ref{est48}) - (\ref{est53}),  we estimate the difference $u\left(t_2, x\right)-u\left(t_1, x\right)$ and obtain
$$
\|u\left(t_2, x\right)-u\left(t_1, x\right)\|_{D\left(L^{-\beta q'}\right)} \leq \sum\limits_{j=1}^{7}\left\|\mathcal{I}_j\right\|_{D\left(L^{-\beta q'}\right)}
$$
$$
\leq \Bigg[
\frac{C_{\kappa} M_2 T^{\alpha q-s}}{\alpha q \lambda_1^{2 \beta q'}}
\|\varphi\|_{D\left(L^{\beta q}\right)}
+\frac{(2-\alpha q)M_2 }{ (1-\alpha q)\lambda_1^{\beta (p+q')}} \|F\|_{\mathcal{D}_{2, \alpha q-s}}
$$
$$
+\frac{ C_{\kappa} M_2 T^{\alpha + \alpha q-s}}{\alpha q }  t_1^{-2\alpha q} \|F\|_{\mathcal{D}_{2, \alpha q-s}}\Bigg](t_2 -t_1)^{s}
$$
$$
+\Bigg(\frac{ C_{\kappa} T^{\alpha + \alpha q+\alpha q'}  }{\alpha q }  t_1^{-2\alpha q}  B(\alpha q,1-\alpha q')
+\frac{t_1^{\alpha(q-q') - s} }{ (1-\alpha q)\lambda_1^{\beta (p+q')}}   B(1 - \alpha q', \alpha q- s)
$$

$$
 +\frac{t_2^{\alpha(q- q') - s} }{ \lambda_1^{\beta (p+q')}}   B(1 - \alpha q', \alpha q- s)\Bigg)M_2 \|k\|_{C[0,T]}(t_2 -t_1)^{s}
$$
\begin{equation}\label{esth54}
\times \frac{\Phi_1 \left(\|\varphi\|_{D\left(L^{\beta p}\right)}+  \|F\|_{\mathcal{D}_{2, \alpha q-s}}\right)}{2-\exp{\left\{ \frac{\Phi_1 \|k\|_{C[0,T]}T^{\alpha q +\alpha q'}}{ \alpha q +\alpha q'}\right\}}} \exp{\left\{ \frac{\Phi_1 \|k\|_{C[0,T]}T^{\alpha q +\alpha q'}}{ \alpha q +\alpha q'}\right\}}.
\end{equation}

We derive from the above that $\|u\left(t_2, x\right)-u\left(t_1, x\right)\|_{D\left(L^{-\beta q'}\right)}$ tends to $0$ as $t_2-t_1$ tends $0$ for $0<t_1<t_2\leq T$.  Thus, $u(t, x)$ belongs to the set $C\left([0,T];D\left(L^{-\beta q'}\right)\right)$.

Now, we divide both sides of inequality (\ref{esth54}) by $\left(t_2-t_1\right)^s$
$$
\frac{\|u\left(t_2, x\right)-u\left(t_1, x\right)\|_{D\left(L^{-\beta q'}\right)}}{\left(t_2-t_1\right)^s}
$$
$$
\leq
\frac{C_{\kappa} M_2 T^{\alpha q-s}}{\alpha q \lambda_1^{2 \beta q'}}
\|\varphi\|_{D\left(L^{\beta q}\right)}
+\frac{(2-\alpha q)M_2 }{ (1-\alpha q)\lambda_1^{\beta (p+q')}} \|F\|_{\mathcal{D}_{2, \alpha q-s}}
$$
$$
+\frac{ C_{\kappa} M_2 T^{\alpha + \alpha q-s}}{\alpha q }  t_1^{-2\alpha q} \|F\|_{\mathcal{D}_{2, \alpha q-s}}
+\Bigg(\frac{ C_{\kappa} T^{\alpha + \alpha q+\alpha q'}  }{\alpha q }  t_1^{-2\alpha q}  B(\alpha q,1-\alpha q')
$$$$
+\frac{t_1^{\alpha(q-q') - s} }{ (1-\alpha q)\lambda_1^{\beta (p+q')}}   B(1 - \alpha q', \alpha q- s)
 +\frac{t_2^{\alpha(q- q') - s} }{ \lambda_1^{\beta (p+q')}}   B(1 - \alpha q', \alpha q- s)\Bigg)M_2 \|k\|_{C[0,T]}
$$
\begin{equation*}
\times \frac{\Phi_1 \left(\|\varphi\|_{D\left(L^{\beta p}\right)}+  \|F\|_{\mathcal{D}_{2, \alpha q-s}}\right)}{2-\exp{\left\{ \frac{\Phi_1 \|k\|_{C[0,T]}T^{\alpha q +\alpha q'}}{ \alpha q +\alpha q'}\right\}}} \exp{\left\{ \frac{\Phi_1 \|k\|_{C[0,T]}T^{\alpha q +\alpha q'}}{ \alpha q +\alpha q'}\right\}}.
\end{equation*}

On the other hand, according to the definition of the norm in the class of H\"{o}lder-continuous functions and  condition $0<s< \min \{\alpha q, 1-\alpha q\}$, we have  $u(t, x)\in C^{s}\left([0,T];D\left(L^{-\beta q'}\right)\right)$, such that
$$
\|u\left(t, x\right)\|_{C^{s}\left([0,T];D\left(L^{-\beta q'}\right)\right)}
\leq
\frac{C_{\kappa} M_2 T^{\alpha q-s}}{\alpha q \lambda_1^{2 \beta q'}}
\|\varphi\|_{D\left(L^{\beta q}\right)}
+\frac{(2-\alpha q)M_2 }{ (1-\alpha q)\lambda_1^{\beta (p+q')}} \|F\|_{\mathcal{D}_{2, \alpha q-s}}
$$
$$
+\frac{ C_{\kappa} M_2 T^{\alpha + \alpha q-s}}{\alpha q }  t_1^{-2\alpha q} \|F\|_{\mathcal{D}_{2, \alpha q-s}}
+\Bigg(\frac{ C_{\kappa} T^{\alpha + \alpha q+\alpha q'}  }{\alpha q }  t_1^{-2\alpha q}  B(\alpha q,1-\alpha q')
$$$$
+\frac{t_1^{\alpha(q-q') - s} }{ (1-\alpha q)\lambda_1^{\beta (p+q')}}   B(1 - \alpha q', \alpha q- s)
 +\frac{t_2^{\alpha(q- q') - s} }{ \lambda_1^{\beta (p+q')}}   B(1 - \alpha q', \alpha q- s)\Bigg)M_2 \|k\|_{C[0,T]}
$$
\begin{equation*}
\times \frac{\Phi_1 \left(\|\varphi\|_{D\left(L^{\beta p}\right)}+  \|F\|_{\mathcal{D}_{2, \alpha q-s}}\right)}{2-\exp{\left\{ \frac{\Phi_1 \|k\|_{C[0,T]}T^{\alpha q +\alpha q'}}{ \alpha q +\alpha q'}\right\}}} \exp{\left\{ \frac{\Phi_1 \|k\|_{C[0,T]}T^{\alpha q +\alpha q'}}{ \alpha q +\alpha q'}\right\}}.
\end{equation*}
From this it follows that the inequality (\ref{theq30}) in Lemma \ref{lemmth5} holds.
\end{proof}

\begin{lemma}\label{lemmth7}
Let conditions A1), A3) and A4) be satisfied.
If $\varphi \in D\left(L^{\beta p}\right)$ and $F \in \mathcal{D}_{2, \alpha q-s}((0,T) \times \Omega)$, then problem (\ref{eqth11}) has a unique solution $u(t, x)$, and $u \in L^{\frac{1}{\alpha q'}-r}\left(0, T ; D\left(L^{\beta\left(p-p'\right)}\right)\right) \cap C^{\alpha q}\left((0, T] ; L_2(\Omega)\right) \cap C^s\left([0, T] ; D\left(L^{-\beta q}\right)\right),$ there exists a positive constant $C_3$ such that
$$
\|u\|_{L^{\frac{1}{a l'}-r}\left(0, T ; D\left(L^{\beta\left(p-p'\right)}\right)\right)}+\|u\|_{C^{\alpha q}\left(0, T ; L_2(\Omega)\right)}
$$
\begin{equation}\label{theq31}
+\|u\|_{C^s\left([0, T] ; D\left(L^{-\beta q'}\right)\right)} \leq C_3\|\varphi\|_{D\left(L^{\beta p}\right)}+C_3\|F\|_{\mathcal{D}_{2, \alpha q-s}}.
\end{equation}
\end{lemma}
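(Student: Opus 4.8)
The plan is to read off Lemma~\ref{lemmth7} as a synthesis of Lemmas~\ref{lemmth3},~\ref{lemmth4} and~\ref{lemmth5}: these three lemmas already produce exactly the three target regularity classes, and they do so for one and the same mild solution of~(\ref{eqth11}). First I would check that the hypotheses A1), A3), A4) assumed here contain the hypotheses of each of those lemmas. The package A1)+A3)+A4) is precisely what Lemmas~\ref{lemmth3} and~\ref{lemmth4} require. For Lemma~\ref{lemmth5}, which uses A1) and A2), note that $p'\le p-s/\alpha<p$ forces $q'=1-p'>q$, hence $\alpha q'>\alpha q$ and $\frac{1-\alpha q'}{\alpha q'}\le\frac{1-\alpha q}{\alpha q}$; thus the bound $0<r\le\frac{1-\alpha q'}{\alpha q'}$ sitting inside A4) implies the bound $0<r\le\frac{1-\alpha q}{\alpha q}$ of A2). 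Moreover $\|F\|_{\mathcal{D}_{2,\alpha q}}\le T^{s}\|F\|_{\mathcal{D}_{2,\alpha q-s}}$ by the embedding recorded in Section~2, so $F\in\mathcal{D}_{2,\alpha q-s}((0,T)\times\Omega)$ yields $F\in\mathcal{D}_{2,\alpha q}((0,T)\times\Omega)$, and the hypotheses of Lemma~\ref{lemmth2} hold as well. Hence (as in the previous lemmas, assuming $k\in C[0,T]$ and that the relevant conditions $\exp\{\cdots\}<2$ are in force) the integral identity~(\ref{eqth12}) has a mild solution $u$, which by Lemma~\ref{lemmth2} is unique in $L^{\frac{1}{\alpha q}-r}(0,T;L_2(\Omega))$ and therefore unique in every subspace of it.

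Next I would apply the three lemmas to this same $u$. Lemma~\ref{lemmth3} gives $u\in L^{\frac{1}{\alpha q'}-r}(0,T;D(L^{\beta(p-p')}))$ with estimate~(\ref{theq28}); Lemma~\ref{lemmth4} gives $u\in C^{\alpha q}((0,T];L_2(\Omega))$ with~(\ref{theq29}); and Lemma~\ref{lemmth5} gives $u\in C^{s}([0,T];D(L^{-\beta q'}))$ with~(\ref{theq30}). Intersecting the three memberships yields the regularity class asserted in the statement.

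Finally, for the estimate~(\ref{theq31}) I would add~(\ref{theq28}),~(\ref{theq29}) and~(\ref{theq30}). Each right-hand side is a structural constant, depending only on $\alpha,\beta,p,q,r,p',q',\kappa,T$ and $\|k\|_{C[0,T]}$ (and including in each case the same bounded factor $\exp\{\cdots\}/(2-\exp\{\cdots\})$ furnished by Theorem~\ref{ththththth1}), times $\big(\|\varphi\|_{D(L^{\beta p})}+\|F\|_{\mathcal{D}_{2,\alpha q-s}}\big)$, save that~(\ref{theq30}) carries the weaker norm $\|\varphi\|_{D(L^{\beta q})}$. The latter is controlled by $\|\varphi\|_{D(L^{\beta p})}$ via the Hilbert-scale embedding $D(L^{\beta p})\hookrightarrow D(L^{\beta q})$: since $\varphi\in D(L^{\beta p})$ already presupposes $p\ge q$, and $\lambda_j\ge\lambda_1$ gives $\|v\|_{D(L^{b})}\le\lambda_1^{\,b-a}\|v\|_{D(L^{a})}$ for $a\ge b\ge 0$. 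Absorbing all these fixed factors into a single constant $C_3=C_3(\alpha,\beta,p,q,r,p',q',\kappa,T,\|k\|_{C[0,T]})$ produces~(\ref{theq31}).

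The one genuinely delicate part is the bookkeeping in the first paragraph: one has to be sure that a single tuple $(p,q,s,r,p',q')$ obeying A1), A3), A4) is admissible \emph{simultaneously} in Lemmas~\ref{lemmth3},~\ref{lemmth4},~\ref{lemmth5} — in particular harmonizing their (different) smallness conditions by imposing the strongest of them — and that every $\varphi$-norm occurring in~(\ref{theq28})--(\ref{theq30}) is dominated by $\|\varphi\|_{D(L^{\beta p})}$. Everything else is a direct quotation of the three earlier lemmas.
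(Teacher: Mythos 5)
Your proposal is correct and takes essentially the same route as the paper, whose entire proof is the one-line remark that (\ref{theq31}) follows by combining (\ref{theq28})--(\ref{theq30}) from Lemmas \ref{lemmth3}--\ref{lemmth5}; your additional bookkeeping (the $r$-bound of A4) implying that of A2), the inclusion $\mathcal{D}_{2,\alpha q-s}\subset\mathcal{D}_{2,\alpha q}$, the smallness/exponential conditions, and absorbing $\|\varphi\|_{D\left(L^{\beta q}\right)}$ into $\|\varphi\|_{D\left(L^{\beta p}\right)}$) merely makes explicit what the paper leaves tacit. The only caveat is that the embedding $D\left(L^{\beta p}\right)\hookrightarrow D\left(L^{\beta q}\right)$ requires $p\geq q$, which A1) alone does not guarantee and which $\varphi\in D\left(L^{\beta p}\right)$ does not "presuppose"; but this is an imprecision you share with the paper rather than one you introduce.
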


The inequality (\ref{theq31}) in Lemma \ref{lemmth7} is obtained by combining the inequalities (\ref{theq28}) - (\ref{theq30}).

\section*{4. Investigating the time fractional derivative of the mild solution}

In the next theorem, we will investigate the time-space fractional derivative of the mild solution $u(t, x)$.
\begin{theorem}\label{theorth1}
Let $p, q, s, p', q', \widehat{p}, \widehat{q}, r, \widehat{r}$ be defined by A1), A3), A4), A5). If $\varphi \in D\left(L^{\beta(p+\widehat{q})}\right)$, and $F \in L^{\frac{1}{\alpha q-s}+\widehat{r}}\left(0, T ; L_2(\Omega)\right)$, then nonlocal initial and boundary problem (\ref{eqth11}) has a unique solution $u$ such that
$$
u \in L^{\frac{1}{\alpha q'}-r}\left(0,T;D\left(L^{\beta(p-p')}\right)\right) \cap C^{\alpha q}\left((0, T] ; L_2(\Omega)\right) \cap C^s\left([0, T]; D\left(L^{-\beta q'}\right) \right),
$$$$
\partial_t^\alpha u \in L^{\frac{1}{\alpha}-\widehat{r}}\left(0, T;D\left(L^{-\beta(q-\widehat{q})}\right) \right)
$$
and
$$
\left\|\partial_t^\alpha u(t, \cdot)\right\|_{D\left(L^{-\beta (q-\widehat{q})}\right)}\leq    M_3 \left\|F(t, \cdot)\right\|
+  T^\alpha\left\|k\right\|_{C[0,T]}
\frac{\Phi \left(\|\varphi\|_{D\left(L^{\beta p}\right)}+  \|F\|_{\mathcal{D}_{2, \alpha q}}\right)}{2-\exp{\left\{ \frac{\Phi \|k\|_{C[0,T]}T^{2\alpha q}}{2\alpha q}\right\}}}
$$
$$
\times \exp{\left\{ \frac{\Phi \|k\|_{C[0,T]}t^{2\alpha q}}{2\alpha q}\right\}}
 + M_2 T^\alpha t^{-\alpha} \|F\|_{\mathcal{D}_{2, \alpha (q-\widehat{q})}}+
\frac{C_{\kappa} M_2}{\lambda_1^\beta} t^{-\alpha}  \|\varphi\|_{D\left(L^{\beta (p+\widehat{q})}\right)}
$$
$$
+
\frac{C_{\kappa} M_2}{\lambda_1^\beta}   t^{-\alpha} \|F\|_{\mathcal{D}_{2, \alpha (q-\widehat{q})}}
+\Bigg(\frac{C_{\kappa} M_2}{\lambda_1^\beta} \|k\|_{C[0,T]}  t^{-\alpha}  + M_2    T^{\alpha q} \|k\|_{C[0,T]}t^{-\alpha \widehat{q}}
\Bigg)
$$
\begin{equation}\label{theq58}
\times \frac{\Phi \left(\|\varphi\|_{D\left(L^{\beta p}\right)}+  \|F\|_{\mathcal{D}_{2, \alpha q}}\right)}{2-\exp{\left\{ \frac{\Phi \|k\|_{C[0,T]}T^{2\alpha q}}{2\alpha q}\right\}}} \exp{\left\{ \frac{\Phi \|k\|_{C[0,T]}t^{2\alpha q}}{2\alpha q}\right\}} B(\alpha(q-\widehat{q}), 1-\alpha q).
\end{equation}
$$
\left\|\partial_t^\alpha u(t, \cdot)\right\|_{L^{\frac{1}{\alpha }-\widehat{r}}\left(0, T ; D\left(L^{-\beta (q-\widehat{q})}\right)\right)}  \leq  M_4  \left\|F\right\|_{L^{\frac{1}{\alpha q-s}+\widehat{r}}\left(0, T ; L_2(\Omega)\right)}+ M_4\|\varphi\|_{D\left(L^{\beta (p+\widehat{q})}\right)}
$$
$$
+ M_4 T^\alpha\left\|k\right\|_{C[0,T]}
\frac{\Phi \left(\|\varphi\|_{D\left(L^{\beta (p+\widehat{q})}\right)}+  \|F\|_{L^{\frac{1}{\alpha q-s}+\widehat{r}}\left(0, T ; L_2(\Omega)\right)} \right) }{2-\exp{\left\{ \frac{\Phi \|k\|_{C[0,T]}T^{2\alpha q}}{2\alpha q}\right\}}}
$$
$$
\times \exp{\left\{ \frac{\Phi \|k\|_{C[0,T]}t^{2\alpha q}}{2\alpha q}\right\}}
+\Bigg(\frac{C_{\kappa} M_2}{\lambda_1^\beta}   t^{-\alpha}  + M_2    T^{\alpha q}
\Bigg)\|k\|_{C[0,T]}B(\alpha(q-\widehat{q}), 1-\alpha q)
$$
\begin{equation}\label{theq59}
\times \frac{\Phi \left(\|\varphi\|_{D\left(L^{\beta (p+\widehat{q})}\right)}
+ 2 \|F\|_{L^{\frac{1}{\alpha q-s}+\widehat{r}}\left(0, T ; L_2(\Omega)\right)}\right)}{2-\exp{\left\{ \frac{\Phi \|k\|_{C[0,T]}T^{2\alpha q}}{2\alpha q}\right\}}} \exp{\left\{ \frac{\Phi \|k\|_{C[0,T]}t^{2\alpha q}}{2\alpha q}\right\}},
\end{equation}
where
$M_4=\max\left\{M_2 T^\alpha, M_3T^\alpha, C_0T^\alpha,\frac{C_{\kappa} M_2}{\lambda_1^\beta}, M_2 C_0T^\alpha B(\alpha(q-\widehat{q}),1-\alpha q) \right\}$.
\end{theorem}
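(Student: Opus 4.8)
The plan is to build on Lemma~\ref{lemmth7} for the regularity of $u$ itself and then differentiate the mild representation~(\ref{eqth12}) term by term. First I would observe that the present hypotheses are stronger than those of Lemma~\ref{lemmth7}: by the inclusion~(\ref{theqt6})--(\ref{theqt7}) we have $F\in L^{\frac{1}{\alpha q-s}+\widehat r}(0,T;L_2(\Omega))\subset\mathcal D_{2,\alpha q-s}((0,T)\times\Omega)$ and $D(L^{\beta(p+\widehat q)})\subset D(L^{\beta p})$, so $u$ exists, is unique, and already lies in $L^{\frac1{\alpha q'}-r}(0,T;D(L^{\beta(p-p')}))\cap C^{\alpha q}((0,T];L_2(\Omega))\cap C^{s}([0,T];D(L^{-\beta q'}))$ with the bound~(\ref{theq31}); in addition Lemma~\ref{lemmth1} supplies the weighted estimate~(\ref{est25}) on $\|t^{\alpha q}u(t,\cdot)\|$, which I will use to close the terms carrying the coefficient $k$.

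Next I would apply $\partial_t^\alpha$ to the five summands of~(\ref{eqth12}). Using Proposition~\ref{proposition2}(iv)--(v) componentwise -- equivalently, the fact that $t^{\alpha-1}E_{\alpha,\alpha}(-\lambda_j^\beta t^\alpha)$ is the Duhamel kernel inverting $\partial_t^\alpha+\lambda_j^\beta$ with zero initial value -- one obtains $\partial_t^\alpha\mathcal G_2\varphi=-L^\beta\mathcal G_2\varphi$, $\partial_t^\alpha\mathcal G_1 F=F-L^\beta\mathcal G_1 F$, $\partial_t^\alpha\mathcal G_1\{k,u\}=\{k,u\}-L^\beta\mathcal G_1\{k,u\}$, and, since $\mathcal G_1(T,\cdot)$ produces a function independent of $t$, $\partial_t^\alpha\mathcal G_3 h=-L^\beta\mathcal G_3 h$; summing gives the consistency identity $\partial_t^\alpha u=F-ku-L^\beta u$. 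I would then estimate $\|\partial_t^\alpha u(t,\cdot)\|_{D(L^{-\beta(q-\widehat q)})}$ by treating each piece separately. For the terms carrying an $L^\beta$ I write $\|L^\beta v\|_{D(L^{-\beta(q-\widehat q)})}=\|L^{\beta(p+\widehat q)}v\|$ (using $1-q+\widehat q=p+\widehat q$), insert $\lambda_j^{\beta(p+\widehat q)}$ into the spectral series, and split the Mittag-Leffler factor by Proposition~\ref{thproth3}, e.g. $\lambda_j^{\beta(p+\widehat q)}E_{\alpha,\alpha}(-\lambda_j^\beta(t-\tau)^\alpha)\le M_2\lambda_j^{\beta(p+\widehat q)}(1+\lambda_j^\beta(t-\tau)^\alpha)^{-(p+\widehat q)}\le M_2(t-\tau)^{-\alpha(p+\widehat q)}$, so that after convolution only the integrable singularity $(t-\tau)^{\alpha(q-\widehat q)-1}$ remains; condition A5) ($\widehat q\le\min(p,q,s/\alpha)$) is precisely what makes all these exponents admissible. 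The $\varphi$-term then yields the factor $t^{-\alpha}$ together with $\|\varphi\|_{D(L^{\beta(p+\widehat q)})}$ (explaining the stronger hypothesis on $\varphi$), the $F$-terms yield $M_3\|F\|$ and $\|F\|_{\mathcal D_{2,\alpha(q-\widehat q)}}$, and the $k$-terms are closed by substituting the Lemma~\ref{lemmth1} bound~(\ref{est25}) for $\|u(\tau,\cdot)\|$ and evaluating the Beta integral $B(\alpha(q-\widehat q),1-\alpha q)$; collecting everything produces~(\ref{theq58}).

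To obtain~(\ref{theq59}) I would take the $L^{\frac1\alpha-\widehat r}(0,T)$-norm in $t$ of~(\ref{theq58}): the weights $t\mapsto t^{-\alpha}$ and $t\mapsto t^{-\alpha\widehat q}$ lie in $L^{\frac1\alpha-\widehat r}(0,T)$ because A5) gives $\widehat r\le\frac{1-\alpha}\alpha$, hence $-\alpha(\tfrac1\alpha-\widehat r)>-1$; the remaining $\mathcal D_{2,\alpha(q-\widehat q)}$- and $\mathcal D_{2,\alpha q}$-norms of $F$ are dominated by $\|F\|_{L^{\frac1{\alpha q-s}+\widehat r}(0,T;L_2(\Omega))}$ through~(\ref{theqt6})--(\ref{theqt7}), once one checks $\frac1{\alpha q-s}+\widehat r>\frac1{\alpha(q-\widehat q)}$, which again follows from A5). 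Absorbing all constants into $M_3$ and $M_4$ finishes the argument.

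The step I expect to be the main obstacle is the rigorous justification that the series for $u$ may be differentiated term by term, so that $\partial_t^\alpha u$ exists as an element of $L^{\frac1\alpha-\widehat r}(0,T;D(L^{-\beta(q-\widehat q)}))$ -- i.e. establishing convergence of the differentiated series in that norm uniformly enough to interchange $\partial_t^\alpha$ with the summation -- together with the careful bookkeeping of fractional exponents needed to keep every convolution and temporal integral finite; the coupling through $k$, which forces the a priori use of Lemma~\ref{lemmth1} before any of these estimates can be closed, is the secondary difficulty.
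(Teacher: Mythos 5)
Your proposal follows essentially the same route as the paper: reduce to Lemmas \ref{lemmth1}--\ref{lemmth7} via the embeddings (\ref{theqt6})--(\ref{theqt7}) and $D(L^{\beta(p+\widehat q)})\hookrightarrow D(L^{\beta p})$, apply $\partial_t^\alpha$ to the five pieces of the mild formula using Proposition \ref{proposition2}(iv)--(v) so that each term picks up a factor $-\lambda_j^\beta$, estimate in $D(L^{-\beta(q-\widehat q)})$ with Proposition \ref{thproth3} and $0<p+\widehat q<1$, close the $k$-terms with (\ref{est25}) and a Beta integral, and pass to (\ref{theq59}) through the integrability of $t^{-\alpha}$, $t^{-\alpha\widehat q}$ and the bound $\|F\|_{\mathcal D_{2,\alpha(q-\widehat q)}}\leq M^*\|F\|_{L^{\frac1{\alpha q-s}+\widehat r}}$ coming from A5). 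The only step you flag as open --- justifying term-by-term application of $\partial_t^\alpha$ --- is handled in the paper exactly as you anticipate, by estimating the partial sums $\sum_{n_1\le j\le n_2}J_{ij}(t)e_j$ and invoking the dominated convergence theorem to show they form Cauchy sequences in $D(L^{-\beta(q-\widehat q)})$.
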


\begin{proof} According to the condition  A5), we obtain $0<\alpha q-s<1$, and $1<\frac{1}{\alpha q-s}<\frac{1}{\alpha q-s}+\widehat{r}$. From the inclusion (\ref{theqt6}) embedding and (\ref{theqt7}), we get
\begin{equation}\label{eqth59}
L^{\frac{1}{\alpha q-s}+\widehat{r}}\left(0, T ; L_2(\Omega)\right) \subset \mathcal{D}_{2, \alpha q-s}((0,T) \times \Omega)
\end{equation}
Moreover, the Sobolev embedding $D\left(L^{\beta(p+\widehat{q})}\right) \hookrightarrow D\left(L^{\beta p}\right)$, holds. Consequently, the assumptions of this theorem also satisfy the conditions of lemmas \ref{lemmth3}-\ref{lemmth5} and Proposion \ref{lemmth7}.
 Therefore, problem (\ref{eqth11}) has a unique solution and
$$
u \in L^{\frac{1}{\alpha q'}-r}\left(0,T;D\left(L^{\beta(p-p')}\right)\right) \cap C^{\alpha q}\left((0, T] ; L_2(\Omega)\right) \cap C^s\left([0, T]; D\left(L^{-\beta q'}\right) \right).
$$

Now, we prove that $\partial_t^\alpha u$ exists and belongs to $L^{\frac{1}{\alpha}-\hat{r}}\left(0, T ; D\left(L^{-\beta(q-\widehat{q})}\right) \right) \cap C^\alpha\left((0, T] ; D\left(L^{-\beta q}\right) \right)$.
Using the identities (iv), (v) of Proposition \ref{proposition2} and equation (\ref{eqth9}), we obtain for $\partial_t^\alpha u_j(t)$:
$$
\partial_t^\alpha u_j(t) =\bigg[\varphi_j- F_j(T) \star \left\{T^{\alpha-1} E_{\alpha, \alpha}\left(-\lambda_j^\beta T^\alpha\right)\right\} \bigg]\Psi_j(T) \partial_t^\alpha E_{\alpha, 1}\left(-\lambda_j^\beta t^\alpha\right)
$$
$$
+\partial_t^\alpha \left[F_j(t) \star \left\{t^{\alpha-1} E_{\alpha, \alpha}\left(-\lambda_j^\beta t^\alpha\right)\right\}\right]-\partial_t^\alpha \left[\{k(t) u_j(t)\} \star \left\{t^{\alpha-1} E_{\alpha, \alpha}\left(-\lambda_j^\beta t^\alpha\right)\right\}\right]
$$
$$
=F_j(t) -k(t) u_j(t) - \lambda_j^\beta  F_j(t) \star \left\{t^{\alpha-1} E_{\alpha, \alpha}\left(-\lambda_j^\beta t^\alpha\right)\right\} - \lambda_j^\beta \varphi_j \Psi_j(T) E_{\alpha, 1}\left(-\lambda_j^\beta t^\alpha\right)
$$
$$
-\lambda_j^\beta F_j(T) \star \left\{T^{\alpha-1} E_{\alpha, \alpha}\left(-\lambda_j^\beta T^\alpha\right)\right\} \Psi_j(T)  E_{\alpha, 1}\left(-\lambda_j^\beta t^\alpha\right)
$$
$$
+\lambda_j^\beta \{k(T)u_j(T) \star \left\{T^{\alpha-1} E_{\alpha, \alpha}\left(-\lambda_j^\beta T^\alpha\right)\right\} \Psi_j(T)  E_{\alpha, 1}\left(-\lambda_j^\beta t^\alpha\right)
$$
$$
 + \lambda_j^\beta  \{k(t) u_j(t)\} \star \left\{t^{\alpha-1} E_{\alpha, \alpha}\left(-\lambda_j^\beta t^\alpha\right)\right\}
$$
$$
=F_j(t)-k(t) u_j(t)+J_{1j}(t)+J_{2j}(t)+J_{3j}(t)+J_{4j}+J_{5j}(t),\,\, j\in \mathbb{N}.
$$

First, we consider the sum $\sum\limits_{n_1\leq j \leq n_2}J_{1j}(t)e_j$, for $n_1,n_2\in \mathbb{N}$, $1\leq n_1 <  n_2.$

According to the definition of the dual space $D\left(L^{-\beta (q-\widehat{q})}\right)$
of $D\left(L^{\beta (q-\widehat{q})}\right),$ and the identity (\ref{eqth5555}) of their dual inner product, we have
$$
\left\|\sum\limits_{n_1\leq j \leq n_2}J_{1j}(t)e_j\right\|_{D\left(L^{-\beta (q-\widehat{q})}\right)}
= \left\|\sum\limits_{n_1\leq j \leq n_2}\lambda_j^\beta  F_j(t) \star \left\{t^{\alpha-1} E_{\alpha, \alpha}\left(-\lambda_j^\beta t^\alpha\right)\right\}e_j\right\|_{D\left(L^{-\beta (q-\widehat{q})}\right)}
$$
$$
\leq \int\limits_0^t\left\|\sum_{n_1 \leq j \leq n_2} \lambda_j^\beta  F_j(\tau)  (t-\tau)^{\alpha-1} E_{\alpha, \alpha}\left(-\lambda_j^\beta (t-\tau)^\alpha\right)  e_j\right\|_{D\left(L^{-\beta (q-\widehat{q})}\right)} d \tau
$$
$$
 \leq  \int\limits_0^t\Bigg\{\sum_{i=1}^{\infty} \lambda_i^{-2 \beta(p-\widehat{q})}\Bigg(\sum_{n_1 \leq j \leq n_2} \lambda_j^\beta  F_j(\tau)
 (t-\tau)^{\alpha-1}
$$$$
\times E_{\alpha, \alpha}\left(-\lambda_j^\beta (t-\tau)^\alpha\right)  e_j, e_i\Bigg)_{-\beta(q-\widehat{q}), \beta(q-\widehat{q})}^2\Bigg\}^{1 / 2} d \tau
 $$$$
\leq   \int\limits_0^t\left\{\sum_{n_1 \leq j \leq n_2} \lambda_j^{2 \beta(p+\widehat{q})}  F_j^2(\tau)  (t-\tau)^{2\alpha-2} \left(E_{\alpha, \alpha}\left(-\lambda_j^\beta (t-\tau)^\alpha\right)\right)^2\right\}^{1 / 2} d \tau .
$$
According to inequality $0<p+\widehat{q}<1$  and  inequalities in Proposition \ref{thproth3}, we get the following from the above inequality
$$
\left\|\sum\limits_{n_1\leq j \leq n_2}J_{1j}(t)e_j\right\|_{D\left(L^{-\beta (q-\widehat{q})}\right)}
$$
$$
\leq M_2  \int\limits_0^t\left\{\sum_{n_1 \leq j \leq n_2} \lambda_j^{2 \beta(p+\widehat{q})}  F_j^2(\tau)  (t-\tau)^{2\alpha-2}  \lambda_j^{-2 \beta(p+\widehat{q})}
(t-\tau)^{-2\alpha(p+\widehat{q})} \right\}^{1 / 2} d \tau
$$
\begin{equation}\label{eqth60}
 = M_2 T^\alpha t^{-\alpha} \int\limits_0^t (t-\tau)^{\alpha(q-\widehat{q})-1}
\left\{\sum_{n_1 \leq j \leq n_2}    F_j^2(\tau)
 \right\}^{1 / 2} d \tau.
\end{equation}

Second, we proceed to establish an estimate for the sum $\sum\limits_{n_1\leq j \leq n_2}J_{2j}(t)e_j$. Using the inequalities in Proposition \ref{thproth3}, we derive the following
$$
\left\|\sum\limits_{n_1\leq j \leq n_2}J_{2j}(t)e_j\right\|_{D\left(L^{-\beta (q-\widehat{q})}\right)} = \left\|\sum\limits_{n_1\leq j \leq n_2}\lambda_j^\beta \varphi_j \Psi_j(T) E_{\alpha, 1}\left(-\lambda_j^\beta t^\alpha\right)e_j\right\|_{D\left(L^{-\beta (q-\widehat{q})}\right)}
$$
$$
 = \left\{\sum_{i=1}^{\infty} \lambda_i^{-2 \beta(p-\widehat{q})}\Bigg(\sum\limits_{n_1\leq j \leq n_2}\lambda_j^\beta \varphi_j \Psi_j(T) E_{\alpha, 1}\left(-\lambda_j^\beta t^\alpha\right)e_j, e_i\Bigg)_{-\beta(q-\widehat{q}), \beta(q-\widehat{q})}^2\right\}^{\frac{1}{2}}
$$
$$
=\left\{\sum_{n_1 \leq j \leq n_2} \lambda_j^{2 \beta(p+\widehat{q})} \varphi_j^2 \Psi_j^2(T) \left(E_{\alpha, 1}\left(-\lambda_j^\beta t^\alpha\right)\right)^2\right\}^{1 / 2}
$$
\begin{equation}\label{eqth61}
 \leq \frac{C_{\kappa} M_2}{\lambda_1^\beta}   \left\{\sum_{n_1 \leq j \leq n_2} \lambda_j^{2 \beta(p+\widehat{q})} \varphi_j^2 t^{-2 \alpha}\right\}^{1 / 2}
\leq \frac{C_{\kappa} M_2}{\lambda_1^\beta} t^{-\alpha}  \left\{\sum_{n_1 \leq j \leq n_2} \lambda_j^{2 \beta(p+\widehat{q})} \varphi_j^2  \right\}^{1 / 2}.
\end{equation}

Third, we estimate the sum  $\sum\limits_{n_1\leq j \leq n_2}J_{3j}(t)e_j$. By methods similar to (\ref{eqth61}), we obtain:
$$
\left\|\sum\limits_{n_1\leq j \leq n_2}J_{3j}(t)e_j\right\|_{D\left(L^{-\beta (q-\widehat{q})}\right)}
$$
$$
= \left\|\sum\limits_{n_1\leq j \leq n_2}\lambda_j^\beta F_j(T) \star \left\{T^{\alpha-1} E_{\alpha, \alpha}\left(-\lambda_j^\beta T^\alpha\right)\right\} \Psi_j(T)  E_{\alpha, 1}\left(-\lambda_j^\beta t^\alpha\right)e_j\right\|_{D\left(L^{-\beta (q-\widehat{q})}\right)}
$$
$$
\leq \int\limits_0^T \Bigg\|\sum_{n_1 \leq j \leq n_2} \lambda_j^\beta  F_j(\tau)  (T-\tau)^{\alpha-1} E_{\alpha, \alpha}\left(-\lambda_j^\beta (T-\tau)^\alpha\right)
$$
$$
\times \Psi_j(T)  E_{\alpha, 1}\left(-\lambda_j^\beta t^\alpha\right)e_j\Bigg\|_{D\left(L^{-\beta (q-\widehat{q})}\right)} d \tau
$$
$$
 \leq  \int\limits_0^T \Bigg\{\sum_{i=1}^{\infty} \lambda_i^{-2 \beta(p-\widehat{q})}\Bigg(\sum_{n_1 \leq j \leq n_2} \lambda_j^\beta  F_j(\tau)  (T-\tau)^{\alpha-1} E_{\alpha, \alpha}\left(-\lambda_j^\beta (T-\tau)^\alpha\right)
$$$$
\times \Psi_j(T)  E_{\alpha, 1}\left(-\lambda_j^\beta t^\alpha\right)e_j, e_i\Bigg)_{-\beta(q-\widehat{q}), \beta(q-\widehat{q})}^2\Bigg\}^{1 / 2} d \tau
 $$
$$
 \leq  \frac{C_{\kappa} M_2}{\lambda_1^\beta}   t^{-\alpha} \int\limits_0^T \Bigg\{\sum_{n_1 \leq j \leq n_2} \lambda_j^{2 \beta(p+\widehat{q})}  F_j^2(\tau)  (T-\tau)^{2\alpha(q-\widehat{q})-2} \lambda_j^{-2 \beta(p+\widehat{q})} \Bigg\}^{1 / 2} d \tau
$$
\begin{equation}\label{eqth62}
\leq  \frac{C_{\kappa} M_2}{\lambda_1^\beta}   t^{-\alpha} \int\limits_0^T (T-\tau)^{\alpha(q-\widehat{q})-1}  \Bigg\{\sum_{n_1 \leq j \leq n_2} F_j^2(\tau)    \Bigg\}^{1 / 2} d \tau.
\end{equation}

Next, we estimate the sum  $\sum\limits_{n_1\leq j \leq n_2}J_{4j}(t)e_j$. By methods similar to (\ref{eqth62}), we obtain:
$$
\left\|\sum\limits_{n_1\leq j \leq n_2}J_{4j}(t)e_j\right\|_{D\left(L^{-\beta (q-\widehat{q})}\right)} = \Bigg\|\sum\limits_{n_1\leq j \leq n_2} \{k(T)u_j(T)\} \star \left\{T^{\alpha-1} E_{\alpha, \alpha}\left(-\lambda_j^\beta T^\alpha\right)\right\}
$$
$$
\times \lambda_j^\beta \Psi_j(T)  E_{\alpha, 1}\left(-\lambda_j^\beta t^\alpha\right)e_j\Bigg\|_{D\left(L^{-\beta (q-\widehat{q})}\right)}
$$
$$
 \leq  \int\limits_0^T \Bigg\{\sum_{i=1}^{\infty} \lambda_i^{-2 \beta(p-\widehat{q})}\Bigg(\sum_{n_1 \leq j \leq n_2} \lambda_j^\beta  k(T)u_j(T) (T-\tau)^{\alpha-1} E_{\alpha, \alpha}\left(-\lambda_j^\beta (T-\tau)^\alpha\right)
$$$$
\times \Psi_j(T)  E_{\alpha, 1}\left(-\lambda_j^\beta t^\alpha\right)e_j, e_i\Bigg)_{-\beta(q-\widehat{q}), \beta(q-\widehat{q})}^2\Bigg\}^{1 / 2} d \tau
 $$
$$
 \leq  \frac{C_{\kappa} M_2}{\lambda_1^\beta}   t^{-\alpha} \int\limits_0^T \Bigg\{\sum_{n_1 \leq j \leq n_2} \lambda_j^{2 \beta(p+\widehat{q})}  k^2(T)u_j^2(T)  (T-\tau)^{2\alpha(q-\widehat{q})-2} \lambda_j^{-2 \beta(p+\widehat{q})} \Bigg\}^{1 / 2} d \tau
$$
\begin{equation*}
\leq  \frac{C_{\kappa} M_2}{\lambda_1^\beta} \|k\|_{C[0,T]}  t^{-\alpha} \int\limits_0^T (T-\tau)^{\alpha(q-\widehat{q})-1}  \Bigg\{\sum_{n_1 \leq j \leq n_2} u_j^2(T)    \Bigg\}^{1 / 2} d \tau.
\end{equation*}
For $0<\tau<T$, we have $u(\tau,\cdot)$  belonging to $L_2(\Omega)$. This follows that the sequence $\left\{\sum_{j \geq n}    u_j^2(\tau)  \right\}^{1 / 2}$, converges pointwise to $0$ as $n$ goes to infinity.  According to the  Sobolev imbedding $D\left(L^{\beta(q+\widehat{q})}\right) \hookrightarrow D\left(L^{\beta p}\right)$ and (\ref{est25}), we have
$$
\left\|\sum\limits_{n_1\leq j \leq n_2}J_{4j}(t)e_j\right\|_{D\left(L^{-\beta (q-\widehat{q})}\right)} \leq  \frac{C_{\kappa} M_2}{\lambda_1^\beta} \|k\|_{C[0,T]}  t^{-\alpha}  \frac{\Phi \left(\|\varphi\|_{D\left(L^{\beta p}\right)}+  \|F\|_{\mathcal{D}_{2, \alpha q}}\right)}{2-\exp{\left\{ \frac{\Phi \|k\|_{C[0,T]}T^{2\alpha q}}{2\alpha q}\right\}}}
$$
\begin{equation}\label{eqth62th}
\times \exp{\left\{ \frac{\Phi \|k\|_{C[0,T]}t^{2\alpha q}}{2\alpha q}\right\}} B(\alpha(q-\widehat{q}), 1-\alpha q).
\end{equation}

Similar to the price of $J_{1j}(t)$ above, we estimate $J_{5j}(t)$
$$
\left\|\sum\limits_{n_1\leq j \leq n_2}J_{5j}(t)e_j\right\|_{D\left(L^{-\beta (q-\widehat{q})}\right)}
$$
$$
= \left\|\sum\limits_{n_1\leq j \leq n_2}\lambda_j^\beta  k(t)u_j(t) \star \left\{t^{\alpha-1} E_{\alpha, \alpha}\left(-\lambda_j^\beta t^\alpha\right)\right\}e_j\right\|_{D\left(L^{-\beta (q-\widehat{q})}\right)}
$$
$$
\leq \int\limits_0^t\left\|\sum_{n_1 \leq j \leq n_2} \lambda_j^\beta k(\tau) u_j(\tau)  (t-\tau)^{\alpha-1} E_{\alpha, \alpha}\left(-\lambda_j^\beta (t-\tau)^\alpha\right)  e_j\right\|_{D\left(L^{-\beta (q-\widehat{q})}\right)} d \tau
$$
$$
 \leq  \int\limits_0^t\Bigg\{\sum_{i=1}^{\infty} \lambda_i^{-2 \beta(p-\widehat{q})}\Bigg(\sum_{n_1 \leq j \leq n_2} \lambda_j^\beta k(\tau) u_j(\tau)
 (t-\tau)^{\alpha-1}
$$$$
\times E_{\alpha, \alpha}\left(-\lambda_j^\beta (t-\tau)^\alpha\right)  e_j, e_i\Bigg)_{-\beta(q-\widehat{q}), \beta(q-\widehat{q})}^2\Bigg\}^{1 / 2} d \tau
 $$$$
\leq   \int\limits_0^t\left\{\sum_{n_1 \leq j \leq n_2} \lambda_j^{2 \beta(p+\widehat{q})}  k^2(\tau) u_j^2(\tau)  (t-\tau)^{2\alpha-2} \left(E_{\alpha, \alpha}\left(-\lambda_j^\beta (t-\tau)^\alpha\right)\right)^2\right\}^{1 / 2} d \tau .
$$
By the inequality $0<p+\widehat{q}<1$  and  inequalities in Proposition \ref{thproth3}, we get
$$
\left\|\sum\limits_{n_1\leq j \leq n_2}J_{5j}(t)e_j\right\|_{D\left(L^{-\beta (q-\widehat{q})}\right)}
$$
$$
\leq M_2 \|k\|_{C[0,T]} \int\limits_0^t\left\{\sum_{n_1 \leq j \leq n_2} \lambda_j^{2 \beta(p+\widehat{q})}  u_j^2(\tau)  (t-\tau)^{2\alpha-2}  \lambda_j^{-2 \beta(p+\widehat{q})}
(t-\tau)^{-2\alpha(p+\widehat{q})} \right\}^{1 / 2} d \tau .
$$
$$
 = M_2  \|k\|_{C[0,T]}    \int\limits_0^t (t-\tau)^{\alpha(q-\widehat{q})-1}
\left\{\sum_{n_1 \leq j \leq n_2}    u_j^2(\tau)
 \right\}^{1 / 2} d \tau.
$$
For $0<\tau<T$, we have $u(\tau,\cdot)$  belonging to $L_2(\Omega)$. This follows that the sequence $\left\{\sum_{j \geq n}    u_j^2(\tau)  \right\}^{1 / 2}$, converges pointwise to $0$ as $n$ goes to infinity.  According to the  Sobolev imbedding $D\left(L^{\beta(q+\widehat{q})}\right) \hookrightarrow D\left(L^{\beta p}\right)$ and (\ref{est25}), we have
$$
\left\|\sum\limits_{n_1\leq j \leq n_2}J_{5j}(t)e_j\right\|_{D\left(L^{-\beta (q-\widehat{q})}\right)}  \leq M_2    T^{\alpha q} \|k\|_{C[0,T]}\frac{\Phi \left(\|\varphi\|_{D\left(L^{\beta p}\right)}+  \|F\|_{\mathcal{D}_{2, \alpha q}}\right)}{2-\exp{\left\{ \frac{\Phi \|k\|_{C[0,T]}T^{2\alpha q}}{2\alpha q}\right\}}}
$$
\begin{equation}\label{eqth63}
\times \exp{\left\{ \frac{\Phi \|k\|_{C[0,T]}T^{2\alpha q}}{2\alpha q}\right\}}  B(\alpha(q-\widehat{q}),1-\alpha q) t^{-\alpha \widehat{q}}.
\end{equation}

Therefore, the dominated convergence theorem yields that
$$
\lim _{n \rightarrow \infty}  \int\limits_0^t (t-\tau)^{\alpha(q-\widehat{q})-1}
\left\{\sum_{ j \leq n}    u_j^2(\tau)
 \right\}^{1 / 2} d \tau =0,
$$
$$
\lim _{n \rightarrow \infty}  \int\limits_0^T (T-\tau)^{\alpha(q-\widehat{q})-1}
\left\{\sum_{ j \leq n}    u_j^2(\tau)
 \right\}^{1 / 2} d \tau =0.
$$

We deduce $
\left\|\sum\limits_{n_1\leq j \leq n_2}J_{ij}(t)e_j\right\|_{D\left(L^{-\beta (q-\widehat{q})}\right)},\,\, i=1,2,3,4,5
$ exist finitely.

For almost every $\tau$ in the interval $(0, T)$, by (\ref{eqth59}), we have that $F(\tau,\cdot)$ belongs to $L_2(\Omega)$. This implies $\sum\limits_{1\leq j \leq n} F_j (\tau)e_j $ is a Cauchy sequence in $L_2(\Omega)$. From the above statements and Sobolev embedding  $L_2(\Omega) \hookrightarrow D\left(L^{-\beta(q-\widehat{q})}\right)$
implies that $\sum\limits_{1\leq j \leq n} F_j (\tau)e_j $  is also a Cauchy sequence in $D\left(L^{-\beta(q-\widehat{q})}\right).$

On the other hand, it follows from $\varphi\in D\left(L^{-\beta(q-\widehat{q})}\right)$
that
$$
\lim\limits_{n_1,n_2\rightarrow \infty} \sum_{n_1 \leq j \leq n_2} \varphi_j^2(\tau)\lambda_i^{2 \beta(p+\widehat{q})}=0.
$$
By A5), $0 \leq \widehat{q} \leq \frac{s}{\alpha}$, and we obtain the inclusion $\mathcal{D}_{2, \alpha q-s}((0,T) \times \Omega) \subset \mathcal{D}_{2, \alpha(q-\widehat{q})}((0,T) \times \Omega)$. We deduce that $F \in \mathcal{D}_{2, \alpha(q-\widehat{q})}((0,T) \times \Omega)$, and by the dominated convergence theorem
$$
\lim _{n_1, n_2 \rightarrow \infty} \int\limits_0^T (T-\tau)^{\alpha(q-\widehat{q})-1}\left\{\sum_{n_1 \leq j \leq n_2} F_j^2(\tau)\right\}^{1 / 2} d \tau=0.
$$

In addition, the Sobolev embedding $L_2(\Omega) \hookrightarrow D\left(L^{-\beta(q-\widehat{q})}\right)$ yields that there exists a positive constant $M_3$ such that
$$
\left\|k u\right\|_{D\left(L^{-\beta (q-\widehat{q})}\right)} \leq M_3 \left\|k u\right\|_{L_2(\Omega)}.
$$
Hence $\sum_{j=1}^n \partial_t^\alpha u_j(t) e_j$ is a Cauchy sequence and a convergent sequence in $D\left(L^{-\beta (q-\widehat{q})}\right)$. Taking all the above arguments together, we conclude that $\left\|\sum_{j=1}^n \partial_t^\alpha u_j(t) e_j\right\|_{D\left(L^{-\beta (q-\widehat{q})}\right)}
$ finitely exists.

Moreover, combining with estimates (\ref{eqth60})-(\ref{eqth63}), we obtain
$$
\left\|\partial_t^\alpha u(t, \cdot)\right\|_{D\left(L^{-\beta (q-\widehat{q})}\right)}\leq  \left\|F(t, \cdot)\right\|_{D\left(L^{-\beta (q-\widehat{q})}\right)}+
 \left\|k(t) u(t, \cdot)\right\|_{D\left(L^{-\beta (q-\widehat{q})}\right)}
$$
$$
+\sum\limits_{i=1}^{5}\left\| \sum\limits_{i=1}^{\infty} J_{ij}(t)e_j \right\|_{D\left(L^{-\beta (q-\widehat{q})}\right)}
\leq M_3 \left\|F(t, \cdot)\right\|
+  T^\alpha\left\|k\right\|_{C[0,T]}
\frac{\Phi \left(\|\varphi\|_{D\left(L^{\beta p}\right)}+  \|F\|_{\mathcal{D}_{2, \alpha q}}\right)}{2-\exp{\left\{ \frac{\Phi \|k\|_{C[0,T]}T^{2\alpha q}}{2\alpha q}\right\}}}
$$
$$
\times \exp{\left\{ \frac{\Phi \|k\|_{C[0,T]}t^{2\alpha q}}{2\alpha q}\right\}}
 + M_2 T^\alpha t^{-\alpha} \|F\|_{\mathcal{D}_{2, \alpha (q-\widehat{q})}}+
\frac{C_{\kappa} M_2}{\lambda_1^\beta} t^{-\alpha}  \|\varphi\|_{D\left(L^{\beta (p+\widehat{q})}\right)}
$$
$$
+
\frac{C_{\kappa} M_2}{\lambda_1^\beta}   t^{-\alpha} \|F\|_{\mathcal{D}_{2, \alpha (q-\widehat{q})}}
+\Bigg(\frac{C_{\kappa} M_2}{\lambda_1^\beta} \|k\|_{C[0,T]}  t^{-\alpha}  + M_2    T^{\alpha q} \|k\|_{C[0,T]}t^{-\alpha \widehat{q}}
\Bigg)
$$
\begin{equation*}
\times \frac{\Phi \left(\|\varphi\|_{D\left(L^{\beta p}\right)}+  \|F\|_{\mathcal{D}_{2, \alpha q}}\right)}{2-\exp{\left\{ \frac{\Phi \|k\|_{C[0,T]}T^{2\alpha q}}{2\alpha q}\right\}}} \exp{\left\{ \frac{\Phi \|k\|_{C[0,T]}t^{2\alpha q}}{2\alpha q}\right\}} B(\alpha(q-\widehat{q}), 1-\alpha q).
\end{equation*}

By performing simple operations in the above inequality, we obtain the inequality (\ref{theq58}) from Theorem \ref{theorth1}.

So,  from $0<\widehat{r} \leq \frac{1-\alpha}{\alpha}$ and $0<\alpha q-s<\alpha$ that $1 \leq \frac{1}{\alpha}-\widehat{r}<\frac{1}{\alpha q-s}+\widehat{r}$. This implies the following Sobolev embedding
$$
L^{\frac{1}{\alpha q-s}+\widehat{r}}\left(0, T ; D\left(L^{-\beta(q-\widehat{q})}\right) \right) \hookrightarrow L^{\frac{1}{\alpha}-\widehat{r}}\left(0, T ; D\left(L^{-\beta(q-\widehat{q})}\right)\right)
$$

Moreover, by the assumption A5), $\widehat{q}\alpha<s$, we have $\frac{1}{\alpha q-s}+\hat{r}>\frac{1}{\alpha(q-\widehat{q})}$. This implies that there exists a constant $M^*>0$ such that
\begin{equation}\label{eqth65}
\left\|F\right\|_{\mathcal{D}_{2, \alpha(q-\widehat{q})}} \leq M^{*}\left\|F \right\|_{L^{\frac{1}{\alpha q-s}+\widehat{r}}}\left(0, T ; L_2(\Omega)\right) .
\end{equation}

Hence, we deduce from (\ref{theq58}) that there exists a constant $M_{4}>0$ satisfying
$$
\left\|\partial_t^\alpha u(t, \cdot)\right\|_{L^{\frac{1}{\alpha }-\widehat{r}}\left(0, T ; D\left(L^{-\beta (q-\widehat{q})}\right)\right)}  \leq  M_4  \left\|F\right\|_{L^{\frac{1}{\alpha q-s}+\widehat{r}}\left(0, T ; L_2(\Omega)\right)}+ M_4\|\varphi\|_{D\left(L^{\beta (p+\widehat{q})}\right)}
$$
$$
+ M_4 T^\alpha\left\|k\right\|_{C[0,T]}
\frac{\Phi \left(\|\varphi\|_{D\left(L^{\beta (p+\widehat{q})}\right)}+  \|F\|_{L^{\frac{1}{\alpha q-s}+\widehat{r}}\left(0, T ; L_2(\Omega)\right)} \right) }{2-\exp{\left\{ \frac{\Phi \|k\|_{C[0,T]}T^{2\alpha q}}{2\alpha q}\right\}}}
$$
$$
\times \exp{\left\{ \frac{\Phi \|k\|_{C[0,T]}t^{2\alpha q}}{2\alpha q}\right\}}
 + M_2 T^\alpha  \|F\|_{\mathcal{D}_{2, \alpha (q-\widehat{q})}}+
\frac{C_{\kappa} M_2}{\lambda_1^\beta}   \|\varphi\|_{D\left(L^{\beta (p+\widehat{q})}\right)}
$$
$$
+
\frac{C_{\kappa} M_2}{\lambda_1^\beta}  \|F\|_{\mathcal{D}_{2, \alpha (q-\widehat{q})}}
+\Bigg(\frac{C_{\kappa} M_2}{\lambda_1^\beta}   t^{-\alpha}  + M_2    T^{\alpha q}
\Bigg)\|k\|_{C[0,T]}B(\alpha(q-\widehat{q}), 1-\alpha q)
$$
\begin{equation*}
\times \frac{\Phi \left(\|\varphi\|_{D\left(L^{\beta (p+\widehat{q})}\right)}
+ 2 \|F\|_{L^{\frac{1}{\alpha q-s}+\widehat{r}}\left(0, T ; L_2(\Omega)\right)}\right)}{2-\exp{\left\{ \frac{\Phi \|k\|_{C[0,T]}T^{2\alpha q}}{2\alpha q}\right\}}} \exp{\left\{ \frac{\Phi \|k\|_{C[0,T]}t^{2\alpha q}}{2\alpha q}\right\}},
\end{equation*}
where we note that $\left\|t^{-\alpha}\right\|_{L^{\frac{1}{\alpha}-\widehat{r}}(0, T ; \mathbb{R})}<\infty$, $\left\|t^{-\alpha \widehat{q}}\right\|_{L^{\frac{1}{\alpha}-\widehat{r}}(0, T ; \mathbb{R})}<\infty$.  This inequality is the inequality (\ref{theq59}) from Theorem \ref{theorth1}.

\end{proof}

\begin{theorem}\label{theorth2}
Let $p, q, s, p', q', \widehat{p}, \widehat{q}, r, \widehat{r}$ be defined by A1), A3), A4), A5). If $\varphi \in D\left(L^{\beta(p+\widehat{q})}\right)$, and $F \in L^{\frac{1}{\alpha q-s}+\widehat{r}}\left(0, T ; L_2(\Omega)\right)$, then nonlocal initial and boundary problem (\ref{eqth11}) has a unique solution $u$ such that
$$
\begin{gathered}
u \in L^{\frac{1}{\alpha q'}-r}\left(0, T ;
D\left(L^{\beta(p-p')}\right)\right) \cap C^{\alpha q}\left((0, T] ; L_2(\Omega)\right) \cap C^s\left([0, T];D\left(L^{-\beta q'}\right)\right), \\
\partial_t^\alpha u \in L^{\frac{1}{\alpha}-\widehat{r}}\left(0, T;D\left(L^{-\beta (q-\widehat{q})}\right)\right) \cap C^\alpha\left((0, T];D\left(L^{-\beta q}\right)\right .
\end{gathered}
$$
and
$$
 \left\|\partial_t^\alpha u \right\|_{C^{\alpha}\left((0, T];D\left(L^{-\beta q}\right)\right)} \leq M_4 \left\|F\right\|_{C^{\alpha}\left((0, T];D\left(L^{-\beta q}\right)\right)}
$$$$
+M_4\left(M^{*}\|\varphi\|_{D\left(L^{\beta (p+\widehat{q})}\right)}+ 2 M^{*}\left\|F\right\|_{L^{\frac{1}{\alpha q-s}+\widehat{r}}\left(0, T ; L_2(\Omega)\right)} \right)
$$
$$
+2M_4 \left\|k\right\|_{C[0,T]}
  \Bigg(M^{*}\|\varphi\|_{D\left(L^{\beta (p+\widehat{q})}\right)}
+ 2 M^{*} \left\|F\right\|_{L^{\frac{1}{\alpha q-s}+\widehat{r}}\left(0, T ; L_2(\Omega)\right)}\Bigg)
$$
\begin{equation}\label{theqth67}
\times Z_{\alpha q,1}\left(\bigg(\frac{M_2}{\lambda_1^{\beta p}} \Gamma(\alpha q) \|k\|_{C[0,T]}\bigg)^{\frac{1}{\alpha q}}T\right).
\end{equation}
Moreover, there exists a positive constant $M_5$ such that
$$
 \left\|\partial_t^\alpha u \right\|_{L^{\frac{1}{\alpha}-\widehat{r}}\left(0, T;D\left(L^{-\beta (q-\widehat{q})}\right)\right)} +\left\|\partial_t^\alpha u \right\|_{C^{\alpha}\left((0, T];D\left(L^{-\beta q}\right)\right)}
$$
$$
\leq M_5 \left\|F\right\|_{C^{\alpha}\left((0, T];D\left(L^{-\beta q}\right)\right)}
+M_5\left(M^{*}\|\varphi\|_{D\left(L^{\beta (p+\widehat{q})}\right)}+ 2 M^{*}\left\|F\right\|_{L^{\frac{1}{\alpha q-s}+\widehat{r}}\left(0, T ; L_2(\Omega)\right)} \right)
$$
\begin{equation}\label{theqth68}
M_5 \left\|k\right\|_{C[0,T]}
\frac{\Phi_1 \left(\|\varphi\|_{D\left(L^{\beta p}\right)}+  \|F\|_{\mathcal{D}_{2, \alpha q-s}}\right)}{2-\exp{\left\{ \frac{\Phi_1 \|k\|_{C[0,T]}T^{\alpha q +\alpha q'}}{ \alpha q +\alpha q'}\right\}}}
\exp{\left\{ \frac{\Phi_1 \|k\|_{C[0,T]}t^{\alpha q +\alpha q'}}{ \alpha q +\alpha q'}\right\}}.
\end{equation}
\end{theorem}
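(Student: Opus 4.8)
The strategy is to promote the $L^{\frac{1}{\alpha}-\widehat r}$--regularity of $\partial_t^\alpha u$ obtained in Theorem~\ref{theorth1} to H\"older continuity on $(0,T]$ with values in the larger dual space $D(L^{-\beta q})$, imitating the argument used for $u$ itself in Lemmas~\ref{lemmth4}--\ref{lemmth5}. First I would apply Theorem~\ref{theorth1}: it guarantees that problem~(\ref{eqth11}) has a unique solution with the stated regularity of $u$, that $\partial_t^\alpha u$ exists and lies in $L^{\frac{1}{\alpha}-\widehat r}(0,T;D(L^{-\beta(q-\widehat q)}))$, and that the Fourier coefficients satisfy
$$
\partial_t^\alpha u_j(t)=F_j(t)-k(t)u_j(t)+J_{1j}(t)+J_{2j}(t)+J_{3j}(t)+J_{4j}(t)+J_{5j}(t),\qquad j\in\mathbb N,
$$
with the $J_{ij}$ given by the closed formulas derived there from Proposition~\ref{proposition2}(iv),(v) and equation~(\ref{eqth9}). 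Fixing $0<t_1<t_2\le T$, I would then expand $\partial_t^\alpha u(t_2,\cdot)-\partial_t^\alpha u(t_1,\cdot)=\sum_j\big[\partial_t^\alpha u_j(t_2)-\partial_t^\alpha u_j(t_1)\big]e_j$ and estimate each of the seven term-by-term differences in the $D(L^{-\beta q})$--norm, using the dual pairing identity~(\ref{eqth5555}).

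The estimates split into two families, handled by the two tools already present in the paper. For the data-driven terms $F_j$, $J_{2j}$, $J_{3j}$ one writes the increments of the Mittag-Leffler factors $E_{\alpha,1}(-\lambda_j^\beta t^\alpha)$ and of the convolution kernels $t^{\alpha-1}E_{\alpha,\alpha}(-\lambda_j^\beta t^\alpha)$ as time integrals of $\eta^{\alpha-1}E_{\alpha,\alpha}(-\lambda_j^\beta\eta^\alpha)$ and $\eta^{\alpha-2}E_{\alpha,\alpha-1}(-\lambda_j^\beta\eta^\alpha)$, exactly as in (\ref{eqht35})--(\ref{eqht36}); bounding these via Proposition~\ref{thproth3} and the elementary inequalities (\ref{eqth38})--(\ref{esth39}), together with the embeddings furnished by (\ref{theqt6}), (\ref{theqt7}), (\ref{eqth59}) and (\ref{eqth65}), yields bounds proportional to $(t_2-t_1)^\alpha$ times $\|\varphi\|_{D(L^{\beta(p+\widehat q)})}$ and the norms of $F$, while the pure $F_j$ increment $\|F(t_2,\cdot)-F(t_1,\cdot)\|_{D(L^{-\beta q})}$ is absorbed into the term $\|F\|_{C^\alpha((0,T];D(L^{-\beta q}))}$ on the right-hand side of~(\ref{theqth67}). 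For the solution-driven terms $k(t)u_j(t)$, $J_{4j}$, $J_{5j}$ the same manipulations leave behind integrals in which $\|u(\tau,\cdot)\|$ and $\|u(\tau,\cdot)\|_{D(L^{-\beta q'})}$ appear against weakly singular kernels $(t-\tau)^{\alpha q-1}$; inserting the a priori bounds (\ref{est25}) of Lemma~\ref{lemmth1}, (\ref{thineq35}) of Lemma~\ref{lemmth3} and (\ref{theq30}) of Lemma~\ref{lemmth5}, one arrives at an integral inequality for the $D(L^{-\beta q})$--H\"older increment of $\partial_t^\alpha u$ of the weakly singular Gronwall--Bellman--Gamidov type. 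Theorem~\ref{ththththth1} then closes it and delivers the growth factor $Z_{\alpha q,1}(\cdots)$ appearing in~(\ref{theqth67}), provided the smallness hypothesis on $\|k\|_{C[0,T]}$ keeps the associated exponential strictly below $2$. Taking the supremum of the increment divided by $(t_2-t_1)^\alpha$ over $0<t_1<t_2\le T$ gives $\partial_t^\alpha u\in C^\alpha((0,T];D(L^{-\beta q}))$ and the bound~(\ref{theqth67}); adding this to the $L^{\frac{1}{\alpha}-\widehat r}$--estimate~(\ref{theq59}) of Theorem~\ref{theorth1} and absorbing constants into a single $M_5$ yields~(\ref{theqth68}).

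The main obstacle is the self-referential nature of the estimate: the terms $k(t)u_j(t)$, $J_{4j}$ and $J_{5j}$ carry the unknown solution, so the H\"older increment of $\partial_t^\alpha u$ cannot be bounded outright but only through an integral inequality in which $u$ reappears under a weakly singular kernel. Resolving this circularity is precisely what forces the use of the singular Gronwall inequality of Theorem~\ref{ththththth1} and the smallness condition on $k$. A secondary technical point, already encountered in the proof of Theorem~\ref{theorth1}, is to justify passing the term-by-term estimates to the limit: one must check that the partial sums $\sum_{j=1}^n\partial_t^\alpha u_j(t)e_j$ form a Cauchy sequence in $D(L^{-\beta q})$, which rests on the dominated convergence argument built on $F\in\mathcal D_{2,\alpha(q-\widehat q)}$, on $\varphi\in D(L^{\beta(p+\widehat q)})$, and on $u(\tau,\cdot)\in L_2(\Omega)$ for almost every $\tau$.
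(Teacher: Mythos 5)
Your overall scaffolding (invoke Theorem \ref{theorth1} for existence and the $L^{\frac{1}{\alpha}-\widehat r}$--regularity of $\partial_t^\alpha u$, then expand $\partial_t^\alpha u(t_2,\cdot)-\partial_t^\alpha u(t_1,\cdot)$ termwise via the representation of $\partial_t^\alpha u_j$ and estimate in $D(L^{-\beta q})$ with Proposition \ref{thproth3}, (\ref{eqth38})--(\ref{esth39}) and the dual pairing (\ref{eqth5555})) is indeed what the paper does; the final step of adding (\ref{theq59}) to get (\ref{theqth68}) also matches. But there is a genuine gap in what you then try to prove. In this paper $C^{\alpha}\left((0,T];B\right)$ is \emph{not} a H\"older class: by the definition in Section 2 it is the weighted space with norm $\sup_{0<t\le T} t^{\alpha}\|v(t)\|_{B}$, the H\"older norm being reserved for closed intervals $[0,T]$. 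So the target is (i) continuity of $\partial_t^\alpha u$ on $(0,T]$ with values in $D(L^{-\beta q})$ and (ii) the weighted bound $t^{\alpha}\|\partial_t^\alpha u(t,\cdot)\|_{D(L^{-\beta q})}\le\cdots$, which the paper gets by observing that the estimates (\ref{eqth60})--(\ref{eqth63}) remain valid for $\widehat q=0$ and then reading off (\ref{theq76}) from (\ref{theq58}) and (\ref{eqth65}). Your plan to bound $\sup_{0<t_1<t_2\le T}(t_2-t_1)^{-\alpha}\|\partial_t^\alpha u(t_2,\cdot)-\partial_t^\alpha u(t_1,\cdot)\|_{D(L^{-\beta q})}$ aims at a stronger property that the available estimates do not deliver: the terms carrying $\Psi_j(T)$ and $\varphi$ (the analogues of $\mathcal{J}_3$, $\mathcal{J}_4$, $\mathcal{J}_5$ in (\ref{est69})--(\ref{est84est})) are bounded by $t_1^{-\alpha}\left(t_2^{\alpha}-t_1^{\alpha}\right)$ and similar expressions, so the increment divided by $(t_2-t_1)^{\alpha}$ blows up as $t_1\to 0$; only convergence to zero for fixed $t_1>0$, i.e.\ continuity on $(0,T]$, follows.

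The second problem is the ``circularity'' you identify as the main obstacle: it does not exist, and the extra Gronwall application you build around it is not the mechanism here. In the difference estimates the unknown enters only through $\|u(\tau,\cdot)\|$ (or its $D(L^{-\beta q'})$-norm) under weakly singular kernels; these quantities were already bounded once and for all in Lemmas \ref{lemmth1}, \ref{lemmth3}, \ref{lemmth5} (where Theorem \ref{ththththth1} was applied at the level of $u$), and the paper simply inserts (\ref{thineq35}) and its companions. The increment of $\partial_t^\alpha u$ itself never appears under an integral sign, so there is no integral inequality to close, and the factor $Z_{\alpha q,1}(\cdots)$ in (\ref{theqth67}) does not arise from a Gronwall step on H\"older increments but from the weighted estimate (\ref{theq76}); the smallness conditions are those inherited from the earlier lemmas, not a new hypothesis attached to your Gronwall step. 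You should also treat the difference $k(t_2)u(t_2,\cdot)-k(t_1)u(t_1,\cdot)$ explicitly, as in (\ref{est74}), using $k\in C[0,T]$ and the continuity of $u$ already established.
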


\begin{proof}
We prove $u(t, x) \in  C^{\alpha }\left((0, T];D\left(L^{-\beta q}\right)\right)$. Let us consider $0<t_1<t_2 \leq T$.   Therefore, also, according to Theorem \ref{theorth1}, we prove that for the difference $\partial_t^\alpha u\left(t_2,\cdot\right)- \partial_t^\alpha u\left(t_1, \cdot\right)$ the following holds
\begin{equation}\label{ththeq67}
\lim _{t_2-t_1 \rightarrow 0}\left\|\partial_t^\alpha u\left(t_2,\cdot\right)- \partial_t^\alpha u\left(t_1, \cdot\right)\right\|_{D\left(L^{-\beta q}\right)}=0.
\end{equation}

After making some simple calculations on the difference of the fractional differential, we find:
$$
\partial_t^\alpha u(t_2, x) - \partial_t^\alpha u(t_1, x)
=F(t, x) \Bigg|_{t=t_1}^{t=t_2} -\sum_{j=1}^{\infty} \lambda_j^{\beta} e_j(x)  F_j(t) \star \left\{t^{\alpha-1} E_{\alpha, \alpha}\left(-\lambda_j^\beta t^\alpha\right)\right\} \Bigg|_{t=t_1}^{t=t_2}
$$
$$
-\sum_{j=1}^{\infty} \lambda_j^{\beta} \Psi_j(T) e_j(x) \varphi_j  E_{\alpha, 1}\left(-\lambda_j^\beta t^\alpha\right)\Bigg|_{t=t_1}^{t=t_2}
$$
$$
+\sum_{j=1}^{\infty} \lambda_j^{\beta} \Psi_j(T) e_j(x)  F_j(T) \star \left\{T^{\alpha-1} E_{\alpha, \alpha}\left(-\lambda_j^\beta T^\alpha\right)\right\}  E_{\alpha, 1}\left(-\lambda_j^\beta t^\alpha\right)\Bigg|_{t=t_1}^{t=t_2}
$$
$$
-\sum_{j=1}^{\infty} \lambda_j^{\beta} \Psi_j(T) e_j(x)  \{k(T) u_j(T)\} \star \left\{T^{\alpha-1} E_{\alpha, \alpha}\left(-\lambda_j^\beta T^\alpha\right)\right\}  E_{\alpha, 1}\left(-\lambda_j^\beta t^\alpha\right)\Bigg|_{t=t_1}^{t=t_2}
$$
$$
- \{k(t) u(t, x)\}\Bigg|_{t=t_1}^{t=t_2} +\sum_{j=1}^{\infty} \lambda_j^\beta e_j(x) \{k(t) u_j(t)\} \star \left\{t^{\alpha-1} E_{\alpha, \alpha}\left(-\lambda_j^\beta t^\alpha\right)\right\}\Bigg|_{t=t_1}^{t=t_2}.
$$
$$
=F(t_2, x) - F(t_1, x)-(k(t_2)u(t_2, x) - k(t_1)u(t_1, x))
$$
$$
-\sum_{j=1}^{\infty} \lambda_j^\beta \int\limits_0^{t_1} \int\limits_{t_1-\tau}^{t_2-\tau} F_j(\tau) \eta^{\alpha-2} E_{\alpha, \alpha-1}\left(-\lambda_j^\beta \eta^\alpha\right) d \eta d \tau e_j(x)
$$
$$
-\sum_{j=1}^{\infty} \lambda_j^\beta \int\limits_{t_1}^{t_2} F_j(\tau) (t_2-\tau)^{\alpha-1} E_{\alpha, \alpha}\left(-\lambda_j^\beta (t_2-\tau)^{\alpha}\right) d \tau  e_j(x)
$$
$$
+\sum_{j=1}^{\infty} \lambda_j^{2\beta} \Psi_j(T)  \varphi_j  \int\limits_{t_1}^{t_2} \tau^{\alpha-1} E_{\alpha, \alpha}\left(-\lambda_j^\beta \tau^\alpha\right) d \tau e_j(x)
$$
$$
-\sum_{j=1}^{\infty} \lambda_j^{2\beta} \Psi_j(T)   F_j(T) \star \left\{T^{\alpha-1} E_{\alpha, \alpha}\left(-\lambda_j^\beta T^\alpha\right)\right\}  \int\limits_{t_1}^{t_2} \tau^{\alpha-1} E_{\alpha, \alpha}\left(-\lambda_j^\beta \tau^\alpha\right) d \tau e_j(x)
$$
$$
+\sum_{j=1}^{\infty} \lambda_j^{2\beta} \Psi_j(T) \{k(T) u_j(T)\} \star \left\{T^{\alpha-1} E_{\alpha, \alpha}\left(-\lambda_j^\beta T^\alpha\right)\right\}  \int\limits_{t_1}^{t_2} \tau^{\alpha-1} E_{\alpha, \alpha}\left(-\lambda_j^\beta \tau^\alpha\right) d \tau e_j(x)
$$
$$
+\sum_{j=1}^{\infty} \lambda_j^\beta \int\limits_0^{t_1} \int\limits_{t_1-\tau}^{t_2-\tau} k(\tau) u_j(\tau) \eta^{\alpha-2} E_{\alpha, \alpha-1}\left(-\lambda_j^\beta \eta^\alpha\right) d \eta d \tau e_j(x)
$$
$$
+\sum_{j=1}^{\infty} \lambda_j^\beta \int\limits_{t_1}^{t_2} k(\tau) u_j(\tau) (t_2-\tau)^{\alpha-1} E_{\alpha, \alpha}\left(-\lambda_j^\beta (t_2-\tau)^{\alpha}\right) d \tau  e_j(x)
$$
\begin{equation}\label{eqth67}
:
=F(t_2, x) - F(t_1, x)-(k(t_2)u(t_2, x) - k(t_1)u(t_1, x))+\sum\limits_{j=1}^{7}\mathcal{J}_j.
\end{equation}
Here we note that $\mathcal{J}_j=L^\beta \mathcal{I}_j, \,\, j=\overline{1,7}$.

Since $F(t, x)\in  C^{\alpha }\left((0, T];D\left(L^{-\beta q}\right)\right)$, we have just to prove $\left\|\mathcal{J}_j\right\|_{D\left(L^{-\beta q}\right)}$ approaches 0 as $t_2-t_1$ approaches 0 .

 Let us first consider $\left\|\mathcal{J}_1\right\|_{D\left(L^{-\beta q}\right)}$. Using inequalities in Proposition \ref{thproth3} we obtain:
$$
\left\|\mathcal{J}_1\right\|_{D\left(L^{-\beta q}\right)}  \leq \int_0^{t_1}\left\|-\sum_{j=1}^{\infty} \lambda_j^\beta  \int\limits_{t_1-\tau}^{t_2-\tau} F_j(\tau) \eta^{\alpha-2} E_{\alpha, \alpha-1}\left(-\lambda_j^\beta \eta^\alpha\right) d \eta   e_j\right\|_{D\left(L^{-\beta q}\right)} d \tau
$$
$$
 \leq \int_0^{t_1}\left\{\sum_{j=1}^{\infty} \lambda_j^{2 \beta} \lambda_j^{-2 \beta q} F_j^2(\tau)\left|\int\limits_{t_1-\tau}^{t_2-\tau} \eta^{\alpha-2} E_{\alpha, \alpha-1}\left(-\lambda_j^\beta \eta^\alpha\right)|d \eta\right|^2\right\}^{1 / 2} d \tau
$$$$
  \leq M_2 \int_0^{t_1}\left\{\sum_{j=1}^{\infty} \lambda_j^{2 \beta} \lambda_j^{-2 \beta q} F_j^2(\tau)\left|\int\limits_{t_1-\tau}^{t_2-\tau} \eta^{\alpha-2} \lambda_j^{-\beta p} \eta^{-\alpha p} d \eta\right|^2\right\}^{1 / 2} d \tau
$$

$$
\leq  M_2 \int\limits_0^{t_1} \int\limits_{t_1-\tau}^{t_2-\tau} \eta^{\alpha q-2} d \eta \|F(\tau, \cdot)\| d \tau.
$$
Using (\ref{eqth38}), (\ref{esth39}) and (\ref{eqth59}) in the last integral of the above inequality, we get
$$
\left\|\mathcal{J}_1\right\|_{D\left(L^{-\beta q}\right)}   \leq    \frac{M_2 \left(t_2-t_1\right)^s}{ 1-\alpha q}   \int_0^{t_1}\|F(\tau, \cdot)\|\left(t_1-\tau\right)^{\alpha q -s-1} d \tau
$$
\begin{equation}\label{est67}
  \leq  \frac{M_2 }{ 1-\alpha q} \|F\|_{\mathcal{D}_{2, \alpha q-s}}\left(t_2-t_1\right)^s.
\end{equation}

Let us secondly consider $\left\|\mathcal{J}_2\right\|_{D\left(L^{-\beta q}\right)}$. We have

$$
\left\|\mathcal{J}_2\right\|_{D\left(L^{-\beta q}\right)}  \leq
$$$$
\int\limits_{t_1}^{t_2}\left\|\sum_{j=1}^{\infty} \lambda_j^\beta  F_j(\tau) (t_2-\tau)^{\alpha-1} E_{\alpha, \alpha}\left(-\lambda_j^\beta (t_2-\tau)^{\alpha}\right)  e_j\right\|_{D\left(L^{-\beta q}\right)} d \tau
$$$$
\leq M_2 \int\limits_{t_1}^{t_2}\left\{\sum_{j=1}^{\infty} \lambda_j^{2 \beta} \lambda_j^{-2 \beta q} F_j^2(\tau) \lambda_j^{-2 \beta p}\left(t_2-\tau\right)^{-2 \alpha p}\right\}^{1 / 2}\left(t_2-\tau\right)^{\alpha-1} d \tau
$$
\begin{equation}\label{est68}
\leq M_2 \int\limits_{t_1}^{t_2}\|F(\tau, \cdot)\|\left(t_2-\tau\right)^{\alpha q-s-1}\left(t_2-\tau\right)^s d \tau \leq M_2 \|F\|_{\mathcal{D}_{2, \alpha q-s}}\left(t_2-t_1\right)^s.
\end{equation}

Thirdly, we consider the norm $\left\|\mathcal{J}_3\right\|_{D\left(L^{-\beta q}\right)} $. Hence, we have
$$
\left\|\mathcal{J}_3\right\|_{D\left(L^{-\beta q}\right)}=\left\| \sum_{j=1}^{\infty} \lambda_j^{2\beta} \Psi_j(T)  \varphi_j  \int\limits_{t_1}^{t_2} \tau^{\alpha-1} E_{\alpha, \alpha}\left(-\lambda_j^\beta \tau^\alpha\right) d \tau e_j(x)\right\|_{D\left(L^{-\beta q}\right)}
$$
$$
\leq \left\{
\sum_{j=1}^{\infty}  \varphi_j^2 \Psi_j^2(T) \lambda_j^{4 \beta} \lambda_j^{-2 \beta q} \left[\int\limits_{t_1}^{t_2} \tau^{\alpha-1} E_{\alpha, \alpha}\left(-\lambda_j^\beta \tau^\alpha\right) d \tau\right]^2 \right\}^{1 / 2}
$$
$$
\leq C_{\kappa} M_2  \left\{ \sum_{j=1}^{\infty}  \varphi_j^2  \lambda_j^{4 \beta} \lambda_j^{-2 \beta q} \left[\int\limits_{t_1}^{t_2} \tau^{\alpha-1} \lambda_j^{-\beta } \tau^{-\alpha} d \tau\right]^2 \right\}^{1 / 2}
$$
$$
\leq C_{\kappa} M_2 t_1^{-\alpha} \left\{ \sum_{j=1}^{\infty}  \varphi_j^2  \lambda_j^{2 \beta p} \left[\int\limits_{t_1}^{t_2} \tau^{\alpha -1}  d \tau\right]^2 \right\}^{1 / 2}
$$
$$
\leq \frac{C_{\kappa} M_2 }{\alpha}   t_1^{-\alpha} \left\{ \sum_{j=1}^{\infty}  \varphi_j^2  \lambda_j^{2 \beta p} \left[ t_2^{\alpha }-t_1^{\alpha}  \right]^2 \right\}^{1 / 2}
$$
\begin{equation}\label{est69}
\leq \frac{C_{\kappa} M_2 }{\alpha}   t_1^{-\alpha} \left( t_2^{\alpha }-t_1^{\alpha} \right)
\|\varphi\|_{D\left(L^{\beta p}\right)}.
\end{equation}

Similar to the estimate of $\mathcal{J}_3$, we get the estimate of $\mathcal{J}_4$ and $\mathcal{J}_5$. Using the estimation method for $\mathcal{J}_3$ and inequalities in Proposition \ref{thproth3}, we get
$$
\left\|\mathcal{J}_4\right\|_{D\left(L^{-\beta q}\right)}
\leq \int\limits_0^T \Bigg\{
\sum_{j=1}^{\infty} \lambda_j^{4 \beta} \lambda_j^{-2 \beta q}  F_j^2(\tau) \Psi_j^2(T)(T-\tau)^{2\alpha-2} \left(E_{\alpha, \alpha}\left(-\lambda_j^\beta (T-\tau)^\alpha\right)\right)^2
$$
$$
\left[\int\limits_{t_1}^{t_2} \eta^{\alpha-1} E_{\alpha, \alpha}\left(-\lambda_j^\beta \eta^\alpha\right) d \eta\right]^2 \Bigg\}^{1 / 2}  d \tau
\leq  C_{\kappa} M_2 \int\limits_0^T \Bigg\{
\sum_{j=1}^{\infty} \lambda_j^{4 \beta} \lambda_j^{-2 \beta q}
F_j^2(\tau)
$$
$$
\lambda_j^{-\beta p} (T-\tau)^{2\alpha-2}
(T-\tau)^{-2\alpha p}\left[\int\limits_{t_1}^{t_2} \eta^{\alpha-1} \lambda_j^{-\beta } \eta^{-\alpha} d \eta\right]^2 \Bigg\}^{1 / 2}  d \tau
$$
$$
\leq  \frac{C_{\kappa} M_2}{\alpha  }   t_1^{-\alpha} \int\limits_0^T \Bigg\{
\sum_{j=1}^{\infty} F_j^2(\tau) (T-\tau)^{2\alpha q-2}
 \left[t_2^{\alpha }-t_1^{\alpha } \right]^2 \Bigg\}^{1 / 2}  d \tau
$$
$$
\leq  \frac{C_{\kappa} M_2}{\alpha  }   t_1^{-\alpha}  \int\limits_0^T   \|F(\tau, \cdot)\| (T-\tau)^{\alpha q-1}
\left(t_2^{\alpha }-t_1^{\alpha }\right)    d \tau
$$
\begin{equation}\label{est70}
\leq  \frac{C_{\kappa} M_2}{\alpha  }   t_1^{-\alpha}\left(t_2^{\alpha }-t_1^{\alpha }\right)\|F\|_{\mathcal{D}_{2, \alpha q-s}},
\end{equation}
$$
\left\|\mathcal{J}_5\right\|_{D\left(L^{-\beta q}\right)}
\leq \int\limits_0^T \Bigg\{
\sum_{j=1}^{\infty} \lambda_j^{4 \beta} \lambda_j^{-2 \beta q} k^2(\tau) u_j^2(\tau) \Psi_j^2(T)(T-\tau)^{2\alpha-2}
$$
$$
\times \left(E_{\alpha, \alpha}\left(-\lambda_j^\beta (T-\tau)^\alpha\right)\right)^2\left[\int\limits_{t_1}^{t_2} \eta^{\alpha-1} E_{\alpha, \alpha}\left(-\lambda_j^\beta \eta^\alpha\right) d \eta\right]^2 \Bigg\}^{1 / 2}  d \tau
$$
$$
\leq  C_{\kappa} M_2 \int\limits_0^T \Bigg\{
\sum_{j=1}^{\infty} \lambda_j^{4 \beta} \lambda_j^{-2 \beta q}
k^2(\tau) u_j^2(\tau) \lambda_j^{-\beta p} (T-\tau)^{2\alpha -2}
$$$$
(T-\tau)^{-2\alpha p}\left[\int\limits_{t_1}^{t_2} \eta^{\alpha-1} \lambda_j^{-\beta } \eta^{-\alpha} d \eta\right]^2 \Bigg\}^{1 / 2}  d \tau
$$
$$
\leq  \frac{C_{\kappa} M_2}{\alpha  }   t_1^{-\alpha} \int\limits_0^T \Bigg\{
\sum_{j=1}^{\infty}k^2(\tau) u_j^2(\tau) (T-\tau)^{2\alpha q-2}
 \left[t_2^{\alpha }-t_1^{\alpha } \right]^2 \Bigg\}^{1 / 2}  d \tau
$$
$$
\leq  \frac{C_{\kappa} M_2 \|k\|_{C[0,T]}}{\alpha }   t_1^{-\alpha}  \int\limits_0^T   \|u(\tau, \cdot)\| (T-\tau)^{\alpha q-1}
\left(t_2^{\alpha }-t_1^{\alpha }\right)    d \tau
$$
$$
\leq  \frac{C_{\kappa} M_2  \|k\|_{C[0,T]}T^{\alpha q}}{\alpha^2 q  }   t_1^{-\alpha}\left(t_2^{\alpha }-t_1^{\alpha }\right)\frac{\Phi_1 \left(\|\varphi\|_{D\left(L^{\beta p}\right)}+  \|F\|_{\mathcal{D}_{2, \alpha q-s}}\right)}{2-\exp{\left\{ \frac{\Phi_1 \|k\|_{C[0,T]}T^{\alpha q +\alpha q'}}{ \alpha q +\alpha q'}\right\}}}
$$
\begin{equation}\label{est84est}
\times \exp{\left\{ \frac{\Phi_1 \|k\|_{C[0,T]}t^{\alpha q +\alpha q'}}{ \alpha q +\alpha q'}\right\}}B(1 - \alpha q', \alpha q).
\end{equation}

Now let's estimate $\mathcal{J}_6$ in the same way
$$
\left\|\mathcal{J}_6\right\|_{D\left(L^{-\beta q}\right)}
$$
$$
\leq \int_0^{t_1}\left\|\sum_{j=1}^{\infty} \lambda_j^\beta  \int\limits_{t_1-\tau}^{t_2-\tau} k(\tau) u_j(\tau) \eta^{\alpha-2} E_{\alpha, \alpha-1}\left(-\lambda_j^\beta \eta^\alpha\right) d \eta d \tau e_j(x)\right\|_{D\left(L^{-\beta q}\right)} d \tau
$$
$$
 \leq \int_0^{t_1}\left\{\sum_{j=1}^{\infty} \lambda_j^{2 \beta} \lambda_j^{-2 \beta q}  k^2(\tau)   u_j^2(\tau)\left|\int\limits_{t_1-\tau}^{t_2-\tau} \eta^{\alpha-2} E_{\alpha, \alpha-1}\left(-\lambda_j^\beta \eta^\alpha\right)|d \eta\right|^2\right\}^{1 / 2} d \tau
$$$$
\leq M_2 \|k\|_{C[[0,T]} \int_0^{t_1}\left\{\sum_{j=1}^{\infty}   u_j^2(\tau) \left|\int\limits_{t_1-\tau}^{t_2-\tau} \omega^{\alpha-2}   \omega^{-\alpha p} d \omega\right|^2\right\}^{1 / 2} d \tau
$$

$$
\leq M_2 \|k\|_{C[[0,T]} \int\limits_0^{t_1} \|u(\tau, \cdot)\| \int\limits_{t_1-\tau}^{t_2-\tau} \eta^{\alpha q-2} d \eta  d \tau.
$$
Using (\ref{thineq35}) and (\ref{eqth38}), (\ref{esth39})  in the last integral of the above inequality, we get
$$
\left\|\mathcal{J}_6\right\|_{D\left(L^{-\beta q}\right)}   \leq    \frac{ M_2 \|k\|_{C[[0,T]} \left(t_2-t_1\right)^s}{ (1-\alpha q)} t_1^{\alpha(q - q') - s}  B(1 - \alpha q', \alpha q- s)
$$
\begin{equation}\label{est71}
\times \frac{\Phi_1 \left(\|\varphi\|_{D\left(L^{\beta p}\right)}+  \|F\|_{\mathcal{D}_{2, \alpha q-s}}\right)}{2-\exp{\left\{ \frac{\Phi_1 \|k\|_{C[0,T]}T^{\alpha q +\alpha q'}}{ \alpha q +\alpha q'}\right\}}} \exp{\left\{ \frac{\Phi_1 \|k\|_{C[0,T]}t^{\alpha q +\alpha q'}}{ \alpha q +\alpha q'}\right\}}.
\end{equation}

Let's estimate $\mathcal{J}_7:$
 $$
\left\|\mathcal{J}_7\right\|_{D\left(L^{-\beta q}\right)}  \leq
$$$$
\int\limits_{t_1}^{t_2}\left\|\sum_{j=1}^{\infty} \lambda_j^\beta k(\tau) u_j(\tau) (t_2-\tau)^{\alpha-1} E_{\alpha, \alpha}\left(-\lambda_j^\beta (t_2-\tau)^{\alpha}\right)  e_j\right\|_{D\left(L^{-\beta q}\right)} d \tau
$$$$
\leq M_2 \|k\|_{C[[0,T]} \int\limits_{t_1}^{t_2}\left\{\sum_{j=1}^{\infty} \lambda_j^{2 \beta} \lambda_j^{-2 \beta q} u_j^2(\tau) \lambda_j^{-2 \beta p}\left(t_2-\tau\right)^{-2 \alpha p}\right\}^{1 / 2}\left(t_2-\tau\right)^{\alpha-1} d \tau
$$
\begin{equation*}
\leq M_2  \|k\|_{C[[0,T]}  \int\limits_{t_1}^{t_2}\|u(\tau, \cdot)\|\left(t_2-\tau\right)^{\alpha q-s-1}\left(t_2-\tau\right)^s d \tau
\end{equation*}
\begin{equation*}
\leq M_2  \|k\|_{C[[0,T]} \left(t_2-t_1\right)^s \int\limits_{t_1}^{t_2}\|u(\tau, \cdot)\|\left(t_2-\tau\right)^{\alpha q-s-1}  d \tau
\end{equation*}

$$
\leq  M_2 \|k\|_{C[[0,T]}  t_2^{\alpha(q- q') - s} B(1 - \alpha q', \alpha q- s)\left(t_2-t_1\right)^{s}
$$
\begin{equation}\label{est73}
\times \frac{\Phi_1 \left(\|\varphi\|_{D\left(L^{\beta p}\right)}+  \|F\|_{\mathcal{D}_{2, \alpha q-s}}\right)}{2-\exp{\left\{ \frac{\Phi_1 \|k\|_{C[0,T]}T^{\alpha q +\alpha q'}}{ \alpha q +\alpha q'}\right\}}} \exp{\left\{ \frac{\Phi_1 \|k\|_{C[0,T]}t^{\alpha q +\alpha q'}}{ \alpha q +\alpha q'}\right\}}.
\end{equation}
Now let's estimate the difference norm $k(t_2)u(t_2, x) - k(t_1)u(t_1, x)$. Using the obvious inequality
$$
\|k(t_2)u(t_2, x) - k(t_1)u(t_1, x)\|_{D\left(L^{-\beta q}\right)} \leq \left\|k(t_2)-k(t_1)\right\| \|u(t_2, x)\|_{D\left(L^{-\beta q}\right)}
$$
\begin{equation}\label{est74}
+\|k(t_1)\| \left\|u(t_2, x)-u(t_1, x)\right\|_{D\left(L^{-\beta q}\right)}.
\end{equation}

If we consider (\ref{eqth67}),  and  (\ref{est68}) - (\ref{est74}) together, then it follows that (\ref{ththeq67}). Therefore, $\partial_t^\alpha u(t, x)$ is in the class $ C\left((0, T];D\left(L^{-\beta q}\right)\right)$.

In general, it is easy to see that the estimates (\ref{eqth60})-(\ref{eqth63})  also hold for $\widehat{q}=0$. Hence, we deduce from (\ref{theq58}) and (\ref{eqth65}) that
$$
t^{\alpha} \left\|\partial_t^\alpha u(t, \cdot)\right\|_{D\left(L^{-\beta q}\right)} \leq M_4 \left\|F(t, \cdot)\right\|_{D\left(L^{-\beta q}\right)}
$$$$
+M_4\left(\|\varphi\|_{D\left(L^{\beta p}\right)}+ 2 M^{*}\left\|F\right\|_{L^{\frac{1}{\alpha q-s}+\widehat{r}}\left(0, T ; L_2(\Omega)\right)} \right)
$$
$$
+2M_4 \left\|k\right\|_{C[0,T]}
  \Bigg(\|\varphi\|_{D\left(L^{\beta p}\right)}
+ 2 M^{*} \left\|F\right\|_{L^{\frac{1}{\alpha q-s}+\widehat{r}}\left(0, T ; L_2(\Omega)\right)}\Bigg)
$$
\begin{equation}\label{theq76}
\times Z_{\alpha q,1}\left(\bigg(\frac{M_2}{\lambda_1^{\beta p}} \Gamma(\alpha q) \|k\|_{C[0,T]}\bigg)^{\frac{1}{\alpha q}}T\right).
\end{equation}
From this we get that $\partial_t^\alpha u \in  C^{\alpha}\left((0, T];D\left(L^{-\beta q}\right)\right)$

 In addition, there exists a positive constant $M^{*}$ such that
$$
 \left\|\partial_t^\alpha u \right\|_{C^{\alpha}\left((0, T];D\left(L^{-\beta q}\right)\right)} \leq M_4 \left\|F\right\|_{C^{\alpha}\left((0, T];D\left(L^{-\beta q}\right)\right)}
$$$$
+M_4\left(M^{*}\|\varphi\|_{D\left(L^{\beta (p+\widehat{q})}\right)}+ 2 M^{*}\left\|F\right\|_{L^{\frac{1}{\alpha q-s}+\widehat{r}}\left(0, T ; L_2(\Omega)\right)} \right)
$$
$$
+2M_4 \left\|k\right\|_{C[0,T]}
\frac{\Phi_1 \left(\|\varphi\|_{D\left(L^{\beta p}\right)}+  \|F\|_{\mathcal{D}_{2, \alpha q-s}}\right)}{2-\exp{\left\{ \frac{\Phi_1 \|k\|_{C[0,T]}T^{\alpha q +\alpha q'}}{ \alpha q +\alpha q'}\right\}}}
\exp{\left\{ \frac{\Phi_1 \|k\|_{C[0,T]}t^{\alpha q +\alpha q'}}{ \alpha q +\alpha q'}\right\}}.
$$

From this, we obtain the result (\ref{theqth67}) from Theorem \ref{theorth2}. According to (\ref{theq59}) and (\ref{theqth67}), we obtain the estimate (\ref{theqth68})  in Theorem \ref{theorth2}.
 Theorem \ref{theorth2} is proven.
\end{proof}

\section*{4 Nonlocal initial problem with a Nonlinear source}

In this section, we investigate the existence, uniqueness, and regularity of the mild solutions of problem (\ref{eqth1})--(\ref{eqth3}) corresponding to the nonlinear source function $F(t, x, u (t, x))$. It is appropriate to consider the assumptions that $u(t,\cdot)$ and $F (t,\cdot, u (t,\cdot))$ belong to the same spatial space $L_2(\Omega)$.

We introduce the following assumptions on the numbers $p, q, p', q', \widehat{p}, \widehat{q}, r, \widehat{r}$.

B1) $0<q<p<1$ such that $p+q=1$;

B2) $0<p'<p, \quad q'=1-p', \quad 0<r \leq \frac{1-\alpha q'}{\alpha q'}$;

B3) $0<p' \leq p-q, \quad q'=1-p', \quad 0<r \leq \frac{1-\alpha q'}{\alpha q'}$;

B4) $0 \leq \widehat{q}<q, \quad \widehat{p}=1-\widehat{q}, \quad 0<\widehat{r} \leq \frac{1-\alpha}{\alpha}$.

In our work, we will assume on $F (t,\cdot, u (t,\cdot))$ the following assumptions

C1) $F(t, \cdot, \mathbf{0})=\mathbf{0}$, and there exists a constant $\Upsilon>0$ such that, for all $v_1, v_2 \in L_2(\Omega)$ and $t \in (0,T)$,
$$
\left\|F\left(t, \cdot, v_1\right)-F\left(t, \cdot, v_2\right)\right\| \leq \Upsilon\left\|v_1-v_2\right\| .
$$

C2) $F(t, \cdot, \mathbf{0})=\mathbf{0}$, and there exists a constant $\Upsilon^{*}>0$ such that, for all $v_1, v_2 \in L_2(\Omega)$ and $t_1, t_2 \in J$,
$$
\left\|F\left(t_1, ., v_1\right)-F\left(t_2, ., v_2\right)\right\| \leq \Upsilon^*\left(\left|t_1-t_2\right|+\left\|v_1-v_2\right\|\right).
$$

Note that the assumption C1), C2) imply that, for $v \in L_2(\Omega)$,
\begin{equation}\label{eqth79}
\|F(t, \cdot, v)\| \leq \Upsilon\|v\|.
\end{equation}

We try to develop the ideas of the nonlocal value probleminitial problem with a linear source (\ref{eqth11}) to deal with the nonlocal value problem initial problem with a non-linear source (\ref{eqth1})--(\ref{eqth3}). In Section 3, for the linear function $F(t, x)$ we assumed that
\begin{equation}\label{eqth80}
F \in \mathcal{D}_{2, \alpha q}((0,T) \times \Omega) \text {, or } F \in \mathcal{D}_{2, \alpha q-s}((0,T) \times \Omega) \text {, or } F \in L^{\frac{1}{\alpha q-s}+\widehat{r}}\left(0, T ; L_2(\Omega)\right),
\end{equation}
where $p, q, s, \widehat{r}$ are defined by A1), A3), A5). However, we cannot suppose that the nonlinear source function $F(t, x, u(t, x))$ satisfies the same assumptions as in (\ref{eqth80}), and then find the solution $u$. A natural idea might be to combine the idea of Lemma \ref{lemmth1} with the inequality (\ref{eqth79}), i.e., we predict the solution $u$ may be contained in the set
$$
\mathbf{D}_{\gamma, \eta}^\theta((0,T) \times \Omega):=\left\{w \in \mathcal{D}_{2, \eta}((0,T) \times \Omega): \quad\|w(t, \cdot)\| \leq \theta t^{-\gamma}, \text { for } 0<t \leq T\right\},
$$
for $\theta>0,\,\, 0<\gamma \leq \eta<1$.
The prediction will be proved in the next lemma. However, it is necessary to give some useful notes on $\mathbf{D}_{\gamma, \eta}^\theta((0,T) \times \Omega)$ as follows. For $\mathbf{D}_{\gamma, \eta}^\theta((0,T) \times \Omega)$, we see
$$
\underset{0 \leq t \leq T}{\operatorname{ess} \sup } \int_0^t\|w(\tau, \cdot)\|(t-\tau)^{\eta-1} d \tau \leq \theta \underset{0 \leq t \leq T}{\operatorname{ess} \sup } \int_0^t \tau^{-\gamma}(t-\tau)^{\eta-1} d \tau .
$$
 Hence, we have
\begin{equation}\label{eqth81}
\left\|w\|_{\mathcal{D}_{2, \eta}} \leq \theta T^{\eta-\gamma} B(\eta, 1-\gamma) .\right.
\end{equation}

Moreover, if $\gamma<\eta$, then there always exists a real number $p$ such that $1<\frac{1}{\eta}<p<\frac{1}{\gamma}$. This implies that the function $t^{-\gamma}$ belongs to $L_p\left(0, T ; L_2(\Omega)\right)$. Therefore, we can obtain the following inclusions
$$
\mathbf{D}_{\gamma, \eta}^\theta((0,T) \times \Omega) \subset L_p\left(0, T ; L_2(\Omega)\right) \subset \mathcal{D}_{2, \eta}((0,T) \times \Omega)
$$

We will consider the case $\gamma=\eta$, which we will denote by $\mathbf{D}_{\gamma}^\theta((0,T) \times \Omega):= \mathbf{D}_{\gamma, \gamma}^\theta((0,T) \times \Omega)$.

Now, the Sobolev embedding $D\left(L^{\beta p}\right) \rightarrow L_2(\Omega)$ shows that there exists a positive constant $C_\Omega$ depending on $\Omega, \beta, q$ such that $\|v\| \leq C_\Omega\|v\|_{D\left(L^{\beta p}\right)}$ for all $v \in D\left(L^{\beta p}\right)$.

\section*{4.1 Existence, uniqueness, and regularity of the mild solution of the nonlocal problem with a nonlinear source}

The following lemma holds.

\begin{lemma}\label{lemmth6}
Let $p, q$ be defined by A1). Let $\{w_{(n)}\}$ be defined by $w_{(0)}=\phi$,
$$
w_{(n)}(t, x)=\mathcal{G}_1 (t, x) F(t, \cdot, w_{(n-1)}(t, \cdot,))+\mathcal{G}_2 (t, x) \phi+
\mathcal{G}_3 (t, x) F(t, \cdot, w_{(n-1)}(t, \cdot,))
$$
\begin{equation}\label{eqth82}
-\mathcal{G}_3 (t, x) \{k w_{(n-1)}(t, \cdot,)\}-\mathcal{G}_1 (t, x) \{k(t) w_{(n-1)}(t, \cdot,))\}, \quad n \in \mathbb{N}.
\end{equation}
 If $\phi$ belongs to $D\left(L^{\beta p}\right)$, $F$ satisfies C1), and $\Theta(T)<1$, then
\begin{equation}\label{eqth83}
 \{w_{(n)}\}_{n \geq 0} \subset \mathbf{D}_{\alpha q}^{\chi_0}((0,T) \times \Omega),
\end{equation}
where $\chi_0:=\frac{M_0}{1-\Theta(T)}\|\phi\|_{D\left(L^{\beta p}\right)}$ and
$$
M_0=C_\Omega T^{\alpha q}+ M_2    \Bigg(\frac{T^{\alpha q}}{\lambda_1^{\beta p}} \Upsilon     + C_{\kappa}+ \frac{T^{\alpha q}}{\lambda_1^{\beta p}}C_{\kappa} M_2 \Upsilon M_0  +  T^{\alpha q} \|k\|_{C[0,T]}  \Bigg),
$$
$$
\Theta(T)= \frac{M_2}{\lambda_1^{\beta p}}   \Bigg(T^{\alpha q} \Upsilon     + C_{\kappa}+ C_{\kappa} M_2 \Upsilon  T^{\alpha q}  +  T^{\alpha q} \|k\|_{C[0,T]} + M_2 C_{\kappa} T^{\alpha q} \|k\|_{C[0,T]} \Bigg) B(\alpha q, 1-\alpha q).
$$
\end{lemma}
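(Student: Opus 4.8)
The plan is to prove by induction on $n$ that $\|w_{(n)}(t,\cdot)\|\le\chi_0\,t^{-\alpha q}$ for every $t\in(0,T]$; the membership $w_{(n)}\in\mathcal{D}_{2,\alpha q}((0,T)\times\Omega)$, and hence $w_{(n)}\in\mathbf{D}_{\alpha q}^{\chi_0}((0,T)\times\Omega)$, then follows from this pointwise bound by the same Beta-integral estimate that yields (\ref{eqth81}) with $\gamma=\eta=\alpha q$, namely $\|w_{(n)}\|_{\mathcal{D}_{2,\alpha q}}\le\chi_0\,B(\alpha q,1-\alpha q)<\infty$. For $n=0$ we have $w_{(0)}=\phi\in D(L^{\beta p})\subset L_2(\Omega)$, so $\|\phi\|\le C_\Omega\|\phi\|_{D(L^{\beta p})}$; since $t^{-\alpha q}\ge T^{-\alpha q}$ on $(0,T]$ and $M_0\ge C_\Omega T^{\alpha q}$ while $0<\Theta(T)<1$, we get $\|\phi\|\le C_\Omega T^{\alpha q}\|\phi\|_{D(L^{\beta p})}\,t^{-\alpha q}\le M_0\|\phi\|_{D(L^{\beta p})}\,t^{-\alpha q}\le\chi_0\,t^{-\alpha q}$, which settles the base case.

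For the inductive step, assume $w_{(n-1)}\in\mathbf{D}_{\alpha q}^{\chi_0}$ and estimate in $L_2(\Omega)$ each of the five terms of (\ref{eqth82}), reusing the operator bounds from the proof of Lemma \ref{lemmth1}. For $\mathcal{G}_1(t,\cdot)F(t,\cdot,w_{(n-1)})$, the chain of inequalities leading to (\ref{est20}) gives $\|\mathcal{G}_1(t,\cdot)F(t,\cdot,w_{(n-1)})\|\le\frac{M_2}{\lambda_1^{\beta p}}\int_0^t(t-\tau)^{\alpha q-1}\|F(\tau,\cdot,w_{(n-1)}(\tau,\cdot))\|\,d\tau$; inserting the Lipschitz bound (\ref{eqth79}) together with the induction hypothesis turns the integral into $\Upsilon\chi_0\int_0^t(t-\tau)^{\alpha q-1}\tau^{-\alpha q}\,d\tau=\Upsilon\chi_0\,B(\alpha q,1-\alpha q)$, the convergence of this Beta integral being exactly where $0<\alpha q<1$ is used. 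The term $\mathcal{G}_1(t,\cdot)\{k(t)w_{(n-1)}(t,\cdot)\}$ is handled identically with $\Upsilon$ replaced by $\|k\|_{C[0,T]}$. For $\mathcal{G}_2(t,\cdot)\phi$ I would use $|\Psi_j(T)|\le C_\kappa$ and $E_{\alpha,1}(-\lambda_j^\beta t^\alpha)\le M_2(\lambda_j^\beta t^\alpha)^{-q}\le M_2\lambda_1^{-\beta q}t^{-\alpha q}$ (Proposition \ref{thproth3} together with $1+z\ge z^q$ for $0\le q\le1$), arguing as in (\ref{est22}) to obtain a bound $\le C\,t^{-\alpha q}\|\phi\|_{D(L^{\beta p})}$. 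Finally, $\mathcal{G}_3(t,\cdot)F=\mathcal{G}_2(t,\cdot)\mathcal{G}_1(T,\cdot)F$ and $\mathcal{G}_3(t,\cdot)\{kw_{(n-1)}\}$ are treated by composition: the inner factor $\mathcal{G}_1(T,\cdot)(\cdot)$ is time independent, its $\ell^2$-coefficients being controlled by $\frac{M_2}{\lambda_1^{\beta p}}\int_0^T(T-\tau)^{\alpha q-1}\|(\cdot)(\tau,\cdot)\|\,d\tau$, which again produces $B(\alpha q,1-\alpha q)$ after (\ref{eqth79}) and the hypothesis, and composing with the $\mathcal{G}_2(t,\cdot)$ estimate supplies the extra factors $C_\kappa M_2$ (and $\|k\|_{C[0,T]}$ in the $k$-term). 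Wherever a Beta constant rather than a power of $t$ results, one bounds it by $T^{\alpha q}t^{-\alpha q}$ using $t\le T$.

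Summing the five estimates and factoring out $t^{-\alpha q}$, the total coefficient of $\|\phi\|_{D(L^{\beta p})}$ coming from the base and $\mathcal{G}_2\phi$ contributions is $M_0$, and the total coefficient of $\chi_0$ coming from the four $w_{(n-1)}$-dependent terms, after collecting the common factor $B(\alpha q,1-\alpha q)$, is $\Theta(T)$; hence $\|w_{(n)}(t,\cdot)\|\le\bigl(M_0\|\phi\|_{D(L^{\beta p})}+\Theta(T)\chi_0\bigr)t^{-\alpha q}$. Since $\chi_0=\tfrac{M_0}{1-\Theta(T)}\|\phi\|_{D(L^{\beta p})}$ is precisely the fixed point of the affine map $\chi\mapsto M_0\|\phi\|_{D(L^{\beta p})}+\Theta(T)\chi$, the right-hand side equals $\chi_0\,t^{-\alpha q}$, which closes the induction and proves (\ref{eqth83}).

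The routine but delicate part is the exponent bookkeeping: one must check that every one of the five pieces reproduces the singular weight $t^{-\alpha q}$ and no worse, which works because the kernel exponent $\alpha-1-\alpha p=\alpha q-1$ of $\mathcal{G}_1$ is matched to the weight $-\alpha q$ so that the convolution integrals collapse to pure Beta constants, and because the uniform bound $|\Psi_j(T)|\le C_\kappa$ — valid under the standing assumption that $\kappa\notin(0,1)$, or $\kappa\in(0,1)$ with $\lambda_j\neq\lambda_0$ — keeps the $\mathcal{G}_2$ and $\mathcal{G}_3$ contributions finite. The hypothesis $\Theta(T)<1$ is exactly what makes $\chi_0$ finite and positive, and is therefore essential; beyond assembling the constants correctly there is no genuine analytic obstacle.
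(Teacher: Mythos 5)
Your proposal is correct and follows essentially the same route as the paper: induction on $n$, the base case via the Sobolev embedding $D(L^{\beta p})\hookrightarrow L_2(\Omega)$, the inductive step by reusing the operator estimates of Lemma \ref{lemmth1} together with C1) and the Beta integral $\int_0^t(t-\tau)^{\alpha q-1}\tau^{-\alpha q}\,d\tau=B(\alpha q,1-\alpha q)$, and closure via the fixed-point identity $M_0\|\phi\|_{D(L^{\beta p})}+\Theta(T)\chi_0=\chi_0$. The only cosmetic difference is that you feed the pointwise bound $\|w_{(n-1)}(\tau,\cdot)\|\le\chi_0\tau^{-\alpha q}$ directly into the convolution integrals, whereas the paper first passes through the bound $\|w_{(n-1)}\|_{\mathcal{D}_{2,\alpha q}}\le\chi_0 B(\alpha q,1-\alpha q)$ of (\ref{eqth81}); these are equivalent.
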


\begin{proof}
 First, we have
$$
\left\|w_{(0)}\right\|=\|\phi\| \leq C_\Omega\|\phi\|_{D\left(L^{\beta p}\right)}
$$$$
\leq C_\Omega T^{\alpha q}\|\phi\|_{D\left(L^{\beta p}\right)} t^{-\alpha q} \leq M_0 \|\phi\|_{D\left(L^{\beta p}\right)} t^{-\alpha q} \leq \chi_0 t^{-\alpha q}.
$$

So, inequality (\ref{eqth81}) and $\alpha q<1$, imply $w_{(0)} \in \mathbf{D}_{\alpha q}^{\chi_0}((0,T) \times \Omega)$.

Now, we obtain an estimate for $w_{(1)}$
$$
\left\|w_{(1)}\right\|  \leq  \|\mathcal{G}_1 (t, x) F(t, \cdot, w_{(0)}(t, \cdot))\|_{D\left(L^{\beta p}\right)} +
 \|\mathcal{G}_2 (t, x)  w_{(0)}\|_{D\left(L^{\beta p}\right)}
$$$$
 +  \|\mathcal{G}_3 (t, x) F(t, \cdot, w_{(0)}(t, \cdot))\|_{D\left(L^{\beta p}\right)}+  \|\mathcal{G}_3 (t, x) \{k(t) w_{(0)}(t, \cdot)\}\|_{D\left(L^{\beta p}\right)}
$$$$
+
  \|\mathcal{G}_1 (t, x) \{k(t) w_{(0)}(t, \cdot))\}\|_{D\left(L^{\beta p}\right)}.
$$

First, let's get estimates of $\mathcal{G}_i (t, x)  w_{(0)}(t, \cdot),\,\, i=1,2,3$   in $D\left(L^{\beta p}\right)$
$$
\|\mathcal{G}_1 (t, x) F(t, \cdot, w_{(0)}(t, \cdot))\|_{D\left(L^{\beta p}\right)}
$$$$
 \leq M_2  \int\limits_0^t\left\{\sum_{j=1}^{\infty} \frac{1}{\lambda_j^{2\beta p}(t-\tau)^{2\alpha p}}F_j^2(t, \cdot, w_{(0)}(t, \cdot)) (t-\tau)^{2 \alpha-2} \lambda_j^{2\beta p} \right\}^{\frac{1}{2}} d \tau
$$
$$
\leq  \frac{M_2}{\lambda_1^{\beta p}}  \Upsilon \int\limits_0^t (t-\tau)^{ \alpha-1-\alpha p} \|w_{(0)}\|_{D\left(L^{\beta p}\right)}d \tau
$$
$$
\leq  \frac{M_2}{\lambda_1^{\beta p}} \Upsilon   M_0 \|\phi\|_{D\left(L^{\beta p}\right)}\int\limits_0^t (t-\tau)^{ \alpha q-1} \tau^{-\alpha q} d \tau
$$
\begin{equation}\label{eqth84}
\leq \frac{M_2}{\lambda_1^{\beta p}} \Upsilon   M_0 T^{\alpha q}\|\phi\|_{D\left(L^{\beta p}\right)} B(\alpha q, 1-\alpha q) t^{-\alpha q},
\end{equation}
\begin{equation}\label{eqth85}
\|\mathcal{G}_2 (t, x)  \phi\|_{D\left(L^{\beta p}\right)}
 \leq \frac{M_2}{\lambda_1^{\beta p}} C_{\kappa}   M_0 \|\phi\|_{D\left(L^{\beta p}\right)} t^{-\alpha q},
\end{equation}
$$
\|\mathcal{G}_3 (t, x) F(t, \cdot, w_{(0)}(t, \cdot))\|_{D\left(L^{\beta p}\right)}
$$$$
\leq \frac{M_2^2}{\lambda_1^{\beta p}} C_{\kappa}   \int\limits_0^T \left\{\sum_{j=1}^{\infty} F_j^2(t, \cdot, w_{(0)}(t, \cdot))    (T-\tau)^{2\alpha-2-2\alpha p} \lambda_j^{2\beta p}
\right\}^{\frac{1}{2}} d \tau
$$
$$
\leq \frac{M_2^2}{\lambda_1^{\beta p}} C_{\kappa}  \Upsilon M_0\|\phi\|_{D\left(L^{\beta p}\right)}    \int\limits_0^T (T-\tau)^{\alpha q-1} \tau^{-\alpha q}  d \tau
$$
\begin{equation}\label{eqth86}
\leq  \frac{M_2^2}{\lambda_1^{\beta p}} C_{\kappa}   \Upsilon M_0  T^{\alpha q}  \|\phi\|_{D\left(L^{\beta p}\right)}B(\alpha q, 1-\alpha q)  t^{-\alpha q}.
\end{equation}
Similar to $\mathcal{G}_1(t, x) F$, $\mathcal{G}_3(t, x) F$ we estimate $\mathcal{G}_1(t, x)\{k,  u\}$ and $\mathcal{G}_3(t, x)\{k,  u\}$
$$
\|\mathcal{G}_3 (t, x) \{k(t)  w_{(0)}(t, \cdot) \}\|_{D\left(L^{\beta p}\right)}
$$
$$
\leq \int\limits_0^t\left\{\sum_{j=1}^{\infty} \bigg( k(\tau) w_{(0)j} (\tau) E_{\alpha, \alpha} \left(-\lambda_j^\beta(t-\tau)^\alpha\right)(t-\tau)^{ \alpha-1}\bigg)^2 \lambda_j^{2\beta p}\right\}^{\frac{1}{2}}  d \tau
$$
$$
 \leq \frac{M_2}{\lambda_1^{\beta p}} t^{-\alpha q} \|k\|_{C[0,T]} \int\limits_0^t\left\{\sum_{j=1}^{\infty}  w_{(0)j}^2  (t-\tau)^{2 \alpha q -2}\right\}^{\frac{1}{2}} d \tau
$$
$$
\leq   \frac{M_2}{\lambda_1^{\beta p}} t^{-\alpha q}  \|k\|_{C[0,T]} \int\limits_0^t (t-\tau)^{ \alpha q-1} \|w_{(0)}(\tau, \cdot)\| d \tau
$$
\begin{equation}\label{eqth87th}
\leq  \frac{M_2}{\lambda_1^{\beta p}}  \|k\|_{C[0,T]}  M_0 T^{\alpha q}  \|\phi\|_{D\left(L^{\beta p}\right)}B(\alpha q, 1-\alpha q)  t^{-\alpha q}.
\end{equation}
$$
\|\mathcal{G}_3 (t, x) \{k(t)  w_{(0)}(t, \cdot)\}\|_{D\left(L^{\beta p}\right)}
$$$$
\leq \frac{M_2^2}{\lambda_1^{\beta p}} C_{\kappa}   \int\limits_0^T \left\{\sum_{j=1}^{\infty} k^2(t) w_{(0)}^2(t, \cdot)   (T-\tau)^{2\alpha-2-2\alpha p} \lambda_j^{2\beta p}
\right\}^{\frac{1}{2}} d \tau
$$
$$
\leq \frac{M_2^2}{\lambda_1^{\beta p}} C_{\kappa}  \|k\|_{C[0,T]}   M_0\|\phi\|_{D\left(L^{\beta p}\right)}    \int\limits_0^T (T-\tau)^{\alpha q-1} \tau^{-\alpha q}  d \tau
$$
\begin{equation}\label{eqth87}
\leq  \frac{M_2^2}{\lambda_1^{\beta p}} C_{\kappa}    \|k\|_{C[0,T]}  M_0  T^{\alpha q}  \|\phi\|_{D\left(L^{\beta p}\right)}B(\alpha q, 1-\alpha q)  t^{-\alpha q}.
\end{equation}

Finally, from (\ref{eqth84}) - (\ref{eqth87}) we obtain
$$
\left\|w_{(1)}\right\|  \leq \Theta(T)   M_0 \|\phi\|_{D\left(L^{\beta p}\right)}   t^{-\alpha q}.
$$
Then, by using (\ref{eqth81}), we get
\begin{equation}\label{eqth9191}
\left\|w_{(1)}\right\|_{\mathcal{D}_{2, \alpha q}} \leq \Theta(T) M_0\|\phi\|_{D\left(L^{\beta p}\right)}  B(\alpha q, 1-\alpha q)\leq \chi_0  B(\alpha q, 1-\alpha q).
\end{equation}
Hence, inequality (\ref{eqth81}) and $\alpha q<1$, imply $w_{(1)} \in \mathbf{D}_{a q}^{\chi_0}(J \times D)$.

Now, we assume that $w_{(n-1)}$
belongs to $\mathbf{D}_{a q}^{\chi_0}(J \times D)$ for some $n \geq 1$. Then, by using (\ref{eqth81}), we have

\begin{equation}\label{eqth89}
\left\|w_{(n-1)}\right\|_{\mathcal{D}_{2, \alpha q}}  \leq \chi_0  B(\alpha q, 1-\alpha q).
\end{equation}

By induction, for all $\phi \in D\left(L^{\beta p}\right)$, inclusion (\ref{eqth83}) will be proved by showing that $w_{(n)}$ belongs to $\mathbf{D}_{a q}^{\chi_0}(J \times D).$

Now, applying the same reasoning as in the proof of Lemma \ref{lemmth1}, we obtain following estimates.  Using the same line of reasoning as in (\ref{est20}), we deduce
$$
\left\|\mathcal{G}_1(t, \cdot) F(w_{(n-1)})\right\|
$$
$$
\leq \int\limits_0^t\left\{\sum_{j=1}^{\infty} F_j^2(\tau,\cdot,w_{(n-1)}(\tau,\cdot)) E_{\alpha, \alpha}^2\left(-\lambda_j^\beta(t-\tau)^\alpha\right)(t-\tau)^{2 \alpha-2}\right\}^{\frac{1}{2}} d \tau
$$$$
 \leq M_2 \int\limits_0^t\left\{\sum_{j=1}^{\infty} \frac{1}{\lambda_j^{2\beta p}(t-\tau)^{2\alpha p}}F_j^2(\tau,\cdot,w_{(n-1)}(\tau,\cdot))(\tau) (t-\tau)^{2 \alpha-2}\right\}^{\frac{1}{2}} d \tau
$$
$$
\leq \frac{1}{\lambda_1^{\beta p}} M_2 \int\limits_0^t (t-\tau)^{ \alpha q-1} \|F_j(\tau,\cdot,w_{(n-1)}(\tau,\cdot))\|d \tau.
$$
Using (\ref{eqth79}) and (\ref{eqth89}), we obtain the following estimate
\begin{equation}\label{eqth90}
\left\|\mathcal{G}_1(t, \cdot) F(w_{(n-1)})\right\| \leq   \frac{M_2 }{\lambda_1^{\beta p}} \Upsilon \left\|w_{(n-1)}\right\|_{\mathcal{D}_{2, \alpha q}}  \leq \frac{M_2 T^{\alpha q}}{\lambda_1^{\beta p}} \Upsilon  \chi_0  B(\alpha q, 1-\alpha q)t^{-\alpha q}.
\end{equation}

On the other hand,
the norm $\mathcal{G}_2(t, \cdot) \phi$ is estimated by (\ref{est22}), i.e.,
$$
\left\|\mathcal{G}_2(t, \cdot) \phi \right\|
\leq \left\{
\sum_{j=1}^{\infty} \left(\phi_j \Psi_j(T)E_{\alpha, 1}\left(-\lambda_j^\beta t^\alpha\right) \right)^2\right\}^{1 / 2}
$$$$
\leq C_{\kappa} M_2
\left\{\sum_{j=1}^{\infty} \phi_j^2 \frac{1}{\left(1+\lambda_j^\beta t^\alpha\right)^2}\right\}^{1 / 2} \leq C_{\kappa} M_2
\left\{\sum_{j=1}^{\infty} \phi_j^2 \frac{1}{\left(1+\lambda_j^\beta t^\alpha\right)^{2q}}\right\}^{1 / 2}
$$
\begin{equation}\label{eqth91}
\leq C_{\kappa}\frac{M_2 T^{\alpha q}}{\lambda_1^{\beta q}} t^{-\alpha q}
\left\{\sum_{j=1}^{\infty} \phi_j^2 \lambda_j^{2\beta q}\right\}^{1 / 2} \leq M_0  \|\phi\|_{D\left(L^{\beta q}\right)} t^{-\alpha q}.
\end{equation}

The norm $\left\|\mathcal{G}_3(t, x)F\right\|$ can be estimated in the same way as in the proof of (\ref{est23}). That is,
$$
\left\|\mathcal{G}_3(t, \cdot)F(t,\cdot,w_{(n-1)}(t,\cdot))\right\|
$$$$
\leq \frac{C_{\kappa} M_2^2}{\lambda_1^{\beta p}}    \int\limits_0^T \left\{\sum_{j=1}^{\infty} F_j^2(\tau,\cdot,w_{(n-1)}(\tau,\cdot) (T-\tau)^{2\alpha-2-2\alpha p}
\right\}^{\frac{1}{2}} d \tau
$$
$$
\leq \frac{C_{\kappa} M_2^2}{\lambda_1^{\beta p}}    \int\limits_0^T  \| F_j(\tau,\cdot,w_{(n-1)})\|  (T-\tau)^{\alpha q-1}   d \tau.
$$
Once again, we use (\ref{eqth79}) and (\ref{eqth89}) to get
$$
\left\|\mathcal{G}_3(t, \cdot)F(t,\cdot,w_{(n-1)}(t,\cdot))\right\| \leq \frac{C_{\kappa} M_2^2}{\lambda_1^{\beta p}}  \Upsilon   \int\limits_0^T  \| w_{(n-1)}\|  (T-\tau)^{\alpha q-1}   d \tau
$$
\begin{equation}\label{eqth92}
\leq  \frac{C_{\kappa} M_2^2}{\lambda_1^{\beta p}} \Upsilon \left\|w_{(n-1)}\right\|_{\mathcal{D}_{2, \alpha q}}
\leq \frac{C_{\kappa} M_2^2 T^{\alpha q}}{\lambda_1^{\beta p}}  \Upsilon   \chi_0  B(\alpha q, 1-\alpha q)t^{-\alpha q}.
\end{equation}

Similar to $\mathcal{G}_1(t, x) F$, we estimate $\mathcal{G}_1(t, x)\{k, w_{(n-1)}\}$
$$
\left\|\mathcal{G}_1(t, x)\{k(t), w_{(n-1)}(t,\cdot)\}\right\| \leq \frac{M_2}{\lambda_1^{\beta p}}  \|k\|_{C[0,T]} \int\limits_0^t (t-\tau)^{ \alpha q-1} \|w_{(n-1)}(\tau, \cdot)\| d \tau
$$
\begin{equation}\label{eqth93th}
\leq \frac{M_2 }{\lambda_1^{\beta p}}  \|k\|_{C[0,T]} \left\|w_{(n-1)}\right\|_{\mathcal{D}_{2, \alpha q}}
\leq  \frac{M_2 T^{\alpha q}}{\lambda_1^{\beta p}}  \|k\|_{C[0,T]}
 \chi_0  B(\alpha q, 1-\alpha q)t^{-\alpha q}.
\end{equation}
Now, let's estimate $\mathcal{G}_3(t, x)\{k, w_{(n-1)}\}$  the same way we estimate $\mathcal{G}_3(t, \cdot)F(t,\cdot,w_{(n-1)}(t,\cdot))$
$$
\left\|\mathcal{G}_3(t, x)\{k, w_{(n-1)}\}(t,\cdot)\right\| \leq \frac{C_{\kappa} M_2^2}{\lambda_1^{\beta p}}  \|k\|_{C[0,T]}   \int\limits_0^T  \| w_{(n-1)}(\tau, \cdot)\|  (T-\tau)^{\alpha q-1}   d \tau
$$
\begin{equation}\label{eqth93}
\leq  \frac{C_{\kappa} M_2^2}{\lambda_1^{\beta p}} \|k\|_{C[0,T]}  \left\|w_{(n-1)}\right\|_{\mathcal{D}_{2, \alpha q}}
\leq \frac{C_{\kappa} M_2^2 T^{\alpha q}}{\lambda_1^{\beta p}}  \|k\|_{C[0,T]}  \chi_0  B(\alpha q, 1-\alpha q)t^{-\alpha q}.
\end{equation}

We deduce from (\ref{eqth82}) and (\ref{eqth90}) - (\ref{eqth93}) that
\begin{equation}\label{eqthth97}
\left\| w_{(n)}(t, \cdot)\right\|\leq  \Theta(T)   \chi_0   t^{-\alpha q} +M_0  \|\phi\|_{D\left(L^{\beta q}\right)} t^{-\alpha q}.
\end{equation}
Using $\chi_0:=\frac{M_0}{1-\Theta(T)}\|\phi\|_{D\left(L^{\beta p}\right)}$ in inequality (\ref{eqthth97}), we obtain
$$
\left\| w_{(n)}(t, \cdot)\right\|\leq  \Theta(T)   \chi_0   t^{-\alpha q} + \chi_0  (1-\Theta(T)) t^{-\alpha q}  = \chi_0 t^{-\alpha q}.
$$
Consequently, from $\alpha q<1$, we obtain the inclusion $\{w_{(n)}\}_{n \geq 0} \subset \mathbf{D}_{\alpha q}^{\chi_0}((0,T) \times \Omega)$.
\end{proof}

Next, it is necessary to give a definition of mild solutions of nonlocal initial problem (\ref{eqth1})--(\ref{eqth3}).

\begin{definition}\label{defth2}
If a function $u(t, x)$ belongs to $L_p\left(0,T;L_q(\Omega)\right)$, for some $p,\,\, q \geq 1$, and satisfies equation (\ref{eqth10}), then $u(t, x)$ is said to be a mild solution of nonlocal initial problem (\ref{eqth1})--(\ref{eqth3}).
\end{definition}

The following theorem presents existence, uniqueness, and regularity of a mild solution of nonlocal initial problem (\ref{eqth1})--(\ref{eqth3}).

\begin{theorem}\label{theorth3}
Let $p, q, r, p^{\prime}, q^{\prime}$ be defined by B1), B2). If $\varphi \in D\left(L^{\beta q}\right)$, $k\in C[0,T]$, $F$ satisfies C1), and $\Theta(T)<1$, then nonlocal initial problem (\ref{eqth1})--(\ref{eqth3}) has a unique solution $u(t, x)$, such that
$
u \in L^{\frac{1}{\alpha q^{\prime}}-r}\left(0, T ; D\left(L^{\beta (p-p')}\right)\right)
$
and
$$
\|u\|_{L^{\frac{1}{a q^{\prime}}-r}\left(0, T ; D\left(L^{\beta\left(p-p'\right)}\right)\right)} \leq \frac{M_2 }{\lambda_1^{\beta p'}}\Bigg(\Upsilon \chi_0 B(\alpha q,1-\alpha q) T^{\alpha q'}
$$
$$
+C_{\kappa} T^{\alpha q'}+\frac{C_{\kappa}  }{\lambda_1^{\beta q'}} \Upsilon \chi_0  B(\alpha q,1-\alpha q)
$$
\begin{equation}\label{theqth95}
+C_{\kappa} \|k\|_{C[0,T]} \chi_0 B(\alpha q,1-\alpha q)+\|k\|_{C[0,T]}   \chi_0 B(\alpha q,1-\alpha q)  T^{\alpha q'}\Bigg)\|\varphi\|_{D\left(L^{\beta p}\right)}t^{-\alpha q'},
\end{equation}
where $B(\cdot,\cdot)$ is the Euler's Beta function.
\end{theorem}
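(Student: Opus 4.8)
The plan is to obtain $u$ as the unique fixed point of the nonlinear operator
$$
(\mathcal{N}w)(t,x) := \mathcal{G}_2(t,x)\varphi + \mathcal{G}_1(t,x)F(w) + \mathcal{G}_3(t,x)F(w) - \mathcal{G}_3(t,x)\{k,w\} - \mathcal{G}_1(t,x)\{k,w\},
$$
that is, the right-hand side of the mild formulation (\ref{eqth10}), working in the set $\mathbf{D}_{\alpha q}^{\chi_0}((0,T)\times\Omega)$ introduced before Lemma \ref{lemmth6}, endowed with the weighted metric $d(w_1,w_2)=\sup_{0<t\le T} t^{\alpha q}\|w_1(t,\cdot)-w_2(t,\cdot)\|$. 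Lemma \ref{lemmth6} (applied with $\phi=\varphi$) already guarantees that the Picard iterates $w_{(n)}$ stay inside $\mathbf{D}_{\alpha q}^{\chi_0}((0,T)\times\Omega)$ as soon as $\Theta(T)<1$, so the first step is to show that $\{w_{(n)}\}$ is Cauchy for $d$. Subtracting the recursion (\ref{eqth82}) for consecutive indices, using the linearity of $\mathcal{G}_1,\mathcal{G}_2,\mathcal{G}_3$ and the Lipschitz bound C1) in place of the growth bound (\ref{eqth79}), one reruns the chain (\ref{eqth90})--(\ref{eqth93}) with $w_{(n-1)}$ replaced by $w_{(n)}-w_{(n-1)}$; this yields
$$
\bigl\|w_{(n+1)}(t,\cdot)-w_{(n)}(t,\cdot)\bigr\| \le \Theta(T)\, d\bigl(w_{(n)},w_{(n-1)}\bigr)\, t^{-\alpha q},
$$
so that $\mathcal{N}$ is a $\Theta(T)$-contraction. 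Hence $w_{(n)}\to u$ in $\mathbf{D}_{\alpha q}^{\chi_0}((0,T)\times\Omega)$, $u=\mathcal{N}u$ is a mild solution in the sense of Definition \ref{defth2} (membership in some $L_p(0,T;L_q(\Omega))$ follows from the inclusions recorded after (\ref{eqth81})).

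For uniqueness, if $u_1,u_2$ are two such solutions, then applying to $u_1-u_2$ the operator bounds (\ref{est20})--(\ref{est24}) of Lemma \ref{lemmth1} together with C1) produces an integral inequality of precisely the Gronwall--Bellman--Gamidov form in Theorem \ref{ththththth1} with free constant $C=0$; the explicit estimate of that theorem forces $\|t^{\alpha q}(u_1-u_2)(t,\cdot)\|\equiv 0$, whence $u_1=u_2$. The hypothesis $\Theta(T)<1$, which bounds the relevant size of $\|k\|_{C[0,T]}$, ensures the exponential admissibility condition of Theorem \ref{ththththth1}.

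To upgrade regularity and prove (\ref{theqth95}), one freezes the source: since $u\in\mathbf{D}_{\alpha q}^{\chi_0}((0,T)\times\Omega)$, the bound (\ref{eqth79}) gives $\|F(t,\cdot,u(t,\cdot))\|\le \Upsilon\chi_0 t^{-\alpha q}$, so $\widetilde F:=F(\cdot,\cdot,u)\in\mathbf{D}_{\alpha q}^{\Upsilon\chi_0}((0,T)\times\Omega)\subset\mathcal{D}_{2,\alpha q}((0,T)\times\Omega)$, with $\|\widetilde F\|_{\mathcal{D}_{2,\alpha q}}\le \Upsilon\chi_0 B(\alpha q,1-\alpha q)$ by (\ref{eqth81}). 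Reading the fixed-point identity (\ref{eqth10}) as the linear representation (\ref{eqth12}) with source $\widetilde F$, one bounds each of the five terms in $D(L^{\beta(p-p')})$ exactly as in the proof of Lemma \ref{lemmth3}: (\ref{est2828}) for $\mathcal{G}_1\widetilde F$, (\ref{est29}) for $\mathcal{G}_2\varphi$, (\ref{est30}) for $\mathcal{G}_3\widetilde F$, and (\ref{thest303030}), (\ref{est31}) for the two $\{k,u\}$-terms, now closing the last two off directly via $\|u(\tau,\cdot)\|\le\chi_0\tau^{-\alpha q}$ rather than a Gronwall step. Summing gives the pointwise bound $t^{\alpha q'}\|u(t,\cdot)\|_{D(L^{\beta(p-p')})}\le (\cdots)\|\varphi\|_{D(L^{\beta p})}$ with bracketed constant matching (\ref{theqth95}); since B2) yields $-\alpha q'(\tfrac{1}{\alpha q'}-r)>-1$, the function $t\mapsto t^{-\alpha q'}$ lies in $L^{\frac{1}{\alpha q'}-r}(0,T;\mathbb{R})$, and taking that norm on both sides delivers both the membership $u\in L^{\frac{1}{\alpha q'}-r}(0,T;D(L^{\beta(p-p')}))$ and (\ref{theqth95}).

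The step I expect to be the main obstacle is the contraction estimate: one must verify that the difference of two consecutive iterates reproduces \emph{exactly} the constant $\Theta(T)$, which forces careful bookkeeping of the convolution Beta-factors $B(\alpha q,1-\alpha q)$ and of the $C_\kappa, M_2, \lambda_1$ dependencies through all five operator terms simultaneously, in particular through the two terms generated by the zero-order coefficient $k(t)u$, which are the genuinely new feature compared with the source-free analysis of Section 3. Once $\Theta(T)$ is identified as the Lipschitz constant of $\mathcal{N}$, existence, uniqueness, and the regularity estimate all follow as described.
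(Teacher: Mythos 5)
Your construction is essentially the paper's own proof: the paper also takes the Picard iterates $w_{(n)}$ of Lemma \ref{lemmth6} with $\phi=\varphi$, shows $\|w_{(n+k)}(t,\cdot)-w_{(n)}(t,\cdot)\|\le 2\chi_0\Theta^n(T)\,t^{-\alpha q}$ by rerunning the estimates (\ref{eqth90})--(\ref{eqth93}) on differences, passes to the limit in $L^{\frac{1}{\alpha q}-r}\left(0,T;L_2(\Omega)\right)$, and then proves (\ref{theqth95}) exactly as in your last step, namely by the five estimates (\ref{eqth97})--(\ref{eqth100}) in $D\left(L^{\beta(p-p')}\right)$ using the frozen source $F(\cdot,\cdot,u)$, the pointwise bound $\|u(t,\cdot)\|\le\chi_0 t^{-\alpha q}$, the resulting Beta factors, and finally the $L^{\frac{1}{\alpha q'}-r}$ norm of $t^{-\alpha q'}$. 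The only place you depart from the paper is uniqueness: the paper reuses the contraction iteration (it shows $\|u-v\|\le 2\chi_0\Theta^n(T)t^{-\alpha q}$ and lets $n\to\infty$), whereas you invoke the Gronwall--Bellman--Gamidov inequality of Theorem \ref{ththththth1} with $C=0$. That variant needs the admissibility condition $\exp[\cdots]<2$ of Theorem \ref{ththththth1}, and your assertion that $\Theta(T)<1$ guarantees it is not justified: the two smallness conditions involve different combinations of $\Upsilon$, $\|k\|_{C[0,T]}$, $T$, $M_2$, $\lambda_1$ and are not comparable in general. This is easily repaired by simply deducing uniqueness from the contraction estimate you already derived (which is what the paper does), so it is a removable detour rather than a structural flaw.
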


\begin{proof}
We divide the proof of this theorem into the following two steps.\\
\textit{Step 1.}   We prove the existence and uniqueness of a mild solution. In order to prove the existence of a mild solution of nonlocal initial problem (\ref{eqth1})--(\ref{eqth3}), we will construct a convergent sequence in $L^{\frac{1}{\alpha q}-r}\left(0, T ; L_2(\Omega)\right)$ whose limit will be a mild solution of the problem. Here, $r$ is defined by A2). Let $\left\{w_{(n)}\right\}_{n \geq 0}$ be a sequence defined by Lemma \ref{lemmth6} with respect to $\phi=\varphi \in D\left(L^{\beta q}\right)$, then $\left\{w_{(n)}\right\}_{n \geq 0} \subset \mathbf{D}_{\alpha q}^{\chi_0}((0,T) \times \Omega)$ where  $\chi_0:=\frac{M_0}{1-\Theta(T)}\|\varphi\|_{D\left(L^{\beta p}\right)}$. Therefore, $
\left\|w_{(n)}(t, .)\right\| \leq \chi_0 t^{-\alpha q}, \quad 0<t \leq T
$ for all $n \geq 1$. This together with $t^{-\alpha q}$ belonging to $L^{\frac{1}{\alpha q}-r}(0, T ; \mathbb{R})$ implies that $\left\{w_{(n)}\right\}_{n \geq 0}$ is a bounded sequence in $L^{\frac{1}{\alpha q}-r}\left(0, T ; L_2(\Omega)\right)$.

Now, we will show that $\left\{w_{(n)}\right\}_{n \geq 0}$ is convergent by proving that it is also a Cauchy sequence.  For fixed $n\geq 1$ and $k\geq 1$, the definition (\ref{eqth82}) of $\left\{w_{(n)}\right\}_{n \geq 0}$ yields that
$$
w_{(n+k)}(t, x)-w_{(n)}(t, x)  =\mathcal{G}_1(t, x)\left[F\left(w_{(n-1+k)}\right)-F\left(w_{(n-1)}\right)\right]
$$$$
+\mathcal{G}_3(t, x)\left[F\left(w_{(n-1+k)}\right)-F\left(w_{(n-1)}\right)\right]
$$
$$
-\mathcal{G}_3(t, x)\left[k(t) w_{(n-1+k)}(t, \cdot,))-k(t) w_{(n-1)}(t, \cdot,))\right]
$$
$$
-\mathcal{G}_1(t, x)\left[k(t) w_{(n-1+k)}(t, \cdot,))-k(t) w_{(n-1)}(t, \cdot,))\right] .
$$

Since $F$ satisfies Lemma \ref{lemmth6}, the latter equation shows that we can apply the same arguments as in Lemma \ref{lemmth6} with $\phi=\mathbf{0}$. Hence, by using estimates (\ref{eqth90}) - (\ref{eqth93}), we obtain an estimate for the difference $w_{(1+k)}(t, x)-w_{(0)}(t, x):$
$$
\left\|w_{(1+k)}(t, .)-w_{(1)}(t, .)\right\|
$$
$$
\leq   \left\|\mathcal{G}_1(t, .)\left[F\left(w_{(0+k)}\right)-F\left(w_{(0)}\right)\right]\right\|+\left\|\mathcal{G}_3(t, x)\left[F\left(w_{(0+k)}\right)-F\left(w_{(0)}\right)\right]\right\|
$$
$$
+\left\|\mathcal{G}_3(t, x)\left[k(t) w_{(0+k)}(t, \cdot,))-k(t) w_{(0)}(t, \cdot,))\right]\right\|
$$
$$
+\left\|\mathcal{G}_1(t, x)\left[k(t) w_{(0+k)}(t, \cdot,))-k(t) w_{(0)}(t, \cdot,))\right]\right\|
$$
$$
\leq \frac{M_2}{\lambda_1^{\beta p}}  \int_0^t\left\|F\left(\tau, ., w_{(0+k)}(\tau, \cdot)\right)-F\left(\tau, ., w_{(0)}(\tau, \cdot)\right)\right\|(t-\tau)^{\alpha q-1} d \tau
$$
$$
\frac{C_{\kappa} M_2^2}{\lambda_1^{\beta p}} t^{-\alpha q} \int_0^T\left\|F\left(\tau, ., w_{(0+k)}(\tau, \cdot)\right)-F\left(\tau, ., w_{(0)}(\tau, \cdot)\right)\right\|(T-\tau)^{\alpha q-1} d \tau
$$
$$
\frac{C_{\kappa} M_2^2}{\lambda_1^{\beta p}} \|k\|_{C[0,T]} t^{-\alpha q} \int_0^T \left\|w_{(0+k)}(\tau, \cdot)-w_{(0)}(\tau, \cdot)\right\| (T-\tau)^{\alpha q-1} d \tau
$$
$$
+ \frac{M_2}{\lambda_1^{\beta p}} \|k\|_{C[0,T]} \int_0^t\left\|w_{(0+k)}(\tau, \cdot)-w_{(0)}(\tau, \cdot)\right\|(t-\tau)^{\alpha q-1} d \tau.
$$
 From $\left\{w_{(n)}\right\}_{n \geq 0} \subset \mathbf{D}_{\alpha q}^{\chi_0}((0,T) \times \Omega)$, we have
$$
\left\|w_{(0+k)}(\tau, \cdot)-w_{(0)}(\tau, \cdot)\right\| \leq 2 \chi_0 t^{-\alpha q}.
$$
From this, we find that
$$
\left\|w_{(1+k)}(t, .)-w_{(1)}(t, .)\right\|
$$
$$
\leq \frac{M_2}{\lambda_1^{\beta p}} \Upsilon \int_0^t\left\|u_{(0+k)}(\tau, \cdot)-u_{(0)}(\tau, \cdot)\right\|(t-\tau)^{\alpha q-1} d \tau
$$
$$
+\frac{C_{\kappa} M_2^2}{\lambda_1^{\beta p}} \Upsilon t^{-\alpha q} \int_0^T\left\|u_{(0+k)}(\tau, \cdot)-u_{(0)}(\tau, \cdot)\right\|(T-\tau)^{\alpha q-1} d \tau
$$
$$
+\frac{C_{\kappa} M_2^2}{\lambda_1^{\beta p}} \|k\|_{C[0,T]}  t^{-\alpha q} \int_0^T\left\|u_{(0+k)}(\tau, \cdot)-u_{(0)}(\tau, \cdot)\right\|(T-\tau)^{\alpha q-1} d \tau
$$
$$
+ \frac{M_2}{\lambda_1^{\beta p}} \|k\|_{C[0,T]} \int_0^t\left\|u_{(0+k)}(\tau, \cdot)-u_{(0)}(\tau, \cdot)\right\|(t-\tau)^{\alpha q-1} d \tau \leq  \frac{2 \chi_0 M_2}{\lambda_1^{\beta p}}\Bigg(\Upsilon T^{\alpha q}+ C_{\kappa} M_2 \Upsilon
$$
$$
+ C_{\kappa} M_2\|k\|_{C[0,T]} +\|k\|_{C[0,T]} T^{\alpha q} \Bigg) B(\alpha q,1-\alpha q) t^{-\alpha q}
\leq  2 \chi_0 \Theta(T)t^{-\alpha q},
$$
that is
$$
\left\|w_{(1+k)}(t, .)-w_{(1)}(t, .)\right\| \leq  2 \chi_0 \Theta(T)t^{-\alpha q}.
$$

Iterating this method $n$-times shows

$$
\left\|w_{(n+k)}(t, .)-w_{(n)}(t, .)\right\|
\leq  2 \chi_0 \Theta^n(T)t^{-\alpha q}.
$$
Taking the $L^{\frac{1}{a^q}-r}(0, T ; \mathbb{R})$-norm of both sides of the above inequality directly implies
\begin{equation}\label{eqth95}
\left\|w_{(n+k)}-w_{(n)}\right\|_{L^{\frac{1}{\alpha q}-r}\left(0, T ; L_2(\Omega)\right)} \leq 2 \chi_0 \Theta^n(T) \left\|t^{-\alpha q}\right\|_{L^{\frac{1}{a q}-r}(0, T ; \mathbb{R})}.
\end{equation}

Note that the constants in (\ref{eqth95}) also do not depend on $(n, k)$. Therefore, substituting $n$ into infinity, we get:
$$
\lim\limits_{n, k \rightarrow \infty}\left\|w_{(n+k)}-w_{(n)}\right\|_{L^{\frac{1}{\alpha q}-r}\left(0, T ; L_2(\Omega)\right)}=0
$$
i.e., $\left\{w_{(n)}\right\}_{n \geq 0}$ is a bounded Cauchy sequence in $L^{\frac{1}{\alpha q}-r}\left(0, T ; L_2(\Omega)\right)$. Hence, there exists a function $u(t, x)$ in $L^{\frac{1}{\alpha q}-\mathrm{r}}\left(0, T ; L_2(\Omega)\right)$ such that
$
u=\lim\limits_{n \rightarrow \infty} w_{(n)}, \quad \text { in } L^{\frac{1}{\alpha q}-r}\left(0, T ; L_2(\Omega)\right),
$
and $u(t, x)$ satisfies equation (\ref{eqth10}), i.e., $u(t, x)$ is a mild solution of nonlocal initial problem (\ref{eqth1})--(\ref{eqth3}). At the same time, the boundedness  (\ref{eqthth97}) of $\left\{w_{(n)}\right\}_{n \geq 0}$  gives:
\begin{equation}\label{eqth96}
\|u(t,\cdot)\| \leq  \chi_0  \|\varphi\|_{D\left(L^{\beta p}\right)} t^{-\alpha q}
\end{equation}
and so that
\begin{equation*}
\|u(t,\cdot)\|_{L^{\frac{1}{\alpha q}-r}\left(0, T ; L_2(\Omega)\right)} \leq  \chi_0  \|\varphi\|_{D\left(L^{\beta p}\right)} \|t^{-\alpha q} \|_{L^{\frac{1}{\alpha q}-r}\left(0, T; \mathbb{R}\right)}.
\end{equation*}

Now, we show the uniqueness of the solution $u(t, x)$. Assume that $v(t, x)$ is another solution of nonlocal initial problem (\ref{eqth1})--(\ref{eqth3}).
Then, by applying the same argument as in  (\ref{eqth95}), we get
$$
\left\|u-v\right\|_{L^{\frac{1}{\alpha q}-r}\left(0, T ; L_2(\Omega)\right)} \leq 2 \chi_0 \Theta^n(T) \left\|t^{-\alpha q}\right\|_{L^{\frac{1}{a q}-r}(0, T ; \mathbb{R})},
$$
for all $n \in \mathbb{N}$. Thus $\left\|u-v\right\|_{L^{\frac{1}{\alpha q}-r}\left(0, T ; L_2(\Omega)\right)}=0$ by letting $n$ go to infinity. Hence $u=v$ in $L^{\frac{1}{\alpha q}-r}\left(0, T ; L_2(\Omega)\right).$

\textit{Step 2.} Using inequality (\ref{eqth96}), we prove that $u(t, x)\in L^{\frac{1}{\alpha q}-r}\left(0, T; D\left(L^{\beta (p-p')}\right)\right).$  We now apply the same arguments as in the proofs of (\ref{est2828})-(\ref{est31}) to estimate
$\|u(t,\cdot)\|_{D\left(L^{\beta (p-p')}\right)}$
as follows. First, we get an estimate  for $\mathcal{G}_1(t, \cdot) F(u)$ in $D\left(L^{\beta (p-p')}\right)$
$$
\left\|\mathcal{G}_1(t, \cdot) F\right\|_{D\left(L^{\beta\left(p-p'\right)}\right)}
$$$$
\leq \int\limits_0^t\left\|L^{\beta\left(p-p'\right)}\sum_{j=1}^{\infty} F_j(\tau,\cdot,u(\tau,\cdot)) (t-\tau)^{\alpha-1} E_{\alpha, \alpha}\left(-\lambda_j^\beta (t-\tau)^\alpha\right) e_j\right\| d \tau
$$
$$
 \leq M_2 \int\limits_0^t\left\{\sum_{j=1}^{\infty} \frac{1}{\lambda_j^{2\beta p}(t-\tau)^{2\alpha p}}F_j^2(\tau,\cdot,u(\tau,\cdot)) (t-\tau)^{2 \alpha-2} \lambda_j^{2\beta (p-p')} \right\}^{\frac{1}{2}} d \tau
$$
$$
\leq \frac{1}{\lambda_1^{\beta p'}} M_2 \int\limits_0^t (t-\tau)^{ \alpha q-1} \|F(\tau,\cdot,u(\tau,\cdot))\| d \tau
$$
$$
\leq \frac{M_2}{\lambda_1^{\beta p'}} \Upsilon \chi_0 \|\varphi\|_{D\left(L^{\beta p}\right)} \int\limits_0^t (t-\tau)^{ \alpha q-1} \tau^{-\alpha q} d \tau
$$
$$
\leq \frac{M_2 T^{\alpha q'}}{\lambda_1^{\beta p'}} \Upsilon \chi_0 \|\varphi\|_{D\left(L^{\beta p}\right)} B(\alpha q,1-\alpha q) t^{-\alpha q'}
$$
\begin{equation}\label{eqth97}
\leq \frac{M_2 T^{\alpha q'}}{\lambda_1^{\beta p'}} \Upsilon \chi_0 B(\alpha q,1-\alpha q)
\|\varphi\|_{D\left(L^{\beta p}\right)} t^{-\alpha q'}.
\end{equation}
Next, we have an estimate  for $\mathcal{G}_2(t, \cdot) \varphi$
\begin{equation}\label{eqth98}
\left\|\mathcal{G}_2(t, \cdot) \varphi\right\|_{D\left(L^{\beta\left(p-p'\right)}\right)}
\leq \frac{C_{\kappa} M_2 T^{\alpha q'}}{\lambda_1^{\beta p'}}t^{-\alpha q'}
\|\varphi\|_{D\left(L^{\beta p}\right)}.
\end{equation}
Using the above methods, we estimate $\mathcal{G}_3(t, x)F$ and $\mathcal{G}_3(t, x)\{k, u\}$
$$
\left\|\mathcal{G}_3(t, x)F\right\|_{D\left(L^{\beta\left(p-p'\right)}\right)}=\left\|L^{\beta\left(p-p'\right)}\mathcal{G}_2(t, x) \mathcal{G}_1(T, x)F(\tau,\cdot,u(\tau,\cdot))\right\|
$$
$$
=\Bigg\|L^{\beta\left(p-p'\right)}\sum_{j=1}^{\infty} \Psi_j(T) e_j(x) F_j(T, \cdot,u(T,\cdot)) \star \left\{T^{\alpha-1} E_{\alpha, \alpha}\left(-\lambda_j^\beta T^\alpha\right)\right\}
$$$$
\times E_{\alpha, 1}\left(-\lambda_j^\beta t^\alpha\right)\Bigg\|
\leq \int\limits_0^T\Bigg\|L^{\beta\left(p-p'\right)}\sum_{j=1}^{\infty} F_j(\tau,\cdot,u(\tau,\cdot)) (T-\tau)^{\alpha-1}
$$
$$
\times E_{\alpha, \alpha}\left(-\lambda_j^\beta (T-\tau)^\alpha\right) \Psi_j(T) e_j(x) E_{\alpha, 1}\left(-\lambda_j^\beta t^\alpha\right)\Bigg\| d \tau
$$
$$
\leq \int\limits_0^T\Bigg\{\sum_{j=1}^{\infty} \bigg( F_j(\tau,\cdot,u(\tau,\cdot)) (T-\tau)^{\alpha-1} E_{\alpha, \alpha}\left(-\lambda_j^\beta (T-\tau)^\alpha\right)
$$$$
\times \Psi_j(T)  E_{\alpha, 1}\left(-\lambda_j^\beta t^\alpha\right)\lambda_j^{\beta(p-p')}\bigg)^2\Bigg\}^{\frac{1}{2}} d \tau
$$
$$
\leq \frac{C_{\kappa} M_2}{\lambda_1^{\beta p'+\beta q'}}    \int\limits_0^T \left\{\sum_{j=1}^{\infty} F_j^2(\tau,\cdot,u(\tau,\cdot)) (T-\tau)^{2\alpha-2-2\alpha p} t^{-2\alpha q'}
\right\}^{\frac{1}{2}} d \tau
$$
$$
\leq \frac{C_{\kappa} M_2 t^{-\alpha q'}}{\lambda_1^{\beta}}    \int\limits_0^T  \| F(\tau,\cdot,u(\tau,\cdot))\|_{L_2(\Omega)} (T-\tau)^{\alpha q-1}   d \tau
$$
\begin{equation}\label{eqth99}
\leq \frac{C_{\kappa} M_2 }{\lambda_1^{\beta}}\Upsilon \chi_0 B(\alpha q,1-\alpha q)
\|\varphi\|_{D\left(L^{\beta p}\right)} t^{-\alpha q'},
\end{equation}
$$
\left\|\mathcal{G}_3(t, x)\{k, u\}\right\|_{D\left(L^{\beta\left(p-p'\right)}\right)}=\left\|L^{\beta\left(p-p'\right)}\mathcal{G}_2(t, x) \mathcal{G}_1(T, x)k(\tau)u(\tau,\cdot)\right\|
$$
$$
=\Bigg\|L^{\beta\left(p-p'\right)}\sum_{j=1}^{\infty} \Psi_j(T) e_j(x) k(T) u_j(T, \cdot) \star \left\{T^{\alpha-1} E_{\alpha, \alpha}\left(-\lambda_j^\beta T^\alpha\right)\right\}
$$$$
\times E_{\alpha, 1}\left(-\lambda_j^\beta t^\alpha\right)\Bigg\|
\leq \int\limits_0^T\Bigg\|L^{\beta\left(p-p'\right)}\sum_{j=1}^{\infty} k(T) u_j(T, \cdot) (T-\tau)^{\alpha-1}
$$
$$
\times E_{\alpha, \alpha}\left(-\lambda_j^\beta (T-\tau)^\alpha\right) \Psi_j(T) e_j(x) E_{\alpha, 1}\left(-\lambda_j^\beta t^\alpha\right)\Bigg\| d \tau
$$
$$
\leq \int\limits_0^T\Bigg\{\sum_{j=1}^{\infty} \bigg( k(T) u_j(T, \cdot) (T-\tau)^{\alpha-1} E_{\alpha, \alpha}\left(-\lambda_j^\beta (T-\tau)^\alpha\right)
$$$$
\times \Psi_j(T)  E_{\alpha, 1}\left(-\lambda_j^\beta t^\alpha\right)\lambda_j^{\beta(p-p')}\bigg)^2\Bigg\}^{\frac{1}{2}} d \tau
$$
$$
\leq \frac{C_{\kappa} M_2}{\lambda_1^{\beta p'+\beta q'}}    \int\limits_0^T \left\{\sum_{j=1}^{\infty} k^2(T) u_j^2(T, \cdot) (T-\tau)^{2\alpha-2-2\alpha p} t^{-2\alpha q'}
\right\}^{\frac{1}{2}} d \tau
$$
$$
\leq \frac{C_{\kappa} M_2 t^{-\alpha q'}}{\lambda_1^{\beta}}   \|k\|_{C[0,T]}  \int\limits_0^T  \| u(\tau,\cdot,u(\tau,\cdot))\|_{L_2(\Omega)} (T-\tau)^{\alpha q-1}   d \tau
$$
\begin{equation}\label{eqth99th}
\leq \frac{C_{\kappa} M_2 }{\lambda_1^{\beta}} \|k\|_{C[0,T]} \chi_0 B(\alpha q,1-\alpha q)
\|\varphi\|_{D\left(L^{\beta p}\right)} t^{-\alpha q'}.
\end{equation}

Using the same methods for estimating $\mathcal{G}_1 (t, x) F$, we estimate $\mathcal{G}_1 (t, x) \{k, u\}$
$$
\left\|\mathcal{G}_1(t, x)\{k,  u\}\right\|_{D\left(L^{\beta\left(p-p'\right)}\right)}
 \leq \frac{M_2 \|k\|_{C[0,T]}}{\lambda_1^{\beta p'}}  \int\limits_0^t\left\{\sum_{j=1}^{\infty} \frac{1}{(t-\tau)^{2\alpha p}}u_j^2(\tau) (t-\tau)^{2 \alpha-2}\right\}^{\frac{1}{2}} d \tau
$$
\begin{equation}\label{eqth100}
\leq \frac{M_2 T^{\alpha q'}}{\lambda_1^{\beta p'}}  \|k\|_{C[0,T]}   \chi_0 B(\alpha q,1-\alpha q)
\|\varphi\|_{D\left(L^{\beta p}\right)}t^{-\alpha q'}.
\end{equation}

Based on the above arguments (\ref{eqth97})-(\ref{eqth100}), we arrive at an estimate
\begin{equation*}
\|u(t, x)\|_{D\left(L^{\beta\left(p-p'\right)}\right)}\leq \|\mathcal{G}_1(t, x) F\|_{D\left(L^{\beta\left(p-p'\right)}\right)}
\|\mathcal{G}_2(t, x) \varphi\|_{D\left(L^{\beta\left(p-p'\right)}\right)}
\end{equation*}
$$
+\|\mathcal{G}_3(t, x) F\|_{D\left(L^{\beta\left(p-p'\right)}\right)}+\|\mathcal{G}_3(t, x) \{k,  u\}\|_{D\left(L^{\beta\left(p-p'\right)}\right)}+\|\mathcal{G}_1(t, x)\{k,  u\}\|_{D\left(L^{\beta\left(p-p'\right)}\right)}
$$
$$
\leq \frac{M_2 }{\lambda_1^{\beta p'}}\Bigg(\Upsilon \chi_0 B(\alpha q,1-\alpha q) T^{\alpha q'} +C_{\kappa} T^{\alpha q'}+\frac{C_{\kappa}  }{\lambda_1^{\beta q'}} \Upsilon \chi_0  B(\alpha q,1-\alpha q)
$$
$$
+C_{\kappa} \|k\|_{C[0,T]} \chi_0 B(\alpha q,1-\alpha q)+\|k\|_{C[0,T]}   \chi_0 B(\alpha q,1-\alpha q)  T^{\alpha q'}\Bigg)\|\varphi\|_{D\left(L^{\beta p}\right)}t^{-\alpha q'}.
$$
By taking the $L^{\frac{1}{\alpha q'}-r}(0,T;\mathbb{R})$ norm, then from the last inequalities we get the estimate (\ref{theqth95}).
\end{proof}

\begin{theorem}\label{theorth4}
Let $p, q, r, p^{\prime}, q^{\prime}$ be defined by B1), B2). If $\varphi \in D\left(L^{\beta q}\right)$, $k\in C[0,T]$, $F$ satisfies C1), and $\Theta(T)<1$, then nonlocal initial problem (\ref{eqth1})--(\ref{eqth3}) has a unique solution
$
u \in  C^{\alpha q}\left((0, T] ; L_2(\Omega)\right)
$
and
\begin{equation}\label{thth102}
\|u\|_{C^{\alpha q}\left((0,T];L_2(\Omega)\right)} \leq  \chi_0  \|\varphi\|_{D\left(L^{\beta p}\right)}.
\end{equation}
\end{theorem}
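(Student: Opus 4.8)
The plan is to obtain Theorem~\ref{theorth4} from Theorem~\ref{theorth3} by upgrading the $L^{\frac{1}{\alpha q}-r}$ regularity of the mild solution to continuity on $(0,T]$, following the pattern of the proof of Lemma~\ref{lemmth4}. By Theorem~\ref{theorth3}, problem \eqref{eqth1}--\eqref{eqth3} already possesses a unique mild solution $u$ in $L^{\frac{1}{\alpha q}-r}\left(0,T;L_2(\Omega)\right)$ satisfying the a priori bound \eqref{eqth96}, namely $\|u(t,\cdot)\|\le\chi_0\,\|\varphi\|_{D\left(L^{\beta p}\right)}\,t^{-\alpha q}$ for $0<t\le T$. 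Since, by definition, $\|u\|_{C^{\alpha q}\left((0,T];L_2(\Omega)\right)}=\sup_{0<t\le T}t^{\alpha q}\|u(t,\cdot)\|$, the estimate \eqref{thth102} is nothing but a restatement of \eqref{eqth96} after multiplying by $t^{\alpha q}$ and taking the supremum. Hence the only new point is to show that $u\in C\left((0,T];L_2(\Omega)\right)$.

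The key observation is that, composed with the solution, the nonlinear source behaves like an admissible linear-type source of the kind treated in Section~3. Indeed, by C1) and \eqref{eqth79}, $\|F(t,\cdot,u(t,\cdot))\|\le\Upsilon\|u(t,\cdot)\|\le\Upsilon\chi_0\|\varphi\|_{D(L^{\beta p})}t^{-\alpha q}$, so $F(\cdot,\cdot,u)\in\mathbf{D}_{\alpha q}^{\,\theta}((0,T)\times\Omega)$ with $\theta:=\Upsilon\chi_0\|\varphi\|_{D(L^{\beta p})}$, and by \eqref{eqth81} $\|F(\cdot,\cdot,u)\|_{\mathcal{D}_{2,\alpha q}}\le\theta\,B(\alpha q,1-\alpha q)$; likewise the a priori bound $\|u(\tau,\cdot)\|\le\chi_0\|\varphi\|_{D(L^{\beta p})}\tau^{-\alpha q}$ together with $\|k\|_{C[0,T]}$ controls the term $k\,u$.

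Next I would fix $0<t_1<t_2\le T$ with $t_1$ fixed and positive, insert $u$ into the fixed-point identity \eqref{eqth10}, and decompose $u(t_2,x)-u(t_1,x)$ exactly as in \eqref{eqth34}--\eqref{eqth37}, with $F_j(\tau)$ replaced by $F_j(\tau,\cdot,u(\tau,\cdot))$: Proposition~\ref{proposition4}(ii) splits the convolution differences into the ``inner'' double integrals $\mathcal{I}_1,\mathcal{I}_6$ and the ``boundary'' integrals $\mathcal{I}_2,\mathcal{I}_7$ over $[t_1,t_2]$, while \eqref{eqht36} produces the Mittag--Leffler contributions $\mathcal{I}_3,\mathcal{I}_4,\mathcal{I}_5$. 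For $\mathcal{I}_2,\mathcal{I}_3,\mathcal{I}_4,\mathcal{I}_5,\mathcal{I}_7$ one runs the estimates \eqref{est40}--\eqref{est45} with the class $\mathcal{D}_{2,\alpha q-s}$ replaced by the weighted bounds $\|F(\tau,\cdot,u(\tau,\cdot))\|\le\theta\tau^{-\alpha q}$ and $\|u(\tau,\cdot)\|\le\chi_0\|\varphi\|_{D(L^{\beta p})}\tau^{-\alpha q}$, together with $\varphi\in D(L^{\beta q})$ for $\mathcal{I}_3$, with $\|k\|_{C[0,T]}$ factored out of $\mathcal{I}_5,\mathcal{I}_7$, and with the embedding $D(L^{\beta})\hookrightarrow L_2(\Omega)$; on $[t_1,t_2]$ one uses $\tau^{-\alpha q}\le t_1^{-\alpha q}$, and the remaining negative powers of $t_1$ are finite since $t_1>0$, so each of these five terms carries a factor $(t_2-t_1)^{\alpha q}\to0$. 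For the inner terms $\mathcal{I}_1,\mathcal{I}_6$ I would instead invoke \eqref{eqth38}--\eqref{esth39} to write the estimate in the form $\int_0^{t_1}\bigl[(t_1-\tau)^{\alpha q-1}-(t_2-\tau)^{\alpha q-1}\bigr]w(\tau)\,d\tau$, where $w(\tau)\lesssim\theta\tau^{-\alpha q}$ (resp.\ $\lesssim\|k\|_{C[0,T]}\chi_0\|\varphi\|_{D(L^{\beta p})}\tau^{-\alpha q}$) so that $(t_1-\tau)^{\alpha q-1}w(\tau)$ is integrable on $(0,t_1)$, and conclude by dominated convergence that $\mathcal{I}_1,\mathcal{I}_6\to0$ as $t_2-t_1\to0$. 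Summing the seven terms gives $\|u(t_2,\cdot)-u(t_1,\cdot)\|\to0$, hence $u\in C\left((0,T];L_2(\Omega)\right)$; combined with \eqref{eqth96} this yields $u\in C^{\alpha q}\left((0,T];L_2(\Omega)\right)$ together with \eqref{thth102}.

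The main obstacle, and the one place where the argument genuinely departs from Lemma~\ref{lemmth4}, is precisely the treatment of $\mathcal{I}_1$ and $\mathcal{I}_6$: in the linear lemma the source was assumed in the smaller class $\mathcal{D}_{2,\alpha q-s}$, which produced a clean H\"older factor $(t_2-t_1)^s$ in those inner terms, whereas here $F(\cdot,\cdot,u)$ and $u$ only enjoy the weighted bound $\|F(\tau,\cdot,u)\|\le\theta\tau^{-\alpha q}$, i.e.\ membership in $\mathcal{D}_{2,\alpha q}$ but not in $\mathcal{D}_{2,\alpha q-s}$. Consequently one cannot quote a uniform modulus of continuity; the vanishing of the inner pieces has to be extracted by the dominated-convergence argument above, and one must accept that the resulting constants blow up as $t_1\to0$ — which is harmless, since $C\left((0,T];L_2(\Omega)\right)$ only requires continuity on $(0,T]$ and the weighted bound \eqref{eqth96} already supplies the correct control near $t=0$.
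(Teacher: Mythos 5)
Your proposal is correct and follows essentially the same route as the paper: existence, uniqueness and the weighted bound \eqref{eqth96} are taken from Theorem~\ref{theorth3}, the difference $u(t_2,\cdot)-u(t_1,\cdot)$ is decomposed into the same seven terms (the paper's $\mathcal{R}_1,\dots,\mathcal{R}_7$ in \eqref{eqth102}), each is estimated through C1) and the bound $\|u(\tau,\cdot)\|\le\chi_0\|\varphi\|_{D(L^{\beta p})}\tau^{-\alpha q}$, and the conclusion \eqref{thth102} is read off from \eqref{eqth96} via the definition of the $C^{\alpha q}((0,T];L_2(\Omega))$ norm. The one place you deviate is the treatment of the inner terms $\mathcal{I}_1,\mathcal{I}_6$ ($\mathcal{R}_1,\mathcal{R}_6$ in the paper), where you claim no quantitative modulus is available and fall back on dominated convergence. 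The paper instead evaluates the critical integral explicitly: by the substitution $\tau=t_2\mu$ and a Beta-function identity it proves \eqref{thesth104},
\begin{equation*}
\int_0^{t_1}\tau^{-\alpha q}\Bigl[(t_1-\tau)^{\alpha q-1}-(t_2-\tau)^{\alpha q-1}\Bigr]\,d\tau
\;\le\;\frac{1}{\alpha q}\,\frac{(t_2-t_1)^{\alpha q}}{t_1^{\alpha q}},
\end{equation*}
so the inner terms also carry an explicit factor $(t_2-t_1)^{\alpha q}$, merely with a constant of order $t_1^{-\alpha q}$ (harmless away from $t=0$). Your dominated-convergence substitute is legitimate but weaker, and as phrased (fixed $t_1$, $t_2-t_1\to0$) it only directly gives one-sided continuity; to cover sequences with $t_1$ varying toward a point $t$ you would need an extra word (a change of variables fixing the domain of integration, or simply the paper's explicit bound, which settles both directions at once). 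Apart from this, your assessment that the constants may blow up as $t_1\to0$ and that this is compatible with membership in $C^{\alpha q}((0,T];L_2(\Omega))$ matches the paper's argument exactly.
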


\begin{proof}
We prove that $u \in C^{\alpha q}\left((0, T] ; L_2(\Omega)\right)$. Let us consider $0<t_1<t_2 \leq T$. By the same arguments as in (\ref{eqth37}),  the difference $u(t_2, x) -u(t_1, x)$ can be calculated as
$$
u\left(t_2, x\right)-u\left(t_1, x\right)=\sum_{j=1}^{\infty} \int\limits_0^{t_1} \int\limits_{t_1-\tau}^{t_2-\tau} F_j(\tau,u(\tau)) \eta^{\alpha-2} E_{\alpha, \alpha-1}\left(-\lambda_j^\beta \eta^\alpha\right) d \eta d \tau e_j(x)
$$
$$
+\sum_{j=1}^{\infty} \int\limits_{t_1}^{t_2} F_j(\tau,u(\tau)) (t_2-\tau)^{\alpha-1} E_{\alpha, \alpha}\left(-\lambda_j^\beta (t_2-\tau)^{\alpha}\right) d \tau  e_j(x)
$$
$$
-L^\beta \sum_{j=1}^{\infty} \Psi_j(T)  \varphi_j  \int\limits_{t_1}^{t_2} \tau^{\alpha-1} E_{\alpha, \alpha}\left(-\lambda_j^\beta \tau^\alpha\right) d \tau e_j(x)
$$
$$
-L^\beta\sum_{j=1}^{\infty} \Psi_j(T)   F_j(T,u(T)) \star \left\{T^{\alpha-1} E_{\alpha, \alpha}\left(-\lambda_j^\beta T^\alpha\right)\right\}  \int\limits_{t_1}^{t_2} \tau^{\alpha-1} E_{\alpha, \alpha}\left(-\lambda_j^\beta \tau^\alpha\right) d \tau e_j(x)
$$
$$
+L^\beta\sum_{j=1}^{\infty} \Psi_j(T) \{k(T)  u_j(T)\} \star \left\{T^{\alpha-1} E_{\alpha, \alpha}\left(-\lambda_j^\beta T^\alpha\right)\right\}  \int\limits_{t_1}^{t_2} \tau^{\alpha-1} E_{\alpha, \alpha}\left(-\lambda_j^\beta \tau^\alpha\right) d \tau e_j(x)
$$
$$
+\sum_{j=1}^{\infty} \int\limits_0^{t_1} \int\limits_{t_1-\tau}^{t_2-\tau} k(\tau) u_j(\tau) \eta^{\alpha-2} E_{\alpha, \alpha-1}\left(-\lambda_j^\beta \eta^\alpha\right) d \eta d \tau e_j(x)
$$
\begin{equation}\label{eqth102}
+\sum_{j=1}^{\infty} \int\limits_{t_1}^{t_2} k(\tau) u_j(\tau) (t_2-\tau)^{\alpha-1} E_{\alpha, \alpha}\left(-\lambda_j^\beta (t_2-\tau)^{\alpha}\right) d \tau  e_j(x)
:=\sum\limits_{j=1}^7 \mathcal{R}_j.
\end{equation}

In what follows, we will establish the convergence for $\left\|\mathcal{R}_j\right\|, \,\, j=\overline{1,7}$ which can be treated similarly as in (\ref{est40})-(\ref{est45}) based on the Lipschitzian assumptions C1), C2). We first see that
$$
\left\|\mathcal{R}_1\right\|   \leq \int_0^{t_1}\left\|\sum_{j=1}^{\infty}   \int\limits_{t_1-\tau}^{t_2-\tau} F_j(\tau,u(\tau)) \eta^{\alpha-2} E_{\alpha, \alpha-1}\left(-\lambda_j^\beta \eta^\alpha\right) d \eta d \tau e_j(x)\right\| d \tau
$$
$$
\leq  \frac{M_2}{ \lambda_1^{\beta p}} \int_0^{t_1}\left\{ \sum_{j=1}^{\infty}   F_j^2(\tau,u(\tau)) \left| \int\limits_{t_1-\tau}^{t_2-\tau}  \eta^{\alpha-2}\eta^{-\alpha p} d \eta \right|^2 d \tau  \right\}^{\frac{1}{2}} d \tau
$$
$$
\leq  \frac{M_2}{ \lambda_1^{\beta p}}  \int\limits_0^{t_1} \|F(\tau, u(\tau))\| \int\limits_{t_1-\tau}^{t_2-\tau} \eta^{\alpha q-2} d \eta  d \tau.
$$
$$
\leq   \frac{M_2 }{ (1-\alpha q)\lambda_1^{\beta p}}  \Upsilon \int\limits_0^{t_1} \|u(\tau, \cdot)\| \Big[ (t_1-\tau)^{\alpha q-1} - (t_2-\tau)^{\alpha q-1}\Big]  d \tau.
$$
$$
\leq   \frac{M_2 }{ (1-\alpha q)\lambda_1^{\beta p}}  \Upsilon \int\limits_0^{t_1} \|u(\tau, \cdot)\| \Big[ (t_1-\tau)^{\alpha q-1} - (t_2-\tau)^{\alpha q-1}\Big]  d \tau.
$$
By using (\ref{eqth96}) in the above inequality, we obtain
$$
\left\|\mathcal{R}_1\right\|   \leq   \frac{M_2 }{ (1-\alpha q)\lambda_1^{\beta p}} \chi_0  \Upsilon \|\varphi\|_{D\left(L^{\beta p}\right)}\int\limits_0^{t_1} \tau^{-\alpha q} \Big[ (t_1-\tau)^{\alpha q-1} - (t_2-\tau)^{\alpha q-1}\Big]  d \tau.
$$

Let's study the integral term in the last inequality
$$
\int\limits_0^{t_1} \tau^{-\alpha q}\left[\left(t_1-\tau\right)^{\alpha q-1}-\left(t_2-\tau\right)^{\alpha q-1}\right] d \tau
$$$$
=
\int\limits_0^{t_1} \tau^{-\alpha q} \left(t_1-\tau\right)^{\alpha q-1} d \tau -
\int\limits_0^{t_1} \tau^{-\alpha q} \left(t_2-\tau\right)^{\alpha q-1} d \tau.
$$
The first part of this integral
$$
\int\limits_0^{t_1} \tau^{-\alpha q}\left(t_1-\tau\right)^{\alpha q-1} d \tau=B(\alpha q, 1-\alpha q),
$$
and in the second part of the integral, due to the substitution $\tau= t_2\mu $, we have
$$
\int\limits_0^{t_1} \tau^{-\alpha q} \left(t_2-\tau\right)^{\alpha q-1} d \tau=\int_0^{t_1 / t_2} \mu^{-\alpha q}(1-\mu)^{\alpha q-1} d \mu
$$
$$
=B(\alpha q, 1-\alpha q)-\int_{t_1 / t_2}^1 \mu^{-\alpha q}(1-\mu)^{\alpha q-1} d \mu
$$

Based on the above
$$
\int\limits_0^{t_1} \tau^{-\alpha q}\left[\left(t_1-\tau\right)^{\alpha q-1}-\left(t_2-\tau\right)^{\alpha q-1}\right] d \tau=\int_{t_1 / t_2}^1 \mu^{-\alpha q}(1-\mu)^{\alpha q-1} d \mu
$$
From this, we get the following estimate
$$
\int\limits_0^{t_1} \tau^{-\alpha q}\left[\left(t_1-\tau\right)^{\alpha q-1}-\left(t_2-\tau\right)^{\alpha q-1}\right] d \tau
$$
\begin{equation}\label{thesth104}
\leq\left(\frac{t_2}{t_1}\right)^{\alpha q} \int_{t_1 / t_2}^1(1-\mu)^{\alpha q-1} d \mu=\frac{1}{\alpha q} \frac{(t_2-t_1)^{\alpha q}}{t_1^{\alpha q}}.
\end{equation}
Therefore, for the estimate of $\mathcal{R}_1$, we arrive at
\begin{equation}\label{est103}
\left\|\mathcal{R}_1\right\|   \leq   \frac{M_2 t_1^{-\alpha q}}{\alpha q (1-\alpha q)\lambda_1^{\beta p}} \chi_0  \Upsilon \|\varphi\|_{D\left(L^{\beta p}\right)} (t_2-t_1)^{\alpha q}.
\end{equation}

Secondly, an estimate for the term $\mathcal{R}_2$ can be shown by using inequalities in Proposition \ref{thproth3} as follows
$$
\left\|\mathcal{R}_2\right\|  \leq
\int\limits_{t_1}^{t_2}\left\|\sum_{j=1}^{\infty}   F_j(\tau,u(\tau)) (t_2-\tau)^{\alpha-1} E_{\alpha, \alpha}\left(-\lambda_j^\beta (t_2-\tau)^{\alpha}\right) e_j(x)\right\| d \tau
$$
$$
\leq \frac{M_2}{ \lambda_1^{\beta p}} \int\limits_{t_1}^{t_2}\|F(\tau, u(\tau))\|\left(t_2-\tau\right)^{\alpha q-1} d \tau
\leq \frac{M_2}{ \lambda_1^{\beta p}} \Upsilon \int\limits_{t_1}^{t_2}\|u(\tau, \cdot)\|\left(t_2-\tau\right)^{\alpha q-1} d \tau
$$
$$
\leq \frac{M_2  }{ \lambda_1^{\beta p}} \chi_0  \Upsilon \|\varphi\|_{D\left(L^{\beta p}\right)} \int\limits_{t_1}^{t_2} \tau^{-\alpha q}\left(t_2-\tau\right)^{\alpha q-1} d \tau
$$
\begin{equation}\label{est104}
\leq \frac{M_2 }{\alpha q \lambda_1^{\beta (p-p_0)}} t_1^{-\alpha q} \chi_0  \Upsilon \|\varphi\|_{D\left(L^{\beta p}\right)} (t_2-t_1)^{\alpha q}.
\end{equation}

 Third, we will estimate the term $\mathcal{R}_3$. The conditions of Theorem 2 hold for the function $\varphi$, therefore, the estimate in (\ref{est42}) holds
\begin{equation}\label{est105}
\left\|\mathcal{R}_3\right\|  \leq
\frac{C_{\kappa} M_2 T^{\alpha q}}{\alpha q}t^{-\alpha q} \left( t_2-t_1 \right)^{\alpha q}
\|\varphi\|_{D\left(L^{\beta q}\right)}.
\end{equation}

Fourth, we proceed to estimate
$\mathcal{R}_4$. Using the estimation method for $\mathcal{I}_3$ and inequalities in Proposition \ref{thproth3},  the same argument as in (\ref{tfor43}) gives
$$
\left\|\mathcal{R}_4\right\|
\leq \int\limits_0^T \Bigg\{
\sum_{j=1}^{\infty} \lambda_j^{2 \beta} F_j^2(\tau, u(\tau)) \Psi_j^2(T)(T-\tau)^{2\alpha-2} \left(E_{\alpha, \alpha}\left(-\lambda_j^\beta (T-\tau)^\alpha\right)\right)^2
$$
$$
\left[\int\limits_{t_1}^{t_2} \eta^{\alpha-1} E_{\alpha, \alpha}\left(-\lambda_j^\beta \eta^\alpha\right) d \eta\right]^2 \Bigg\}^{1 / 2}  d \tau
\leq  C_{\kappa} M_2 \int\limits_0^T \Bigg\{
\sum_{j=1}^{\infty} \lambda_j^{2 \beta} F_j^2(\tau, u(\tau))
$$
$$
 (T-\tau)^{2\alpha-2} \frac{1}{\lambda_j^{2\beta p}}
(T-\tau)^{-2\alpha p}\left[\int\limits_{t_1}^{t_2} \eta^{\alpha-1} \lambda_j^{-\beta q} \eta^{-\alpha q} d \eta\right]^2 \Bigg\}^{1 / 2}  d \tau
$$
$$
\leq  C_{\kappa} M_2 \int\limits_0^T \Bigg\{
\sum_{j=1}^{\infty} \lambda_j^{2 \beta} F_j^2(\tau, u(\tau)) (T-\tau)^{2\alpha q-2}
 \frac{T^\alpha}{(\alpha q)^2 \lambda_j^{2\beta}}  \left[t_1^{-\alpha q}-t_2^{-\alpha q} \right]^2 \Bigg\}^{1 / 2}  d \tau
$$
$$
\leq  \frac{ C_{\kappa} M_2 T^\alpha}{\alpha q }  \int\limits_0^T   \|F(\tau, u(\tau))\| (T-\tau)^{\alpha q-1}
\frac{t_2^{\alpha q}-t_1^{\alpha q}}{t_1^{\alpha q}t_2^{\alpha q} }    d \tau
$$
$$
\leq  \frac{ C_{\kappa} M_2 T^\alpha}{\alpha q } \frac{t_2^{\alpha q}-t_1^{\alpha q}}{t_1^{\alpha q}t_2^{\alpha q} } \chi_0  \Upsilon \|\varphi\|_{D\left(L^{\beta p}\right)}\int\limits_0^T   \tau^{-\alpha q} (T-\tau)^{\alpha q-1}   d \tau
$$
\begin{equation}\label{est106}
\leq  \frac{ C_{\kappa} M_2 T^\alpha}{\alpha q } \frac{t_2^{\alpha q}-t_1^{\alpha q}}{t_1^{\alpha q}t_2^{\alpha q} } \chi_0  \Upsilon \|\varphi\|_{D\left(L^{\beta p}\right)}B(\alpha q,1-\alpha q).
\end{equation}
Next, we estimate $\mathcal{R}_5$
$$
\left\|\mathcal{R}_5\right\|
\leq \int\limits_0^T \Bigg\{
\sum_{j=1}^{\infty} \lambda_j^{2 \beta} k^2(\tau) u_j^2(\tau) \Psi_j^2(T)(T-\tau)^{2\alpha-2} \left(E_{\alpha, \alpha}\left(-\lambda_j^\beta (T-\tau)^\alpha\right)\right)^2
$$
$$
\left[\int\limits_{t_1}^{t_2} \eta^{\alpha-1} E_{\alpha, \alpha}\left(-\lambda_j^\beta \eta^\alpha\right) d \eta\right]^2 \Bigg\}^{1 / 2}  d \tau
$$
$$
\leq  C_{\kappa} M_2 \int\limits_0^T \Bigg\{
\sum_{j=1}^{\infty} \lambda_j^{2 \beta} k^2(\tau) u_j^2(\tau) (T-\tau)^{2\alpha q-2}
 \frac{T^\alpha}{(\alpha q)^2 \lambda_j^{2\beta}}  \left[t_1^{-\alpha q}-t_2^{-\alpha q} \right]^2 \Bigg\}^{1 / 2}  d \tau
$$
$$
\leq  \frac{ C_{\kappa} M_2 T^\alpha}{\alpha q }  \|k\|_{C[[0,T]} \int\limits_0^T   \|u(\tau, \cdot)\| (T-\tau)^{\alpha q-1}
\frac{t_2^{\alpha q}-t_1^{\alpha q}}{t_1^{\alpha q}t_2^{\alpha q} }    d \tau
$$
$$
\leq  \frac{ C_{\kappa} M_2 T^\alpha}{\alpha q } \frac{t_2^{\alpha q}-t_1^{\alpha q}}{t_1^{\alpha q}t_2^{\alpha q} } \chi_0 \|k\|_{C[[0,T]} \|\varphi\|_{D\left(L^{\beta p}\right)}\int\limits_0^T   \tau^{-\alpha q} (T-\tau)^{\alpha q-1}   d \tau
$$
\begin{equation}\label{est106thh}
\leq  \frac{ C_{\kappa} M_2 T^\alpha}{\alpha q } \frac{t_2^{\alpha q}-t_1^{\alpha q}}{t_1^{\alpha q}t_2^{\alpha q} } \chi_0 \|k\|_{C[[0,T]} \|\varphi\|_{D\left(L^{\beta p}\right)}B(\alpha q,1-\alpha q).
\end{equation}

Now let's consider $\mathcal{R}_6$. According to the arguments in $\left\|\mathcal{I}_6\right\|$, we obtain

$$
\left\|\mathcal{R}_6\right\|   \leq \int_0^{t_1}\left\|\sum_{j=1}^{\infty}   \int\limits_{t_1-\tau}^{t_2-\tau} k(\tau) u_j(\tau, \cdot) \eta^{\alpha-2} E_{\alpha, \alpha-1}\left(-\lambda_j^\beta \eta^\alpha\right) d \eta d \tau e_j(x)\right\| d \tau
$$
$$
\leq  \frac{M_2 \|k\|_{C[[0,T]}}{ \lambda_1^{\beta p}}  \int_0^{t_1}\|u(\tau, \cdot)\| d \tau \int\limits_{t_1-\tau}^{t_2-\tau} \eta^{\alpha q-2} d \eta
$$
$$
\leq  \frac{M_2 \|k\|_{C[[0,T]}}{(1-\alpha q) \lambda_1^{\beta p}} \chi_0  \|\varphi\|_{D\left(L^{\beta p}\right)} \int_0^{t_1} \tau^{-\alpha q} \Big[(t_1-\tau)^{\alpha q-1}-(t_2-\tau)^{\alpha q-1}\Big] d \tau
$$
According to inequality (\ref{thesth104}), we have
\begin{equation}\label{est107}
\left\|\mathcal{R}_6\right\|   \leq   \frac{M_2 \|k\|_{C[[0,T]}}{\alpha q (1-\alpha q) \lambda_1^{\beta p}} \chi_0     \frac{(t_2-t_1)^{\alpha q}}{t_1^{\alpha q}} \|\varphi\|_{D\left(L^{\beta p}\right)}.
\end{equation}

Let's estimate $\mathcal{I}_7$
$$
\left\|\mathcal{R}_7\right\|  \leq
\int\limits_{t_1}^{t_2}\left\|\sum_{j=1}^{\infty}  k(\tau) u_j(\tau) (t_2-\tau)^{\alpha-1} E_{\alpha, \alpha}\left(-\lambda_j^\beta (t_2-\tau)^{\alpha}\right) e_j(x)\right\| d \tau
$$
$$
\leq \frac{M_2 \|k\|_{C[[0,T]}}{ \lambda_1^{\beta p}} \int\limits_{t_1}^{t_2}\|u(\tau, \cdot)\|\left(t_2-\tau\right)^{\alpha q-1} d \tau
$$
$$
\leq \frac{M_2 \|k\|_{C[[0,T]}}{ \lambda_1^{\beta p}} \chi_0    \|\varphi\|_{D\left(L^{\beta p}\right)} \int\limits_{t_1}^{t_2} \tau^{-\alpha q} \left(t_2-\tau\right)^{\alpha q-1} d \tau
$$
\begin{equation}\label{est108}
\leq \frac{M_2 \|k\|_{C[[0,T]}}{ \lambda_1^{\beta p}} t_1^{-\alpha q} \chi_0  \|\varphi\|_{D\left(L^{\beta p}\right)} (t_2-t_1)^{\alpha q}.
\end{equation}

Using (\ref{eqth102}) and  (\ref{est103}) - (\ref{est108}), we obtain an estimate for $u(t_2, x) -u(t_1, x)$
$$
\left\|u(t_2, x) -u(t_1, x)\right\|\leq  \frac{M_2 t_1^{-\alpha q}}{\alpha q (1-\alpha q)\lambda_1^{\beta p}} \chi_0  \Upsilon \|\varphi\|_{D\left(L^{\beta p}\right)} (t_2-t_1)^{\alpha q}
$$
$$
+\frac{M_2 }{\alpha q \lambda_1^{\beta (p-p_0)}} t_1^{-\alpha q} \chi_0  \Upsilon \|\varphi\|_{D\left(L^{\beta p}\right)} (t_2-t_1)^{\alpha q}
+\frac{C_{\kappa} M_2 T^{\alpha q}}{\alpha q}t^{-\alpha q} \left( t_2-t_1 \right)^{\alpha q}
\|\varphi\|_{D\left(L^{\beta q}\right)}
$$
$$
+\frac{ C_{\kappa} M_2 T^\alpha}{\alpha q } \frac{t_2^{\alpha q}-t_1^{\alpha q}}{t_1^{\alpha q}t_2^{\alpha q} } \chi_0  \Upsilon \|\varphi\|_{D\left(L^{\beta p}\right)}B(\alpha q,1-\alpha q)
$$
$$
+\frac{ C_{\kappa} M_2 T^\alpha}{\alpha q } \frac{t_2^{\alpha q}-t_1^{\alpha q}}{t_1^{\alpha q}t_2^{\alpha q} } \chi_0 \|k\|_{C[[0,T]} \|\varphi\|_{D\left(L^{\beta p}\right)}B(\alpha q,1-\alpha q)
$$
$$
+\frac{M_2 \|k\|_{C[[0,T]}}{\alpha q (1-\alpha q) \lambda_1^{\beta p}} \chi_0     \frac{(t_2-t_1)^{\alpha q}}{t_1^{\alpha q}} \|\varphi\|_{D\left(L^{\beta p}\right)}
$$
$$
+\frac{M_2 \|k\|_{C[[0,T]}}{ \lambda_1^{\beta p}} t_1^{-\alpha q} \chi_0    \|\varphi\|_{D\left(L^{\beta p}\right)} (t_2-t_1)^{\alpha q}.
$$

We derive from the above that $\|u\left(t_2, x\right)-u\left(t_1, x\right)\|$ tends to $0$ as $t_2-t_1$ tends $0$ for $0<t_1<t_2\leq T$. The above statements prove $u \in C\left((0,T];L_2(\Omega)\right)$. This combines with (\ref{eqth96}) so that  $C^{\alpha q}\left((0,T];L_2(\Omega)\right)$, and from this, the estimate  (\ref{thth102}) in Theorem \ref{theorth4} holds.
\end{proof}

\begin{lemma}\label{lemmath8}
Let $p, q, r, p^{\prime}, q^{\prime}$ be defined by B1), B2). If $\varphi \in D\left(L^{\beta q}\right)$, $F$ satisfies C1), and $\Theta(T)<1$, then nonlocal initial problem (\ref{eqth1})--(\ref{eqth3}) has a unique solution
$$
u \in L^{\frac{1}{\alpha q^{\prime}}-r}\left(0, T ; D\left(L^{\beta (p-p')}\right)\right) \cap C^{\alpha q}\left((0, T] ; L_2(\Omega)\right),
$$
there exists a positive constant $C_5$ such that
\begin{equation}\label{theq110}
\|u\|_{L^{\frac{1}{a q^{\prime}}-r}\left(0, T ; D\left(L^{\beta\left(p-p'\right)}\right)\right)}+\|u\|_{C^{\alpha q}\left(0, T ; L_2(\Omega)\right)}\leq C_5\|\varphi\|_{D\left(L^{\beta p}\right)}
\end{equation}
\end{lemma}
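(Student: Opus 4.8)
The plan is to obtain Lemma \ref{lemmath8} directly as a corollary of Theorems \ref{theorth3} and \ref{theorth4}, since both of those theorems are stated under exactly the hypotheses of this lemma: B1), B2), $\varphi\in D\left(L^{\beta q}\right)$, $k\in C[0,T]$, $F$ satisfying C1), and $\Theta(T)<1$. First I would invoke the uniqueness already established in Step 1 of the proof of Theorem \ref{theorth3} (the Cauchy-sequence argument in $L^{\frac{1}{\alpha q}-r}\left(0,T;L_2(\Omega)\right)$): problem (\ref{eqth1})--(\ref{eqth3}) has exactly one mild solution $u$ in the sense of Definition \ref{defth2}. Consequently the function whose $L^{\frac{1}{\alpha q'}-r}\left(0,T;D\left(L^{\beta(p-p')}\right)\right)$-regularity is asserted by Theorem \ref{theorth3} and the function whose $C^{\alpha q}\left((0,T];L_2(\Omega)\right)$-regularity is asserted by Theorem \ref{theorth4} are one and the same $u$; hence $u$ belongs to the intersection $L^{\frac{1}{\alpha q'}-r}\left(0,T;D\left(L^{\beta(p-p')}\right)\right)\cap C^{\alpha q}\left((0,T];L_2(\Omega)\right)$, which is precisely the regularity claimed.

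Second, I would assemble the quantitative bound (\ref{theq110}) by simply adding the two estimates already proved. Theorem \ref{theorth3}, after the $L^{\frac{1}{\alpha q'}-r}(0,T;\mathbb{R})$-norm has been taken in $t$, gives (\ref{theqth95}), whose right-hand side is of the form $K_1\|\varphi\|_{D\left(L^{\beta p}\right)}$ with $K_1$ depending only on $\alpha,\beta,p,q,p',q',r,\kappa,\Upsilon,\|k\|_{C[0,T]},T$, through the fixed quantities $M_2,\lambda_1,C_\kappa,B(\alpha q,1-\alpha q)$ and through $\chi_0=\frac{M_0}{1-\Theta(T)}\|\varphi\|_{D\left(L^{\beta p}\right)}$, which is itself a fixed multiple of $\|\varphi\|_{D\left(L^{\beta p}\right)}$. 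Likewise Theorem \ref{theorth4} gives (\ref{thth102}), i.e. $\|u\|_{C^{\alpha q}\left((0,T];L_2(\Omega)\right)}\le\chi_0\|\varphi\|_{D\left(L^{\beta p}\right)}=:K_2\|\varphi\|_{D\left(L^{\beta p}\right)}$. Setting $C_5:=K_1+K_2$ and adding the two inequalities yields (\ref{theq110}).

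There is essentially no analytic obstacle at this stage: the contraction argument producing $u$, the $D\left(L^{\beta(p-p')}\right)$-regularity via the operator bounds (\ref{eqth97})--(\ref{eqth100}), and the H\"older-in-time regularity via the splitting (\ref{eqth102}) together with (\ref{est103})--(\ref{est108}), have all been carried out in the two preceding theorems. The only point needing a little care is the bookkeeping of constants: one must check that the coefficients in (\ref{theqth95}) and (\ref{thth102}) depend only on the fixed data and not on $t$ or on $\varphi$, so that their sum is an admissible $C_5$. This is immediate once one observes that the decaying factors $t^{-\alpha q'}$ and $t^{-\alpha q}$ have already been absorbed by taking the respective function-space norms in $t$ over $(0,T)$, each of which is finite because $\alpha q'\left(\frac{1}{\alpha q'}-r\right)=1-\alpha q' r<1$ and $\alpha q\left(\frac{1}{\alpha q}-r\right)=1-\alpha q r<1$ under B2).
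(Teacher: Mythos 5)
Your proposal matches the paper's own argument: the paper proves Lemma \ref{lemmath8} exactly by combining the estimates (\ref{theqth95}) of Theorem \ref{theorth3} and (\ref{thth102}) of Theorem \ref{theorth4}, both established under the same hypotheses, and summing them to produce $C_5$. Your additional remarks on identifying the two solutions via uniqueness and on the finiteness of the $t$-integrals only make explicit what the paper leaves implicit, so the route is essentially identical.
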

The inequality (\ref{theq110}) in Lemma \ref{lemmath8} is obtained by combining the inequalities (\ref{theqth95}) and (\ref{thth102}).

\begin{theorem}\label{theorth5}
Let $p, q, r, p', q'$ be defined by B1), B3). If $\varphi$ belongs to $ D\left(L^{\beta q}\right)$, $F$ satisfies C1), and $\Theta(T)<1$, then nonlocal initial problem (\ref{eqth1})--(\ref{eqth3}) has a unique solution
$
u \in C^{\alpha q}\left([0, T] ; D\left(L^{-\beta q'}\right)\right)
$
and
$$
\|u\left(t, x\right)\|_{C^{\alpha q}\left([0,T];D\left(L^{-\beta q'}\right)\right)}
\leq  M_2 \Bigg[ \frac{T^{\alpha(p-q-p')} \chi_0  \Upsilon}{\alpha q(1-\alpha q')} +  \frac{ M_6  t_1^{-\alpha q}}{\alpha q \lambda_1^{\beta p'}}  \chi_0  \Upsilon
$$
$$
+ \frac{C_{\kappa}  }{\alpha q' \lambda_1^{2 \beta p}} T^{\alpha (p-p')} +\frac{ C_{\kappa} M_2 }{\alpha q  \lambda_1^{\beta q}} T^{\alpha (p-p')}   \chi_0  \Upsilon B(\alpha q',1-\alpha q)
+
\frac{\|k\|_{C[[0,T]}t_1^{-\alpha q}}{ (1-\alpha q \lambda_1^{\beta (p+q')}}
\chi_0
$$
\begin{equation}\label{ththeq112}
+\frac{ C_{\kappa} M_2 }{\alpha q  \lambda_1^{\beta q}} T^{\alpha (p-p')}   \chi_0  \|k\|_{C[0,T]}   B(\alpha q',1-\alpha q)+\frac{\|k\|_{C[[0,T]} }{ \lambda_1^{\beta (p+q')}} t_1^{-\alpha q} \chi_0  \Bigg]\|\varphi\|_{D\left(L^{\beta p}\right)}.
\end{equation}
Moreover, there exists a positive constant $C_6$ such that
$$
\|u\|_{L^{\frac{1}{\alpha q^{\prime}}-r}\left(0, T ; D\left(L^{\beta (p-p')}\right)\right)}+\|u\|_{C^{\alpha q}\left((0, T] ; L_2(\Omega)\right)}
$$
\begin{equation}\label{ththeq113}
+ \| u\|_{C^{\alpha q}\left([0, T] ; D\left(L^{-\beta q'}\right)\right)} \leq C_6 \| \varphi \|_{D\left(L^{\beta p}\right)}.
\end{equation}
\end{theorem}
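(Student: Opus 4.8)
The approach follows the scheme of Lemma~\ref{lemmth5} and Theorem~\ref{theorth4}: I combine the $D(L^{-\beta q'})$-norm manipulations of the former with the $(t_2-t_1)^{\alpha q}$ boundary-layer estimates of the latter, and I use the Lipschitz hypothesis C1) together with the \emph{a priori} bound \eqref{eqth96} to control the nonlinear source. First I invoke Theorem~\ref{theorth3}, Theorem~\ref{theorth4} and Lemma~\ref{lemmath8}: under B1) and B3) the nonlocal initial problem \eqref{eqth1}--\eqref{eqth3} already has a unique mild solution $u$ with $u\in L^{\frac{1}{\alpha q'}-r}(0,T;D(L^{\beta(p-p')}))\cap C^{\alpha q}((0,T];L_2(\Omega))$, and the bound $\|u(t,\cdot)\|\le\chi_0\|\varphi\|_{D(L^{\beta p})}\,t^{-\alpha q}$ holds for $0<t\le T$. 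It therefore remains only to prove $u\in C^{\alpha q}([0,T];D(L^{-\beta q'}))$ together with the estimate \eqref{ththeq112}; then \eqref{ththeq113} is obtained by adding \eqref{theqth95}, \eqref{thth102} and \eqref{ththeq112}.

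For the core estimate, fix $0\le t_1<t_2\le T$ and difference the mild-solution representation \eqref{eqth10} in $t$ exactly as in \eqref{eqth102}, obtaining $u(t_2,x)-u(t_1,x)=\sum_{j=1}^{7}\mathcal{R}_j$, where $\mathcal{R}_1,\mathcal{R}_2$ arise from $\mathcal{G}_1(\cdot)F(u)$, $\mathcal{R}_3$ from $\mathcal{G}_2(\cdot)\varphi$, $\mathcal{R}_4$ from $\mathcal{G}_3(\cdot)F(u)$, $\mathcal{R}_5$ from $\mathcal{G}_3(\cdot)\{k,u\}$, and $\mathcal{R}_6,\mathcal{R}_7$ from $\mathcal{G}_1(\cdot)\{k,u\}$. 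I estimate each $\|\mathcal{R}_j\|_{D(L^{-\beta q'})}$ by inserting the weight $\lambda_j^{-2\beta q'}$ into the Parseval sums (which, since $\lambda_j\ge\lambda_1$, sharpens the spatial denominators to $\lambda_1^{\beta(p+q')}$ or $\lambda_1^{2\beta q'}$), bounding the Mittag-Leffler kernels through Proposition~\ref{thproth3} in the forms $E_{\alpha,\alpha}(-\lambda_j^\beta\eta^\alpha)\le M_2\lambda_j^{-\beta p}\eta^{-\alpha p}$ and $|E_{\alpha,\alpha-1}(-\lambda_j^\beta\eta^\alpha)|\le M_2\lambda_j^{-\beta p}\eta^{-\alpha p}$, replacing $\|F(\tau,\cdot,u(\tau,\cdot))\|$ by $\Upsilon\|u(\tau,\cdot)\|$ via \eqref{eqth79}, and substituting $\|u(\tau,\cdot)\|\le\chi_0\|\varphi\|_{D(L^{\beta p})}\,\tau^{-\alpha q}$ from \eqref{eqth96}; in $\mathcal{R}_5,\mathcal{R}_6,\mathcal{R}_7$ the factor $|k(\tau)|$ is majorised by $\|k\|_{C[0,T]}$. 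The pieces $\mathcal{R}_3,\mathcal{R}_4,\mathcal{R}_5$ are treated as the corresponding $D(L^{-\beta q'})$-norm estimates \eqref{est50}--\eqref{ththest515th} of Lemma~\ref{lemmth5}, now with the $(t_2-t_1)^{\alpha q}$ rate produced, as in Theorem~\ref{theorth4}, by bounding $\int_{t_1}^{t_2}\tau^{\alpha-1}E_{\alpha,\alpha}(-\lambda_j^\beta\tau^\alpha)\,d\tau$ by $\lambda_j^{-\beta p}(t_2^{\alpha q}-t_1^{\alpha q})/(\alpha q)$ and invoking \eqref{esth39} together with $t_2^{\alpha q}-t_1^{\alpha q}\le(t_2-t_1)^{\alpha q}$, at the cost of a negative power of $t_1$.

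The decisive point — and what I expect to be the main obstacle — is extracting the Hölder factor $(t_2-t_1)^{\alpha q}$ (rather than the weaker $(t_2-t_1)^{s}$ of Lemma~\ref{lemmth5}) from the telescoping contributions $\mathcal{R}_1,\mathcal{R}_6$ and the remainder contributions $\mathcal{R}_2,\mathcal{R}_7$. For $\mathcal{R}_1,\mathcal{R}_6$ this relies on the sharp estimate, proved by the substitution $\tau=t_2\mu$ and the Beta-function computation exactly as in \eqref{thesth104},
$$
\int_0^{t_1}\tau^{-\alpha q}\Big[(t_1-\tau)^{\alpha q-1}-(t_2-\tau)^{\alpha q-1}\Big]\,d\tau\le\frac{1}{\alpha q}\,\frac{(t_2-t_1)^{\alpha q}}{t_1^{\alpha q}},
$$
while for $\mathcal{R}_2,\mathcal{R}_7$ one uses $\int_{t_1}^{t_2}\tau^{-\alpha q}(t_2-\tau)^{\alpha q-1}\,d\tau\le t_1^{-\alpha q}(t_2-t_1)^{\alpha q}/(\alpha q)$. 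Summing the resulting bounds — the $D(L^{-\beta q'})$-norm analogues of \eqref{est103}--\eqref{est108} — shows $\|u(t_2,x)-u(t_1,x)\|_{D(L^{-\beta q'})}\to0$ as $t_2-t_1\to0$ for $0<t_1<t_2\le T$, and, as in Lemma~\ref{lemmth5} (where the telescoping term vanishes at $t_1=0$), the continuity extends to $[0,T]$, so $u\in C([0,T];D(L^{-\beta q'}))$. Dividing the summed inequality by $(t_2-t_1)^{\alpha q}$, taking the supremum over $0\le t_1<t_2\le T$, and collecting all constants into the prefactor of \eqref{ththeq112} gives $u\in C^{\alpha q}([0,T];D(L^{-\beta q'}))$ with that bound; finally, adding \eqref{theqth95} and \eqref{thth102} yields \eqref{ththeq113}, which completes the proof.
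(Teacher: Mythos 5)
Your proposal is correct and follows essentially the same route as the paper: reduce via Theorem~\ref{theorth4} to proving $u\in C^{\alpha q}([0,T];D(L^{-\beta q'}))$, difference the mild solution into the seven terms $\mathcal{R}_1,\dots,\mathcal{R}_7$ of \eqref{eqth102}, estimate each in the dual norm using Proposition~\ref{thproth3}, C1) with \eqref{eqth96}, and the integral bound \eqref{thesth104} to extract the $(t_2-t_1)^{\alpha q}$ factor, then divide and combine with \eqref{theqth95} and \eqref{thth102}. The only deviations are cosmetic choices of constants (e.g.\ the paper handles $\mathcal{R}_2$ through the embedding constant $M_6$ and uses the $p'$-power Mittag--Leffler split for $\mathcal{R}_3$--$\mathcal{R}_5$, whereas you estimate directly), which only alter the prefactor in \eqref{ththeq112}.
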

\begin{proof}
According to Theorem \ref{theorth4}, we have just to prove that $u \in C^{\alpha q}\left([0, T] ; D\left(L^{-\beta q'}\right)\right)
$. In this part, we
consider $0 \leq  t_1 < t_2 \leq T$.

 Let us first consider $\left\|\mathcal{R}_1\right\|_{D\left(L^{-\beta q'}\right)}$. Using inequalities in Proposition \ref{thproth3} we obtain:
$$
\left\|\mathcal{R}_1\right\|_{D\left(L^{-\beta q'}\right)}
$$
$$
\leq \int_0^{t_1}\left\|-\sum_{j=1}^{\infty} \lambda_j^\beta  \int\limits_{t_1-\tau}^{t_2-\tau} F_j(\tau,u(\tau)) \eta^{\alpha-2} E_{\alpha, \alpha-1}\left(-\lambda_j^\beta \eta^\alpha\right) d \eta   e_j\right\|_{D\left(L^{-\beta q}\right)} d \tau
$$
$$
 \leq \int_0^{t_1}\left\{\sum_{j=1}^{\infty} \lambda_j^{2 \beta} \lambda_j^{-2 \beta q'} F_j^2(\tau,u(\tau))\left|\int\limits_{t_1-\tau}^{t_2-\tau} \eta^{\alpha-2} E_{\alpha, \alpha-1}\left(-\lambda_j^\beta \eta^\alpha\right)d \eta\right|^2\right\}^{1 / 2} d \tau
$$$$
  \leq M_2 \int_0^{t_1}\left\{\sum_{j=1}^{\infty} \lambda_j^{2 \beta} \lambda_j^{-2 \beta q'} F_j^2(\tau,u(\tau))\left|\int\limits_{t_1-\tau}^{t_2-\tau} \eta^{\alpha-2} \lambda_j^{-\beta p'} \eta^{-\alpha p'} d \eta\right|^2\right\}^{1 / 2} d \tau
$$
$$
\leq  M_2 \int\limits_0^{t_1} \|F(\tau, u(\tau))\| \int\limits_{t_1-\tau}^{t_2-\tau} \eta^{\alpha q'-2} d \eta  d \tau
$$
$$
\leq \frac{M_2}{1-\alpha q'}  \int\limits_0^{t_1} \|F(\tau, u(\tau))\| \Big[(t_1-\tau)^{\alpha q'-1} - (t_2-\tau)^{\alpha q'-1}\Big]  d \tau
$$
$$
\leq \frac{M_2}{1-\alpha q'} \Upsilon \int\limits_0^{t_1} \|u(\tau, \cdot)\| (t_1-\tau)^{\alpha(p-p')} \Big[(t_1-\tau)^{\alpha q-1} - (t_2-\tau)^{\alpha q-1}\Big]  d \tau
$$
$$
\leq \frac{M_2 \chi_0  \Upsilon}{1-\alpha q'} t_1^{\alpha(p-p')} \|\varphi\|_{D\left(L^{\beta p}\right)} \int\limits_0^{t_1} \tau^{-\alpha q}  \Big[(t_1-\tau)^{\alpha q-1} - (t_2-\tau)^{\alpha q-1}\Big]  d \tau.
$$
By using inequality (\ref{thesth104}), we have
$$
\left\|\mathcal{R}_1\right\|_{D\left(L^{-\beta q'}\right)} \leq  \frac{M_2 \chi_0  \Upsilon}{\alpha q(1-\alpha q')} t_1^{\alpha(p-p')} \|\varphi\|_{D\left(L^{\beta p}\right)} \frac{(t_2-t_1)^{\alpha q}}{t_1^{\alpha q}}
$$
\begin{equation}\label{esth112}
\leq  \frac{M_2 \chi_0  \Upsilon}{\alpha q(1-\alpha q')} T^{\alpha(p-q-p')} \|\varphi\|_{D\left(L^{\beta p}\right)} (t_2-t_1)^{\alpha q}.
\end{equation}

Secondly, we are going to consider $\mathcal{R}_2$. The Sobolev embedding $L_2(\Omega) \hookrightarrow D\left(L^{-\beta q'}\right)$
implies that yields that there exists a positive
constant $M_6$ such that
$$
\left\|\mathcal{R}_2\right\|_{D\left(L^{-\beta q'}\right)} \leq M_6 \left\|\mathcal{R}_2\right\|
\leq   \frac{M_2 M_6}{\alpha q \lambda_1^{\beta p'}} t_1^{-\alpha q} \chi_0  \Upsilon \|\varphi\|_{D\left(L^{\beta p}\right)} (t_2-t_1)^{\alpha q}
$$
\begin{equation}\label{est113}
\leq \frac{M_2 M_6}{\alpha q \lambda_1^{\beta p'}} t_1^{-\alpha q} \chi_0  \Upsilon \|\varphi\|_{D\left(L^{\beta p}\right)} (t_2-t_1)^{\alpha q}.
\end{equation}

Thirdly, we consider the norm $\left\|\mathcal{R}_3\right\|_{D\left(L^{-\beta q'}\right)} $. By applying the same arguments as in (\ref{est50}), one can get
$$
\left\|\mathcal{R}_3\right\|_{D\left(L^{-\beta q'}\right)}=\left\|L^\beta \sum_{j=1}^{\infty} \Psi_j(T)  \varphi_j  \int\limits_{t_1}^{t_2} \tau^{\alpha-1} E_{\alpha, \alpha}\left(-\lambda_j^\beta \tau^\alpha\right) d \tau e_j(x)\right\|_{D\left(L^{-\beta q'}\right)}
$$
$$
\leq \left\{
\sum_{j=1}^{\infty}  \varphi_j^2 \Psi_j^2(T)\left[\int\limits_{t_1}^{t_2} \tau^{\alpha-1} E_{\alpha, \alpha}\left(-\lambda_j^\beta \tau^\alpha\right) d \tau\right]^2 \lambda_j^{2 \beta} \lambda_j^{-2 \beta q'}\right\}^{1 / 2}
$$
$$
\leq C_{\kappa} M_2  \left\{ \sum_{j=1}^{\infty}  \varphi_j^2    \left[\int\limits_{t_1}^{t_2} \tau^{\alpha-1}   \tau^{\alpha p'} d \tau\right]^2 \right\}^{1 / 2}
$$
$$
\leq C_{\kappa} M_2  \left\{ \sum_{j=1}^{\infty}  \varphi_j^2   \left[\int\limits_{t_1}^{t_2} \tau^{\alpha q+\alpha (p-p')-1}  d \tau\right]^2 \right\}^{1 / 2}
$$
$$
\leq \frac{C_{\kappa} M_2}{\alpha q' \lambda_1^{2 \beta p}} \|\varphi\|_{D\left(L^{\beta p}\right)} \left[t_2^{\alpha q+\alpha (p-p')}-t_1^{\alpha q+\alpha (p-p')}\right].
$$
From inequality (\ref{esth39}), we have
\begin{equation}\label{est119}
\left\|\mathcal{R}_3\right\|_{D\left(L^{-\beta q'}\right)} \leq \frac{C_{\kappa} M_2}{\alpha q' \lambda_1^{2 \beta p}} T^{\alpha (p-p')}\|\varphi\|_{D\left(L^{\beta p}\right)} \left(t_2-t_1\right)^{\alpha q}.
\end{equation}

Fourthly, we proceed to estimate
$\mathcal{R}_4$. Using the estimation method for $\mathcal{I}_4$, we get the following estimate for $\mathcal{R}_4$
$$
\left\|\mathcal{R}_4\right\|_{D\left(L^{-\beta q'}\right)}
$$$$
\leq \int\limits_0^T \Bigg\{
\sum_{j=1}^{\infty} \lambda_j^{2 \beta} \lambda_j^{-2 \beta q'} F_j^2(\tau,u(\tau)) \Psi_j^2(T)(T-\tau)^{2\alpha-2} \left(E_{\alpha, \alpha}\left(-\lambda_j^\beta (T-\tau)^\alpha\right)\right)^2
$$
$$
\left[\int\limits_{t_1}^{t_2} \eta^{\alpha-1} E_{\alpha, \alpha}\left(-\lambda_j^\beta \eta^\alpha\right) d \eta\right]^2 \Bigg\}^{1 / 2}  d \tau
\leq C_{\kappa} M_2^2   \int\limits_0^T \Bigg\{
\sum_{j=1}^{\infty} \lambda_j^{2 \beta} \lambda_j^{-2\beta q'} F_j^2(\tau,u(\tau))
$$
$$
 (T-\tau)^{2\alpha-2} \frac{1}{\lambda_j^{2\beta p'}}
(T-\tau)^{-2\alpha p'}\left[\int\limits_{t_1}^{t_2} \eta^{\alpha-1} \lambda_j^{-\beta p} \eta^{-\alpha p} d \eta\right]^2 \Bigg\}^{1 / 2}  d \tau
$$
$$
\leq  \frac{C_{\kappa} M_2^2}{ \lambda_1^{\beta p}} \int\limits_0^T \Bigg\{
\sum_{j=1}^{\infty} \lambda_j^{2 \beta} F_j^2(\tau) (T-\tau)^{2\alpha q' -2}
\left[\int\limits_{t_1}^{t_2} \eta^{\alpha q-1}   d \eta\right]^2 \Bigg\}^{1 / 2}  d \tau
$$
$$
\leq  \frac{ C_{\kappa} M_2^2}{\alpha q  \lambda_1^{\beta q}} \left(t_2^{\alpha q}-t_1^{\alpha q}\right) \int\limits_0^T   \|F(\tau, \cdot)\| (T-\tau)^{\alpha q'-1} d \tau
$$
$$
\leq  \frac{ C_{\kappa} M_2^2 }{\alpha q  \lambda_1^{\beta q}} \left(t_2^{\alpha q}-t_1^{\alpha q}\right) \chi_0  \Upsilon \|\varphi\|_{D\left(L^{\beta p}\right)} \int\limits_0^T  \tau^{-\alpha q}  (T-\tau)^{\alpha q' -1} d \tau
$$
\begin{equation}\label{est115}
\leq  \frac{ C_{\kappa} M_2^2 }{\alpha q  \lambda_1^{\beta q}} T^{\alpha (p-p')}   \chi_0  \Upsilon \|\varphi\|_{D\left(L^{\beta p}\right)}  B(\alpha q',1-\alpha q)\left(t_2-t_1\right)^{\alpha q}.
\end{equation}
Next, we estimate  $\mathcal{R}_5$
$$
\left\|\mathcal{R}_5\right\|_{D\left(L^{-\beta q'}\right)} \leq \int\limits_0^T \Bigg\{
\sum_{j=1}^{\infty} \lambda_j^{2 \beta} \lambda_j^{-2 \beta q'} k^2(\tau) u_j^2(\tau) \Psi_j^2(T)(T-\tau)^{2\alpha-2}
$$
$$
\times \left(E_{\alpha, \alpha}\left(-\lambda_j^\beta (T-\tau)^\alpha\right)\right)^2
\left[\int\limits_{t_1}^{t_2} \eta^{\alpha-1} E_{\alpha, \alpha}\left(-\lambda_j^\beta \eta^\alpha\right) d \eta\right]^2 \Bigg\}^{1 / 2}  d \tau
$$
$$
\leq  \frac{C_{\kappa} M_2^2}{ \lambda_1^{\beta p}} \int\limits_0^T \Bigg\{
\sum_{j=1}^{\infty} \lambda_j^{2 \beta} k^2(\tau) u_j^2(\tau) (T-\tau)^{2\alpha q' -2}
\left[\int\limits_{t_1}^{t_2} \eta^{\alpha q-1}   d \eta\right]^2 \Bigg\}^{1 / 2}  d \tau
$$
$$
\leq  \frac{ C_{\kappa} M_2^2}{\alpha q  \lambda_1^{\beta q}} \left(t_2^{\alpha q}-t_1^{\alpha q}\right) \|k\|_{C[0,T]} \int\limits_0^T   \|u(\tau, \cdot)\| (T-\tau)^{\alpha q'-1} d \tau
$$
$$
\leq  \frac{ C_{\kappa} M_2^2 }{\alpha q  \lambda_1^{\beta q}} \left(t_2^{\alpha q}-t_1^{\alpha q}\right) \chi_0  \|k\|_{C[0,T]}  \|\varphi\|_{D\left(L^{\beta p}\right)} \int\limits_0^T  \tau^{-\alpha q}  (T-\tau)^{\alpha q' -1} d \tau
$$
\begin{equation}\label{est115th}
\leq  \frac{ C_{\kappa} M_2^2 }{\alpha q  \lambda_1^{\beta q}} T^{\alpha (p-p')}   \chi_0  \|k\|_{C[0,T]}  \|\varphi\|_{D\left(L^{\beta p}\right)}  B(\alpha q',1-\alpha q)\left(t_2-t_1\right)^{\alpha q}.
\end{equation}

Now let's estimate $\mathcal{R}_6$  in the same way:
$$
\left\|\mathcal{R}_6\right\|_{D\left(L^{-\beta q'}\right)}
\leq \int_0^{t_1}\left\|\sum_{j=1}^{\infty}   \int\limits_{t_1-\tau}^{t_2-\tau} k(\tau) u_j(\tau) \eta^{\alpha-2} E_{\alpha, \alpha-1}\left(-\lambda_j^\beta \eta^\alpha\right) d \eta d \tau e_j(x)\right\|_{D\left(L^{-\beta q'}\right)}  d \tau
$$
$$
\leq  \frac{M_2 \|k\|_{C[[0,T]}}{ \lambda_1^{\beta (p+q')}}  \int_0^{t_1}\|u(\tau, \cdot)\| d \tau \int\limits_{t_1-\tau}^{t_2-\tau} \eta^{\alpha q-2} d \eta
$$
$$
\leq  \frac{M_2 \|k\|_{C[[0,T]}}{ (1-\alpha q \lambda_1^{\beta (p+q')}}
\chi_0  \|\varphi\|_{D\left(L^{\beta p}\right)}\int\limits_0^{t_1} \tau^{-\alpha q} \Big[ (t_1-\tau)^{\alpha q-1} - (t_2-\tau)^{\alpha q-1}\Big]  d \tau.
$$
By using (\ref{thesth104}) in the above inequality, we obtain
\begin{equation}\label{est116}
\left\|\mathcal{R}_6\right\| \leq  \frac{M_2 \|k\|_{C[[0,T]}t_1^{-\alpha q}}{ (1-\alpha q \lambda_1^{\beta (p+q')}}
\chi_0  \|\varphi\|_{D\left(L^{\beta p}\right)}(t_2-t_1)^{\alpha q}.
\end{equation}

Similarly, we estimate $\mathcal{R}_7$ using the method used above
$$
\left\|\mathcal{R}_7\right\|_{D\left(L^{-\beta q'}\right)}
\leq
\int\limits_{t_1}^{t_2}\left\|\sum_{j=1}^{\infty}  k(\tau) u_j(\tau) (t_2-\tau)^{\alpha-1} E_{\alpha, \alpha}\left(-\lambda_j^\beta (t_2-\tau)^{\alpha}\right) e_j(x)\right\|_{D\left(L^{-\beta q'}\right)} d \tau
$$
$$
\leq \frac{M_2 \|k\|_{C[[0,T]}}{ \lambda_1^{\beta (p+q')}} \int\limits_{t_1}^{t_2}\|u(\tau, \cdot)\|\left(t_2-\tau\right)^{\alpha q-1} d \tau
$$
$$
\leq \frac{M_2 \|k\|_{C[[0,T]} }{ \lambda_1^{\beta (p+q')}} \chi_0  \|\varphi\|_{D\left(L^{\beta p}\right)} \int\limits_{t_1}^{t_2} \tau^{-\alpha q}\left(t_2-\tau\right)^{\alpha q-1} d \tau
$$
\begin{equation}\label{est117}
\leq \frac{M_2 \|k\|_{C[[0,T]} }{ \lambda_1^{\beta (p+q')}} t_1^{-\alpha q} \chi_0  \|\varphi\|_{D\left(L^{\beta p}\right)} \left(t_2-t_1\right)^{\alpha q}.
\end{equation}
Using (\ref{esth112}) - (\ref{est117}),  we estimate the difference $u\left(t_2, x\right)-u\left(t_1, x\right)$ in dual Space $D\left(L^{-\beta q'}\right)$ and obtain
$$
\|u\left(t_2, x\right)-u\left(t_1, x\right)\|_{D\left(L^{-\beta q'}\right)}   \leq M_2 \Bigg[ \frac{T^{\alpha(p-q-p')} \chi_0  \Upsilon}{\alpha q(1-\alpha q')}   +  \frac{ M_6  t_1^{-\alpha q}}{\alpha q \lambda_1^{\beta p'}}  \chi_0  \Upsilon
$$
$$
+ \frac{C_{\kappa}  }{\alpha q' \lambda_1^{2 \beta p}} T^{\alpha (p-p')} +\frac{ C_{\kappa} M_2 }{\alpha q  \lambda_1^{\beta q}} T^{\alpha (p-p')}   \chi_0  \Upsilon B(\alpha q',1-\alpha q)
+
\frac{\|k\|_{C[[0,T]}t_1^{-\alpha q}}{ (1-\alpha q \lambda_1^{\beta (p+q')}}
\chi_0
$$
\begin{equation}\label{est118}
+\frac{ C_{\kappa} M_2 }{\alpha q  \lambda_1^{\beta q}} T^{\alpha (p-p')}   \chi_0  \|k\|_{C[0,T]}   B(\alpha q',1-\alpha q)+\frac{\|k\|_{C[[0,T]} }{ \lambda_1^{\beta (p+q')}} t_1^{-\alpha q} \chi_0  \Bigg]\|\varphi\|_{D\left(L^{\beta p}\right)} \left(t_2-t_1\right)^{\alpha q}.
\end{equation}

We derive from the above that $\|u\left(t_2, x\right)-u\left(t_1, x\right)\|_{D\left(L^{-\beta q'}\right)}$ tends to $0$ as $t_2-t_1$ tends $0$ for $0<t_1<t_2\leq T$.  Thus, $u(t, x)$ belongs to the set $C^{\alpha q}\left([0,T];D\left(L^{-\beta q'}\right)\right)$.

Now, we divide both sides of inequality (\ref{est118}) by $\left(t_2-t_1\right)^{\alpha q}$
$$
\frac{\|u\left(t_2, x\right)-u\left(t_1, x\right)\|_{D\left(L^{-\beta q'}\right)}}{\left(t_2-t_1\right)^{\alpha q}}   \leq M_2 \Bigg[ \frac{T^{\alpha(p-q-p')} \chi_0  \Upsilon}{\alpha q(1-\alpha q')} +  \frac{ M_6  t_1^{-\alpha q}}{\alpha q \lambda_1^{\beta p'}}  \chi_0  \Upsilon
$$
$$
+ \frac{C_{\kappa}  }{\alpha q' \lambda_1^{2 \beta p}} T^{\alpha (p-p')} +\frac{ C_{\kappa} M_2 }{\alpha q  \lambda_1^{\beta q}} T^{\alpha (p-p')}   \chi_0  \Upsilon B(\alpha q',1-\alpha q)
+
\frac{\|k\|_{C[[0,T]}t_1^{-\alpha q}}{ (1-\alpha q \lambda_1^{\beta (p+q')}}
\chi_0
$$
$$
+\frac{ C_{\kappa} M_2 }{\alpha q  \lambda_1^{\beta q}} T^{\alpha (p-p')}   \chi_0  \|k\|_{C[0,T]}   B(\alpha q',1-\alpha q)+\frac{\|k\|_{C[[0,T]} }{ \lambda_1^{\beta (p+q')}} t_1^{-\alpha q} \chi_0  \Bigg]\|\varphi\|_{D\left(L^{\beta p}\right)}.
$$

On the other hand, according to the definition of the norm in the class of H\"{o}lder-continuous functions, we have  $u(t, x)\in C^{\alpha q}\left([0,T];D\left(L^{-\beta q'}\right)\right)$, such that
$$
\|u\left(t, x\right)\|_{C^{\alpha q}\left([0,T];D\left(L^{-\beta q'}\right)\right)}
\leq  M_2 \Bigg[ \frac{T^{\alpha(p-q-p')} \chi_0  \Upsilon}{\alpha q(1-\alpha q')} +  \frac{ M_6  t_1^{-\alpha q}}{\alpha q \lambda_1^{\beta p'}}  \chi_0  \Upsilon
$$
$$
+ \frac{C_{\kappa}  }{\alpha q' \lambda_1^{2 \beta p}} T^{\alpha (p-p')} +\frac{ C_{\kappa} M_2 }{\alpha q  \lambda_1^{\beta q}} T^{\alpha (p-p')}   \chi_0  \Upsilon B(\alpha q',1-\alpha q)
+
\frac{\|k\|_{C[[0,T]}t_1^{-\alpha q}}{ (1-\alpha q \lambda_1^{\beta (p+q')}}
\chi_0
$$
$$
+\frac{ C_{\kappa} M_2 }{\alpha q  \lambda_1^{\beta q}} T^{\alpha (p-p')}   \chi_0  \|k\|_{C[0,T]}   B(\alpha q',1-\alpha q)+\frac{\|k\|_{C[[0,T]} }{ \lambda_1^{\beta (p+q')}} t_1^{-\alpha q} \chi_0  \Bigg]\|\varphi\|_{D\left(L^{\beta p}\right)}.
$$
From this follows the inequality (\ref{ththeq112}) in Theorem \ref{theorth5}. The inequality (\ref{ththeq113}) in Theorem \ref{theorth5} is obtained by combining the inequalities (\ref{theqth95}), (\ref{thth102}) and (\ref{ththeq112}).
 \end{proof}

\section*{4.2 Analysis of the time-space fractional derivative of the mild solution of the nonlocal problem with a nonlinear source}

\begin{theorem}\label{theorth6}
Let $p, q, p', q', \widehat{p}, \widehat{q}, r, \widehat{r}$ be defined by A1), B2), B4). If $\varphi \in D\left(L^{\beta(p+\widehat{q})}\right)$, $k\in C[0,T]$, $F$ satisfies the assumptions C2), and $\Theta(T)<1$, then nonlocal initial problem (\ref{eqth1})--(\ref{eqth3}) has a unique solution $u(t, x)$ satisfying that
$
\partial_t^\alpha u \in L^{\frac{1}{\alpha }-\widehat{r}}\left(0, T ;
D\left(L^{-\beta(q-\widehat{q})}\right)\right).
$
Moreover, there exists a constant $C_7>0$ such that
\begin{equation}\label{ththeq121}
 \left\|\partial_t^\alpha u\right\|_{L^{\frac{1}{\alpha }-\widehat{r}}\left(0, T ;
D\left(L^{-\beta(q-\widehat{q})}\right)\right)} \leq  C_7 \left\| t^{-\alpha}\right\|_{L^{\frac{1}{\alpha}-\widehat{r}}\left(0, T\right)}  \|\varphi\|_{D\left(L^{\beta(p+\widehat{q})}\right)}.
\end{equation}
\end{theorem}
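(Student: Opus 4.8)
The plan is to freeze the nonlinearity along the mild solution already produced and then to repeat, with an a priori bound replacing the source hypothesis, the estimates that proved Theorem~\ref{theorth1}. First, since $\widehat{q}\ge0$ the embedding $D(L^{\beta(p+\widehat{q})})\hookrightarrow D(L^{\beta p})$ holds, and the hypotheses of Theorems~\ref{theorth3}--\ref{theorth5} and Lemma~\ref{lemmath8} are in force, so problem (\ref{eqth1})--(\ref{eqth3}) has a unique mild solution $u$ with
$$u\in L^{\frac{1}{\alpha q'}-r}\left(0,T;D\left(L^{\beta(p-p')}\right)\right)\cap C^{\alpha q}\left((0,T];L_2(\Omega)\right)\cap C^{\alpha q}\left([0,T];D\left(L^{-\beta q'}\right)\right),$$
and, by (\ref{eqth96}), $\|u(t,\cdot)\|\le\chi_0\|\varphi\|_{D(L^{\beta p})}\,t^{-\alpha q}$ for $0<t\le T$. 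Put $\widetilde F(t,x):=F(t,x,u(t,x))$; combining C2), (\ref{eqth79}) and this bound gives $\|\widetilde F(t,\cdot)\|\le\chi_1\,t^{-\alpha q}$ with $\chi_1:=\Upsilon\chi_0\|\varphi\|_{D(L^{\beta p})}$, i.e. $\widetilde F\in\mathbf{D}_{\alpha q}^{\chi_1}((0,T)\times\Omega)$, whence by (\ref{eqth81}) $\|\widetilde F\|_{\mathcal{D}_{2,\eta}}\le\chi_1 T^{\eta-\alpha q}B(\eta,1-\alpha q)$ for every $\eta\in[\alpha q,1)$. Finally, $u$ obeys the linear representation (\ref{eqth9})--(\ref{eqth10}) with source $\widetilde F$.

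Next I would differentiate this representation fractionally in $t$. Applying Proposition~\ref{proposition2}(iv)--(v) to the series (\ref{eqth9}), exactly as in the proof of Theorem~\ref{theorth1}, one gets
$$\partial_t^\alpha u_j(t)=\widetilde F_j(t)-k(t)u_j(t)+J_{1j}(t)+J_{2j}(t)+J_{3j}(t)+J_{4j}(t)+J_{5j}(t),\qquad j\in\mathbb{N},$$
with the $J_{ij}$ as there but with $F$ replaced by $\widetilde F$. I would then reproduce the five bounds (\ref{eqth60})--(\ref{eqth63}) in the dual space $D(L^{-\beta(q-\widehat{q})})$, estimating the Mittag--Leffler kernels by Proposition~\ref{thproth3} and inserting: in the $\widetilde F$-terms the pointwise estimate $\|\widetilde F(\tau,\cdot)\|\le\chi_1\tau^{-\alpha q}$ followed by a Beta-function evaluation of the convolution (here $\widehat{q}<q$ makes $\alpha(q-\widehat{q})>0$, so $\int_0^t(t-\tau)^{\alpha(q-\widehat{q})-1}\tau^{-\alpha q}\,d\tau=B(1-\alpha q,\alpha(q-\widehat{q}))\,t^{-\alpha\widehat{q}}$ is finite); in the terms carrying $u_j$ the bound $\|u(\tau,\cdot)\|\le\chi_0\|\varphi\|_{D(L^{\beta p})}\tau^{-\alpha q}$; and in the $\varphi$-term the finiteness of $\sum_j\lambda_j^{2\beta(p+\widehat{q})}\varphi_j^2=\|\varphi\|_{D(L^{\beta(p+\widehat{q})})}^2$ --- which is precisely why the stronger membership $\varphi\in D(L^{\beta(p+\widehat{q})})$ is assumed. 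As in Theorem~\ref{theorth1}, the Cauchy-sequence and dominated-convergence arguments then show that every partial sum $\sum_{n_1\le j\le n_2}J_{ij}(t)e_j$ converges in $D(L^{-\beta(q-\widehat{q})})$, so that $\partial_t^\alpha u(t,\cdot)$ exists in that space and admits an explicit pointwise bound whose singular part consists of terms $t^{-\alpha q}$, $t^{-\alpha\widehat{q}}$ and $t^{-\alpha}$, each dominated on $(0,T]$ by a ($T$-dependent) multiple of $t^{-\alpha}$ since $0\le\widehat{q}<q<1$. Thus $\|\partial_t^\alpha u(t,\cdot)\|_{D(L^{-\beta(q-\widehat{q})})}\le C\,t^{-\alpha}\|\varphi\|_{D(L^{\beta(p+\widehat{q})})}$, with $C$ assembled from $M_2,C_\kappa,\lambda_1,\Upsilon,\chi_0,\|k\|_{C[0,T]},T$ and Beta values.

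It then only remains to take the $L^{\frac{1}{\alpha}-\widehat{r}}(0,T)$-norm in $t$: since B4) gives $\widehat{r}>0$, one has $\alpha(\frac{1}{\alpha}-\widehat{r})=1-\alpha\widehat{r}<1$, so $t^{-\alpha}\in L^{\frac{1}{\alpha}-\widehat{r}}(0,T)$; hence $\partial_t^\alpha u\in L^{\frac{1}{\alpha}-\widehat{r}}(0,T;D(L^{-\beta(q-\widehat{q})}))$ and, with $C_7:=C$, the estimate (\ref{ththeq121}) follows.

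The hard part is that, unlike in the linear Theorem~\ref{theorth1}, the frozen source $\widetilde F$ does \emph{not} satisfy the hypothesis $F\in L^{\frac{1}{\alpha q-s}+\widehat{r}}(0,T;L_2(\Omega))$ --- equivalently, via (\ref{eqth65}), it need not lie in $\mathcal{D}_{2,\alpha(q-\widehat{q})}$ --- so that theorem cannot be quoted verbatim. One controls $\widetilde F$ only through the singular a priori estimate $\|\widetilde F(t,\cdot)\|\le\chi_1 t^{-\alpha q}$, and the work lies in re-running the bounds (\ref{eqth60})--(\ref{eqth63}) with this weaker input: carefully transporting the factor $\tau^{-\alpha q}$ through the kernels $\tau^{\alpha-1}E_{\alpha,\alpha}(-\lambda_j^\beta\tau^\alpha)$ and $\eta^{\alpha-2}E_{\alpha,\alpha-1}(-\lambda_j^\beta\eta^\alpha)$, checking that the resulting convolutions converge (where $\widehat{q}<q$ is essential) and that the term-by-term series remains convergent in $D(L^{-\beta(q-\widehat{q})})$.
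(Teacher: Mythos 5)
Your proposal is correct and follows essentially the same route as the paper's proof: freeze the nonlinearity via the a priori bound $\|u(t,\cdot)\|\le\chi_0\|\varphi\|_{D(L^{\beta p})}t^{-\alpha q}$ from (\ref{eqth96}) and C2) (the paper's (\ref{eqth121})), differentiate the spectral representation (\ref{eqth9}) term by term with Proposition \ref{proposition2}, re-run the five dual-space estimates with the singular bound $\tau^{-\alpha q}$ and the Beta-function evaluation (exactly the paper's $R_{ij}$ analysis, where $\widehat{q}<q$ and $\varphi\in D(L^{\beta(p+\widehat q)})$ enter in the same places), obtain the pointwise bound $\|\partial_t^\alpha u(t,\cdot)\|_{D(L^{-\beta(q-\widehat q)})}\le C\,t^{-\alpha}\|\varphi\|_{D(L^{\beta(p+\widehat q)})}$, and conclude by integrability of $t^{-\alpha}$ in $L^{\frac{1}{\alpha}-\widehat r}(0,T)$. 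Your closing observation that Theorem \ref{theorth1} cannot be cited verbatim and the estimates must be redone with the weaker frozen-source input is precisely what the paper does.
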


\begin{proof}
Since $F$ satisfies C2), $F$ also satisfies C1) with respect to the Lipschitz constant $\Upsilon^{*}$. In addition, the Sobolev imbedding $D\left(L^{\beta(p+\widehat{q})}\right)
\hookrightarrow D\left(L^{\beta p}\right)$ shows that $\varphi$ belongs to $D\left(L^{\beta p}\right)$. Hence, by Theorems \ref{theorth3}-\ref{theorth5}, nonlocal initial problem (\ref{eqth1})--(\ref{eqth3}) has a unique solution

$$
u \in L^{\frac{1}{\alpha q'}-r}\left(0, T;
D\left(L^{\beta(p-p')}\right) \right) \cap C^{\alpha q}\left((0, T] ; L_2(\Omega)\right).
$$

Moreover, the inequality (\ref{eqth96}) also holds. We conclude that, for $0<t \leq T$,
\begin{equation}\label{eqth121}
\|F(t, \cdot, u(t, \cdot))\| \leq \Upsilon^{*} \chi_0  \|\varphi\|_{D\left(L^{\beta p}\right)} t^{-\alpha q} \leq C_7 \Upsilon^{*} \chi_0  \|\varphi\|_{D\left(L^{\beta(p+\widehat{q})}\right)} t^{-\alpha q}.
\end{equation}

We prove $\partial_t^\alpha u$ finitely exists and belongs to $L^{\frac{1}{\alpha }-\widehat{r}}\left(0, T ;
D\left(L^{-\beta(p+\widehat{q})}\right)\right)$. By the same way
as in  Theorem \ref{theorth1}, using the above identities and equation (\ref{eqth9}), we obtain for $\partial_t^\alpha u_j(t)$:
$$
\partial_t^\alpha u_j(t) =\bigg[\varphi_j- F_j(T,u(T)) \star \left\{T^{\alpha-1} E_{\alpha, \alpha}\left(-\lambda_j^\beta T^\alpha\right)\right\} \bigg]\Psi_j(T) \partial_t^\alpha E_{\alpha, 1}\left(-\lambda_j^\beta t^\alpha\right)
$$
$$
+\partial_t^\alpha \left[F_j(t,u(t)) \star \left\{t^{\alpha-1} E_{\alpha, \alpha}\left(-\lambda_j^\beta t^\alpha\right)\right\}\right]-\partial_t^\alpha \left[\{k(t) u_j(t)\} \star \left\{t^{\alpha-1} E_{\alpha, \alpha}\left(-\lambda_j^\beta t^\alpha\right)\right\}\right]
$$
$$
=F_j(t,u(t)) -k(t) u_j(t) - \lambda_j^\beta  F_j(t,u(t)) \star \left\{t^{\alpha-1} E_{\alpha, \alpha}\left(-\lambda_j^\beta t^\alpha\right)\right\} - \lambda_j^\beta \varphi_j \Psi_j(T) E_{\alpha, 1}\left(-\lambda_j^\beta t^\alpha\right)
$$
$$
-\lambda_j^\beta F_j(T,u(T)) \star \left\{T^{\alpha-1} E_{\alpha, \alpha}\left(-\lambda_j^\beta T^\alpha\right)\right\}    \Psi_j(T)  E_{\alpha, 1}\left(-\lambda_j^\beta t^\alpha\right)
$$
$$
+\lambda_j^\beta \{k(T) u_j(T)\} \star \left\{T^{\alpha-1} E_{\alpha, \alpha}\left(-\lambda_j^\beta T^\alpha\right)\right\}    \Psi_j(T)  E_{\alpha, 1}\left(-\lambda_j^\beta t^\alpha\right)
$$
$$
 + \lambda_j^\beta  \{k(t) u_j(t)\} \star \left\{t^{\alpha-1} E_{\alpha, \alpha}\left(-\lambda_j^\beta t^\alpha\right)\right\}
$$
$$
=F_j(t,u(t))-k(t) u_j(t)+\sum\limits_{i=1}^{5} R_{ij}(t),\,\, j\in \mathbb{N}.
$$
In view of (\ref{eqth121}), $F(t,\cdot, u(t,\cdot))$ is contained in $L_2(\Omega)$ for $0<t \leq T$. This associates with the Sobolev embedding $L_2(\Omega) \hookrightarrow
D\left(L^{-\beta(q-\widehat{q})}\right)$ that $F(t,\cdot, u(t,\cdot))$ is contained in $D\left(L^{-\beta(q-\widehat{q})}\right)$, namely $\sum\limits_{j=1}^{\infty} F_j(t, u(t)) e_j$ is contained in $D\left(L^{-\beta(q-\widehat{q})}\right)$.

Now, we consider $\left\|\sum_{j=1}^{\infty} R_{ij}(t) e_j\right\|_{D\left(L^{-\beta(q-\widehat{q})}\right)}, i=\overline{1,5}$.
First, let's consider the sum $\sum\limits_{n_1\leq j \leq n_2} R_{1j}(t)e_j$, for $n_1,n_2\in \mathbb{N}$, $1\leq n_1 <  n_2.$
According to the definition of the dual space $D\left(L^{-\beta (q-\widehat{q})}\right)$
of $D\left(L^{\beta (q-\widehat{q})}\right),$ and the identity (\ref{eqth5555}) of their dual inner product, we have
$$
\left\|\sum\limits_{n_1\leq j \leq n_2}R_{1j}(t)e_j\right\|_{D\left(L^{-\beta (q-\widehat{q})}\right)}
$$
$$
= \left\|\sum\limits_{n_1\leq j \leq n_2}\lambda_j^\beta  F_j(t, u(t)) \star \left\{t^{\alpha-1} E_{\alpha, \alpha}\left(-\lambda_j^\beta t^\alpha\right)\right\}e_j\right\|_{D\left(L^{-\beta (q-\widehat{q})}\right)}
$$
$$
\leq \int\limits_0^t\left\|\sum_{n_1 \leq j \leq n_2} \lambda_j^\beta  F_j(\tau, u(\tau))  (t-\tau)^{\alpha-1} E_{\alpha, \alpha}\left(-\lambda_j^\beta (t-\tau)^\alpha\right)  e_j\right\|_{D\left(L^{-\beta (q-\widehat{q})}\right)} d \tau
$$
$$
 \leq  \int\limits_0^t\Bigg\{\sum_{i=1}^{\infty} \lambda_i^{-2 \beta(q-\widehat{q})}\Bigg(\sum_{n_1 \leq j \leq n_2} \lambda_j^\beta  F_j(\tau, u(\tau))
 (t-\tau)^{\alpha-1}
$$$$
\times E_{\alpha, \alpha}\left(-\lambda_j^\beta (t-\tau)^\alpha\right)  e_j, e_i\Bigg)_{-\beta(q-\widehat{q}), \beta(q-\widehat{q})}^2\Bigg\}^{1 / 2} d \tau
 $$$$
\leq   \int\limits_0^t\left\{\sum_{n_1 \leq j \leq n_2} \lambda_j^{2 \beta(p+\widehat{q})}  F_j^2(\tau, u(\tau))  (t-\tau)^{2\alpha-2} \left(E_{\alpha, \alpha}\left(-\lambda_j^\beta (t-\tau)^\alpha\right)\right)^2\right\}^{1 / 2} d \tau
$$
$$
\leq M_2  \int\limits_0^t\left\{\sum_{n_1 \leq j \leq n_2} \lambda_j^{2 \beta(p+\widehat{q})}  F_j^2(\tau, u(\tau))  (t-\tau)^{2\alpha-2}  \lambda_j^{-2 \beta(p+\widehat{q})}
(t-\tau)^{-2\alpha(p+\widehat{q})} \right\}^{1 / 2} d \tau
$$
\begin{equation} \label{esth122}
= M_2  \int\limits_0^t (t-\tau)^{\alpha(q-\widehat{q})-1}
\left\{\sum_{n_1 \leq j \leq n_2}   F_j^2 (\tau, u(\tau))    \right\}^{1 / 2} d \tau.
\end{equation}
For $0<\tau<T$, we have $F(\tau, ., u(\tau,))$ belonging to $L_2(\Omega)$. This follows that the sequence  $\left\{\sum_{ j \geq n}   F_j^2 (\tau, u(\tau))    \right\}^{1 / 2}$, converges pointwise to 0 as $n$ goes to infinity.  Moreover, by (\ref{eqth121}), we have
$$
\int\limits_0^t (t-\tau)^{\alpha(q-\widehat{q})-1}
\left\{\sum_{ j \geq n}   F_j^2 (\tau, u(\tau))    \right\}^{1 / 2} d \tau
\leq  C_7 \Upsilon^{*} \chi_0  \|\varphi\|_{D\left(L^{\beta(p+\widehat{q})}\right)}
$$
$$
\times\int\limits_0^t (t-\tau)^{\alpha(q-\widehat{q})-1}  \tau^{-\alpha q} d \tau
= M_2 C_7 \Upsilon^{*} \chi_0  \|\varphi\|_{D\left(L^{\beta(p+\widehat{q})}\right)} T^{\alpha-\alpha \widehat{q}} B(1-\alpha q,\alpha(q-\widehat{q}))t^{-\alpha}.
$$
Therefore, the dominated convergence theorem yields that
$$
\lim\limits_{n \rightarrow \infty} \int\limits_0^t (t-\tau)^{\alpha(q-\widehat{q})-1}
\left\{\sum_{ j \geq n}   F_j^2 (\tau, u(\tau))    \right\}^{1 / 2} d \tau=0.
$$
This together with $\left\{\sum_{n_1 \leq j \leq n_2}   F_j^2 (\tau, u(\tau))    \right\}^{1 / 2} \leq \left\{\sum_{ j \geq n}   F_j^2 (\tau, u(\tau))    \right\}^{1 / 2}$ gives
$$
\lim\limits_{n \rightarrow \infty} \int\limits_0^t (t-\tau)^{\alpha(q-\widehat{q})-1}
\left\{\sum_{n_1 \leq j \leq n_2}   F_j^2 (\tau, u(\tau))    \right\}^{1 / 2} d \tau=0.
$$

  Secondy, we proceed to establish an estimate for the sum $\sum\limits_{n_1\leq j \leq n_2} R_{2j}(t)e_j$.  On the other hand, $R_{2j}(t)=J_{2j}(t)$, and the norm
$\left\|\sum\limits_{n_1\leq j \leq n_2} R_{2j}(t)e_j\right\|_{D\left(L^{-\beta (q-\widehat{q})}\right)}$ exists finitely by (\ref{eqth61}), it follows from $\varphi\in D\left(L^{-\beta(q-\widehat{q})}\right)$
that
$$
\lim\limits_{n_1,n_2\rightarrow \infty} \sum_{n_1 \leq j \leq n_2} \varphi_j^2(\tau)\lambda_i^{2 \beta(p+\widehat{q})}=0.
$$

Third, we estimate the sum  $\sum\limits_{n_1\leq j \leq n_2} R_{3j}(t)e_j$. By methods similar to (\ref{esth122}), we obtain:
$$
\left\|\sum\limits_{n_1\leq j \leq n_2} R_{3j}(t)e_j\right\|_{D\left(L^{-\beta (q-\widehat{q})}\right)}
\leq  \int\limits_0^T \Bigg\{\sum_{i=1}^{\infty} \lambda_i^{-2 \beta(p-\widehat{q})}\Bigg(\sum_{n_1 \leq j \leq n_2} \lambda_j^\beta  F_j(\tau, u(\tau))
$$$$
\times (T-\tau)^{\alpha-1} E_{\alpha, \alpha}\left(-\lambda_j^\beta (T-\tau)^\alpha\right)  \Psi_j(T)  E_{\alpha, 1}\left(-\lambda_j^\beta t^\alpha\right)e_j, e_i\Bigg)_{-\beta(q-\widehat{q}), \beta(q-\widehat{q})}^2\Bigg\}^{1 / 2} d \tau
 $$
$$
 \leq  \frac{C_{\kappa} M_2}{\lambda_1^\beta}   t^{-\alpha} \int\limits_0^T \Bigg\{\sum_{n_1 \leq j \leq n_2} \lambda_j^{2 \beta(p+\widehat{q})}  F_j^2(\tau, u(\tau))  (T-\tau)^{2\alpha(q-\widehat{q})-2} \lambda_j^{-2 \beta(p+\widehat{q})} \Bigg\}^{1 / 2} d \tau
$$
\begin{equation}\label{eqth123}
\leq  \frac{C_{\kappa} M_2}{\lambda_1^\beta}   t^{-\alpha} \int\limits_0^T (T-\tau)^{\alpha(q-\widehat{q})-1}  \Bigg\{\sum_{n_1 \leq j \leq n_2} F_j^2(\tau, u(\tau))    \Bigg\}^{1 / 2} d \tau.
\end{equation}
Same as estimating the series $\sum\limits_{n_1\leq j \leq n_2} R_{3j}(t)$, we have it too
$$
\lim\limits_{n \rightarrow \infty} \int\limits_0^T (T-\tau)^{\alpha(q-\widehat{q})-1}
\left\{\sum_{n_1 \leq j \leq n_2}   F_j^2 (\tau, u(\tau))    \right\}^{1 / 2} d \tau=0.
$$

Next, we estimate the series $\sum\limits_{n_1\leq j \leq n_2} R_{4j}(t)$
$$
\left\|\sum\limits_{n_1\leq j \leq n_2} R_{4j}(t)e_j\right\|_{D\left(L^{-\beta (q-\widehat{q})}\right)}
\leq  \int\limits_0^T \Bigg\{\sum_{i=1}^{\infty} \lambda_i^{-2 \beta(p-\widehat{q})}\Bigg(\sum_{n_1 \leq j \leq n_2} \lambda_j^\beta k(\tau) u_j(\tau)
$$$$
\times (T-\tau)^{\alpha-1} E_{\alpha, \alpha}\left(-\lambda_j^\beta (T-\tau)^\alpha\right)  \Psi_j(T)  E_{\alpha, 1}\left(-\lambda_j^\beta t^\alpha\right)e_j, e_i\Bigg)_{-\beta(q-\widehat{q}), \beta(q-\widehat{q})}^2\Bigg\}^{1 / 2} d \tau
 $$
$$
 \leq  \frac{C_{\kappa} M_2}{\lambda_1^\beta}   t^{-\alpha} \int\limits_0^T \Bigg\{\sum_{n_1 \leq j \leq n_2} \lambda_j^{2 \beta(p+\widehat{q})}  k^2(\tau) u_j^2(\tau)  (T-\tau)^{2\alpha(q-\widehat{q})-2} \lambda_j^{-2 \beta(p+\widehat{q})} \Bigg\}^{1 / 2} d \tau
$$
$$
\leq  \frac{C_{\kappa} M_2}{\lambda_1^\beta} \|k\|_{C[0,T]}  t^{-\alpha} \int\limits_0^T (T-\tau)^{\alpha(q-\widehat{q})-1}  \Bigg\{\sum_{n_1 \leq j \leq n_2} u_j^2(\tau)    \Bigg\}^{1 / 2} d \tau.
$$

Similar to the price of $R_{1j}(t)$ above, we estimate $R_{5j}(t)$
$$
\left\|\sum\limits_{n_1\leq j \leq n_2} R_{4j}(t)e_j\right\|_{D\left(L^{-\beta (q-\widehat{q})}\right)}
$$
$$
\leq   \int\limits_0^t\left\{\sum_{n_1 \leq j \leq n_2} \lambda_j^{2 \beta(p+\widehat{q})}  k^2(\tau) u_j^2(\tau)  (t-\tau)^{2\alpha-2} \left(E_{\alpha, \alpha}\left(-\lambda_j^\beta (t-\tau)^\alpha\right)\right)^2\right\}^{1 / 2} d \tau
$$
$$
\leq M_2 \|k\|_{C[0,T]} \int\limits_0^t (t-\tau)^{\alpha(q-\widehat{q})-1}
\left\{\sum_{n_1 \leq j \leq n_2}   u_j^2 (\tau)    \right\}^{1 / 2} d \tau.
$$
For $0<\tau<T$, we have $u(\tau,\cdot)$  belonging to $L_2(\Omega)$. This follows that the sequence $\left\{\sum_{j \geq n}    u_j^2(\tau)  \right\}^{1 / 2}$, converges pointwise to $0$ as $n$ goes to infinity.
Therefore, the dominated convergence theorem yields that
$$
\lim _{n \rightarrow \infty}  \int\limits_0^t (t-\tau)^{\alpha(q-\widehat{q})-1}
\left\{\sum_{ j \leq n}    u_j^2(\tau)
 \right\}^{1 / 2} d \tau =0,
$$
$$
\lim _{n \rightarrow \infty}  \int\limits_0^T (T-\tau)^{\alpha(q-\widehat{q})-1}
\left\{\sum_{ j \leq n}    u_j^2(\tau)
 \right\}^{1 / 2} d \tau =0.
$$
According to the  Sobolev imbedding
$D\left(L^{\beta(q+\widehat{q})}\right) \hookrightarrow D\left(L^{\beta p}\right)$ and (\ref{eqth96}), we have
$$
\left\|\sum\limits_{n_1\leq j \leq n_2}J_{4j}(t)e_j\right\|_{D\left(L^{-\beta (q-\widehat{q})}\right)}
$$
\begin{equation}\label{eqth123thth}
\leq \frac{C_{\kappa} M_2}{\lambda_1^\beta}
\chi_0  \|\varphi\|_{D\left(L^{\beta p}\right)}  \|k\|_{C[0,T]} T^{\alpha-\alpha \widehat{q}} B(1-\alpha q,\alpha(q-\widehat{q})) t^{-\alpha }
\end{equation}
$$
\left\|\sum\limits_{n_1\leq j \leq n_2}J_{5j}(t)e_j\right\|_{D\left(L^{-\beta (q-\widehat{q})}\right)}
$$
\begin{equation}\label{eqth124}
\leq  M_2   \chi_0  \|\varphi\|_{D\left(L^{\beta p}\right)}  \|k\|_{C[0,T]} T^{\alpha-\alpha \widehat{q}} B(1-\alpha q,\alpha(q-\widehat{q})) t^{-\alpha }.
\end{equation}
From the above, we get
$$
\lim\limits_{n \rightarrow \infty} \int\limits_0^t (t-\tau)^{\alpha(q-\widehat{q})-1}
\left\{\sum_{n_1 \leq j \leq n_2}   u_j^2 (\tau)    \right\}^{1 / 2} d \tau=0,
$$
$$
\lim\limits_{n \rightarrow \infty} \int\limits_0^T (T-\tau)^{\alpha(q-\widehat{q})-1}
\left\{\sum_{n_1 \leq j \leq n_2}   u_j^2 (\tau)    \right\}^{1 / 2} d \tau=0.
$$

We deduce $
\left\|\sum\limits_{n_1\leq j \leq n_2} R_{ij}(t)e_j\right\|_{D\left(L^{-\beta (q-\widehat{q})}\right)},\,\, i=1,2,3,4,5
$ exist finitely.

 In addition, the Sobolev embedding  $L_2(\Omega) \hookrightarrow D\left(L^{-\beta(q-\widehat{q})}\right)$ yields that there exists a positive constant $C_8$ such that
$$
\left\| F(t, \cdot, u(t, \cdot)) \right\|_{D\left(L^{-\beta (q-\widehat{q})}\right)} \leq  C_8 \left\| F(t, \cdot, u(t, \cdot)) \right\|
$$
According to (\ref{eqth121}), we have
$$
\left\| F(t, \cdot, u(t, \cdot)) \right\|_{D\left(L^{-\beta (q-\widehat{q})}\right)} \leq  C_8  C_7 \Upsilon^{*} \chi_0  \|\varphi\|_{D\left(L^{\beta(p+\widehat{q})}\right)} t^{-\alpha q}.
$$

Moreover, the Sobolev embedding $L_2(\Omega) \hookrightarrow D\left(L^{-\beta(q-\widehat{q})}\right)$ yields that there exists a positive constant $M_3$ such that
$$
\left\|k u\right\|_{D\left(L^{-\beta (q-\widehat{q})}\right)} \leq M_3 \left\|k u\right\|_{L_2(\Omega)}.
$$
Hence $\sum_{j=1}^n \partial_t^\alpha u_j(t) e_j$ is a Cauchy sequence and a convergent sequence in $D\left(L^{-\beta (q-\widehat{q})}\right)$. Taking all the above arguments together, we conclude that $\left\|\sum_{j=1}^n \partial_t^\alpha u_j(t) e_j\right\|_{D\left(L^{-\beta (q-\widehat{q})}\right)}
$ finitely exists.
Thus, by combining the above arguments, we get
$$
\left\|\partial_t^\alpha u(t, \cdot)\right\|_{D\left(L^{-\beta (q-\widehat{q})}\right)}\leq  \left\|F(t, \cdot, u(t, \cdot))\right\|_{D\left(L^{-\beta (q-\widehat{q})}\right)}+
 \left\|k(t) u(t, \cdot)\right\|_{D\left(L^{-\beta (q-\widehat{q})}\right)}
$$
$$
+\sum\limits_{i=1}^{5}\left\| \sum\limits_{j=1}^{\infty} R_{ij}(t)e_j \right\|_{D\left(L^{-\beta (q-\widehat{q})}\right)}.
$$
This combines with (\ref{eqth121}) and there exists a constant $C_7>0$ such that
\begin{equation}\label{eqth125}
\left\|\partial_t^\alpha u(t, \cdot)\right\|_{D\left(L^{-\beta (q-\widehat{q})}\right)}\leq  C_7  \|\varphi\|_{D\left(L^{\beta(p+\widehat{q})}\right)} t^{-\alpha },
\end{equation}
as a result of this
$$
\left\|\partial_t^\alpha u\right\|_{L^{\frac{1}{\alpha}-\widehat{r}}\left(0, T ; D\left(L^{-\beta (q-\widehat{q})}\right)\right)} \leq  C_7 \left\| t^{-\alpha}\right\|_{L^{\frac{1}{\alpha}-\widehat{r}}\left(0, T ; \mathbb{R}\right)}  \|\varphi\|_{D\left(L^{\beta(p+\widehat{q})}\right)}.
$$
This is the inequality (\ref{ththeq121}) from Theorem \ref{theorth6}.
\end{proof}

 \begin{theorem}\label{theorth7}
Let $p, q, p', q', \widehat{p}, \widehat{q}, r, \widehat{r}$ be defined by A1), B2), B4). If $\varphi \in D\left(L^{\beta(p+\widehat{q})}\right)$, $F$ satisfies the assumptions C2), and $\Theta(T)<1$, then nonlocal initial problem (\ref{eqth1})--(\ref{eqth3}) has a unique solution $u(t, x)$ satisfying that
$
\partial_t^\alpha u \in  C^\alpha\left((0, T]; D\left(L^{-\beta q}\right)\right)
$
and there exists a constant $C_9>0$ such that
\begin{equation}\label{eqth127}
\left\|\partial_t^\alpha u \right\|_{C^{\alpha}\left((0, T];D\left(L^{-\beta q}\right)\right)} \leq C_9 \|\varphi\|_{D\left(L^{\beta (p+\widehat{q})}\right)}
\end{equation}
\end{theorem}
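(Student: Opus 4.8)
The plan is to build on Theorem~\ref{theorth6}, which already supplies the existence and uniqueness of $u$, the membership $\partial_t^\alpha u\in L^{\frac{1}{\alpha}-\widehat{r}}(0,T;D(L^{-\beta(q-\widehat{q})}))$, and the pointwise estimate (\ref{eqth125}). As noted in the proof of Theorem~\ref{theorth2} for the linear problem, the series estimates that produce (\ref{eqth125}) remain valid when the parameter there is taken to be $0$; applying Theorem~\ref{theorth6} with $\widehat{q}=0$ (which is legitimate since $\varphi\in D(L^{\beta(p+\widehat{q})})\hookrightarrow D(L^{\beta p})$) and then using this embedding once more therefore yields a constant $C_9>0$ with
\[
t^{\alpha}\left\|\partial_t^\alpha u(t,\cdot)\right\|_{D(L^{-\beta q})}\leq C_9\,\|\varphi\|_{D(L^{\beta(p+\widehat{q})})},\qquad 0<t\leq T .
\]
This controls the weighted-sup part of the $C^{\alpha}((0,T];D(L^{-\beta q}))$-norm, so (\ref{eqth127}) will follow once we prove that $t\mapsto\partial_t^\alpha u(t,\cdot)$ is continuous from $(0,T]$ into $D(L^{-\beta q})$.

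To establish that continuity, I would fix $0<t_1<t_2\leq T$ and, by the same differentiation and integration manipulations that produced (\ref{eqth67}) and (\ref{eqth102}), write
\[
\partial_t^\alpha u(t_2,x)-\partial_t^\alpha u(t_1,x)=\bigl[F(t_2,x,u(t_2,x))-F(t_1,x,u(t_1,x))\bigr]-\bigl[k(t_2)u(t_2,x)-k(t_1)u(t_1,x)\bigr]+\sum_{j=1}^{7}\widetilde{\mathcal{J}}_j,
\]
where $\widetilde{\mathcal{J}}_j=L^\beta\mathcal{R}_j$ and $\mathcal{R}_1,\dots,\mathcal{R}_7$ are the seven convolution/series terms of (\ref{eqth102}), now carrying $F_j(\tau,u(\tau))$ in place of the linear datum. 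For the first bracket I would use the embedding $L_2(\Omega)\hookrightarrow D(L^{-\beta q})$ together with assumption C2) to bound it by $C\bigl(|t_2-t_1|+\|u(t_2,\cdot)-u(t_1,\cdot)\|\bigr)$, then invoke the $C^{\alpha q}((0,T];L_2(\Omega))$-regularity of $u$ from Theorem~\ref{theorth4} to dominate $\|u(t_2,\cdot)-u(t_1,\cdot)\|$ by $C\,t_1^{-\alpha q}(t_2-t_1)^{\alpha q}\|\varphi\|_{D(L^{\beta p})}$; for the second bracket I would split $k(t_2)u(t_2)-k(t_1)u(t_1)=(k(t_2)-k(t_1))u(t_2)+k(t_1)(u(t_2)-u(t_1))$ and use $k\in C[0,T]$, the bound (\ref{eqth96}), and the same Hölder difference estimate for $u$.

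For each $\widetilde{\mathcal{J}}_j$ I would repeat essentially verbatim the estimates (\ref{est67})--(\ref{est73}) from the proof of Theorem~\ref{theorth2}: in the $D(L^{-\beta q})$-norm the operator $L^\beta$ together with the dual weight contributes a factor $\lambda_j^{\beta(1-q)}=\lambda_j^{\beta p}$, which is absorbed by the bound $E_{\alpha,\alpha}(-\lambda_j^\beta s)\leq M_2\lambda_j^{-\beta p}s^{-\alpha p}$ of Proposition~\ref{thproth3}, after which one applies the Lipschitz inequality $\|F(\tau,\cdot,u(\tau,\cdot))\|\leq\Upsilon^{*}\|u(\tau,\cdot)\|$ followed by (\ref{eqth96}) in place of the $\mathcal{D}$-space hypothesis on $F$ used in the linear case. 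Every resulting estimate factorizes as $(t_2-t_1)^{s}$ or $(t_2-t_1)^{\alpha q}$ times a quantity bounded uniformly for $t_1$ in any subinterval $[\delta,T]$ with $\delta>0$. Summing the seven contributions with the two brackets shows $\|\partial_t^\alpha u(t_2,\cdot)-\partial_t^\alpha u(t_1,\cdot)\|_{D(L^{-\beta q})}\to0$ as $t_2-t_1\to0$, hence $\partial_t^\alpha u\in C((0,T];D(L^{-\beta q}))$; combined with the weighted bound of the first paragraph this gives $\partial_t^\alpha u\in C^{\alpha}((0,T];D(L^{-\beta q}))$ and (\ref{eqth127}).

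The step I expect to be the main obstacle is the treatment of the two non-convolution brackets, namely the Hölder-in-time continuity of $F(t,\cdot,u(t,\cdot))$ and of $k(t)u(t,\cdot)$ in $D(L^{-\beta q})$: this forces one to propagate the weighted Hölder regularity of $u$ in $L_2(\Omega)$ near $t=0$, where $\|u(t,\cdot)\|$ blows up like $t^{-\alpha q}$, so one must verify that the $u$-difference estimate degrades only polynomially in $t_1^{-1}$ and hence continuity still holds on every compact subinterval of $(0,T]$. By contrast, the convolution terms $\widetilde{\mathcal{J}}_j$ are routine once they are matched term by term with their linear counterparts in Theorem~\ref{theorth2} and the Lipschitz bound together with (\ref{eqth96}) is substituted for the function-space assumption on $F$.
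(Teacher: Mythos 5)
Your proposal follows essentially the same route as the paper: the paper likewise establishes continuity of $\partial_t^\alpha u$ into $D\left(L^{-\beta q}\right)$ by decomposing $\partial_t^\alpha u(t_2,\cdot)-\partial_t^\alpha u(t_1,\cdot)$ into the $F$-difference, the $k u$-difference, and the seven terms ${}^1\mathcal{R}_j=L^\beta\mathcal{R}_j$, handling the brackets via C2), $k\in C[0,T]$ and the $C^{\alpha q}$-regularity of $u$, and the convolution terms by the Mittag-Leffler bounds together with the Lipschitz estimate and (\ref{eqth96}), and then obtains the weighted bound by taking $\widehat{q}=0$ in (\ref{eqth125}) and using the embedding $D\left(L^{\beta(p+\widehat{q})}\right)\hookrightarrow D\left(L^{\beta p}\right)$. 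Your ordering (weighted bound first, continuity second) and your term-by-term matching with the linear case are exactly what the paper does, so the proposal is correct.
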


\begin{proof}
 We prove $\partial_t^\alpha u \in C^\alpha\left((0, T]; D\left(L^{-\beta q}\right) \right)$. We consider $0<t_1<t_2 \leq T$. A similar argument as in (3.38) yields
$$
\partial_t^\alpha u(t_2, x) - \partial_t^\alpha u(t_1, x)
$$$$
=F(t_2, x, u(t_2, x)) - F(t_1, x, u(t_1, x))
-(k(t_2)u(t_2, x) - k(t_1)u(t_1, x))
$$
$$
-\sum_{j=1}^{\infty} \lambda_j^\beta \int\limits_0^{t_1} \int\limits_{t_1-\tau}^{t_2-\tau} F_j(\tau,u(\tau)) \eta^{\alpha-2} E_{\alpha, \alpha-1}\left(-\lambda_j^\beta \eta^\alpha\right) d \eta d \tau e_j(x)
$$
$$
-\sum_{j=1}^{\infty} \lambda_j^\beta \int\limits_{t_1}^{t_2} F_j(\tau,u(\tau)) (t_2-\tau)^{\alpha-1} E_{\alpha, \alpha}\left(-\lambda_j^\beta (t_2-\tau)^{\alpha}\right) d \tau  e_j(x)
$$
$$
+\sum_{j=1}^{\infty} \lambda_j^{2\beta} \Psi_j(T)  \varphi_j  \int\limits_{t_1}^{t_2} \tau^{\alpha-1} E_{\alpha, \alpha}\left(-\lambda_j^\beta \tau^\alpha\right) d \tau e_j(x)
$$
$$
-\sum_{j=1}^{\infty} \lambda_j^{2\beta} \Psi_j(T)   F_j(T,u(T)) \star \left\{T^{\alpha-1} E_{\alpha, \alpha}\left(-\lambda_j^\beta T^\alpha\right)\right\}
$$$$
\times \int\limits_{t_1}^{t_2} \tau^{\alpha-1} E_{\alpha, \alpha}\left(-\lambda_j^\beta \tau^\alpha\right) d \tau e_j(x)
$$
$$
+\sum_{j=1}^{\infty} \lambda_j^{2\beta} \Psi_j(T)   \{k(T) u_j(T)\} \star \left\{T^{\alpha-1} E_{\alpha, \alpha}\left(-\lambda_j^\beta T^\alpha\right)\right\}
$$$$
\times \int\limits_{t_1}^{t_2} \tau^{\alpha-1} E_{\alpha, \alpha}\left(-\lambda_j^\beta \tau^\alpha\right) d \tau e_j(x)
$$
$$
+\sum_{j=1}^{\infty} \lambda_j^\beta \int\limits_0^{t_1} \int\limits_{t_1-\tau}^{t_2-\tau} k(\tau) u_j(\tau) \eta^{\alpha-2} E_{\alpha, \alpha-1}\left(-\lambda_j^\beta \eta^\alpha\right) d \eta d \tau e_j(x)
$$
$$
+\sum_{j=1}^{\infty} \lambda_j^\beta \int\limits_{t_1}^{t_2} k(\tau) u_j(\tau) (t_2-\tau)^{\alpha-1} E_{\alpha, \alpha}\left(-\lambda_j^\beta (t_2-\tau)^{\alpha}\right) d \tau  e_j(x)
$$
\begin{equation}\label{eqth128}
:=F(t_2, x, u(t_2, x)) - F(t_1, x, u(t_1, x))-(k(t_2)u(t_2, x) - k(t_1)u(t_1, x))+\sum\limits_{j=1}^{7} {}^1\mathcal{R}_j.
\end{equation}
where ${}^1\mathcal{R}_j=L^\beta \mathcal{R}_j,\, j=\overline{1,7}$ and $\mathcal{R}_j,\, j=\overline{1,7}$ is defined by (\ref{eqth102}).
Using Sobolev embedding $L_2(\Omega) \hookrightarrow  D\left(L^{-\beta q}\right)$ and the condition C2),  there exists a positive constant $C_9$ such that
$$
\lim _{t_2-t_1 \rightarrow 0} \| F   \left(t_2, ., u\left(t_2, .\right)\right)-F\left(t_1, ., u\left(t_1, .\right)\right) \|_{D\left(L^{-\beta q}\right)}
$$
$$
\leq \lim _{t_2-t_1 \rightarrow 0} C_9 \left\|F\left(t_2, \cdot, u(t_2, \cdot)\right)-F\left(t_1, ., u(t_1,\cdot)\right)\right\|
$$
$$
\leq \lim _{t_2-t_1 \rightarrow 0} C_9 \Upsilon^{*}\left(\left|t_2-t_1\right|+\left\|u(t_2, \cdot)-u(t_1, \cdot)\right\|\right).
$$
Considering the  note $u \in C^{\alpha q}\left((0, T]; L_2(\Omega) \right)$, we get
\begin{equation}\label{thththth129}
 \lim _{t_2-t_1 \rightarrow 0} \| F   \left(t_2, ., u\left(t_2, .\right)\right)-F\left(t_1, ., u\left(t_1, .\right)\right) \|_{D\left(L^{-\beta q}\right)}=0.
\end{equation}

Next, consider the limit of the difference $k(t_2)u(t_2, x) - k(t_1)u(t_1, x)$ when $t_2-t_1 \rightarrow 0$. Let's first examine the following inequality
$$
\left|k(t_2)u(t_2, x) - k(t_1)u(t_1, x)\right|\leq \left|k(t_2)\right|\left|u(t_2, x) -u(t_1, x)\right|+
\left| u(t_1, x)\right|\left|k(t_2) - k(t_1) \right|.
$$
Considering the  note $u \in C^{\alpha q}\left((0, T]; L_2(\Omega) \right)$ and $k(t)\in C[0,T]$, using the above inequality, we get
$$
\lim _{t_2-t_1 \rightarrow 0} \left\|k(t_2)u(t_2, x) - k(t_1)u(t_1, x)\right\|_{D\left(L^{-\beta q}\right)}
$$$$
\leq \lim _{t_2-t_1 \rightarrow 0}   \left\|k(t_2)-k(t_1)\right\| \|u(t_2, x)\|_{D\left(L^{-\beta q}\right)}
$$
\begin{equation}\label{thththth130}
+ \lim _{t_2-t_1 \rightarrow 0}  \|k(t_1)\| \left\|u(t_2, x)-u(t_1, x)\right\|_{D\left(L^{-\beta q}\right)}=0.
\end{equation}

Let us first consider $\left\|{}^1\mathcal{R}_1\right\|_{D\left(L^{-\beta q}\right)}$. From (\ref{est67}) and (\ref{eqth121}), we have
$$
\left\|{}^1\mathcal{R}_1\right\|_{D\left(L^{-\beta q}\right)}
$$$$
\leq \int_0^{t_1}\left\|-\sum_{j=1}^{\infty} \lambda_j^\beta  \int\limits_{t_1-\tau}^{t_2-\tau} F_j(\tau, u(\tau)) \eta^{\alpha-2} E_{\alpha, \alpha-1}\left(-\lambda_j^\beta \eta^\alpha\right) d \eta   e_j\right\|_{D\left(L^{-\beta q}\right)} d \tau
$$
$$
 \leq \int_0^{t_1}\left\{\sum_{j=1}^{\infty} \lambda_j^{2 \beta} \lambda_j^{-2 \beta q} F_j^2(\tau,u(\tau))\left|\int\limits_{t_1-\tau}^{t_2-\tau} \eta^{\alpha-2} E_{\alpha, \alpha-1}\left(-\lambda_j^\beta \eta^\alpha\right)|d \eta\right|^2\right\}^{1 / 2} d \tau
$$$$
  \leq M_2 \int_0^{t_1}\left\{\sum_{j=1}^{\infty} \lambda_j^{2 \beta} \lambda_j^{-2 \beta q} F_j^2(\tau,u(\tau))\left|\int\limits_{t_1-\tau}^{t_2-\tau} \eta^{\alpha-2} \lambda_j^{-\beta p} \eta^{-\alpha p} d \eta\right|^2\right\}^{1 / 2} d \tau
$$
$$
\leq  \frac{M_2}{1-\alpha q}  \int\limits_0^{t_1}  \|F(\tau, \cdot)\| \left( (t_1-\tau)^{\alpha q-1} -(t_2-\tau)^{\alpha q-1}\right) d \tau.
$$
$$
\leq  \frac{M_2 C_7 \Upsilon^{*}}{1-\alpha q}  \chi_0 \|\varphi\|_{D\left(L^{\beta(p+\widehat{q})}\right)}  \int\limits_0^{t_1}  \tau^{-\alpha q} \left( (t_1-\tau)^{\alpha q-1} -(t_2-\tau)^{\alpha q-1}\right) d \tau.
$$
Using (\ref{thesth104}) for the integral participating in the right-hand side of the above inequality, we obtain
\begin{equation}\label{est129}
\left\|{}^1\mathcal{R}_1\right\|_{D\left(L^{-\beta q}\right)}  \leq  \frac{M_2 C_7 \Upsilon^{*}}{\alpha q(1-\alpha q)}  \chi_0 \|\varphi\|_{D\left(L^{\beta(p+\widehat{q})}\right)}\frac{(t_2-t_1)^{\alpha q}}{t_1^{\alpha q}}.
\end{equation}

Let us secondly consider $\left\|{}^1\mathcal{R}_2\right\|_{D\left(L^{-\beta q}\right)}$. We have
$$
\left\|{}^1\mathcal{R}_2\right\|_{D\left(L^{-\beta q}\right)}  \leq
$$$$
\int\limits_{t_1}^{t_2}\left\|\sum_{j=1}^{\infty} \lambda_j^\beta  F_j(\tau, \cdot,u(\tau, \cdot)) (t_2-\tau)^{\alpha-1} E_{\alpha, \alpha}\left(-\lambda_j^\beta (t_2-\tau)^{\alpha}\right)  e_j\right\|_{D\left(L^{-\beta q}\right)} d \tau
$$$$
\leq M_2 \int\limits_{t_1}^{t_2}\left\{\sum_{j=1}^{\infty} \lambda_j^{2 \beta} \lambda_j^{-2 \beta q} F_j^2(\tau, \cdot,u(\tau, \cdot)) \lambda_j^{-2 \beta p}\left(t_2-\tau\right)^{-2 \alpha p}\right\}^{1 / 2}\left(t_2-\tau\right)^{\alpha-1} d \tau
$$
$$
\leq M_2 \int\limits_{t_1}^{t_2}\|F(\tau, \cdot,u(\tau, \cdot))\|\left(t_2-\tau\right)^{\alpha q-1} d \tau.
$$
From (\ref{eqth121}), we have
$$
\left\|{}^1\mathcal{R}_2\right\|_{D\left(L^{-\beta q}\right)}  \leq   M_2 C_7 \Upsilon^{*} \chi_0 \|\varphi\|_{D\left(L^{\beta(p+\widehat{q})}\right)} \int\limits_{t_1}^{t_2} \tau^{-\alpha q} \left(t_2-\tau\right)^{\alpha q-1} d \tau.
$$
Next, let's study the integral $\int\limits_{t_1}^{t_2} \tau^{-\alpha q} \left(t_2-\tau\right)^{\alpha q-1} d \tau$
\begin{equation}\label{ththth130}
\int\limits_{t_1}^{t_2} \tau^{-\alpha q} \left(t_2-\tau\right)^{\alpha q-1} d \tau \leq t_1^{-\alpha q}\int\limits_{t_1}^{t_2}  \left(t_2-\tau\right)^{\alpha q-1} d \tau =\frac{t_1^{-\alpha q}}{\alpha q} (t_2-t_1)^{\alpha q}.
\end{equation}
From this we obtain
\begin{equation}\label{est130}
\left\|{}^1\mathcal{R}_2\right\|_{D\left(L^{-\beta q}\right)}  \leq  \frac{M_2 C_7 \Upsilon^{*}}{\alpha q}  \chi_0  t_1^{-\alpha q}\|\varphi\|_{D\left(L^{\beta(p+\widehat{q})}\right)}  (t_2-t_1)^{\alpha q}.
\end{equation}

Thirdly, we consider the norm $\left\|{}^1\mathcal{R}_3\right\|_{D\left(L^{-\beta q}\right)} $. This norm  is the same as the estimate in (\ref{est69}).

The norm $\left\|{}^1\mathcal{R}_4\right\|_{D\left(L^{-\beta q}\right)} $ can be estimated as follows
$$
\left\|{}^1\mathcal{R}_4\right\|_{D\left(L^{-\beta q}\right)}  \leq  C_{\kappa} M_2^2 \int\limits_0^T \Bigg\{
\sum_{j=1}^{\infty} \lambda_j^{4 \beta} \lambda_j^{-2 \beta q}
F_j^2(\tau, \cdot,u(\tau, \cdot))
$$$$
\lambda_j^{-\beta p} (T-\tau)^{2\alpha-2}
(T-\tau)^{-2\alpha p}\left[\int\limits_{t_1}^{t_2} \eta^{\alpha-1} \lambda_j^{-\beta } \eta^{-\alpha} d \eta\right]^2 \Bigg\}^{1 / 2}  d \tau
$$
$$
\leq  \frac{C_{\kappa} M_2^2 }{\alpha  }   t_1^{-\alpha} \int\limits_0^T \Bigg\{
\sum_{j=1}^{\infty} F_j^2(\tau, \cdot,u(\tau, \cdot)) (T-\tau)^{2\alpha q-2}
 \left[t_2^{\alpha }-t_1^{\alpha } \right]^2 \Bigg\}^{1 / 2}  d \tau
$$
$$
\leq  \frac{C_{\kappa} M_2^2}{\alpha  }   t_1^{-\alpha}  \left(t_2^{\alpha }-t_1^{\alpha }\right)  \int\limits_0^T  \|F(\tau, \cdot,u(\tau, \cdot))\| (T-\tau)^{\alpha q-1}    d \tau.
$$
From (\ref{eqth121}), we obtain an estimate
\begin{equation}\label{est131}
\left\|{}^1\mathcal{R}_4\right\|_{D\left(L^{-\beta q}\right)} \leq  \frac{C_{\kappa} M_2^2}{\alpha  }   t_1^{-\alpha}  \left(t_2^{\alpha }-t_1^{\alpha }\right)  C_7 \Upsilon^{*} \chi_0 \|\varphi\|_{D\left(L^{\beta(p+\widehat{q})}\right)} B(\alpha q,1-\alpha q).
\end{equation}

Next, we get for ${}^1\mathcal{R}_5$
$$
\left\|{}^1\mathcal{R}_5\right\|_{D\left(L^{-\beta q}\right)}  \leq  C_{\kappa} M_2^2 \int\limits_0^T \Bigg\{
\sum_{j=1}^{\infty} \lambda_j^{4 \beta} \lambda_j^{-2 \beta q} k^2(\tau)
u_j^2(\tau)
$$$$
\lambda_j^{-\beta p} (T-\tau)^{2\alpha-2}
(T-\tau)^{-2\alpha p}\left[\int\limits_{t_1}^{t_2} \eta^{\alpha-1} \lambda_j^{-\beta } \eta^{-\alpha} d \eta\right]^2 \Bigg\}^{1 / 2}  d \tau
$$
$$
\leq  \frac{C_{\kappa} M_2^2 }{\alpha  }   t_1^{-\alpha} \int\limits_0^T \Bigg\{
\sum_{j=1}^{\infty} k^2(\tau)
u_j^2(\tau) (T-\tau)^{2\alpha q-2}
 \left[t_2^{\alpha }-t_1^{\alpha } \right]^2 \Bigg\}^{1 / 2}  d \tau
$$
$$
\leq  \frac{C_{\kappa} M_2^2}{\alpha  } \|k\|_{C[0,T]}  t_1^{-\alpha}  \left(t_2^{\alpha }-t_1^{\alpha }\right)  \int\limits_0^T  \|u(\tau, \cdot)\| (T-\tau)^{\alpha q-1}    d \tau.
$$
\begin{equation}\label{est131thth}
\leq  \frac{C_{\kappa} M_2^2}{\alpha  } \|k\|_{C[0,T]}  t_1^{-\alpha}  \left(t_2^{\alpha }-t_1^{\alpha }\right)  \chi_0 \|\varphi\|_{D\left(L^{\beta(p+\widehat{q})}\right)} B(\alpha q,1-\alpha q).
\end{equation}

Now let's estimate ${}^1\mathcal{R}_6$ in the same way
$$
\left\|{}^1\mathcal{R}_6\right\|_{D\left(L^{-\beta q}\right)}
\leq M_2 \|k\|_{C[[0,T]} \int_0^{t_1}\left\{\sum_{j=1}^{\infty}   u_j^2(\tau) \left|\int\limits_{t_1-\tau}^{t_2-\tau} \omega^{\alpha-2}   \omega^{-\alpha p} d \omega\right|^2\right\}^{1 / 2} d \tau
$$

$$
\leq M_2 \|k\|_{C[[0,T]} \int\limits_0^{t_1} \|u(\tau, \cdot)\| \int\limits_{t_1-\tau}^{t_2-\tau} \eta^{\alpha q-2} d \eta  d \tau.
$$
According to (\ref{eqth96}), from the above inequality we obtain the following inequality
$$
\left\|{}^1\mathcal{R}_6\right\|_{D\left(L^{-\beta q}\right)}
\leq   \frac{M_2  \chi_0}{1-\alpha q} \|k\|_{C[[0,T]}  \|\varphi\|_{D\left(L^{\beta p}\right)} \int\limits_0^{t_1} \tau^{-\alpha q} \Big[ (t_1-\tau)^{\alpha q-1} - (t_2-\tau)^{\alpha q-1}\Big]  d \tau.
$$
By using (\ref{thesth104}) in the above inequality, we obtain
\begin{equation}\label{esth132}
\left\|{}^1\mathcal{R}_6\right\|_{D\left(L^{-\beta q}\right)}
\leq   \frac{M_2  \chi_0}{\alpha q (1-\alpha q)} \|k\|_{C[[0,T]}  t_1^{-\alpha q}\|\varphi\|_{D\left(L^{\beta p}\right)} (t_2-t_1)^{\alpha q}.
\end{equation}

Let's estimate ${}^1\mathcal{R}_7:$
 $$
\left\|{}^1\mathcal{R}_7\right\|_{D\left(L^{-\beta q}\right)}  \leq
$$$$
\int\limits_{t_1}^{t_2}\left\|\sum_{j=1}^{\infty} \lambda_j^\beta k(\tau) u_j(\tau) (t_2-\tau)^{\alpha-1} E_{\alpha, \alpha}\left(-\lambda_j^\beta (t_2-\tau)^{\alpha}\right)  e_j\right\|_{D\left(L^{-\beta q}\right)} d \tau
$$$$
\leq M_2 \|k\|_{C[[0,T]} \int\limits_{t_1}^{t_2}\left\{\sum_{j=1}^{\infty} \lambda_j^{2 \beta} \lambda_j^{-2 \beta q} u_j^2(\tau) \lambda_j^{-2 \beta p}\left(t_2-\tau\right)^{-2 \alpha p}\right\}^{1 / 2}\left(t_2-\tau\right)^{\alpha-1} d \tau
$$
\begin{equation*}
\leq M_2  \|k\|_{C[[0,T]}  \int\limits_{t_1}^{t_2}\|u(\tau, \cdot)\|\left(t_2-\tau\right)^{\alpha q-1}  d \tau.
\end{equation*}
By using (\ref{eqth96}) and (\ref{ththth130})
\begin{equation}\label{est136}
\left\|{}^1\mathcal{R}_7\right\|_{D\left(L^{-\beta q}\right)}  \leq \frac{M_2 t_1^{-\alpha q}}{\alpha q} \|k\|_{C[[0,T]}  \chi_0  \|\varphi\|_{D\left(L^{\beta p}\right)}  (t_2-t_1)^{\alpha q}.
\end{equation}

It follows from (\ref{eqth128}),  and  (\ref{thththth129}) - (\ref{est136}) that $\partial_t^\alpha u(t, x)$ is in the class $ C\left((0, T];D\left(L^{-\beta q}\right)\right)$. On the other hand, the estimate (\ref{eqth125}) also holds for $\widehat{q}=0$ and $\widehat{p}=1$, i.e., for $0<t\leq T$ we get
\begin{equation}\label{eqth137}
t^{\alpha } \left\|\partial_t^\alpha u(t, \cdot)\right\|_{D\left(L^{-\beta q}\right)}\leq  C_7  \|\varphi\|_{D\left(L^{\beta p}\right)}.
\end{equation}
According to the Sobolev embedding $ D\left(L^{\beta(p+\widehat{q})}\right) \hookrightarrow  D\left(L^{\beta p}\right)$, we obtain
$$
 \left\|\partial_t^\alpha u \right\|_{C^{\alpha}\left((0, T];D\left(L^{-\beta q}\right)\right)} \leq C_9 \|\varphi\|_{D\left(L^{\beta (p+\widehat{q})}\right)}.
$$
From this, we obtain the result (\ref{eqth127}) from Theorem \ref{theorth7}.
\end{proof}

\begin{lemma}\label{lemmath9}
Let $p, q, p', q', \widehat{p}, \widehat{q}, r, \widehat{r}$ be defined by A1), B2), B4). If $\varphi \in D\left(L^{\beta(p+\widehat{q})}\right)$, $F$ satisfies the assumptions C2), and $\Theta(T)<1$, then nonlocal initial problem (\ref{eqth1})--(\ref{eqth3}) has a unique solution $u(t, x)$ satisfying that
$$
\partial_t^\alpha u \in L^{\frac{1}{\alpha \widehat{q}}-\widehat{r}}\left(0, T ;
D\left(L^{-\beta(p-\widehat{q})}\right)\right) \cap C^\alpha\left((0, T]; D\left(L^{-\beta q}\right)\right)
$$
Moreover, there exists a constant $C_7>0$ such that
\begin{equation}\label{ththeq138}
 \left\|\partial_t^\alpha u\right\|_{L^{\frac{1}{\alpha }-\widehat{r}}\left(0, T ;
D\left(L^{-\beta(q-\widehat{q})}\right)\right)}+ \left\|\partial_t^\alpha u \right\|_{C^{\alpha}\left((0, T];D\left(L^{-\beta q}\right)\right)} \leq  C_{10}  \|\varphi\|_{D\left(L^{\beta(p+\widehat{q})}\right)}.
\end{equation}
\end{lemma}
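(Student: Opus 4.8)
The plan is to derive Lemma~\ref{lemmath9} directly as a synthesis of Theorems~\ref{theorth6} and \ref{theorth7}, since the hypotheses stated here --- the exponents $p,q,p',q',\widehat p,\widehat q,r,\widehat r$ chosen according to A1), B2), B4), together with $\varphi\in D(L^{\beta(p+\widehat q)})$, $F$ satisfying C2), and $\Theta(T)<1$ --- are precisely the hypotheses of both of those theorems. First I would record that, under these assumptions, the nonlocal initial problem (\ref{eqth1})--(\ref{eqth3}) has a unique mild solution $u(t,x)$: this follows from Theorems~\ref{theorth3}--\ref{theorth5} after noting the Sobolev embedding $D(L^{\beta(p+\widehat q)})\hookrightarrow D(L^{\beta p})$, so that $\varphi\in D(L^{\beta p})$ and the regularity $u\in L^{\frac{1}{\alpha q'}-r}(0,T;D(L^{\beta(p-p')}))\cap C^{\alpha q}((0,T];L_2(\Omega))$ is available. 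Hence it remains only to combine the two assertions about $\partial_t^\alpha u$.

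Second, I would invoke Theorem~\ref{theorth6} to obtain $\partial_t^\alpha u\in L^{\frac1\alpha-\widehat r}\bigl(0,T;D(L^{-\beta(q-\widehat q)})\bigr)$ together with the bound (\ref{ththeq121}), and Theorem~\ref{theorth7} to obtain $\partial_t^\alpha u\in C^\alpha\bigl((0,T];D(L^{-\beta q})\bigr)$ together with the bound (\ref{eqth127}). At this point I would make explicit the one quantitative check that is needed, namely that the weight $t^{-\alpha}$ belongs to $L^{\frac1\alpha-\widehat r}(0,T;\mathbb R)$: from $0<\widehat r\le\frac{1-\alpha}{\alpha}$ one has $\alpha\bigl(\tfrac1\alpha-\widehat r\bigr)=1-\alpha\widehat r<1$, so $\|t^{-\alpha}\|_{L^{\frac1\alpha-\widehat r}(0,T;\mathbb R)}<\infty$, and consequently the right-hand side of (\ref{ththeq121}) is a finite constant multiple of $\|\varphi\|_{D(L^{\beta(p+\widehat q)})}$, which is the form demanded by (\ref{ththeq138}).

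Third, adding the inequalities (\ref{ththeq121}) and (\ref{eqth127}) and setting
\[
C_{10}:=C_7\,\bigl\|t^{-\alpha}\bigr\|_{L^{\frac1\alpha-\widehat r}(0,T;\mathbb R)}+C_9
\]
yields precisely (\ref{ththeq138}), completing the proof. I do not anticipate any real obstacle: all of the analytic work --- the pointwise bound $t^\alpha\|\partial_t^\alpha u(t,\cdot)\|_{D(L^{-\beta q})}\le C_7\|\varphi\|_{D(L^{\beta(p+\widehat q)})}$ coming from (\ref{eqth137}), the dominated-convergence arguments showing that the series $\sum_j \partial_t^\alpha u_j(t)e_j$ converges in $D(L^{-\beta(q-\widehat q)})$, and the H\"older estimates for the seven terms ${}^1\mathcal R_j=L^\beta\mathcal R_j$ --- has already been carried out inside the proofs of Theorems~\ref{theorth6} and \ref{theorth7}. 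The only point requiring a little care is bookkeeping of the constants $C_7,C_8,C_9$ so that a single $C_{10}$ dominates both estimates simultaneously, and observing that the target spaces $D(L^{-\beta(q-\widehat q)})$ and $D(L^{-\beta q})$ are compatible in the sense that the embedding $D(L^{-\beta(q-\widehat q)})\hookrightarrow D(L^{-\beta q})$ holds for $\widehat q\ge 0$, so the two regularity statements coexist without conflict.
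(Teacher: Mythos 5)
Your argument is exactly the paper's: the authors prove Lemma~\ref{lemmath9} simply by combining the estimates (\ref{ththeq121}) from Theorem~\ref{theorth6} and (\ref{eqth127}) from Theorem~\ref{theorth7}, which is precisely your synthesis, and your extra checks (the embedding $D(L^{\beta(p+\widehat q)})\hookrightarrow D(L^{\beta p})$ and the integrability of $t^{-\alpha}$ in $L^{\frac1\alpha-\widehat r}(0,T;\mathbb R)$) only make explicit what the paper leaves implicit. The proposal is correct and follows the same route.
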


The proof of the Lemma \ref{lemmath9} is proved by combining the inequalities (\ref{eqth127}) and (\ref{ththeq121}).

\section*{5 Investigation of the inverse problem (\ref{eqth1})--(\ref{eqth4})}

Let us now investigate the inverse problem (\ref{eqth1})--(\ref{eqth4}).  Let $k(t)$ be an arbitrary function of $C[0,T]$.

Now, let's check the continuity of the fractional operator $L^\beta u(t, x)$ in equation (\ref{eqth1}).

According to equation (\ref{eqth1}), we obtain
\begin{equation}\label{ththeq143}
L^\beta u(t, x)=F(t, x, u(t, x))-
\partial_t^\alpha u(t, x)-k(t) u(t, x)
\end{equation}
Using Theorems \ref{theorth5}, \ref{theorth7} and equality  (\ref{eqth127}) we estimate $L^\beta u(t, x)$
$$
\|L^\beta u(t, x)\|_{C^{\alpha q}\left([0,T];D\left(L^{-\beta q'}\right)\right)} \leq \Upsilon \|u(t, x)\|_{C^{\alpha q}\left([0,T];D\left(L^{-\beta q'}\right)\right)}
$$$$
+\|\partial_t^\alpha u(t, x)\|_{C^{\alpha q}\left([0,T];D\left(L^{-\beta q'}\right)\right)}+\|k\|_{C[0,T]} \|u(t, x)\|_{C^{\alpha q}\left([0,T];D\left(L^{-\beta q'}\right)\right)}
$$

$$
\leq  M_2 \left(\Upsilon+\|k\|_{C[0,T]} \right)\Bigg[ \frac{T^{\alpha(p-q-p')} \chi_0  \Upsilon}{\alpha q(1-\alpha q')} +  \frac{ M_6  t_1^{-\alpha q}}{\alpha q \lambda_1^{\beta p'}}  \chi_0  \Upsilon+ \frac{C_{\kappa}  }{\alpha q' \lambda_1^{2 \beta p}} T^{\alpha (p-p')}
$$
$$
 +\frac{ C_{\kappa} M_2 }{\alpha q  \lambda_1^{\beta q}} T^{\alpha (p-p')}   \chi_0  \Upsilon B(\alpha q',1-\alpha q)
+
\frac{\|k\|_{C[[0,T]}t_1^{-\alpha q}}{ (1-\alpha q \lambda_1^{\beta (p+q')}}
\chi_0  +\frac{\|k\|_{C[[0,T]} }{ \lambda_1^{\beta (p+q')}} t_1^{-\alpha q} \chi_0
$$
$$
+\frac{ C_{\kappa} M_2 }{\alpha q  \lambda_1^{\beta q}} T^{\alpha (p-p')}   \chi_0  \|k\|_{C[0,T]}   B(\alpha q',1-\alpha q) \Bigg]\|\varphi\|_{D\left(L^{\beta p}\right)}+ C_9 \|\varphi\|_{D\left(L^{\beta (p+\widehat{q})}\right)}.
$$

On the other hand, it is easy to see that
$$
L^\beta u(t, x_0)=L^\beta\mathcal{G}_2(t, x_0) \varphi+L^\beta\mathcal{G}_1(t, x_0) F +L^\beta\mathcal{G}_3(t, x_0) F
$$
\begin{equation}\label{thththeq145}
-L^\beta\mathcal{G}_3(t, x_0) \{k,  u\}-L^\beta\mathcal{G}_1(t, x_0)\{k,  u\}:=\mathcal{Q}_1+\mathcal{Q}_2+\mathcal{Q}_3+\mathcal{Q}_4+\mathcal{Q}_5. \end{equation}

The following lemma is valid.
\begin{lemma}\label{lemma7}
 Let $\varphi \in D\left(L^{\gamma_0}\right)$ and $k \in C[0,T]$. Then there exists a positive constant $C_11$ such that
 $$
 \|L^\beta u(t, x_0)\|_{C[0,T]} \leq  C_{11}  \Bigg(C_\kappa+\frac{\Upsilon}{\lambda_1^{\beta p}}\chi_0  B(\alpha q, 1-\alpha q)+\frac{M_2 C_\kappa \Upsilon}{\lambda_1^{\beta p}}\chi_0  B(\alpha q, 1-\alpha q)
 $$
\begin{equation}\label{eqtthth145}
+\frac{M_2 C_\kappa\|k\|_{C[0,T]}+ \|k\|_{C[0,T]}}{\lambda_1^{\beta p}}   \chi_0  B(\alpha q, 1-\alpha q)\Bigg)\|\varphi\|_{D\left(L^{\gamma_0}\right)},
\end{equation}
where $\gamma_0>\beta+\frac{\mathbb{N}}{2},$ $C_{11}$ is dependent on $\Omega, \alpha, \beta, \gamma_0, \mathbb{N}, \lambda_1$.
\end{lemma}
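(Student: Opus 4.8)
The plan is to work from the operator decomposition \eqref{thththeq145}, namely $L^\beta u(t,x_0)=\mathcal{Q}_1+\mathcal{Q}_2+\mathcal{Q}_3+\mathcal{Q}_4+\mathcal{Q}_5$, and to bound each $\mathcal{Q}_i$ as an absolutely convergent series of the form $\sum_{j\ge1}(\text{coefficient}_j)\,e_j(x_0)$, uniformly in $t\in[0,T]$. The one genuinely new ingredient, beyond the estimates already developed in Sections~3--4, is a pointwise bound on the eigenfunctions. Since $\gamma_0>\beta+\tfrac N2$, I would fix an exponent $\rho$ with $\tfrac N4<\rho<\gamma_0-\beta-\tfrac N4$; then the Sobolev embedding $D(L^{\rho})\subset H^{2\rho}(\Omega)\hookrightarrow C(\bar\Omega)$ (valid because $2\rho>N/2$) yields $|e_j(x_0)|\le\|e_j\|_{C(\bar\Omega)}\le C_\Omega\|e_j\|_{D(L^{\rho})}=C_\Omega\lambda_j^{\rho}$, while Weyl's asymptotics $\lambda_j\asymp j^{2/N}$ combined with the choice of $\rho$ makes $\sum_{j\ge1}\lambda_j^{\,2(\beta+\rho)-2\gamma_0}<\infty$. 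This convergent series is what will be invoked repeatedly.

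First I would treat $\mathcal{Q}_1=L^\beta\mathcal{G}_2(t,x_0)\varphi=\sum_j\lambda_j^\beta\varphi_j\Psi_j(T)E_{\alpha,1}(-\lambda_j^\beta t^\alpha)e_j(x_0)$: using $|\Psi_j(T)|\le C_\kappa$, the bound $E_{\alpha,1}(-\lambda_j^\beta t^\alpha)\le M_2$ from Proposition~\ref{thproth3}, the eigenfunction estimate above, and the Cauchy--Schwarz inequality,
\begin{equation*}
|\mathcal{Q}_1|\le C_\Omega M_2 C_\kappa\sum_j|\varphi_j|\lambda_j^{\beta+\rho}\le C_\Omega M_2 C_\kappa\Big(\sum_j\varphi_j^2\lambda_j^{2\gamma_0}\Big)^{1/2}\Big(\sum_j\lambda_j^{2(\beta+\rho)-2\gamma_0}\Big)^{1/2}\le C\,C_\kappa\|\varphi\|_{D(L^{\gamma_0})},
\end{equation*}
uniformly in $t\in[0,T]$; this produces the $C_\kappa$-term of \eqref{eqtthth145}. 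For the four remaining terms $\mathcal{Q}_2,\dots,\mathcal{Q}_5$, which are convolutions against the Mittag-Leffler kernel $s^{\alpha-1}E_{\alpha,\alpha}(-\lambda_j^\beta s^\alpha)$ evaluated at $s=t-\tau$, respectively at $s=T-\tau$ inside an extra $\mathcal{G}_2$-convolution, I would combine: (i) the $p$-splitting $\lambda_j^\beta s^{\alpha-1}E_{\alpha,\alpha}(-\lambda_j^\beta s^\alpha)\le M_2\lambda_j^{\beta q}s^{\alpha q-1}\le M_2\lambda_1^{-\beta p}\lambda_j^{\beta}s^{\alpha q-1}$, which is where the factors $1/\lambda_1^{\beta p}$ enter; (ii) the nonlinear growth bound \eqref{eqth79}, $\|F(\tau,\cdot,u(\tau,\cdot))\|\le\Upsilon\|u(\tau,\cdot)\|$, for $\mathcal{Q}_2,\mathcal{Q}_3$ and the a priori regularity of $u$ from Theorems~\ref{theorth3}--\ref{theorth7} for $\mathcal{Q}_4,\mathcal{Q}_5$; (iii) the solution bound \eqref{eqth96}, $\|u(\tau,\cdot)\|\le\chi_0\|\varphi\|_{D(L^{\beta p})}\tau^{-\alpha q}$, together with $\|\varphi\|_{D(L^{\beta p})}\le\|\varphi\|_{D(L^{\gamma_0})}$; (iv) Cauchy--Schwarz in $j$, extracting $e_j(x_0)$ as $\lambda_j^{\rho}$ and using the summability fixed in the first paragraph; and (v) the convolution identity $\int_0^t(t-\tau)^{\alpha q-1}\tau^{-\alpha q}\,d\tau=\int_0^T(T-\tau)^{\alpha q-1}\tau^{-\alpha q}\,d\tau=B(\alpha q,1-\alpha q)$. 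Carrying these through exactly as in the proofs of Lemmas~\ref{lemmth1} and \ref{lemmth6} and Theorems~\ref{theorth3}, \ref{theorth5}, but with the extra factor $\lambda_j^{\rho}$, gives $|\mathcal{Q}_2|\le C\tfrac{\Upsilon}{\lambda_1^{\beta p}}\chi_0B(\alpha q,1-\alpha q)\|\varphi\|_{D(L^{\gamma_0})}$, $|\mathcal{Q}_3|\le C\tfrac{M_2C_\kappa\Upsilon}{\lambda_1^{\beta p}}\chi_0B(\alpha q,1-\alpha q)\|\varphi\|_{D(L^{\gamma_0})}$, and similarly $|\mathcal{Q}_4|,|\mathcal{Q}_5|\le C\tfrac{(M_2C_\kappa+1)\|k\|_{C[0,T]}}{\lambda_1^{\beta p}}\chi_0B(\alpha q,1-\alpha q)\|\varphi\|_{D(L^{\gamma_0})}$, all uniform in $t\in[0,T]$. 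The same estimates show that the partial sums defining each $\mathcal{Q}_i$ are Cauchy in $C[0,T]$, so $L^\beta u(\cdot,x_0)\in C[0,T]$ and the representation is legitimate; adding the five bounds and taking the supremum over $[0,T]$ gives \eqref{eqtthth145} with $C_{11}$ depending only on $\Omega,\alpha,\beta,\gamma_0,N,\lambda_1$ (through $C_\Omega$, the choice of $\rho$, the convergent series $\sum_j\lambda_j^{2(\beta+\rho)-2\gamma_0}$, and $B(\alpha q,1-\alpha q)$).

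The hard part is the bookkeeping for $\mathcal{Q}_2$--$\mathcal{Q}_5$: to make the spectral series converge one must absorb both the $\lambda_j^{\beta}$ coming from $L^\beta$ and the growth $\lambda_j^{\rho}$ of $\|e_j\|_{C(\bar\Omega)}$, and the key is that these are spent against different budgets — the $\lambda_j^{\beta}$ against the smoothing of the Mittag-Leffler kernel via the $p$-splitting (harmless since $\lambda_j\ge\lambda_1>0$), and the surplus $\lambda_j^{\rho}$ against the regularity reserve of $\varphi\in D(L^{\gamma_0})$ through Cauchy--Schwarz. This is precisely why the sharp hypothesis reads $\gamma_0>\beta+\tfrac N2$: one unit of $\beta$ for the kernel, $\rho>\tfrac N4$ for the embedding $D(L^{\rho})\hookrightarrow C(\bar\Omega)$, and a further $\tfrac N4$ for the Weyl-type summability $\sum_j\lambda_j^{2(\beta+\rho)-2\gamma_0}<\infty$. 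The second delicate point is uniformity as $t\to0^{+}$: the factors $E_{\alpha,1}$ stay bounded, so the only singular contributions are the time convolutions, and their boundedness in $t$ is exactly the content of the identity in (v), which is also what forces the Beta function $B(\alpha q,1-\alpha q)$ into the final constant.
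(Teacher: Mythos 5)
Your route is the same as the paper's: the decomposition \eqref{thththeq145} into $\mathcal{Q}_1,\dots,\mathcal{Q}_5$, the sup-norm bound $|e_j(x_0)|\le C_\Omega\lambda_j^{\rho}$ from $D(L^{\rho})\subset H^{2\rho}(\Omega)\hookrightarrow C(\bar\Omega)$ (the paper's $\theta=\frac{N}{4}+\varepsilon$ is exactly one admissible choice of your $\rho$), Weyl asymptotics to make $\sum_j\lambda_j^{-2(\gamma_0-\beta-\rho)}$ converge, and the Beta-function identity for the time convolutions; your treatment of $\mathcal{Q}_1$ matches the paper's line by line.

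For $\mathcal{Q}_2$--$\mathcal{Q}_5$, however, your plan as written has a genuine gap. After extracting $|e_j(x_0)|\le C_\Omega\lambda_j^{\rho}$, the coefficient of the $j$-th mode still carries a power $\lambda_j^{\beta q+\rho}$ (your $p$-splitting trades $s^{\alpha-1}E_{\alpha,\alpha}$ for $s^{\alpha q-1}$ but does not remove the eigenvalue growth), so the Cauchy--Schwarz step that invokes $\sum_j\lambda_j^{2(\beta+\rho)-2\gamma_0}<\infty$ necessarily pairs it with $\bigl(\sum_j\lambda_j^{2\gamma_0}F_j^2(\tau,u(\tau))\bigr)^{1/2}=\|F(\tau,\cdot,u(\tau,\cdot))\|_{D(L^{\gamma_0})}$, and with $\|u(\tau,\cdot)\|_{D(L^{\gamma_0})}$ for $\mathcal{Q}_4,\mathcal{Q}_5$. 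The ingredients you cite in (ii)--(iii), namely \eqref{eqth79} and \eqref{eqth96}, control only the $L_2(\Omega)$-norms of $F(\tau,\cdot,u(\tau,\cdot))$ and $u(\tau,\cdot)$, and an $L_2$ bound cannot absorb the surplus growth, since $\sum_j\lambda_j^{2(\beta q+\rho)}=\infty$; the regularity reserve of $\varphi\in D(L^{\gamma_0})$ is available directly only in $\mathcal{Q}_1$, where the coefficients are $\varphi_j$. To close the argument one needs a Lipschitz-type bound for $F$ and a weighted a priori bound for $u(\tau,\cdot)$ at the level of $D(L^{\gamma_0})$ (with the $\tau^{-\alpha q}$ singularity), which is exactly what the paper's own proof silently assumes when it passes from $\sum_j\lambda_j^{2\gamma_0}|F_j(\tau,\cdot,u(\tau,\cdot))|^2$ to $\Upsilon\chi_0 B(\alpha q,1-\alpha q)\|\varphi\|_{D(L^{\gamma_0})}$; condition C1) and \eqref{eqth96} as stated do not supply it. So the missing step is the one the paper itself glosses over, but in your write-up it appears as an explicit inconsistency between items (ii)--(iii) and item (iv), and it must either be proved (higher-order regularity of $u$ and a $D(L^{\gamma_0})$-Lipschitz hypothesis on $F$) or added as an assumption.
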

\begin{proof}
 An inequality similar to the estimate in (\ref{eqtthth145}) was derived in [30]. However, the smoothness assumptions differ from those in [30], so we provide a detailed proof of inequality (\ref{eqtthth145}).

Recall the following inequality for the fractional power $L^\beta$ of $L$:
$$
\|u\|_{H^{2\beta}(\Omega)} \leq c_0 \|L^\beta u\|_{L_2(\Omega)}
$$
where constant $c_0>0$ depends only on $\beta$ and $\lambda_1$ (see., [[31], p. 208])

Let $2 \varepsilon=\gamma_0-\beta-\frac{\mathbb{N}}{2}$ and $\gamma_0>\beta+\frac{\mathbb{N}}{2}$. Furthermore we set $\theta=\frac{\mathbb{N}}{4}+\varepsilon$. Then by $H^{2 \theta} \subset C(\bar{\Omega})$ due to $4 \theta>\mathbb{N}$, we have
\begin{equation}\label{thththeq146}
\left\|e_n\right\|_{C(\bar{\Omega})} \leq C(\Omega)\left\|e_n\right\|_{H^{2 \theta}(\Omega)} \leq C_\Omega\left\|L^\theta e_n\right\|_{L^2(\Omega)} \leq C_\Omega \lambda_n^\theta .
\end{equation}

For $\mathcal{Q}_1$, by inequalities in Proposition \ref{thproth3}, and noticing that $\lambda_n=O\left(n^{2 / \mathbb{N}}\right)$ (see [[32], pp. 355,356]), we have
$$
|\mathcal{Q}_1| \leq \left|L^\beta\mathcal{G}_2(t, x_0) \varphi \right| =  \left|\sum_{j=1}^{\infty} \lambda_j^\beta \varphi_j \Psi_j(T)E_{\alpha, 1}\left(-\lambda_j^\beta t^\alpha\right) e_j(x_0)  \right|
$$
$$
\leq C_\kappa \sum_{j=1}^{\infty} \lambda_j^\beta |\varphi_j | |E_{\alpha, 1}\left(-\lambda_j^\beta t^\alpha\right)| |e_j(x_0)|\leq C_\kappa M_2 \sum_{j=1}^{\infty}\lambda_j^\beta |\varphi_j |  \frac{1}{1+\lambda_j^\beta t^\alpha}  |e_j(x_0)|
$$
$$
\leq C_\kappa M_2 C_\Omega \sum_{j=1}^{\infty}\lambda_j^\beta \lambda_j^{\gamma_0} |(\varphi,e_j) |  \frac{1}{1+\lambda_j^\beta t^\alpha} \lambda_j^{-\gamma_0+\theta}   \leq C_\kappa M_2 C_\Omega \sum_{j=1}^{\infty} \lambda_j^{\gamma_0} |(\varphi,e_j) |  \lambda_j^{-(\gamma_0-\beta-\theta)}
$$
$$
\leq C_\kappa M_2 C_\Omega \left(\sum_{j=1}^{\infty} \lambda_j^{2\gamma_0} |\varphi_j |^2 \right)^{\frac{1}{2}} \left(\sum_{j=1}^{\infty} \lambda_j^{-2(\gamma_0-\beta-\theta)}   \right)^{\frac{1}{2}}
$$
$$
\leq C_\kappa M_2 C_\Omega \|\varphi\|_{D\left(L^{\gamma_0}\right)} \left(\sum_{j=1}^{\infty} j^{\frac{-4(\gamma_0-\beta-\theta)}{\mathbb{N}}}   \right)^{\frac{1}{2}}.
$$
By the choice of $\theta$, we have
$\frac{4(\gamma_0-\beta-\theta)}{\mathbb{N}}=\frac{4(\gamma_0-\beta-\frac{\mathbb{N}}{4}-\varepsilon)}{\mathbb{N}}=\frac{\mathbb{N}+4(\gamma_0-\beta-\frac{\mathbb{N}}{2})-4\varepsilon}{\mathbb{N}}
=
\frac{\mathbb{N}+4\varepsilon}{\mathbb{N}}>1$, which implies
$$
\sum_{j=1}^{\infty} j^{\frac{-4(\gamma_0-\beta-\theta)}{\mathbb{N}}}<C\left(\gamma_0, \mathbb{N}\right).
$$
Therefore,
\begin{equation}\label{eqththeq146}
\left|\mathcal{Q}_1\right| \leq   C_\kappa M_2 C_\Omega  C\left(\gamma_0, \mathbb{N}\right) \|\varphi\|_{D\left(L^{\gamma_0}\right)}.
\end{equation}
Further,  by inequalities in Proposition \ref{thproth3} and (\ref{thththeq146}), we have the following estimate for $\mathcal{Q}_2:$
$$
\left|\mathcal{Q}_2 \right|
=\sum_{j=1}^{\infty} \left|\lambda_j^\beta \int\limits_0^t   F_j(\tau,\cdot,u(\tau,\cdot)) (t-\tau)^{\alpha-1} E_{\alpha, \alpha}\left(-\lambda_j^\beta (t-\tau)^\alpha\right)   d \tau e_j(x_0)  \right|
$$
$$
\leq   \sum_{j=1}^{\infty} \left|\lambda_j^\beta \int\limits_0^t   F_j(\tau,\cdot,u(\tau,\cdot)) (t-\tau)^{\alpha-1} E_{\alpha, \alpha}\left(-\lambda_j^\beta (t-\tau)^\alpha\right)   d \tau e_j(x_0)  \right|
$$
$$
\leq M_2 C_\Omega \sum_{j=1}^{\infty}  \lambda_j^\beta \int\limits_0^t   | F_j(\tau,\cdot,u(\tau,\cdot))|  (t-\tau)^{\alpha-1} \frac{1}{1+\lambda_j^\beta (t-\tau)^\alpha}     d \tau    \lambda_j^{\theta}
$$
$$
\leq \frac{1}{\lambda_1^{\beta p}} M_2 C_\Omega \int\limits_0^t (t-\tau)^{\alpha q-1} \sum_{j=1}^{\infty} \lambda_j^{\gamma_0}   |F_j(\tau,\cdot,u(\tau,\cdot))| \lambda_j^{-\left(\gamma_0-\beta-\theta\right)} d \tau
$$
$$
\leq \frac{M_2 C_\Omega}{\lambda_1^{\beta p}} \int\limits_0^t (t-\tau)^{\alpha q-1} \left(\sum_{j=1}^{\infty} \lambda_j^{2\gamma_0} |F_j(\tau,\cdot,u(\tau,\cdot))|^2 \right)^{\frac{1}{2}} \left(\sum_{j=1}^{\infty} \lambda_j^{-2(\gamma_0-\beta-\theta)}   \right)^{\frac{1}{2}} d \tau
$$
$$
\leq \frac{M_2 C_\Omega }{\lambda_1^{\beta p}} C\left(\gamma_0, \mathbb{N}\right) \int\limits_0^t (t-\tau)^{\alpha q-1} \|F_j(\tau,\cdot,u(\tau,\cdot))\|_{D\left(L^{\gamma_0}\right)}
 d \tau
$$
\begin{equation}\label{eqththeq147}
\leq \frac{M_2 C_\Omega \Upsilon}{\lambda_1^{\beta p}} C\left(\gamma_0, \mathbb{N}\right) \chi_0   B(\alpha q, 1-\alpha q) \|\varphi\|_{D\left(L^{\gamma_0}\right)}.
\end{equation}
Next we estimate $\mathcal{Q}_3.$ By Proposition \ref{thproth3}, we have
$$
|\mathcal{Q}_3| =\left|L^\beta \mathcal{G}_2(t, x) \mathcal{G}_1(T, x)F\right|
$$
$$
=\left|\sum_{j=1}^{\infty} \lambda_j^\beta \Psi_j(T)  \int\limits_0^T   F_j(\tau,\cdot,u(\tau,\cdot)) (T-\tau)^{\alpha-1} E_{\alpha, \alpha}\left(-\lambda_j^\beta (T-\tau)^\alpha\right)  d \tau  E_{\alpha, 1}\left(-\lambda_j^\beta t^\alpha\right)  e_j(x_0) \right|
$$
$$
\leq   \frac{M_2^2 C_\kappa C_\Omega }{\lambda_1^{\beta p}}  \int\limits_0^T (T-\tau)^{ \alpha q-1}  \sum_{j=1}^{\infty} \lambda_j^{\gamma_0}   |F_j(\tau,\cdot,u(\tau,\cdot))| \lambda_j^{-\left(\gamma_0-\beta-\theta\right)}  d \tau
$$
$$
\leq   \frac{M_2^2 C_\kappa C_\Omega }{\lambda_1^{\beta p}}  \int\limits_0^T (T-\tau)^{ \alpha q-1}  \left(\sum_{j=1}^{\infty} \lambda_j^{2\gamma_0} |F_j(\tau,\cdot,u(\tau,\cdot))|^2 \right)^{\frac{1}{2}} \left(\sum_{j=1}^{\infty} \lambda_j^{-2(\gamma_0-\beta-\theta)}   \right)^{\frac{1}{2}} d \tau
$$
$$
\leq \frac{M_2^2 C_\kappa C_\Omega }{\lambda_1^{\beta p}} C\left(\gamma_0, \mathbb{N}\right) \int\limits_0^T (T-\tau)^{\alpha q-1} \|F(\tau,\cdot,u(\tau,\cdot))\|_{D\left(L^{\gamma_0}\right)}
 d \tau
$$
\begin{equation}\label{eqththeq148}
\leq \frac{M_2^2 C_\kappa C_\Omega \Upsilon}{\lambda_1^{\beta p}} C\left(\gamma_0, \mathbb{N}\right) \chi_0   B(\alpha q, 1-\alpha q) \|\varphi\|_{D\left(L^{\gamma_0}\right)}.
\end{equation}
Similarly, we get estimate for $\mathcal{Q}_4$
$$
|\mathcal{Q}_4| =\Bigg|\sum_{j=1}^{\infty} \lambda_j^\beta \Psi_j(T)  \int\limits_0^T k(\tau)  u_j(\tau,\cdot) (T-\tau)^{\alpha-1} E_{\alpha, \alpha}\left(-\lambda_j^\beta (T-\tau)^\alpha\right)  d \tau
$$$$
\times E_{\alpha, 1}\left(-\lambda_j^\beta t^\alpha\right)  e_j(x_0) \Bigg|
\leq   \frac{M_2^2 C_\kappa C_\Omega }{\lambda_1^{\beta p}}  \int\limits_0^T (T-\tau)^{ \alpha q-1}  \sum_{j=1}^{\infty} \lambda_j^{\gamma_0}   | k(\tau)  u_j(\tau,\cdot)| \lambda_j^{-\left(\gamma_0-\beta-\theta\right)}  d \tau
$$
$$
\leq   \frac{M_2^2 C_\kappa C_\Omega }{\lambda_1^{\beta p}}  \int\limits_0^T (T-\tau)^{ \alpha q-1}  | k(\tau)| \left(\sum_{j=1}^{\infty} \lambda_j^{2\gamma_0} |u_j(\tau,\cdot)|^2 \right)^{\frac{1}{2}} \left(\sum_{j=1}^{\infty} \lambda_j^{-2(\gamma_0-\beta-\theta)}   \right)^{\frac{1}{2}} d \tau
$$
$$
\leq \frac{M_2^2 C_\kappa C_\Omega }{\lambda_1^{\beta p}} C\left(\gamma_0, \mathbb{N}\right) \|k\|_{C[0,T]} \int\limits_0^T (T-\tau)^{\alpha q-1} \|u(\tau,\cdot)\|_{D\left(L^{\gamma_0}\right)}
 d \tau
$$
\begin{equation}\label{eqththeq148thth}
\leq \frac{M_2^2 C_\kappa C_\Omega }{\lambda_1^{\beta p}} C\left(\gamma_0, \mathbb{N}\right) \|k\|_{C[0,T]} \chi_0   B(\alpha q, 1-\alpha q) \|\varphi\|_{D\left(L^{\gamma_0}\right)}.
\end{equation}

Finally, we estimate $\mathcal{Q}_5$ using the methods from the above estimates
 $$
\left|\mathcal{Q}_5 \right|
=\sum_{j=1}^{\infty} \left|\lambda_j^\beta \int\limits_0^t   k(\tau)u_j(\tau,\cdot) (t-\tau)^{\alpha-1} E_{\alpha, \alpha}\left(-\lambda_j^\beta (t-\tau)^\alpha\right)   d \tau e_j(x_0)  \right|
$$
$$
\leq   \sum_{j=1}^{\infty} \left|\lambda_j^\beta \int\limits_0^t   k(\tau)u_j(\tau,\cdot) (t-\tau)^{\alpha-1} E_{\alpha, \alpha}\left(-\lambda_j^\beta (t-\tau)^\alpha\right)   d \tau e_j(x_0)  \right|
$$
$$
\leq \frac{1}{\lambda_1^{\beta p}} M_2 C_\Omega \|k\|_{C[0,T]} \int\limits_0^t (t-\tau)^{\alpha q-1} \sum_{j=1}^{\infty} \lambda_j^{\gamma_0}   |u_j(\tau,\cdot)| \lambda_j^{-\left(\gamma_0-\beta-\theta\right)} d \tau
$$
$$
\leq \frac{M_2 C_\Omega}{\lambda_1^{\beta p}} \|k\|_{C[0,T]} \int\limits_0^t (t-\tau)^{\alpha q-1} \left(\sum_{j=1}^{\infty} \lambda_j^{2\gamma_0} |u_j(\tau,\cdot)|^2 \right)^{\frac{1}{2}} \left(\sum_{j=1}^{\infty} \lambda_j^{-2(\gamma_0-\beta-\theta)}   \right)^{\frac{1}{2}} d \tau
$$
$$
\leq \frac{M_2 C_\Omega }{\lambda_1^{\beta p}} C\left(\gamma_0, \mathbb{N}\right) \|k\|_{C[0,T]} \int\limits_0^t (t-\tau)^{\alpha q-1} \|u_j(\tau,\cdot)\|_{D\left(L^{\gamma_0}\right)}
 d \tau
$$
\begin{equation}\label{eqththeq149}
\leq \frac{M_2 C_\Omega  }{\lambda_1^{\beta p}} C\left(\gamma_0, \mathbb{N}\right) \chi_0   B(\alpha q, 1-\alpha q) \|k\|_{C[0,T]}\|\varphi\|_{D\left(L^{\gamma_0}\right)}.
\end{equation}
Using (\ref{eqththeq146}) - (\ref{eqththeq149}), we obtain an estimate for $L^\beta u(t, x_0)$
$$
\|L^\beta u(t, x_0)\|_{C[0,T]} \leq   C_\kappa M_2 C_\Omega  C\left(\gamma_0, \mathbb{N}\right) \|\varphi\|_{D\left(L^{\gamma_0}\right)}
$$$$
+
\frac{M_2 C_\Omega \Upsilon}{\lambda_1^{\beta p}} C\left(\gamma_0, \mathbb{N}\right) \chi_0   B(\alpha q, 1-\alpha q) \|\varphi\|_{D\left(L^{\gamma_0}\right)}
$$
$$
+\frac{M_2^2 C_\kappa C_\Omega \Upsilon}{\lambda_1^{\beta p}} C\left(\gamma_0, \mathbb{N}\right) \chi_0   B(\alpha q, 1-\alpha q) \|\varphi\|_{D\left(L^{\gamma_0}\right)}
$$
$$
+\frac{M_2^2 C_\kappa C_\Omega }{\lambda_1^{\beta p}} C\left(\gamma_0, \mathbb{N}\right)  \chi_0   B(\alpha q, 1-\alpha q) \|k\|_{C[0,T]} \|\varphi\|_{D\left(L^{\gamma_0}\right)}
$$
$$
+\frac{M_2 C_\Omega  }{\lambda_1^{\beta p}} C\left(\gamma_0, \mathbb{N}\right) \chi_0   B(\alpha q, 1-\alpha q) \|k\|_{C[0,T]}\|\varphi\|_{D\left(L^{\gamma_0}\right)}.
$$
From this, we obtain the estimate (\ref{eqtthth145}) from Lemma \ref{lemma7} and so complete the proof of this lemma.
\end{proof}

We make the following assumptions:\\
(D1) $h(t) \in C^1[0, T]$ and satisfy the conditions $|h(t)|\geq h_0>0$ where $h_0$ is given number;\\
(D2) $h(T)=\kappa h(0)+\varphi(x_0)$.\\
(D3)  $\exp{\left\{\frac{T^\alpha}{\Gamma(\alpha+1)}\left(\Upsilon+\|k\|_{C[0, T]}\right)\right\}}<1+|\kappa - 1|$

The following theorem holds.
\begin{theorem}\label{theorem8}
Let the conditions of the theorems \ref{theorth3} and conditions D1), D2), D3) be satisfied. Then the problem of finding a solution of (\ref{eqth1})--(\ref{eqth4}) is equivalent to the problem of determining the function $k(t) \in C[0,T]$ satisfying
\begin{equation}\label{eqth140}
k(t)= k_0(t) - \frac{1}{h(t)}\sum_{j=1}^{\infty} \lambda_j^\beta \left(u(t;k), e_j\right) e_j(x_0),
\end{equation}
where
\begin{equation}\label{eqth141}
k_0(t)=\frac{1}{h(t)}\left(F(t, x_0, h(t))
-  \partial_t^\alpha h(t)\right).
\end{equation}
\end{theorem}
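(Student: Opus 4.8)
The plan is to prove the equivalence in two directions, with the delicate work concentrated in the converse implication.

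\textbf{From the inverse problem to (\ref{eqth140}).} Suppose $k\in C[0,T]$ and the associated direct solution $u$ together solve (\ref{eqth1})--(\ref{eqth4}). Evaluating the equation (\ref{eqth1}) at the fixed point $x=x_0$ and inserting the overdetermination condition $u(t,x_0)=h(t)$ gives
$$
\partial_t^\alpha h(t)+L^\beta u(t,x_0)+k(t)h(t)=F(t,x_0,h(t)),\qquad t\in[0,T].
$$
By D1) we have $|h(t)|\geq h_0>0$, so dividing by $h(t)$ and substituting the spectral expansion $L^\beta u(t,x_0)=\sum_{j\geq1}\lambda_j^\beta(u(t),e_j)e_j(x_0)$ from (\ref{eqth4444}) yields exactly (\ref{eqth140})--(\ref{eqth141}). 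To make this meaningful I would first record that the traces $u(t,x_0)$, $L^\beta u(t,x_0)$ and $\partial_t^\alpha u(t,x_0)$ are continuous functions of $t$: this follows from the regularity statements of Section 4 (Theorems \ref{theorth5} and \ref{theorth7}) and from Lemma \ref{lemma7}, which establishes continuity of $t\mapsto L^\beta u(t,x_0)$ under the smoothness hypothesis $\varphi\in D(L^{\gamma_0})$ with $\gamma_0>\beta+\mathbb{N}/2$; moreover $\partial_t^\alpha h$ is well defined since $h\in C^1[0,T]$ by D1).

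\textbf{From (\ref{eqth140}) to the inverse problem.} Conversely, let $k\in C[0,T]$ satisfy (\ref{eqth140}), and let $u=u(t;k)$ be the unique mild solution of the direct problem (\ref{eqth1})--(\ref{eqth3}) with that coefficient, whose existence is furnished by Theorem \ref{theorth3}. Multiplying (\ref{eqth140}) by $h(t)$ and using (\ref{eqth4444}) again gives $k(t)h(t)=F(t,x_0,h(t))-\partial_t^\alpha h(t)-L^\beta u(t,x_0)$, while (\ref{eqth1}) at $x=x_0$ reads $L^\beta u(t,x_0)=F(t,x_0,u(t,x_0))-\partial_t^\alpha u(t,x_0)-k(t)u(t,x_0)$. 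Eliminating $L^\beta u(t,x_0)$ between these two identities and setting $w(t):=u(t,x_0)-h(t)$, I obtain the scalar fractional relation
$$
\partial_t^\alpha w(t)+k(t)w(t)=F(t,x_0,u(t,x_0))-F(t,x_0,h(t)),\qquad t\in(0,T],
$$
which, thanks to the nonlocal condition (\ref{eqth3}) evaluated at $x_0$ together with the compatibility assumption D2), $h(T)=\kappa h(0)+\varphi(x_0)$, comes equipped with the homogeneous nonlocal condition $w(T)=\kappa w(0)$.

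It then remains to show that $w\equiv0$, which is the main obstacle. The plan is to pass to the integral form $w(t)=w(0)+I_t^\alpha\big[F(\cdot,x_0,u)-F(\cdot,x_0,h)-k\,w\big]$, put $t=T$ and use $w(T)=\kappa w(0)$ to solve $w(0)=(\kappa-1)^{-1}I_T^\alpha[\,\cdot\,]$ (the value $\kappa=1$ being excluded by D3)), and then estimate, using the Lipschitz dependence of $F$ on its last argument together with (\ref{eqth79}), so as to reach
$$
|w(t)|\leq\frac{\Upsilon+\|k\|_{C[0,T]}}{|\kappa-1|\,\Gamma(\alpha)}\int_0^T(T-s)^{\alpha-1}|w(s)|\,ds+\frac{\Upsilon+\|k\|_{C[0,T]}}{\Gamma(\alpha)}\int_0^t(t-s)^{\alpha-1}|w(s)|\,ds.
$$
Applying the weakly singular Gronwall--Bellman--Gamidov inequality of Theorem \ref{ththththth1} (in the variant carrying the factor $(\kappa-1)^{-1}$ in front of the $\int_0^T$ term) with $C=0$ and $g\equiv\Upsilon+\|k\|_{C[0,T]}$, the hypothesis D3), $\exp\{\tfrac{T^\alpha}{\Gamma(\alpha+1)}(\Upsilon+\|k\|_{C[0,T]})\}<1+|\kappa-1|$, is precisely what keeps the denominator of the resulting estimate positive, forcing $|w(t)|\leq0$ on $[0,T]$, hence $w\equiv0$ and $u(t,x_0)=h(t)$; thus $(k,u)$ solves (\ref{eqth1})--(\ref{eqth4}). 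The two points I expect to need care with are the bookkeeping of the constant $(\kappa-1)^{-1}$ so that D3) lines up exactly with the smallness condition of Theorem \ref{ththththth1}, and ensuring that the pointwise values of $F$ at $x_0$ may be taken, for which one invokes the pointwise form of the Lipschitz assumptions C1)/C2) and the trace regularity recalled above.
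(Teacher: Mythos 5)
Your proposal follows the paper's proof essentially verbatim: the same evaluation of (\ref{eqth1}) at $x=x_0$ combined with (\ref{eqth4}) and the spectral form (\ref{eqth4444}) for the forward direction, and for the converse the same reduction to the scalar nonlocal problem for $w(t)=u(t,x_0)-h(t)$ with $w(T)=\kappa w(0)$ via D2), the same integral representation with the $(\kappa-1)^{-1}$ factor, and the same application of the weakly singular Gronwall--Bellman--Gamidov inequality of Theorem \ref{ththththth1} under D3) to force $w\equiv 0$. The argument is correct and no genuinely different route is taken.
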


\begin{proof}
 We split the proof into two steps.

\textit{Step 1}: Equivalent formulation as an integral equation.

Set $x = x_0$ in equation (\ref{eqth1}), the procedure yields
\begin{equation}\label{eqth142}
\partial_t^\alpha u(t, x_0)+L^\beta u(t, x_0)+k(t)u(t, x_0)=F(t, x_0, u(t, x_0)).
\end{equation}
Taking into account the condition $\partial_t^\alpha h(t)\in C[0;T]$ which imply that $h(t)\in C^1[0,T]$, and fractional differentiating both sides of (\ref{eqth4}) with respect to t give
\begin{equation}\label{eqth143}
\partial_t^\alpha u(t, x_0)=\partial_t^\alpha h(t), 0\leq t\leq T.
\end{equation}
Using (\ref{eqth4}), (\ref{eqth142}) and (\ref{eqth143}), we obtain
\begin{equation*}
k(t)=\frac{1}{h(t)}\left(F(t, x_0, h(t))
-  \partial_t^\alpha h(t)\right) - \frac{1}{h(t)}L^\beta u(t, x_0).
\end{equation*}
According to the definition (\ref{eqth4444}) of the fractional power of the operator $L$, we get
\begin{equation*}
k(t)=\frac{1}{h(t)}\left(F(t, x_0, h(t))
-  \partial_t^\alpha h(t)\right) - \frac{1}{h(t)}\sum_{j=1}^{\infty} \lambda_j^\beta \left(u(t,\cdot), e_j\right) e_j(x_0).
\end{equation*}
The functions $u(t)$ depend on $k(t)$, i.e. $u(t;k).$
After simple converting, we obtain
integral equation (\ref{eqth140}) for determining $k(t)$ of Theorem \ref{theorem8}.

\textit{Step 2.} Suppose now that $k \in C[0, T]$ satisfy  (\ref{eqth140}). In order to prove that $k$ is the solution to the inverse problem (\ref{eqth1})--(\ref{eqth4}), it suffices to show that (\ref{eqth4}). By the equation (\ref{eqth1}), we have (\ref{eqth142}). \\
Together with (\ref{eqth140}) and conditions (D1), (D2), we obtain that $\mu(t):= u(t,x_0)-h(t)$ satisfies
\begin{equation}\label{eqth144}
\left\{\begin{array}{l}
\partial_{t}^{\alpha} \mu(t)+k(t) \mu(t)=F(t, x_0, \mu(t)+ h(t))-F(t, x_0, h(t)), \quad t \in(0, T], \\
\mu(T)-\kappa \mu(0)=0.
\end{array}\right.
\end{equation}
Therefore, we have
$$
\mu(t)=
\frac{1}{(\kappa - 1)\Gamma(\alpha)} \int_0^T (T - s)^{\alpha - 1} \left(F(s, x_0, \mu(s)+ h(s))-F(s, x_0, h(s))\right) ds
$$
$$
-\frac{1}{(\kappa - 1)\Gamma(\alpha)} \int_0^T (T - s)^{\alpha - 1} k(s) \mu(s) ds
$$
$$
+\frac{1}{\Gamma(\alpha)} \int_0^t(t-s)^{\alpha-1} \left(F(s, x_0, \mu(s)+ h(s))-F(s, x_0, h(s))\right) ds
$$
$$
-\frac{1}{\Gamma(\alpha)} \int_0^t(t-s)^{\alpha-1} k(s) \mu(s) ds.
$$
According to the condition C1), for $k(t) \in C[0,T]$, we obtain the estimate
$$
\|\mu\|_{C[0, t]} \leq
\frac{\Upsilon+\|k\|_{C[0, T]}}{|\kappa - 1|\Gamma(\alpha)}  \int_0^T (T-s)^{\alpha - 1} |\mu(s)| ds
+\frac{\Upsilon+\|k\|_{C[0, T]}}{ \Gamma(\alpha)} \int_0^t(t-s)^{\alpha-1}  |\mu(s)| ds.
$$
According to the generalized Volterra Fredholm integral inequality in Theorem \ref{ththththth1} and condition (D3), we have $\|\mu\|_{C[0, t]}=0$ for all $t \in[0, T]$, which implies  $u(t,x_0)-h(t)=0$, $0 \leq t \leq T$, i.e., the condition (\ref{eqth4}) is satisfied. This completes the proof of Theorem \ref{theorem8}.\end{proof}

 \section*{6 Well-posedness of the inverse problem}

\begin{theorem}\label{theorem9}
Let the conditions of the theorems 4 and 5 and conditions D1), D2), D3) be satisfied. Then  there exists a sufficiently small $T > 0$ such that the inverse problem (\ref{eqth1})--(\ref{eqth4}) has a unique solution $k(t) \in C[0,T]$.
\end{theorem}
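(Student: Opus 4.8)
The plan is to use the equivalence established in Theorem~\ref{theorem8} and to recast the inverse problem as a fixed-point problem for the nonlinear operator $\mathcal{A}$ on $C[0,T]$ given by
\[
(\mathcal{A}k)(t):=k_0(t)-\frac{1}{h(t)}\sum_{j=1}^{\infty}\lambda_j^\beta\bigl(u(t;k),e_j\bigr)e_j(x_0),
\]
where $u(\cdot\,;k)$ denotes the unique mild solution of the direct problem (\ref{eqth1})--(\ref{eqth3}) associated with the coefficient $k$, whose existence, uniqueness and regularity are guaranteed by Theorems~\ref{theorth3}--\ref{theorth5}, and where $k_0$ is the fixed function (\ref{eqth141}); note that $k_0\in C[0,T]$ because of (D1), of $h\in C^1[0,T]$ (so that $\partial_t^\alpha h=I_t^{1-\alpha}h'\in C[0,T]$), of $|h|\ge h_0>0$, and of the continuity of $F$. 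By Theorem~\ref{theorem8}, a function $k\in C[0,T]$ solves the inverse problem (\ref{eqth1})--(\ref{eqth4}) if and only if $\mathcal{A}k=k$, so it suffices to show that $\mathcal{A}$ possesses a unique fixed point once $T$ is small.

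First I would fix $\rho:=2\|k_0\|_{C[0,T]}+1$ and consider the closed ball $\mathcal{B}_\rho=\{k\in C[0,T]:\|k\|_{C[0,T]}\le\rho\}$. For $k\in\mathcal{B}_\rho$, Lemma~\ref{lemma7} yields $\|L^\beta u(\cdot,x_0;k)\|_{C[0,T]}\le\Xi(T,\rho)\,\|\varphi\|_{D(L^{\gamma_0})}$, where $\Xi(T,\rho)$ is built from $C_\kappa$, $\chi_0$, $B(\alpha q,1-\alpha q)$ and $\|k\|_{C[0,T]}\le\rho$; since $\chi_0=\frac{M_0}{1-\Theta(T)}\|\varphi\|_{D(L^{\beta p})}$ and both $M_0-C_\Omega T^{\alpha q}$ and $\Theta(T)$ carry strictly positive powers of $T^{\alpha q}$ (with $\Theta(T)<1$ assumed throughout Section~4), the constant $\Xi(T,\rho)$ stays bounded and, more importantly, its $T$-dependent contribution that is multiplied by $\rho$ through $\|k\|_{C[0,T]}$ tends to $0$ as $T\to 0^+$. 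Combining this with $\|1/h\|_{C[0,T]}\le h_0^{-1}$ gives $\|\mathcal{A}k\|_{C[0,T]}\le\|k_0\|_{C[0,T]}+h_0^{-1}\Xi(T,\rho)\|\varphi\|_{D(L^{\gamma_0})}\le\rho$ for $T$ sufficiently small, i.e.\ $\mathcal{A}(\mathcal{B}_\rho)\subset\mathcal{B}_\rho$.

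Next I would show that $\mathcal{A}$ is a contraction on $\mathcal{B}_\rho$. For $k_1,k_2\in\mathcal{B}_\rho$ with solutions $u_1,u_2$, the difference $w:=u_1-u_2$ satisfies the integral identity obtained by subtracting (\ref{eqth10}) written for $k_1$ and $k_2$: the source-type terms generate $\mathcal{G}_i\bigl(F(u_1)-F(u_2)\bigr)$, controlled by the Lipschitz assumption (C1), while the lower-order terms generate $\mathcal{G}_i\{k_1 w\}+\mathcal{G}_i\{(k_1-k_2)u_2\}$, which one estimates using the pointwise-in-time bound $\|u_i(t,\cdot)\|\le\chi_0\|\varphi\|_{D(L^{\beta p})}t^{-\alpha q}$ from (\ref{eqth96}). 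Re-running the estimates of Lemmas~\ref{lemmth1}, \ref{lemmth3} and of Lemma~\ref{lemma7} for the weighted quantity $t^{\alpha q}\|L^\beta w(t,x_0)\|$ leads to a weakly singular Volterra--Fredholm integral inequality to which Theorem~\ref{ththththth1} applies, and produces
\[
\bigl\|L^\beta\bigl(u(\cdot,x_0;k_1)-u(\cdot,x_0;k_2)\bigr)\bigr\|_{C[0,T]}\le\Lambda(T)\,\|k_1-k_2\|_{C[0,T]},
\]
with $\Lambda(T)\to 0$ as $T\to 0^+$ (again because every term carries a positive power of $T$). Dividing by $h$ gives $\|\mathcal{A}k_1-\mathcal{A}k_2\|_{C[0,T]}\le h_0^{-1}\Lambda(T)\|k_1-k_2\|_{C[0,T]}$, so for $T$ small enough $h_0^{-1}\Lambda(T)<1$ and $\mathcal{A}$ is a contraction on $\mathcal{B}_\rho$. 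The Banach fixed-point theorem then yields a unique $k\in\mathcal{B}_\rho$ with $\mathcal{A}k=k$, which by Theorem~\ref{theorem8} solves (\ref{eqth1})--(\ref{eqth4}). For uniqueness in all of $C[0,T]$, not merely in $\mathcal{B}_\rho$, I would observe that any solution $\tilde k$ satisfies $\tilde k=\mathcal{A}\tilde k$ and, by Lemma~\ref{lemma7} applied with $\rho=\|\tilde k\|_{C[0,T]}$, obeys an a priori bound $\|\tilde k\|_{C[0,T]}\le\rho_0$ with $\rho_0$ independent of $\tilde k$ once $T$ is small; applying the contraction estimate on $\mathcal{B}_{\max(\rho,\rho_0)}$ forces $\tilde k=k$.

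The main obstacle is the Lipschitz stability estimate for the solution operator $k\mapsto L^\beta u(\cdot,x_0;k)$ with a factor $\Lambda(T)$ that genuinely vanishes as $T\to 0$: this forces one to re-carry the entire chain of Mittag-Leffler/convolution estimates of Sections~3--4 for the difference $u_1-u_2$, including the delicate pointwise weights $t^{-\alpha q}$ and the boundary contribution at $t=T$ entering through $\Psi_j(T)$, and then to close the argument with the singular Gronwall--Bellman--Gamidov inequality of Theorem~\ref{ththththth1}; keeping the $\lambda_j$-powers balanced so that the series defining $L^\beta w(t,x_0)$ converges (exactly as in Lemma~\ref{lemma7}, exploiting $\varphi\in D(L^{\gamma_0})$ with $\gamma_0>\beta+\tfrac{N}{2}$) is the technically heaviest point.
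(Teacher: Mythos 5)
Your route differs structurally from the paper's: you run a fixed point for $k$ alone on $C[0,T]$, treating the fully nonlinear direct problem $k\mapsto u(\cdot;k)$ (Theorems \ref{theorth3}--\ref{theorth5}) as a black box at every iteration, whereas the paper contracts on the pair $(\overline u,\overline k)$ in the set $B_R^T\subset L^{\frac{1}{\alpha q'}-r}\left(0,T;D\left(L^{\beta(p-p')}\right)\right)\times C[0,T]$, producing the new $u$ from the \emph{linear} problem (\ref{eqth151}) with frozen data $F(t,x,\overline u)-\overline k\,\overline u$ and the new $k$ from (\ref{eqth150}), and then verifies invariance and contraction using only the linear estimates and Lemma \ref{lemma7}. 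That difference would be acceptable if your central stability estimate were available, but it is exactly there that the argument does not close.

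The gap is the claim $\bigl\|L^\beta\bigl(u(\cdot,x_0;k_1)-u(\cdot,x_0;k_2)\bigr)\bigr\|_{C[0,T]}\le\Lambda(T)\|k_1-k_2\|_{C[0,T]}$ with $\Lambda(T)\to0$ as $T\to0$, which you assert "because every term carries a positive power of $T$". With the bounds actually available this is false. The coefficient difference enters through $L^\beta\mathcal{G}_1\{(k_1-k_2)u_2\}(t,x_0)$ and $L^\beta\mathcal{G}_3\{(k_1-k_2)u_2\}(t,x_0)$, and the only a priori control of $u_2$ provided by the nonlinear theory is the singular bound (\ref{eqth96}), $\|u_2(\tau,\cdot)\|\lesssim\chi_0\,\tau^{-\alpha q}$; then $\int_0^t(t-\tau)^{\alpha q-1}\tau^{-\alpha q}\,d\tau=B(\alpha q,1-\alpha q)$ and likewise for the integral over $[0,T]$ hidden in $\mathcal{G}_3$, so these contributions are bounded by constants proportional to $B(\alpha q,1-\alpha q)\chi_0$ with \emph{no} positive power of $T$ (this is the same phenomenon as the $T$-independent constant $\mathcal{Y}$ in (\ref{eqth165})). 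Inserting the weight $t^{\alpha q}$, as you propose, does recover a factor $T^{\alpha q}$, but then you only control $t^{\alpha q}\,|L^\beta w(t,x_0)|$, which cannot be converted into the unweighted $C[0,T]$ bound you need for $\mathcal{A}$ to map a ball of $C[0,T]$ into itself and contract there (the resulting bound degenerates like $t^{-\alpha q}$ as $t\to0$). Note also that $\chi_0$ and $\Theta(T)$ contain $T$-independent pieces (the $M_2C_\kappa$ terms), so they do not shrink as $T\to0$ either. Consequently, smallness of $T$ alone does not give $h_0^{-1}\Lambda(T)<1$: you would need either a non-singular a priori estimate of $u(\cdot;k)$ in $D(L^{\gamma_0})$ (not established anywhere in the paper, whose nonlinear theory yields only the $\tau^{-\alpha q}$-weighted $L_2$ bound), or an explicit smallness condition on the data of the kind the paper implicitly invokes when it demands $\max\{\cdots\}T^{\alpha q}+h_0^{-1}\mathcal{Y}<\tfrac12$ before (\ref{thth167}). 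As written, the contraction step — the heart of your proof — is unproved, and the Lipschitz dependence of the nonlinear solution map on $k$ that it presupposes is itself a nontrivial result requiring the full chain of estimates of Sections 3--4 plus Theorem \ref{ththththth1}, which your sketch does not carry out.
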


We can now prove the existence of a solution to our inverse problem. To prove the existence of a solution, we use the fixed point argument. First, we define the function set
$$
B_{R}^{T}=\left\{\overline{u} \in L^{\frac{1}{\alpha q^{\prime}}-r}\left(0, T ; D\left(L^{\beta (p-p')}\right)\right),\,\, \overline{k} \in C[0, T]:\quad \overline{u}(T, x)=\kappa \overline{u}(0, x)+\varphi(x),   \right.
$$$$\left.
\textbf{B} \overline{u}(t, x)=0, \quad(t, x) \in (0,T) \times \partial \Omega,\,\, \|\overline{u}\|_{L^{\frac{1}{\alpha q^{\prime}}-r}\left(0, T ; D\left(L^{\beta (p-p')}\right)\right)}+\|\overline{k}\|_{C[0, T]} \leq R\right\}.
$$
For given $\overline{u},\,\, \overline{k} \in B_{R}^{T}$, we also define the mapping $\Psi: B_{R}^{T} \rightarrow B_{R}^{T}$  such that $(\overline{u},\overline{k})\rightarrow (u,k)$  through
\begin{equation}\label{eqth150}
k(t)= \frac{1}{h(t)}\left(F(t, x_0, h(t))
-  \partial_t^\alpha h(t)\right) - \frac{1}{h(t)}L^\beta \overline{u}(t, x_0),
\end{equation}
and initial boundary value problem
\begin{equation}\label{eqth151}
\begin{cases}
\partial_t^\alpha u(t, x)+L^\beta u(t, x)+\overline{k}(t) \overline{u}(t, x)=F(t, x, \overline{u}(t, x)), \,\, (t, x) \in (0,T) \times \Omega,\\
\textbf{B} u(t, x)=0, \quad(t, x) \in (0,T) \times \partial \Omega,\\
u(T, x)=\kappa u(0, x)+\varphi(x), \quad x \in \Omega.
\end{cases}
\end{equation}
By H\''older's inequality, we have
$$
\|\overline{k}(t) \overline{u}(t, x)\|^2_{D\left(L^{-\beta q'}\right)} \leq
\sum_{n=1}^{\infty} \lambda_n^{-2 \beta q'}\left|\overline{k}(t) \overline{u}(t, x)\right|^2
$$$$
\leq \|\overline{k}\|^2_{C[0,t]} \sum_{n=1}^{\infty} \lambda_n^{-2 \beta q'}\left|\overline{u}(t, x)\right|^2 \leq \|\overline{k}\|^2_{C[0,t]}\|\overline{u}\|^2_{L^{\frac{1}{\alpha q^{\prime}}-r}\left(0, t ; D\left(L^{\beta (p-p')}\right)\right)}.
$$
From this we obtain the following
$$
\|\overline{k}(t) \overline{u}(t, x)\|_{L^{\frac{1}{\alpha q^{\prime}}-r}\left(0, T ; D\left(L^{\beta (p-p')}\right)\right)}    \leq \|\overline{k}\|_{C[0,T]}\|\overline{u}\|_{L^{\frac{1}{\alpha q^{\prime}}-r}\left(0, T ; D\left(L^{\beta (p-p')}\right)\right)}\leq R^2.
$$
By conditions C1), C2), we find that
$$
\|F(t, x, \overline{u}(t, x))\|_{L^{\frac{1}{\alpha q^{\prime}}-r}\left(0, T ; D\left(L^{\beta (p-p')}\right)\right)}    \leq \Upsilon \|\overline{u}\|_{L^{\frac{1}{\alpha q^{\prime}}-r}\left(0, T ; D\left(L^{\beta (p-p')}\right)\right)}\leq \Upsilon R
$$
Using these results, we have
$$\overline{k}(t) \overline{u}(t, x) \in  L^{\frac{1}{\alpha q^{\prime}}-r}\left(0, T ; D\left(L^{\beta (p-p')}\right)\right),\,\,
F(t, x, \overline{u}(t, x))\in  L^{\frac{1}{\alpha q^{\prime}}-r}\left(0, T ; D\left(L^{\beta (p-p')}\right)\right).
$$
By Theorem \ref{theorth3}, the unique solution $u(t, x) \in  L^{\frac{1}{\alpha q^{\prime}}-r}\left(0, T ; D\left(L^{\beta (p-p')}\right)\right)$ of problem (\ref{eqth151}), given by
$$
u(t, x)=\mathcal{G}_2(t, x) \varphi +\mathcal{G}_1(t, x) F(t, x, \overline{u}(t, x))
$$
\begin{equation}\label{eqth153}
+\mathcal{G}_3(t, x) F(t, x, \overline{u}(t, x))
-\mathcal{G}_3(t, x) \{\overline{k} \overline{u}\}
-\mathcal{G}_1(t, x)\{\overline{k} \overline{u}\},
\end{equation}
satisfies
$$
\|u(t, x)\|_{ L^{\frac{1}{\alpha q^{\prime}}-r}\left(0, T ; D\left(L^{\beta (p-p')}\right)\right)}\leq \frac{C_\kappa M_2}{ \lambda_1^{\beta p'}} \|\varphi\|_{D\left(L^{\beta (p)}\right)}
$$
$$
+
\frac{M_2 \Upsilon}{\alpha q \lambda_1^{\beta p'}}T^{\alpha q} \|\overline{u}\|_{L^{\frac{1}{\alpha q^{\prime}}-r}\left(0, T ; D\left(L^{\beta (p-p')}\right)\right)}+\frac{M^2_2 C_\kappa \Upsilon}{\alpha q \lambda_1^{\beta p'}}T^{\alpha q} \|\overline{u}\|_{L^{\frac{1}{\alpha q^{\prime}}-r}\left(0, T ; D\left(L^{\beta (p-p')}\right)\right)}
$$
$$
+\frac{M^2_2 C_\kappa T^{\alpha q}}{\alpha q \lambda_1^{\beta p'}} \|\overline{k}\|_{C[0,T]} \|\overline{u}\|_{L^{\frac{1}{\alpha q^{\prime}}-r}\left(0, T ; D\left(L^{\beta (p-p')}\right)\right)}
$$
\begin{equation}\label{eqth154}
+\frac{M_2  }{\alpha q \lambda_1^{\beta p'}}T^{\alpha q} \|\overline{k}\|_{C[0,T]}\|\overline{u}\|_{L^{\frac{1}{\alpha q^{\prime}}-r}\left(0, T ; D\left(L^{\beta (p-p')}\right)\right)}.
\end{equation}
Then (\ref{eqth150}) defines the function $k(t)$ in terms of $u$. Furthermore, by (\ref{eqth150}), we have
$$
|k(t)| \leq  \left|\frac{1}{h(t)}\left(F(t, x_0, h(t))
-  \partial_t^\alpha h(t)\right) - \frac{1}{h(t)}L^\beta \overline{u}(t, x_0)\right|
$$
\begin{equation}\label{eqth1555}
\leq \frac{1}{h_0}(\Upsilon+1) \|h\|_1+\frac{1}{h_0}\left|L^\beta \overline{u}(t, x_0)\right|,
\end{equation}
where $\|h\|_1=\max\left\{\|h\|_{C[0,T]}, \,\, \|\partial_t^\alpha h\|_{C[0,T]}\right\}.$

Using the inequality (\ref{eqtthth145}), we obtain from (\ref{eqth1555})
$$
|k(t)|  \leq \frac{(\Upsilon+1)}{h_0} \|h\|_1+ C_{11}  \Bigg(C_\kappa+\frac{\Upsilon}{\lambda_1^{\beta p}}\chi_0  B(\alpha q, 1-\alpha q)+\frac{M_2 C_\kappa \Upsilon}{\lambda_1^{\beta p}}\chi_0  B(\alpha q, 1-\alpha q)
$$
\begin{equation}\label{thth156}
+\frac{M_2 C_\kappa\|k\|_{C[0,T]}+ \|k\|_{C[0,T]}}{\lambda_1^{\beta p}}   \chi_0  B(\alpha q, 1-\alpha q)\Bigg)\|\varphi\|_{D\left(L^{\gamma_0}\right)}.
\end{equation}
Thus, the mapping
$$
\Psi: B_{R}^{T} \rightarrow L^{\frac{1}{\alpha q^{\prime}}-r}\left(0, T ; D\left(L^{\beta (p-p')}\right)\right) \times C[0,T],\quad (\overline{u},\overline{k})\rightarrow (u,k)
$$
given by (\ref{eqth150}) and (\ref{eqth151}), is well defined.

The next Theorem shows that $\Psi$ is a contraction map on $B_{R}^{T}$

\begin{theorem}\label{theorem10}
Let the conditions of the theorems \ref{theorth3}-\ref{theorth6} and conditions D1)- D3) be satisfied. Then there exists a sufficiently small $T_0$ and a suitable large $R$ such that the inverse problem (\ref{eqth1})--(\ref{eqth4}) has a unique solution $(u,k) \in L^{\frac{1}{\alpha q^{\prime}}-r}\left(0, T ; D\left(L^{\beta (p-p')}\right)\right) \times C[0,T]$ for all $T \in (0, T_0]$.
\end{theorem}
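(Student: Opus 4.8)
\textbf{Strategy.} The proof is by the contraction mapping principle applied to the map $\Psi$ on $B_R^T$, regarded as a closed (hence complete) subset of the Banach space $L^{\frac{1}{\alpha q'}-r}\left(0,T;D\left(L^{\beta(p-p')}\right)\right)\times C[0,T]$ with the product norm. That $\Psi$ is well defined by (\ref{eqth150})--(\ref{eqth151}) was checked in the discussion preceding the statement: solvability of the forward problem (\ref{eqth151}) and the bound (\ref{eqth154}) come from Theorem~\ref{theorth3}, and the bound (\ref{thth156}) for the coefficient component comes from (\ref{eqth150}) combined with Lemma~\ref{lemma7}, whose hypotheses are supplied by the regularity Theorems~\ref{theorth5} and~\ref{theorth7}, since on the image of $\Psi$ the function $\overline u$ solves a forward problem of the type (\ref{eqth1})--(\ref{eqth3}).

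\textbf{Step 1 (self-mapping).} Collecting (\ref{eqth154}) and (\ref{thth156}), the image $(u,k)=\Psi(\overline u,\overline k)$ of an element of $B_R^T$ satisfies a bound of the schematic form
\[
\|u\|_{L^{\frac{1}{\alpha q'}-r}(0,T;D(L^{\beta(p-p')}))}+\|k\|_{C[0,T]}\le \mathcal{A}_0\bigl(\|\varphi\|_{D(L^{\gamma_0})}+\|h\|_1\bigr)+\varepsilon_0(T)\,P(R),
\]
where $\mathcal{A}_0$ is a constant independent of $T$ and $R$, $P$ is a fixed polynomial, and $\varepsilon_0(T)\to0$ as $T\to0^+$; this is because the only terms in (\ref{eqth154}) and (\ref{thth156}) that are not multiplied by a positive power $T^{\alpha q}$ or $T^{\alpha q'}$ depend only on the data $\varphi,h$ (the non-decaying term $\mathcal{Q}_1$ of Lemma~\ref{lemma7} is one of these and carries no factor of $R$). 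I would then first fix $R$ so large that $\mathcal{A}_0(\|\varphi\|_{D(L^{\gamma_0})}+\|h\|_1)\le R/2$, and afterwards choose $T_0$ so small that $\varepsilon_0(T)P(R)\le R/2$ for all $T\le T_0$; for such $T$ one has $\Psi(B_R^T)\subset B_R^T$. Shrinking $T_0$ further if needed, the standing smallness requirements --- $\Theta(T)<1$, the exponential conditions of Theorems~\ref{theorth3}--\ref{theorth6}, and (D3) --- remain in force, each of these quantities being continuous in $T$ and satisfying its strict inequality at $T=0^+$.

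\textbf{Step 2 (contraction).} For $(\overline u_i,\overline k_i)\in B_R^T$, $i=1,2$, put $(u_i,k_i)=\Psi(\overline u_i,\overline k_i)$. Subtracting the representation (\ref{eqth153}) and using linearity of $\mathcal{G}_1,\mathcal{G}_3$, I would estimate $u_1-u_2$ exactly as in the proofs of Lemma~\ref{lemmth1} and Theorem~\ref{theorth3}, applying the $\mathcal{D}_{2,\alpha q}$-type bounds for $\mathcal{G}_1,\mathcal{G}_3$ to the increments $F(t,\cdot,\overline u_1)-F(t,\cdot,\overline u_2)$ and $\overline k_1\overline u_1-\overline k_2\overline u_2$. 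The Lipschitz hypothesis C1) gives $\|F(t,\cdot,\overline u_1)-F(t,\cdot,\overline u_2)\|\le\Upsilon\|\overline u_1-\overline u_2\|$, and $\|\overline k_1\overline u_1-\overline k_2\overline u_2\|\le\|\overline k_1-\overline k_2\|\,\|\overline u_1\|+\|\overline k_2\|\,\|\overline u_1-\overline u_2\|$ together with $\|\overline u_i\|,\|\overline k_i\|\le R$ then yields
\[
\|u_1-u_2\|_{L^{\frac{1}{\alpha q'}-r}(0,T;D(L^{\beta(p-p')}))}\le\varepsilon_1(T,R)\bigl(\|\overline u_1-\overline u_2\|+\|\overline k_1-\overline k_2\|\bigr),
\]
with $\varepsilon_1(T,R)\to0$ as $T\to0^+$ for $R$ fixed. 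Since, by (\ref{eqth150}), $k_1-k_2=-\tfrac{1}{h}L^\beta(\overline u_1-\overline u_2)(\cdot,x_0)$ and $\overline u_1-\overline u_2$ solves a forward problem (\ref{eqth1})--(\ref{eqth3}) with zero nonlocal data, rerunning the estimates of Lemma~\ref{lemma7} for this difference (the term $\mathcal{Q}_1$ being absent) gives $\|k_1-k_2\|_{C[0,T]}\le\varepsilon_2(T,R)\,\|\overline u_1-\overline u_2\|$ with $\varepsilon_2(T,R)\to0$. Hence, in the product norm, $\Psi$ is Lipschitz with constant $\vartheta(T,R)=\varepsilon_1(T,R)+\varepsilon_2(T,R)$, and a last shrinking of $T_0$ makes $\vartheta(T,R)<1$ for every $T\le T_0$. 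By the Banach fixed point theorem $\Psi$ has a unique fixed point $(u,k)\in B_R^T$, and by the equivalence established in Theorem~\ref{theorem8} this pair is the unique solution of the inverse problem (\ref{eqth1})--(\ref{eqth4}) on $[0,T]$ for every $T\in(0,T_0]$.

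\textbf{The main obstacle.} The delicate point is the coefficient component of the contraction. Whereas $u_1-u_2$ inherits a factor $T^{\alpha q}$ (or $T^{\alpha q'}$) automatically from the Mittag--Leffler convolution operators $\mathcal{G}_1,\mathcal{G}_3$, the difference $k_1-k_2$ equals the pointwise spatial trace $-\tfrac1h L^\beta(\overline u_1-\overline u_2)(\cdot,x_0)$, which is not visibly small in $T$. One must therefore arrange the estimate so that the Lemma~\ref{lemma7} machinery is applied to the difference $\overline u_1-\overline u_2$, exploiting that it has homogeneous nonlocal data --- so that the non-decaying contribution $\mathcal{Q}_1\sim C_\kappa M_2 C_\Omega C(\gamma_0,\mathbb{N})\|\varphi\|_{D(L^{\gamma_0})}$ does not appear --- and that every surviving term $\mathcal{Q}_2,\dots,\mathcal{Q}_5$ contains a convolution $\int_0^t(t-\tau)^{\alpha q-1}(\cdot)\,d\tau$ and hence a factor $T^{\alpha q}$. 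Carrying this out while keeping $R$ fixed, so that $\varepsilon_1,\varepsilon_2$ do not deteriorate as $R$ is enlarged in Step~1, is the crux that couples the self-mapping and contraction steps.
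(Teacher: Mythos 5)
Your overall strategy is the same as the paper's: the map $\Psi$ defined by (\ref{eqth150})--(\ref{eqth151}) on the ball $B_R^T$, a self-mapping step based on (\ref{eqth154}) and (\ref{thth156}) with $R$ chosen from the data and $T_0$ small, and a contraction step based on the difference system (\ref{eqth161})--(\ref{eqth167}), the representation (\ref{eqth133}), and Lemma \ref{lemma7}. You correctly identify the crux (the coefficient component $k_1-k_2=-\tfrac1h L^\beta(\overline u_1-\overline u_2)(\cdot,x_0)$ is not visibly small in $T$), but your proposed resolution does not go through, and this is a genuine gap rather than a stylistic difference.

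First, the claim $\varepsilon_2(T,R)\to 0$ as $T\to 0^+$ is unsubstantiated: the Lemma \ref{lemma7} estimates of $\mathcal{Q}_2,\dots,\mathcal{Q}_5$ do not gain a factor $T^{\alpha q}$, because the relevant convolution is $\int_0^t(t-\tau)^{\alpha q-1}\tau^{-\alpha q}\,d\tau=B(\alpha q,1-\alpha q)$, which is independent of $t$ and $T$ (the singular weight $\tau^{-\alpha q}$ coming from the a priori bound on the solution exactly cancels the would-be $T$-power). This is precisely why the paper's constant $\mathcal{Y}$ in (\ref{eqth165}) carries no power of $T$, and why its final requirement in (\ref{thth166})--(\ref{thth167}) is that $\frac1{h_0}\mathcal{Y}$ plus the $T^{\alpha q}$-terms be below $\tfrac12$ --- effectively a smallness condition on the data, not something obtained by shrinking $T$ alone. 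The same defect infects your Step 1: the terms in (\ref{thth163}) coming from $\mathcal{Q}_4,\mathcal{Q}_5$ carry the factor $\|\overline k\|_{C[0,T]}\le R$ and no positive power of $T$, so it is not true that every $R$-dependent contribution is multiplied by $\varepsilon_0(T)\to 0$; the paper instead absorbs them into $\mathcal{P}$ and sets $R=2\mathcal{P}$, which requires the coefficient of $R$ there to be small. Second, even granting the desired decay, you cannot simply ``rerun Lemma \ref{lemma7}'' on $\overline u_1-\overline u_2$: elements of $B_R^T$ are not mild solutions, so the difference admits no $\mathcal{Q}_1,\dots,\mathcal{Q}_5$ decomposition (homogeneous nonlocal data removes nothing if there is no representation to begin with), and in the norms you fixed a bound of $\sup_{t}\,|L^\beta(\overline u_1-\overline u_2)(t,x_0)|$ by the $L^{\frac{1}{\alpha q'}-r}$-in-time norm of the difference, with a constant vanishing as $T\to 0$, is not available. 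To repair the argument one must either build the mild-solution representation into the definition of $B_R^T$ (or iterate $\Psi$ once before contracting) and accept, as the paper implicitly does, a $T$-independent contribution controlled by a smallness assumption on $\mathcal{Y}/h_0$, or redo the coefficient estimate in a weighted norm designed so that a genuine positive power of $T$ survives.
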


In the following we show that is a contraction map on $B_{R}^{T}$ for suitable $T$ and $R$. To
this end, we first prove  $\Psi(B_{R}^{T} ) \subset B_{R}^{T}$ for sufficiently small $T_0$ and suitably large $R$.  To simplify the calculations, we restrict $T \in (0, 1]$.  From Theorem \ref{theorth3} and (\ref{eqth154}),
we have
$$
\|u(t, x)\|_{ L^{\frac{1}{\alpha q^{\prime}}-r}\left(0, T ; D\left(L^{\beta (p-p')}\right)\right)}\leq \frac{C_\kappa M_2}{ \lambda_1^{\beta p'}} \|\varphi\|_{D\left(L^{\beta (p)}\right)}
$$
$$
+
\frac{M_2 \Upsilon}{\alpha q \lambda_1^{\beta p'}}T^{\alpha q} \|\overline{u}\|_{L^{\frac{1}{\alpha q^{\prime}}-r}\left(0, T ; D\left(L^{\beta (p-p')}\right)\right)}+\frac{M^2_2 C_\kappa \Upsilon}{\alpha q \lambda_1^{\beta p'}}T^{\alpha q} \|\overline{u}\|_{L^{\frac{1}{\alpha q^{\prime}}-r}\left(0, T ; D\left(L^{\beta (p-p')}\right)\right)}
$$
$$
+\frac{M_2  }{\alpha q \lambda_1^{\beta p'}}T^{\alpha q} \|\overline{k}\|_{C[0,T]}\|\overline{u}\|_{L^{\frac{1}{\alpha q^{\prime}}-r}\left(0, T ; D\left(L^{\beta (p-p')}\right)\right)}
$$
\begin{equation}\label{eqth162}
\leq \frac{C_\kappa M_2}{ \lambda_1^{\beta p'}} \|\varphi\|_{D\left(L^{\beta (p)}\right)} +
\frac{M_2 \Upsilon}{\alpha q \lambda_1^{\beta p'}}T^{\alpha q} R  +\frac{M^2_2 C_\kappa \Upsilon}{\alpha q \lambda_1^{\beta p'}}T^{\alpha q} R +\frac{M_2  }{\alpha q \lambda_1^{\beta p'}}T^{\alpha q}R^2.
\end{equation}
On the other hand, by (\ref{thth156}), we have
$$
\|k\|_{C[0,T]}  \leq \frac{(\Upsilon+1)}{h_0} \|h\|_1+ C_{11}  \Bigg(C_\kappa+\frac{\Upsilon}{\lambda_1^{\beta p}}\chi_0  B(\alpha q, 1-\alpha q)+\frac{M_2 C_\kappa \Upsilon}{\lambda_1^{\beta p}}\chi_0  B(\alpha q, 1-\alpha q)
$$
\begin{equation}\label{thth163}
+\frac{M_2 C_\kappa R+ R}{\lambda_1^{\beta p}}   \chi_0  B(\alpha q, 1-\alpha q)\Bigg)\|\varphi\|_{D\left(L^{\gamma_0}\right)}.
\end{equation}

Then, adding up (\ref{eqth162}) and (\ref{thth163}) leads to
\begin{equation}\label{thth159}
\|(u,k)\|_{L^{\frac{1}{\alpha q^{\prime}}-r}\left(0, T ; D\left(L^{\beta (p-p')}\right)\right)\times C[0,T]}\leq \mathcal{P} +
\frac{M_2 }{\alpha q \lambda_1^{\beta p'}}T^{\alpha q} R \left(\Upsilon+M_2 C_\kappa \Upsilon+R\right),
\end{equation}
where
$$
\mathcal{P}=\frac{C_\kappa M_2}{ \lambda_1^{\beta p'}} \|\varphi\|_{D\left(L^{\beta (p)}\right)}+ \frac{(\Upsilon+1)}{h_0} \|h\|_1+ C_{11}  \Bigg(C_\kappa+\frac{\Upsilon}{\lambda_1^{\beta p}}\chi_0  B(\alpha q, 1-\alpha q)
$$
$$
+\frac{M_2 C_\kappa \Upsilon}{\lambda_1^{\beta p}}\chi_0  B(\alpha q, 1-\alpha q)+\frac{M_2 C_\kappa R+ R}{\lambda_1^{\beta p}}   \chi_0  B(\alpha q, 1-\alpha q)\Bigg)\|\varphi\|_{D\left(L^{\gamma_0}\right)}
$$
and therefore satisfies
$$
\lim\limits_{T\rightarrow +0} \left(
\frac{M_2 }{\alpha q \lambda_1^{\beta p'}}T^{\alpha q} R \bigg(\Upsilon+M_2 C_\kappa \Upsilon+R\bigg)\right)=0.
$$
Now we take $R$, such that $R=2\mathcal{P}$ with the constant $\mathcal{P}$ in (\ref{thth159}). Then there exists $T_0 > 0$ such that
\begin{equation}\label{thth160}
\|(u,k)\|_{L^{\frac{1}{\alpha q^{\prime}}-r}\left(0, T ; D\left(L^{\beta (p-p')}\right)\right)\times C[0,T]}\leq R
\end{equation}
for all $T \in \left(0, T_0\right]$. That is, $\Psi$ maps $(B_{R}^{T})$ into itself for each fixed $T \in\left(0, \min \left\{1, T_0\right\}\right]$.

Next, we check the second condition of contractive mapping $\Psi$. Let $(u, k)=\Psi(\bar{u}, \bar{k})$ and $(U, K)=\Psi(\bar{U}, \bar{K})$. Then we obtain that $(u-U, k-K)$ satisfies that
\begin{equation}\label{eqth161}
(k-K)(t)=  - \frac{1}{h(t)}L^\beta (\overline{u}-\overline{U})(t, x_0),
\end{equation}
and initial boundary value problem
\begin{equation}\label{eqth167}
\begin{cases}
\partial_t^\alpha (u-U)(t, x)+L^\beta [u-U](t, x)+ (\overline{k}-\overline{K})\overline{u}+(\overline{u}-\overline{U})  \overline{K},\\
=F(t, x, \overline{u}(t, x))-F(t, x, \overline{U}(t, x)), \,\, (t, x) \in (0,T) \times \Omega,\\
\textbf{B} (u-U)(t, x)=0, \quad(t, x) \in (0,T) \times \partial \Omega,\\
(u-U)(T, x)=\kappa (u-U)(0, x), \quad x \in \Omega.
\end{cases}
\end{equation}

Again by Theorems \ref{theorth3} and \ref{theorth4}, the unique solution $u(t, x) \in  L^{\frac{1}{\alpha q^{\prime}}-r}\left(0, T ; D\left(L^{\beta (p-p')}\right)\right)$ of problem (\ref{eqth167}), given by
$$
(u-U)(t, x)= \mathcal{G}_1(t, x) \left(F(t, x, \overline{u}(t, x))-F(t, x, \overline{U}(t, x))\right)
$$$$
+\mathcal{G}_3(t, x) \left(F(t, x, \overline{u}(t, x))-F(t, x, \overline{U}(t, x))\right)
-\mathcal{G}_3(t, x) \left(\overline{k}(t)\overline{u}(t, x)-\overline{K}(t)\overline{U}(t, x)\right)
$$
\begin{equation}\label{eqth133}
-\mathcal{G}_1(t, x) \left(\overline{k}(t)\overline{u}(t, x)-\overline{K}(t)\overline{U}(t, x)\right).
\end{equation}
In formula (\ref{eqth133}), we use the following
$$
\mathcal{G}_3(t, x) \left(\overline{k}(t)\overline{u}(t, x)-\overline{K}(t)\overline{U}(t, x)\right)=\mathcal{G}_3(t, x)\{(\overline{u}-\overline{U})  \overline{K}\}+\mathcal{G}_3(t, x)\{(\overline{k}-\overline{K})\overline{u}\},
$$
$$
\mathcal{G}_1(t, x) \left(\overline{k}(t)\overline{u}(t, x)-\overline{K}(t)\overline{U}(t, x)\right)=\mathcal{G}_1(t, x)\{(\overline{u}-\overline{U})  \overline{K}\}+\mathcal{G}_1(t, x)\{(\overline{k}-\overline{K})\overline{u}\}.
$$
Using the above relations, we estimate (\ref{eqth133})
$$
\|u-U\|_{ L^{\frac{1}{\alpha q^{\prime}}-r}\left(0, T ; D\left(L^{\beta (p-p')}\right)\right)}\leq
\frac{M_2 \Upsilon (C_\kappa+M_2)}{\alpha q \lambda_1^{\beta p'}}T^{\alpha q} \|\overline{u}-\overline{U}\|_{L^{\frac{1}{\alpha q^{\prime}}-r}\left(0, T ; D\left(L^{\beta (p-p')}\right)\right)}
$$
$$
+\frac{M_2 (C_\kappa+M_2)}{\alpha q \lambda_1^{\beta p'}}T^{\alpha q} \|\overline{k}\|_{C[0,T]} \|\overline{u}-\overline{U}\|_{L^{\frac{1}{\alpha q^{\prime}}-r}\left(0, T ; D\left(L^{\beta (p-p')}\right)\right)}
$$
$$
+\frac{M_2 (C_\kappa+M_2)}{\alpha q \lambda_1^{\beta p'}}T^{\alpha q} \|\overline{k}-\overline{K}\|_{C[0,T]}\|\overline{u}\|_{L^{\frac{1}{\alpha q^{\prime}}-r}\left(0, T ; D\left(L^{\beta (p-p')}\right)\right)}
$$
$$
+\frac{M_2  }{\alpha q \lambda_1^{\beta p'}}T^{\alpha q} \|\overline{k}\|_{C[0,T]}\|\overline{u}-\overline{U}\|_{L^{\frac{1}{\alpha q^{\prime}}-r}\left(0, T ; D\left(L^{\beta (p-p')}\right)\right)}
$$
\begin{equation*}
+\frac{M_2  }{\alpha q \lambda_1^{\beta p'}}T^{\alpha q} \|\overline{k}-\overline{K}\|_{C[0,T]}\|\overline{u}\|_{L^{\frac{1}{\alpha q^{\prime}}-r}\left(0, T ; D\left(L^{\beta (p-p')}\right)\right)}
\end{equation*}
and from this
$$
\|u-U\|_{ L^{\frac{1}{\alpha q^{\prime}}-r}\left(0, T ; D\left(L^{\beta (p-p')}\right)\right)}\leq  \max\Bigg\{
\frac{M_2 \Upsilon (C_\kappa+M_2)}{\alpha q \lambda_1^{\beta p'}}T^{\alpha q}, \frac{M_2 R (C_\kappa+M_2)}{\alpha q \lambda_1^{\beta p'}}T^{\alpha q},
$$
\begin{equation}\label{eqth164}
\frac{M_2  R}{\alpha q \lambda_1^{\beta p'}}T^{\alpha q}, \frac{M_2  R}{\alpha q \lambda_1^{\beta p'}}T^{\alpha q}\Bigg\}\left[\|\overline{u}-\overline{U}\|_{L^{\frac{1}{\alpha q^{\prime}}-r}\left(0, T ; D\left(L^{\beta (p-p')}\right)\right)}+ \|\overline{k}-\overline{K}\|_{C[0,T]}\right].
\end{equation}

Similarly, according to  estimate (\ref{thth156}) and equality (\ref{eqth161})
\begin{equation}\label{eqth165}
\|k-K\|_{C[0,T]}  \leq \frac{1}{h_0} \mathcal{Y} \Bigg[\|\overline{u}-\overline{U}\|_{L^{\frac{1}{\alpha q^{\prime}}-r}\left(0, T ; D\left(L^{\beta (p-p')}\right)\right)}+\|\overline{k}-\overline{K}\|_{C[0,T]}\Bigg],
\end{equation}
where
$$
\mathcal{Y}=\max\Bigg\{
\frac{M_2 C_\kappa C\left(\gamma_0, \mathbb{N}\right)}{\alpha q \lambda_1^{\beta p'}},
$$$$
\frac{M_2^2 C_\Omega C_\kappa }{\alpha q \lambda_1^{\beta p'}}C\left(\gamma_0, \mathbb{N}\right),
\frac{M_2^2 C_\Omega C_\kappa R}{\alpha q \lambda_1^{\beta p'}}C\left(\gamma_0, \mathbb{N}\right),
\frac{M_2 C_\Omega R}{\alpha q \lambda_1^{\beta p'}}C\left(\gamma_0, \mathbb{N}\right)\Bigg\}
$$

Therefore, by (\ref{eqth164}) and (\ref{eqth165}), we have
$$
\|(u-U,k-K)\|_{L^{\frac{1}{\alpha q^{\prime}}-r}\left(0, T ; D\left(L^{\beta (p-p')}\right)\right)\times C[0,T]}
$$
$$
\leq \Bigg(\max\Bigg\{
\frac{M_2 \Upsilon (M_2 C_\kappa+M_2)}{\alpha q \lambda_1^{\beta p'}}T^{\alpha q}, \frac{M_2  R}{\alpha q \lambda_1^{\beta p'}}T^{\alpha q}, \frac{M_2  R}{\alpha q \lambda_1^{\beta p'}}T^{\alpha q}\Bigg\}+\frac{1}{h_0} \mathcal{Y}\Bigg)
$$
\begin{equation}\label{thth166}
\times \|(\overline{u}-\overline{U},\overline{k}-\overline{K})\|_{L^{\frac{1}{\alpha q^{\prime}}-r}\left(0, T ; D\left(L^{\beta (p-p')}\right)\right)\times C[0,T]}.
\end{equation}
Hence, we can choose a sufficiently small $T_{2}$ such that
$$
\max\Bigg\{
\frac{M_2 \Upsilon (M_2 C_\kappa+1)}{\alpha q \lambda_1^{\beta p'}}T^{\alpha q}, \frac{M_2  R}{\alpha q \lambda_1^{\beta p'}}T^{\alpha q}, \frac{M_2  R}{\alpha q \lambda_1^{\beta p'}}T^{\alpha q}\Bigg\}+\frac{1}{h_0} \mathcal{Y} < \frac{1}{2}
$$
for all $T\in (0; T_{2}]$ to obtain
$$
\|(u-U,k-K)\|_{L^{\frac{1}{\alpha q^{\prime}}-r}\left(0, T ; D\left(L^{\beta (p-p')}\right)\right)\times C[0,T]} \qquad \qquad \qquad
$$
\begin{equation}\label{thth167}
\qquad \qquad \qquad \leq \frac{1}{2} \|(\overline{u}-\overline{U},\overline{k}-\overline{K})\|_{L^{\frac{1}{\alpha q^{\prime}}-r}\left(0, T ; D\left(L^{\beta (p-p')}\right)\right)\times C[0,T]}.
\end{equation}
Estimates (\ref{thth160}) and (\ref{thth167}) show that $\Psi$ is a contraction map on $B_R^T$ for all $T\in (0,T_2]$, if we choose
$T\leq \min\{1,T_1,T_2\}$.

\end{document}